\documentclass{amsart}

\usepackage{comment, hyperref}

\begin{document}

\newtheorem{theorem}{Theorem}[section]
\newtheorem{prop}[theorem]{Proposition}
\newtheorem{lemma}[theorem]{Lemma}
\newtheorem{cor}[theorem]{Corollary}
\newtheorem{conj}[theorem]{Conjecture}

\theoremstyle{definition}
\newtheorem{definition}[theorem]{Definition}
\newtheorem{rmk}[theorem]{Remark}
\newtheorem{eg}[theorem]{Example}
\newtheorem{qn}[theorem]{Question}
\newtheorem{defn}[theorem]{Definition}

\numberwithin{equation}{section}

\newcommand{\boundary}{\partial}
\newcommand{\cc}{{\mathbb C}}
\newcommand{\integers}{{\mathbb Z}}
\newcommand{\natls}{{\mathbb N}}
\newcommand{\ratls}{{\mathbb Q}}
\newcommand{\reals}{{\mathbb R}}
\newcommand{\R}{{\mathbb R}}
\newcommand{\q}{{\mathbb Q}}
\newcommand{\N}{{\mathbb N}}
\newcommand{\D}{{\mathbb D}}
\newcommand\AAA{{\mathcal A}}
\newcommand\BB{{\mathcal B}}
\newcommand\CC{{\mathcal C}}
\newcommand\DD{{\mathcal D}}
\newcommand\EE{{\mathcal E}}
\newcommand\FF{{\mathcal F}}
\newcommand\GG{{\mathcal G}}
\newcommand\HH{{\mathcal H}}
\newcommand\II{{\mathcal I}}
\newcommand\JJ{{\mathcal J}}
\newcommand\KK{{\mathcal K}}
\newcommand\LL{{\mathcal L}}
\newcommand\MM{{\mathcal M}}
\newcommand\NN{{\mathcal N}}
\newcommand\OO{{\mathcal O}}
\newcommand\PP{{\mathcal P}}
\newcommand\QQ{{\mathcal Q}}
\newcommand\RR{{\mathcal R}}
\newcommand\SSS{{\mathcal S}}
\newcommand\SP{{{\mathcal S}{\mathcal P}}}
\newcommand\SM{{{\mathcal S}{\mathcal M}}}
\newcommand\TT{{\mathcal T}}
\newcommand\UU{{\mathcal U}}
\newcommand\VV{{\mathcal V}}
\newcommand\WW{{\mathcal W}}
\newcommand\XX{{\mathcal X}}
\newcommand\YY{{\mathcal Y}}
\newcommand\HC{{{\mathcal H}{\mathcal C}}}
\newcommand\CT{{{\mathcal C}{\mathcal T}}}
\newcommand\QP{{{\mathcal Q}{\mathcal P}}}
\newcommand\aff{{{\mathcal A}{\mathcal F}}}
\newcommand\QPK{{{\mathcal Q}{\mathcal P}{\mathcal K}}}
\newcommand\CR{{{\mathcal C}{\mathcal R}}}
\newcommand\CA{{{\mathcal C}{\mathcal A}}}
\newcommand\AC{{{\mathcal A}{\mathcal C}}}
\newcommand\ACT{{{\mathcal A}{\mathcal C}{\mathcal T}}}
\newcommand\ZZ{{\mathcal Z}}
\newcommand{\Q}{{\mathbb Q}}
\newcommand\F{{\mathbb F}}
\newcommand{\Ga}{\Gamma}
\newcommand\Z{{\mathbb{Z}}}
\newcommand\C{{\mathbb{C}}}
\newcommand\St{{{\mathcal{S}}t}}
\newcommand\ZG{{\mathbb{Z}}G}
\newcommand\til{\widetilde}

\title[Homotopical Height]{Homotopical Height}

\author[I. Biswas]{Indranil Biswas}
\address{School of Mathematics, Tata Institute of Fundamental
Research, Homi Bhabha Road, Bombay 400005, India}
\email{indranil@math.tifr.res.in}

\author[M. Mj]{Mahan Mj}
\address{RKM Vivekananda University, Belur Math, WB 711202, 
India}
\email{mahan.mj@gmail.com; mahan@rkmvu.ac.in}

\author[D. Pancholi]{Dishant Pancholi}
\address{Chennai Mathematical Institute, H1, SIPCOT IT Park, Siruseri
Kelambakkam 603103,
India}
\email{dishant@cmi.ac.in}

\subjclass[2000]{ 32Q15, 57M05, 57R19 (Primary); 57R17, 14F35, 20J05, 20J06, 32E10, 32J15, 32J27 (Secondary)}

\keywords{Homotopical Height, K\"ahler group, projective group, duality group, cohomological dimension,
holomorphically convex}

\date{\today}

\thanks{The first author is supported by J. C. Bose Fellowship. Research of the
second author is partly supported by CEFIPRA Indo-French Research Grant 4301-1. The
third author would like to thank ICTP for hosting him during January 2013 where
a part of the work for this article was done.}

\begin{abstract} Given a group $G$ and a class of manifolds $\CC$ (e.g. symplectic, contact, K\"ahler etc), it is an old
problem to find a manifold $M_G \in \CC$ whose fundamental group is $G$.
This article refines it: for a group $G$ and a positive integer $r$ find $M_G \in \CC$ such that
$\pi_1(M_G)=G$  and $\pi_i(M_G)=0$ for $1<i<r$.    We thus
provide a unified point of view systematizing  known  and  new results in this direction for various different classes of manifolds.
The largest $r$ for which such an $M_G \in \CC$ can be found is called the homotopical height $ht_\CC(G)$. 
Homotopical height provides a dimensional obstruction to finding
a $K(G,1)$ space within the given class $\CC$, leading
to a hierarchy of these classes in terms of
``softness'' or ``hardness'' \`a la Gromov. We show that the classes of closed
contact, CR, and almost complex
manifolds as well as the class of (open) Stein manifolds are soft. 

The classes $\SP$ and $\CA$ of closed
symplectic and complex manifolds exhibit intermediate ``softness'' in the sense
that every finitely presented group $G$ can be realized as
the fundamental group of a manifold in $\SP$ and a manifold in $\CA$. For these classes,
$ht_\CC(G)$ provides a numerical invariant for   finitely presented groups. We give explicit computations of these invariants
for some standard finitely presented groups.

We  use the notion of homotopical height within the ``hard'' category of 
K\"ahler groups to obtain partial answers to questions of Toledo regarding 
second cohomology and second group cohomology of  K\"ahler groups. We also modify and generalize a construction due to Dimca, Papadima and Suciu
to give a potentially large class of projective groups
violating property FP. 
\end{abstract}

\maketitle

\tableofcontents

\section{Introduction} 
 Given a group $G$ and one's favorite class of manifolds $\CC$ (e.g. symplectic, contact, K\"ahler etc), it is an old
problem to find a manifold $M_G \in \CC$ whose fundamental group is $G$.
This article refines it: for a group $G$ and a positive integer $r$ find $M_G \in \CC$ such that
$\pi_1(M_G)=G$  and $\pi_i(M_G)=0$ for $1<i<r$.    This paper 
provides a unified point of view systematizing  known  and  new results in this direction for various different classes of manifolds.
The largest $r$ for which such an $M_G \in \CC$ can be found shall be  called the {\it homotopical height} $ht_\CC(G)$
which provides a numerical/dimensional answer to the following basic question:

\begin{qn} Given a finitely presented group $G$ and a class $\CC$  of smooth manifolds (e.g. symplectic, contact, K\"ahler etc), what is the obstruction to
constructing a 
$K(G,1)$ manifold within the class $\CC$? \label{q} \end{qn}

We now formally introduce the  notion of {\it homotopical height}.

\begin{definition} \label{htd} Let $G$ be a finitely presented group, and
let $\CC$ be a given class of smooth manifolds of dimension greater than zero.
We shall say that the $\CC$--homotopical height of $G$ is $-\infty$
if $G$ is not the fundamental group of some manifold in $\CC$. If $\pi_2(M)\,\not=
\,0$ for all $M \,\in\, \CC$ with $\pi_1(M)\,=\, G$, then we
shall say that the $\CC$--homotopical height of $G$ is $2$. We shall say that
the $\CC$--homotopical height of $G$ is greater than 
or equal to $n$ if there exists a manifold $M \,\in\, \CC$ such that
$\pi_1(M)\,=\, G$ and $\pi_i (M) \,=\, 0$ for every $1\,<\,i \,<\, n$. The
$\CC$--homotopical  height of $G$ is the maximum value of $n$ such that
the $\CC$--homotopical  height of $G$ is greater than or equal to $n$. 

We shall denote the $\CC$--homotopical  height  of $G$ as $ht_\CC (G)$.
\end{definition}

Note that if there exists a $K(G,1)$ space in $\CC$, then the $\CC$--homotopical
height  of $G$ is infinite. Thus the 
$\CC$--homotopical  height  provides a dimensional obstruction to the
construction of a $K(G,1)$ manifold within the class $\CC$.

In Definition \ref{htd}, the dimension restriction ($> 0$) on elements of $\CC$
is for technical reasons to do with the exceptional
nature of the trivial group; this is explained in Section \ref{sec1.6}.

\begin{definition} \label{htd2} A manifold $M\,\in\,\CC$ is said to {\bf realize}
the  $\CC$-homotopical height of $G$ if $\pi_1(M)\,=\,G$ and $\pi_i(M)\,=\,
0$ for all $1\,<\,i\,<\,ht_\CC (G)$. If a manifold $M\,\in\,\CC$ realizing
$ht_\CC (G)$ has the property that there is no manifold in $\CC$ of strictly 
smaller dimension realizing $ht_\CC (G)$, then the dimension of $M$ is called
the $\CC$--\textit{homotopical dimension}. The $\CC$--homotopical dimension
of $G$ will be denoted by $hdim_\CC (G)$.
\end{definition}

Any $K(G,1)$ manifold in $\CC$
realizes the $\CC$--homotopical  height (which is $\infty$). 
The notions of homotopical height and homotopical dimension provide, as we hope to show in this paper, an organizing principle and language to address
a number of existing problems.

Let $G$ be a finitely presented group.
\begin{enumerate}
\item $G$ is of type $FP_\infty$ if it admits a $K(G,1)$ space (in the category
of CW complexes) with finitely many cells in each dimension.

\item $G$ is of type $FP_k$ if it admits a $K(G,1)$ space (in the category of CW
complexes) with finitely many cells in each dimension less than $(k+1)$.

\item $G$ is of type $FP$ if it admits a finite $K(G,1)$ space, i.e., a $K(G,1)$ 
space with finitely many cells.
\end{enumerate}
(See \cite{brown}.)

 Our investigations lead us to the following gradation of categories $\CC$ in degrees
of hardness or softness (\`a la Gromov \cite{gro-pdr,gromov-sq, gro-sh}) viewed through the lens of homotopical height:
We shall say that a class $\CC$ is of
\begin{enumerate}
\item[Type 1:] if $ht_\CC(G) \geq k$ for all finitely presented groups of type
$FP_k$, in particular, $ht_\CC(G) \,=\, \infty$ for all 
finitely presented groups of type $FP_\infty$; 
\item[Type 2:] if $ht_\CC(\{1\}) \,=\, \infty$ for the trivial group;
\item[Type 3:] if $ht_\CC(G) \geq 0$ for all finitely presented groups; and 
\item[Type 4:] if $ht_\CC (G) \,=\, -\infty$ for some finitely presented group of type $FP_\infty$.
\end{enumerate}

Clearly classes $\CC$ of Type 1 are the softest and those of Type 4 are hardest
in the above list. However, as we shall see below, Type 2 and Type 3
are classes that exhibit intermediate behavior of different (and not quite
comparable) kinds. Note that a class $\CC$ is of Type 3 if and only if every
finitely presented group $G$ appears as the fundamental group of some manifold
in $\CC$.

\subsection{Motivational Problems} There were a number of problems and theorems
that led us to the formulation of Definition \ref{htd} and the consequent
``hard/soft'' framework. In the next subsection, we shall summarize the partial
answers/approaches we obtain in this paper to some of the questions below.

\begin{theorem}[\cite{gompf}]
Every finitely presented group is the fundamental group of a closed symplectic manifold. \label{gompf-pi1} \end{theorem}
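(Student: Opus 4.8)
The plan is to construct the manifold in dimension four, along the lines of \cite{gompf}, by repeatedly applying the symplectic normal connected sum (the ``Gompf sum'') along codimension-two symplectic submanifolds. Fix a finite presentation $G = \langle g_1,\dots,g_k \mid r_1,\dots,r_l\rangle$. Two ingredients are used throughout: (i) the symplectic sum theorem --- if two closed symplectic $4$-manifolds each contain an embedded \emph{symplectic} surface of self-intersection zero, and these two surfaces are diffeomorphic and carry equal symplectic area, then (the normal Euler numbers being zero and hence equal and opposite) the normal connected sum of the two manifolds along these surfaces carries a symplectic structure; and (ii) van Kampen's theorem, which presents the fundamental group of such a sum as the amalgamated free product of the fundamental groups of the two complements over the fundamental group of the common boundary, a circle bundle over the glued surface.

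First I would choose a symplectic seed. Put $g = \max(k,1)$ and let $X_0 = \Sigma_g \times S^2$, the product of the closed orientable genus-$g$ surface with the sphere, with a product symplectic form; then $\pi_1(X_0) = \pi_1(\Sigma_g)$. A genus-$g$ surface group surjects onto the free group $F_g$ (kill half the standard generators) and $F_g$ surjects onto $G$ since $g \ge k$, so we get a surjection $\phi\colon\pi_1(\Sigma_g)\to G$; as $G$ is finitely presented, $\ker\phi$ is the normal closure of finitely many elements $w_1,\dots,w_m\in\pi_1(\Sigma_g)$.

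Next I would kill $w_1,\dots,w_m$ one at a time. For each $i$ the crucial step is to represent $w_i$ by a \emph{symplectically} embedded, self-intersection-zero surface $S_i\subset X_0$ of some controlled genus $h_i\ge 1$, with the $S_i$ pairwise disjoint, so that $\pi_1(S_i)\to\pi_1(X_0)$ carries one distinguished generator to $w_i$ and every other generator to $1$. Given such an $S_i$, form the symplectic sum of $X_0$ with a simply connected symplectic $4$-manifold $Z_i$ carrying a square-zero symplectic genus-$h_i$ surface $F_i$ whose complement $Z_i\setminus\nu F_i$ is simply connected, gluing $S_i$ to $F_i$; for $h_i=1$ the rational elliptic surface $E(1)$ works, since it is simply connected and a sphere section meeting a fiber once makes the fiber complement simply connected, and suitable Lefschetz-fibration pieces serve the higher-genus cases. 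In the van Kampen computation for this sum the entire boundary circle bundle dies on the $Z_i$-side, so the $i$-th sum has precisely the effect of killing the image of $\pi_1(S_i)$ in $\pi_1(X_0)$, i.e.\ of imposing the single relation $w_i=1$ and nothing else. As the $S_i$ are disjoint, the $m$ sums are independent, and the resulting closed symplectic $4$-manifold $X$ satisfies $\pi_1(X) = \pi_1(\Sigma_g)/\langle\langle w_1,\dots,w_m\rangle\rangle \cong G$.

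The main obstacle, and the real content of Gompf's argument, is the surface-realization step. It is not formal: the naive idea of thickening an embedded loop representing $w_i$ to a torus inside a trivialized normal neighborhood fails, because a short computation shows that the restriction of $\omega$ to such a torus necessarily has zeros, so the torus is never symplectic. One must instead build $S_i$ from genuinely symplectic pieces --- beginning, for instance, with the symplectic surface $\Sigma_g\times\{\mathrm{pt}\}$, on which $w_i$ is realized by an immersed curve that can be embedded by pushing double points into the $S^2$-direction, and then modifying the surface in genus and homology class so that it stays symplectic of self-intersection zero while exactly one of its $\pi_1$-generators maps to $w_i$ and the rest to $1$. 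Performing these modifications compatibly, and checking that no relation beyond $w_i$ is introduced, is the only step requiring real work; the symplectic sum theorem and van Kampen then finish the proof.
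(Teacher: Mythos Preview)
The paper does not give a proof of this statement at all: Theorem~\ref{gompf-pi1} is simply quoted as Gompf's result \cite{gompf} and used as input. There is therefore nothing in the paper to compare your proposal against.

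That said, your outline is a fair high-level sketch of Gompf's original construction, and you are candid about where the work lies. A couple of cautions if you intend to flesh this out. First, the surface-realization step is not a minor technicality to be waved at: producing, for each word $w_i$, a \emph{symplectic} square-zero surface whose $\pi_1$-image is exactly $\langle w_i\rangle$ (and arranging the $S_i$ to be pairwise disjoint) is the heart of the paper, and your paragraph describing it is more of a wish than an argument. Second, in the symplectic sum the surfaces must have equal area as well as equal genus and opposite normal Euler numbers; you mention this, but in practice one rescales or perturbs to arrange it, and this should be said. Finally, for the $Z_i$ side you assert that ``suitable Lefschetz-fibration pieces serve the higher-genus cases''; this is true, but one must check that the complement of the fiber is simply connected, which is not automatic for an arbitrary genus-$h$ Lefschetz fibration. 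None of these are fatal --- they are exactly the points Gompf addresses --- but as written your proposal is an outline pointing to \cite{gompf} rather than an independent proof.
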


In the language of this paper, Theorem \ref{gompf-pi1} says that the class 
$\SP$ of closed symplectic manifolds is of Type 3 (we return to this later).

\begin{theorem}[\cite{taubes}] Every finitely presented group is the fundamental
group of a closed complex manifold. \label{taubes-pi1}
\end{theorem}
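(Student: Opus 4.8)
The plan is to reduce the statement to two inputs: the classical fact that every finitely presented group is the fundamental group of a closed smooth oriented $4$-manifold, and the twistor construction, which attaches a compact complex $3$-fold to any anti-self-dual Riemannian $4$-manifold.

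First I would realize $G$ as the fundamental group of a closed smooth oriented $4$-manifold $X$. One may invoke Theorem \ref{gompf-pi1} in its $4$-dimensional form (Gompf's construction produces closed symplectic $4$-manifolds with arbitrary finitely presented $\pi_1$), or argue directly: build the presentation $2$-complex $K$ of $G$, embed it in $\mathbb{R}^5$, take a regular neighbourhood $W$ (a compact $5$-manifold with boundary that deformation retracts to $K$), and set $X\,=\,\partial W$. General position ($1+2<5$ and $2+2<5$) shows that $\pi_1(X)\,\xrightarrow{\sim}\,\pi_1(W)\,\cong\,\pi_1(K)\,=\,G$, and $X$ is orientable since it bounds. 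Next I would apply Taubes' theorem on the existence of anti-self-dual conformal structures: there is an integer $k\,\geq\,0$ such that $X'\,:=\,X\,\#\,k\,\overline{\mathbb{CP}^2}$ admits a Riemannian metric with vanishing self-dual Weyl tensor ($W^+=0$). Since $\mathbb{CP}^2$ is simply connected, van Kampen gives $\pi_1(X')\,=\,\pi_1(X)\,=\,G$.

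Now form the twistor space $\pi\colon Z\,\to\,X'$, the unit sphere bundle of the rank-$3$ bundle $\Lambda^2_+$ of self-dual $2$-forms of $X'$. It is an $S^2$-bundle over a compact manifold, hence a closed manifold of real dimension $6$, and it carries the Atiyah--Hitchin--Singer almost complex structure, which is integrable precisely because $X'$ is anti-self-dual; thus $Z$ is a closed complex manifold of complex dimension $3$. Finally, the homotopy exact sequence of the fibration $S^2\,\hookrightarrow\,Z\,\to\,X'$, together with $\pi_1(S^2)\,=\,\pi_0(S^2)\,=\,0$, yields $\pi_1(Z)\,\cong\,\pi_1(X')\,=\,G$, which proves the theorem.

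The hard part is Taubes' existence theorem for anti-self-dual metrics, a substantial gauge-theoretic PDE result; in this argument it is used as a black box. Everything else is essentially formal: the realization of $G$ by a smooth $4$-manifold, the invariance of $\pi_1$ under connected sum with $\mathbb{CP}^2$ and under the twistor fibration, and the Atiyah--Hitchin--Singer integrability criterion for the twistor almost complex structure. One should only take care with orientation conventions, since whether integrability of the twistor structure corresponds to $W^+=0$ or $W^-=0$ depends on the chosen orientation of $\Lambda^2_+$; either way the conclusion is unaffected after possibly reversing the orientation of $X'$.
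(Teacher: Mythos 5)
Your argument is correct and is exactly the route the paper has in mind: the paper states this as Taubes' theorem and later explicitly refers to ``Taubes' twistor construction,'' i.e.\ realizing $G$ as $\pi_1$ of a closed oriented $4$-manifold, connect-summing with copies of $\overline{\mathbb{CP}^2}$ to obtain an anti-self-dual metric, and passing to the twistor space, whose $S^2$-fibration preserves $\pi_1$. No discrepancies to report.
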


Theorem \ref{taubes-pi1} says that the class $\CA$ of closed complex manifolds
is of Type 3. The following question asks if the category $\CA$ of complex
manifolds is ``soft'':

\begin{qn}[{\cite[p. 238]{chern-bk}}, \cite{gromov-sq, yau-qn}]\label{yau-qn}
Does every almost complex closed manifold of real dimension
greater than four admit a complex structure? \end{qn}

We adapt Question \ref{yau-qn} to the context of this paper and ask:
\begin{qn} \label{yau-qn-ht} For a finitely presented group $G$ of type $FP$, is $ht_\CA (G) = \infty$?
\end{qn}

We note in passing that a group that provides a negative answer to Question \ref{yau-qn-ht} will also 
furnish a counterexample  to Question \ref{yau-qn}.

The fundamental groups of compact K\"ahler manifolds (respectively,
complex projective manifolds) will be called K\"ahler groups
(respectively, projective groups).

The following problem, due to Koll\'ar, asks for a kind of softness for
projective groups. The class $\PP$ of complex projective manifolds
and the class $\KK$ of closed K\"ahler manifolds belong to Type 4 in the above classification. We think of
these classes as ``hard''. 

\begin{qn}[{\cite[Section 0.3.1]{kollar-shaf}}]\label{kollar-qn}
Is a projective group 
$G_1$ commensurable to a group $G$, 
admitting a $K(G, 1)$ space which is a smooth quasi-projective variety? 
\end{qn}

Dimca, Papadima and Suciu \cite{dps-bb} have furnished examples of 
finitely presented groups giving a negative answer to Question 
\ref{kollar-qn}. However, these examples are not of type $FP_k$ for some 
$k$ and hence do not satisfy $ht_{\SM}(G) \,=\, \infty$ even for the class $\SM$ 
of closed smooth manifolds. We therefore modify Question \ref{kollar-qn} as 
follows:

\begin{qn}\label{kollar-qn1}
\mbox{}
\begin{enumerate}
\item Let $G_1$ be a projective group of type $FP_\infty$. 
Is $G_1$ commensurable to a group $G$, 
admitting a $K(G, 1)$ space which is a smooth quasi-projective variety? 

\item In particular, if $G$ is a torsion-free  projective group
of type $FP$, does it admit a $K(G, 1)$ space which is a quasi-projective variety?

\item If $G_1$ is a projective group of type $FP_k$, is $G_1$ commensurable
to a group $G$ with $ht_\QP(G)\,\geq\, k$, where $\QP$ denotes the category
of smooth quasiprojective varieties.
\end{enumerate}
\end{qn}

The examples in \cite{dps-bb}  do not provide counterexamples to Question
\ref{kollar-qn1}(3). In the form Question\ref{kollar-qn1}(3), Koll\'ar's
question applies to every finitely presented projective group (take $k\,=\,2$).

The next set of questions  specifically pertain to the hard (i.e., Type 4)
class $\KK$ of  K\"ahler manifolds. Our framework of ``homotopical height''
gives an approach and partial positive answers to these. An outstanding
conjecture on K\"ahler groups, due to Carlson and Toledo, states the following:

\begin{conj}[\cite{kollar-shaf}]\label{tolcon}
Let $G$ be an infinite  K\"ahler group. Then there exists a finite index
subgroup $G_1$ of $G$ such that the second Betti
number $ b_2 (G_1)$ is positive. \end{conj}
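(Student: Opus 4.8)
The plan is to route the conjecture through the homotopical-height machinery, isolating the single point at which the Kähler hypothesis is truly indispensable. The elementary fact I would lean on throughout is that on a closed Kähler manifold $M$ of complex dimension $n\geq 1$, any closed $(1,1)$-form $\eta$ that is semi-positive and positive at one point satisfies $\int_M\eta\wedge\omega_M^{\,n-1}>0$, so $[\eta]\neq 0$ in $H^2(M;\R)$; in particular $b_2(M)\geq 1$.

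\textbf{Step 1.} First I would handle the ``soft'' case $ht_\KK(G)\geq 3$. Here $G$ is realized by a closed Kähler $M$ with $\pi_2(M)=0$, so the universal cover $\widetilde M$ is $2$-connected and the classifying map $M\to K(G,1)$ is $3$-connected; hence $H^2(G;\R)\cong H^2(M;\R)$ and $b_2(G)=b_2(M)\geq 1$, with no passage to a finite-index subgroup required. This already shows that the real content of Conjecture~\ref{tolcon} is confined to the range $ht_\KK(G)=2$, where every closed Kähler model $M$ of $G$ has $\pi_2(M)\neq 0$ and its Kähler class escapes the (always injective) image of $H^2(G;\R)\hookrightarrow H^2(M;\R)$.

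\textbf{Step 2.} Next I would dispose of the case in which some finite-index $G_1\leq G$ has $b_1(G_1)>0$. Pick a closed Kähler $M_1$ with $\pi_1(M_1)=G_1$; by Hodge theory $H^{1,0}(M_1)\neq 0$, so choose a nonzero holomorphic $1$-form $\alpha$. Then $\alpha\wedge\bar\alpha$ is a closed, semi-positive $(1,1)$-form, positive wherever $\alpha\neq 0$, so $[\alpha]\cup[\bar\alpha]=[\alpha\wedge\bar\alpha]\neq 0$ in $H^2(M_1;\C)$ by the fact above. Since $[\alpha],[\bar\alpha]\in H^1(M_1;\C)=H^1(G_1;\C)$ and cup products of $H^1$-classes are pulled back from $K(G_1,1)$, this exhibits a nonzero element of $H^2(G_1;\C)$, i.e.\ $b_2(G_1)>0$. (Equivalently, one pulls back the flat Kähler class along the Albanese map of $M_1$, which factors up to homotopy through $K(G_1,1)$.)

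\textbf{Step 3, and the main obstacle.} What remains is an infinite Kähler group $G$ with $ht_\KK(G)=2$ and $b_1(G_1)=0$ for every finite-index $G_1\leq G$ — one should worry here about Kähler groups with Kazhdan's property~(T) that admit no aspherical Kähler $K(G,1)$. Now the Albanese is empty and one must produce an $H^2$-class from a \emph{nonabelian} representation: the natural route is a Zariski-dense $\rho\colon G_1\to L$ into a simple Lie group with Hermitian symmetric space $\Omega=L/K$, together with the $\rho$-equivariant pluriharmonic map $\widetilde{M_1}\to\Omega$ furnished by Corlette--Siu-type theory, whose pullback of the invariant Kähler form of $\Omega$ descends to a class in $H^2(G_1;\R)$ that one then argues is nonzero via a Toledo-invariant / Milnor--Wood positivity. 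The sticking point — and, to the best of my knowledge, the reason Conjecture~\ref{tolcon} remains open — is the input that \emph{every} such $G$ admits a representation of this kind with the required positivity; this is the step I expect to be the genuine obstacle, so that within the present framework the honest deliverable is the pair of reductions in Steps~1 and~2, pinning the conjecture down to its ``perfect, aspherically non-Kähler'' core.
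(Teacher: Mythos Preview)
The statement you are addressing is a \emph{conjecture}, not a theorem: the paper does not prove it, and indeed explicitly presents it as open (attributed to Carlson--Toledo). So there is no ``paper's own proof'' to compare against. Your proposal is honest about this --- Step~3 correctly identifies the remaining case as the genuine obstacle and the reason the conjecture is open.

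That said, your Steps~1 and~2 are correct and recover exactly the partial evidence the paper itself records. Your Step~1 is the paper's Proposition~\ref{toledob2} (stated there for the symplectic height $ht_\SP$, but the argument is identical): if $\pi_2(M)=0$ one builds a $K(G,1)$ from $M$ by attaching cells of dimension $\geq 4$, so $b_2(G)=b_2(M)>0$. Your Step~2 is the Albanese reduction the paper invokes in the proof of Proposition~\ref{exactdim}, citing \cite{kkm}: once $b_1(G_1)>0$, a nonzero holomorphic $1$-form $\alpha$ gives $[\alpha]\cup[\bar\alpha]\neq 0$ in the image of $H^2(G_1;\C)$. Both are standard and correctly argued.

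In short: there is no gap in your write-up beyond the one you yourself flag, but there is also no proof --- what you have produced is a clean account of the known reductions (matching the paper's partial results and its cited literature), together with a correct localization of the open problem to infinite K\"ahler groups with $ht_\KK(G)=2$ and virtually vanishing $b_1$.
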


We were led to the following question thanks to Domingo Toledo.

\begin{qn} \label{tol-h2} Let $G$ be a K\"ahler group such that
$H^2(G,\, \ZG) \,\neq\, 0$. Is $G$ the fundamental group of a compact Riemann surface?
\end{qn}

\subsection{Outline of the Paper and Results} Let $\CC$ be a subclass of the class of  (not necessarily closed)
smooth manifolds of {\em positive dimension}. The classes we shall be most
interested here are as follows, arranged roughly in decreasing order of
``softness''.

\begin{enumerate}
\item The class $\SM$ of closed smooth manifolds.
\item The class $ACR$ of closed almost CR manifolds.
\item The class $\AC$ of closed almost complex manifolds.
\item The class $\CT$ of closed contact manifolds.
\item The class $\CR$ of closed CR manifolds.
\item The class $\St$ of (open) Stein manifolds.
\item The class $\SP$ of closed symplectic manifolds.
\item The class $\CA$ of closed complex manifolds.
\item The class $\aff$ of smooth affine varieties.
\item The class $\QP$ of smooth quasi-projective varieties.
\item The class $\KK$ of compact K\"ahler manifolds.
\item The class $\PP$ of smooth complex projective manifolds.
\item The subclass $\HC$ of $\PP$ consisting of smooth complex projective manifolds with holomorphically convex universal cover.
\item The subclass $\SSS$ of $\PP$ consisting of smooth complex projective manifolds with Stein universal cover.
\end{enumerate}

Other natural classes to which Definition \ref{htd} can and do apply will be defined as we go along.
This paper has roughly three thematic parts: \\
1) Section \ref{soft} deals with the ``soft'' categories: $\CT\, , AC\, R,  \AC
\, , \CR\, , \St$. \\
2) Section \ref{casp} deals primarily with the ``intermediate'' categories: $\SP\, , \CA$.\\
3) The later sections deal primarily with the ``hard'' categories $\KK$ and $\PP$. \\

The {\it techniques} used in these three types of categories are  different but not unrelated. We give below a representative selection of results
from each of these three  categories.

\medskip

\noindent {\bf Soft Categories:}\\
The main theorem here on the ``soft category'' proves softness (Type 1) for the classes $\CT , ACR,  \AC, \CR, \St$
(see Theorems \ref{sm-soft}, \ref{ac-soft},  \ref{stein-soft}, \ref{ct-soft}).

\begin{theorem} The classes $\SM, \CT ,   \AC, \CR, \St$ are each of Type 1.
In particular, $ht_\CC(G) \,=\, \infty$ for all groups $G$ of type $FP_\infty$
and $\CC = \SM, \CT,   \AC, \CR$ or $ \St$. \label{omni-soft} \end{theorem}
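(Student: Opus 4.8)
The plan is to prove each containment $ht_\CC(G) = \infty$ for $G$ of type $FP_\infty$ by the same general strategy: start from a finite-dimensional $K(G,1)$ CW complex (the $FP_\infty$ hypothesis gives a $K(G,1)$ with finitely many cells in each dimension, which we truncate at a high skeleton), thicken it to a compact smooth manifold with boundary by embedding in some $\reals^N$ and taking a regular neighborhood, and then modify this manifold within the prescribed geometric category $\CC$ without changing low-dimensional homotopy. The key observation is that the homotopical height only cares about $\pi_1$ and the vanishing of $\pi_i$ for $1 < i < n$; since $n$ is arbitrary, we need, for each target $n$, a manifold $M \in \CC$ with $\pi_1(M) = G$ and $\pi_i(M) = 0$ for $1 < i < n$. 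Passing to the $n$-skeleton of a $K(G,1)$ already achieves this at the level of CW complexes, so the work is entirely in realizing such a complex (up to $(n-1)$-equivalence) by an object of $\CC$.

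First I would handle $\SM$: embed the $n$-skeleton $X_n$ of a $K(G,1)$ into $\reals^N$ for $N$ large, take a compact regular neighborhood $W$, and then take the double $M = \boundary(W \times [0,1])$ or simply the boundary of a thickening chosen so that by general position the inclusion $X_n \hookrightarrow M$ is an $(n-1)$-equivalence (this is the standard Stallings/Wall thickening argument; one needs $\dim M$ large compared to $n$, which is harmless). This gives $\pi_1(M) = G$ and $\pi_i(M) = 0$ for $1 < i < n$, so $ht_\SM(G) \geq n$; letting $n \to \infty$ gives Type 1 for $\SM$. Next, for $\St$: a theorem of Eliashberg (handle attachment in the Stein category) says any smooth manifold of dimension $\geq 6$ built with handles of index $\leq \tfrac12 \dim$ admits a Stein structure; since $X_n$ has cells of dimension $\leq n$, a thickening of dimension $\geq 2n$ (and $\geq 6$) carries a Stein structure, and the handle structure controls $\pi_i$ in the same range. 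For $\CT$: take an open Stein (or Weinstein) filling as above and pass to its contact boundary, or use an appropriate open-book / contact surgery construction; the boundary of a Weinstein domain built with subcritical-through-critical handles of the right index is contact and $(n-1)$-connected relative to $X_n$. For $\AC$ and $\CR$: either run the parametric $h$-principle of Gromov for almost complex structures on open manifolds (every open, almost parallelizable manifold of appropriate dimension admits an almost complex structure) after arranging the thickening to be stably parallelizable, or obtain them as the induced structures on boundaries / hypersurfaces of the complex or Stein manifolds already constructed; $\CR$ structures in particular arise on real hypersurfaces in Stein manifolds.

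The main obstacle I expect is the bookkeeping that ties together two separate things simultaneously: (i) keeping $\pi_1$ exactly equal to $G$ — not just having $G$ as a quotient or subgroup — while (ii) killing $\pi_2, \dots, \pi_{n-1}$, all the while staying inside the rigid geometric category. For $\SM$ this is classical, but for $\CT, \CR, \St, \AC$ one must be careful that the geometric construction (Eliashberg's Stein handle calculus, Weinstein fillings, contact surgery, Gromov's $h$-principle) does not force extra handles or connected sums that alter $\pi_1$ or reintroduce low homotopy. The cleanest route is to do \emph{all} the homotopy-theoretic adjustments first — produce one smooth stably parallelizable compact manifold $W$ of large even dimension with $\partial W$ controlling everything, with a handle decomposition using only handles of index $\leq n$ and with $W \simeq X_n$ through dimension $n-1$ — and only then invoke the relevant $h$-principle/surgery theorem as a black box to upgrade $W$ or $\partial W$ to an object of $\CC$ without touching its handle structure. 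I would also need to separately verify (referring forward to Theorems \ref{sm-soft}, \ref{ac-soft}, \ref{stein-soft}, \ref{ct-soft}) the base case and the dimension hypotheses under which each $h$-principle applies, and note that the dimension-$>0$ convention in Definition \ref{htd} disposes of the trivial-group edge case so that ``Type 1'' is stated cleanly.
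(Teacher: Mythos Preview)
Your plan is correct and follows essentially the same route as the paper: thicken a high skeleton of a $K(G,1)$ to a closed smooth manifold with stably trivial tangent bundle (Theorem~\ref{sm-soft} and Corollary~\ref{stt-soft}), cross with $\R^m$ to get an open almost complex manifold, apply Eliashberg's theorem to make it Stein (Theorem~\ref{stein-soft}), and then pass to a suitable real hypersurface to obtain $\CR$ and $\CT$ (Theorem~\ref{ct-soft}).

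A few places where the paper's execution differs from your sketch are worth noting. For $\SM$ the paper does not take the double of $W$; it takes the boundary of a further thickening $W_k \times D^{k+1}$, obtaining $Y_k = (W_k \times S^k) \cup (D^{k+1} \times V_k)$, and computes $\pi_i(Y_k)$ via the relative exact sequence of $(Y_k, W_k\times S^k)$ rather than a general-position argument. For closed $\AC$ the paper does \emph{not} invoke an $h$-principle (your first alternative would only give an open almost complex manifold); instead it takes an almost CR hypersurface $H$ and forms the product $H \times S^{2m+1}$, building an explicit $J$ from the two almost CR structures plus the two transverse line fields. For $\CR$ and $\CT$ the paper realizes the Stein manifold concretely in $\C^N$ and intersects with a large sphere $S(0,R)$, so the resulting hypersurface is strongly pseudoconvex (hence integrable CR and contact), rather than appealing to abstract Weinstein fillings. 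These are technical variations on the same theme; your overall architecture matches the paper.
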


Softness of the class of closed almost complex manifolds $\AC$ provides a possible approach towards a {\it negative} answer to Question
\ref{yau-qn}. A group providing a negative answer to Question
\ref{yau-qn-ht} or to the more general Question \ref{yau-qn-ht2} below will immediately furnish an almost complex manifold which does not admit
a complex structure.

\medskip

\noindent {\bf Intermediate Categories:}\\
The classes $\SP$ and $\CA$ exhibit intermediate ``softness'' in the sense that every finitely presented group $G$
can be realized as the fundamental group of a manifold in $\SP$ and also
a manifold in $\CA$ (by Theorem \ref{gompf-pi1} and Theorem \ref{taubes-pi1} respectively).
For these classes,
$ht_\CC(G)$ provides a numerical invariant for finitely presented groups $G$. In Section \ref{casp},
we give explicit computations of these invariants
for some
standard finitely presented groups. 

The proof of the following theorem is in
Propositions \ref{ht-ca-eg}, \ref{ht-ca-finite}, \ref{ht-ca-nacs},
\ref{b20-sp}.

\begin{theorem}
We have $ht_\CA (G) = \infty$ for
\begin{enumerate}
\item[(a)] any finitely generated abelian group $G$,
\item[(b)] for any  finite group $G$, and
\item[(c)] the (integral) Heisenberg group $H^{2m+1}$.
\end{enumerate}

Let $G$ be a finitely presented group such that $b_2(G) = 0$. Then $ht_\SP(G) = 2$.
In particular, all finite groups and finitely generated free groups have $ht_\SP(G) = 2$.
\end{theorem}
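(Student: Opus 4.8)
The plan is to prove the two halves of the $\SP$-statement separately, since the upper bound and lower bound on $ht_\SP(G)$ have different characters. For the \emph{lower bound} $ht_\SP(G) \geq 2$, I would simply invoke Gompf's theorem (Theorem \ref{gompf-pi1}): every finitely presented $G$ is the fundamental group of some closed symplectic manifold $M$, so by Definition \ref{htd}, $ht_\SP(G) \geq 2$ automatically (the condition ``$\pi_i(M) = 0$ for $1 < i < 2$'' is vacuous). This already shows $ht_\SP(G)$ is at least $2$ for \emph{every} finitely presented group, with no hypothesis on $b_2$.

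For the \emph{upper bound} $ht_\SP(G) \leq 2$ when $b_2(G) = 0$, the key observation is that a realizing manifold of homotopical height $\geq 3$ would be a closed symplectic manifold $M$ with $\pi_1(M) = G$ and $\pi_2(M) = 0$. First I would recall that a closed symplectic manifold $M^{2n}$ has $[\omega]^n \neq 0$ in $H^{2n}(M;\R)$, and more importantly $[\omega] \neq 0$ in $H^2(M;\R)$ (since $[\omega]^n$ generates the top cohomology, $[\omega]$ cannot be a torsion or zero class). Next, the vanishing $\pi_2(M) = 0$ together with $\pi_1(M) = G$ means that the classifying map $M \to K(G,1)$ is a $3$-equivalence (it induces an iso on $\pi_1$ and $\pi_2$, and a surjection on $\pi_3$ is not needed — what we need is that it induces an isomorphism on $H^2$). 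By Hopf's theorem / obstruction theory, when $\pi_2(M) = 0$ the map $M \to K(\pi_1(M),1) = K(G,1)$ induces an isomorphism $H^2(G;\R) \xrightarrow{\sim} H^2(M;\R)$. Hence $H^2(M;\R) \cong H^2(G;\R)$, and since $b_2(G) = 0$ by hypothesis, we get $H^2(M;\R) = 0$. But this contradicts $0 \neq [\omega] \in H^2(M;\R)$. Therefore no such $M$ exists, so $ht_\SP(G) < 3$, i.e. $ht_\SP(G) = 2$.

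For the two named families: a finite group $G$ has $H^2(G;\R) = 0$ since group cohomology of a finite group with coefficients in a $\ratls$-vector space (or any uniquely divisible module) vanishes in positive degrees, so $b_2(G) = 0$; a finitely generated free group $G = F_k$ has cohomological dimension $1$, so $H^2(G;-) = 0$ and in particular $b_2(G) = 0$. Both therefore fall under the general statement and satisfy $ht_\SP(G) = 2$.

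The main obstacle — really the only nontrivial point — is justifying the isomorphism $H^2(G;\R) \xrightarrow{\sim} H^2(M;\R)$ from the hypothesis $\pi_2(M) = 0$. I would make this precise via the Hopf exact sequence: for any space $M$ there is an exact sequence $\pi_2(M) \to H_2(M;\Z) \to H_2(\pi_1(M);\Z) \to 0$, and more generally the map $M \to K(\pi_1(M),1)$ kills $\pi_2$ after attaching cells of dimension $\geq 3$, so it is an isomorphism on $H_i$ for $i \leq 2$ precisely when $\pi_2(M) = 0$ (dualize with real coefficients via the universal coefficient theorem, which is clean over a field). One should double-check the edge behavior in low degrees and the role of $\pi_1$ acting on $\pi_2$, but since we only use \emph{real} coefficients and only degree $2$, the argument is robust. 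The rest is the elementary fact that a symplectic form gives a nonzero real second cohomology class on a closed manifold.
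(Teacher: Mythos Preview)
Your argument is correct and matches the paper's approach: the paper phrases the key step as attaching cells of dimension $\geq 4$ to $M$ to build a $K(G,1)$ space $X$ with $b_2(M) = b_2(X) = b_2(G)$, which is exactly your classifying-map/Hopf-sequence argument in different clothing. The lower bound via Gompf and the verification that finite and free groups have $b_2 = 0$ are likewise the same (the paper leaves these as evident).
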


We also provide some weak positive evidence towards Conjecture 
\ref{tolcon} (see Proposition \ref{toledob2}):

\begin{prop}  Let $G$ be 
 an infinite  finitely presented group (e.g. an infinite K\"ahler group) with $ht_\SP (G) > 2$. Then $ b_2 (G)$ is positive.\end{prop}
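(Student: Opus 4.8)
The plan is to reduce the claim to a short computation on a closed symplectic manifold that realizes the homotopical height, followed by a comparison of its second homology with that of $G$. By Definition \ref{htd}, the hypothesis $ht_\SP(G)>2$ (i.e. $ht_\SP(G)\geq 3$) furnishes a closed symplectic manifold $(M,\omega)$ with $\pi_1(M)=G$ and $\pi_2(M)=0$. I will show first that $b_2(M)>0$, and then that $b_2(G)=b_2(M)$.

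For the first point, write $2n=\dim M$, so $n\geq 1$. Since $\omega^n$ is a volume form on the closed manifold $M$ (which is oriented by $\omega^n$), the class $[\omega]^n=[\omega^n]\in H^{2n}(M;\R)$ is nonzero, so $[\omega]\neq 0$ in $H^2(M;\R)$ and hence $b_2(M)\geq 1$.

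For the second point, let $f\colon M\to K(G,1)$ be a map inducing the identity on $\pi_1$. Because $\pi_2(M)$ already vanishes, a $K(G,1)$ can be built from $M$ by attaching cells of dimension $\geq 4$ only (one kills $\pi_3,\pi_4,\dots$ in turn, and attaching cells of dimension $\geq 4$ never disturbs $\pi_1$ or $\pi_2$, so no $3$-cell is ever required). Thus the relative cellular chain complex of the pair vanishes in degrees $\leq 3$, so $H_j(K(G,1),M;\Z)=0$ for $j\leq 3$, and the long exact sequence of the pair shows that $f_*\colon H_i(M;\Z)\to H_i(G;\Z)$ is an isomorphism for $i\leq 2$; in particular $H_2(M;\Z)\cong H_2(G;\Z)$. (Equivalently, this is the vanishing of the Hurewicz term in the five-term exact sequence $\pi_2(M)\to H_2(M;\Z)\to H_2(G;\Z)\to 0$.) Tensoring with $\ratls$ gives $b_2(G)=b_2(M)\geq 1$, as claimed.

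There is no essential obstacle here; the one point to watch is that it is precisely the \emph{vanishing} of $\pi_2(M)$ that upgrades the always-available surjection $H_2(M;\Z)\twoheadrightarrow H_2(G;\Z)$ to an isomorphism, and it is that isomorphism — transporting nonvanishing of $b_2$ from $M$ to $G$ — on which the argument rests. I note in passing that the hypothesis ``$G$ infinite'' is not actually used in this deduction; it is retained because the statement is meant to be read alongside Conjecture \ref{tolcon} on infinite K\"ahler groups (indeed, if $G$ is finite then $b_2(G)=0$, which by this Proposition is consistent only with $ht_\SP(G)=2$).
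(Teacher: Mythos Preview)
Your proof is correct and follows essentially the same approach as the paper's: obtain a closed symplectic $M$ with $\pi_1(M)=G$ and $\pi_2(M)=0$, build a $K(G,1)$ from $M$ by attaching cells of dimension $\geq 4$, and conclude $b_2(G)=b_2(M)>0$. Your version is in fact slightly cleaner, since you invoke Definition~\ref{htd} directly to produce $M$ and thereby avoid the paper's case split between finite and infinite $ht_\SP(G)$; your closing remark that the hypothesis ``$G$ infinite'' is not actually used is also correct.
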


It follows that K\"ahler symplectically aspherical groups of class $\mathcal 
A$ in the terminology of \cite{ikrt} verify Conjecture 
\ref{tolcon}.

\medskip

\noindent {\bf Hard Categories:}\\ We turn to the harder categories $\KK, \PP, \HC, \SSS$ in Sections \ref{kk1}, \ref{kk2} and \ref{fin} where we 
discuss restrictions on homotopical height resulting from: \\
a) Lefschetz Hyperplane Theorem (Section \ref{lhpt}),\\
b) Formality (Section \ref{formality}), and\\
c) Group Cohomology (Section \ref{kk2}).\\

In Proposition \ref{cx=htstein}, we obtain the following exact computation of homotopical dimension in the class $\SSS$:
\begin{prop} If  $ht_\SSS (G)=n < \infty$, then there exists a smooth complex projective manifold $M$ of complex dimension exactly
$n$ realizing it.  Further, no smooth 
projective manifold $M$ of complex dimension less than $n$ realizes
$ht_\SSS (G)$. Hence $hdim_\SSS(G) = 2n$.

Furthermore, the universal cover $\til M$ of the above complex projective manifold $M$
is homotopy equivalent to a wedge of $n$--spheres.
\end{prop}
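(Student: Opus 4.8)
The plan is to unwind the definition of the class $\SSS$ and leverage the Lefschetz hyperplane theorem together with a Morse-theoretic analysis of the Stein universal cover. Suppose $ht_\SSS(G) = n < \infty$. By definition there is some smooth projective $M_0 \in \SSS$ with $\pi_1(M_0) = G$ and $\pi_i(M_0) = 0$ for $1 < i < n$. Starting from $M_0$, I would first reduce its dimension: repeatedly intersect with a generic smooth hyperplane section (after embedding $M_0$ in some $\C\P^N$ via a very ample line bundle). By the Lefschetz hyperplane theorem, a smooth hyperplane section $M'$ of a projective manifold of complex dimension $d$ satisfies $\pi_i(M') \to \pi_i(M_0)$ isomorphic for $i < d-1$ and surjective for $i = d-1$; moreover the universal cover of a hyperplane section of $M$ is a hyperplane section (more precisely, the preimage under a covering map) of the universal cover $\widetilde M$, and the property of being Stein is inherited by such preimages (closed complex submanifolds, or intersections with hyperplanes lifted upstairs, of Stein manifolds are Stein). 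So the class $\SSS$ is closed under taking generic hyperplane sections of sufficiently high dimension, and iterating this brings us down to a projective manifold $M \in \SSS$ of complex dimension exactly $n$ with $\pi_1(M) = G$, $\pi_i(M) = 0$ for $1 < i < n$; this realizes $ht_\SSS(G)$.

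Next I would show no projective $M \in \SSS$ of complex dimension $m < n$ can realize $ht_\SSS(G)$. This is where the Stein hypothesis on the universal cover does the real work. If $\widetilde M$ is Stein of complex dimension $m$, then by the Andreotti–Frankel theorem $\widetilde M$ has the homotopy type of a CW complex of (real) dimension at most $m$; in particular $H_i(\widetilde M) = 0$ and $\pi_i(M) \cong \pi_i(\widetilde M) = 0$ for all $i > m$. Now if $M$ realized $ht_\SSS(G) = n$ with $m < n$, then $\pi_i(M) = 0$ for $1 < i < n$, but also $\pi_i(M) = 0$ for $i \geq m+1$ hence in particular for $i = n$ if $n \geq m+1$, i.e. always. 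Combined with the vanishing for $1<i<n$, this would force $\pi_i(M) = 0$ for all $i > 1$, making $M$ a $K(G,1)$, so $ht_\SSS(G) = \infty$, contradicting finiteness of $n$. Hence every realizing manifold has complex dimension $\geq n$, and combined with the previous paragraph the minimum is exactly $n$, giving $hdim_\SSS(G) = 2n$.

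For the last assertion, take the realizing $M \in \SSS$ of complex dimension $n$ constructed above. Its universal cover $\widetilde M$ is Stein of complex dimension $n$, hence homotopy equivalent to an $n$-dimensional CW complex by Andreotti–Frankel; moreover $\widetilde M$ is $(n-1)$-connected. Indeed $\pi_1(\widetilde M) = 1$, and for $1 < i < n$ we have $\pi_i(\widetilde M) \cong \pi_i(M) = 0$. A simply connected CW complex of dimension $n$ that is $(n-1)$-connected is, by the Hurewicz theorem and standard obstruction/attaching-cell arguments, homotopy equivalent to a wedge of $n$-spheres: $H_i(\widetilde M) = 0$ for $0 < i < n$, $H_n(\widetilde M)$ is free abelian (a subgroup of the cellular $n$-chains of a complex with no higher cells), and one builds the equivalence by mapping in a wedge $\bigvee S^n$ realizing a basis of $\pi_n \cong H_n$ and invoking Whitehead's theorem.

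The main obstacle I expect is the first step: verifying carefully that the class $\SSS$ is closed under generic hyperplane sections, i.e. that when one passes to a smooth hyperplane section $M' \subset M$, the universal cover $\widetilde{M'}$ is again Stein. The point is that $\pi_1(M') \to \pi_1(M)$ is an isomorphism for $\dim_\C M \geq 3$ (Lefschetz), so the universal cover $\widetilde{M'}$ is the preimage of $M'$ in $\widetilde M$, which is the intersection of $\widetilde M$ with the lift of the hyperplane — a closed complex submanifold of the Stein manifold $\widetilde M$ cut out there by a holomorphic function — and closed complex submanifolds of Stein manifolds are Stein. One must be mildly careful at the bottom of the induction (when $\dim_\C M$ drops to $2$, where Lefschetz only gives surjectivity on $\pi_1$) but since we only need to descend to complex dimension $n \geq 2$ and the relevant connectivity statements for $\pi_i$, $1<i<n$, are unaffected, this causes no trouble; alternatively one arranges the very ample embedding so that all intermediate sections have dimension $\geq 3$ until the final step.
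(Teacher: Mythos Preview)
Your overall strategy is the same as the paper's: Lefschetz hyperplane sections to push the dimension down to $n$ (and you correctly supply the argument that $\SSS$ is preserved, which the paper outsources to a reference), Andreotti--Frankel to rule out dimension below $n$, and then the $(n-1)$-connected $n$-complex argument for the wedge of spheres.

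There is, however, one genuine slip in your second step. You write that since $\widetilde M$ has the homotopy type of a CW complex of real dimension at most $m$, ``in particular $H_i(\widetilde M) = 0$ and $\pi_i(M) \cong \pi_i(\widetilde M) = 0$ for all $i > m$''. The homology statement is correct, but the homotopy statement is false in general: a CW complex of dimension $m$ can have nonvanishing $\pi_i$ for $i > m$ (e.g.\ $\pi_3(S^2) \neq 0$). You then use this false claim to conclude $\pi_i(M) = 0$ for $i \geq m+1$. The paper's argument (and the correct one) runs in the opposite order: first use that $M$ realizes $ht_\SSS(G) = n > m$ to get $\pi_i(\widetilde M) = 0$ for $1 \leq i \leq m$, and \emph{then} invoke Hurewicz iteratively together with $H_i(\widetilde M) = 0$ for $i > m$ to propagate the vanishing of $\pi_i$ upward. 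You have all the pieces; they just need to be assembled in the right order, with Hurewicz doing the work of converting homology vanishing into homotopy vanishing once the lower homotopy groups are already known to be zero.
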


We also obtain partial positive answers to Question \ref{tol-h2}
(the details are in Propositions \ref{h2stein}, \ref{h2stein1}, \ref{hnstein}).

\begin{theorem}
If $ht_\SSS(G) > 2$, then either $H^2(G,\,\ZG)\,=\,0$ or $G$ is the 
fundamental group of a compact Riemann surface.

Suppose $ht_\SSS(G) \,=\, 2$, and $M$ is a smooth projective surface realizing 
the homotopical height of $G$. If in addition the action of $\pi_1(M)\,=
\,G$ on $\pi_2(M)$ is trivial, then either $H^2(G,\,\ZG)\,=\,0$ 
or $G$ is virtually the fundamental group of a compact Riemann surface.

If $ht_\SSS(G)\, >\, n$, then either $H^n(G,\,\ZG)\,=\,0$ or $G$ is
a {\rm PD}(n) group, in which case $H^n(G,\,\ZG)\,=\,\Z$. 
\end{theorem}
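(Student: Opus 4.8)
The plan is to exploit the hypothesis $ht_\SSS(G) > n$ to produce a smooth complex projective manifold $M$ with $\pi_1(M) = G$, Stein universal cover $\til M$, and $\pi_i(M) = 0$ for $1 < i < n$. By the structure theory already invoked for the class $\SSS$ (cf. Proposition \ref{cx=htstein} and the discussion preceding it), one may further arrange that $M$ has complex dimension $\geq n$, so that $\til M$ — being Stein of complex dimension $d \geq n$ — has the homotopy type of a CW complex of real dimension $\leq d$, and in fact (after passing to an $M$ realizing the homotopical height, as in Proposition \ref{cx=htstein}) $\til M$ is highly connected: $\pi_i(\til M) = \pi_i(M) = 0$ for $1 < i < n$. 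Thus $\til M$ is $(n-1)$--connected.

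The key algebraic step is then to translate connectivity of $\til M$ into a computation of $H^n(G, \ZG)$. Since $\til M$ is $(n-1)$--connected, the Hurewicz theorem gives $H_i(\til M, \Z) = 0$ for $0 < i < n$ and $H_n(\til M, \Z) \cong \pi_n(\til M)$. Now $H^*(G, \ZG) \cong H^*_c$ of the universal cover in the appropriate sense; more precisely, using the spectral sequence of the covering $\til M \to M$ (equivalently, the identification $H^*(G, \ZG) \cong H^*(M, \ZG)$ via the Borel construction, since $M$ is a $K(G,1)$ only through dimension $n-1$ but the low-degree behavior of the spectral sequence $H^p(G, H^q(\til M, \Z)) \Rightarrow H^{p+q}(M, \Z)$ is governed by the connectivity of $\til M$), one finds that $H^n(G, \ZG)$ injects into, and in a range agrees with, a group built from $H_n(\til M)$ and the $G$--action on it. The standard fact to quote here is: if $X$ is a finite (or finite-type) aspherical-through-dimension-$n$ complex with $\pi_1 = G$ and $(n-1)$--connected universal cover, then $H^n(G, \ZG) \cong H^n_c(\til X, \Z)$, and the latter vanishes unless $\til X$ fails to be $n$--acyclic at infinity.

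The dichotomy then emerges as follows. Either $H^n(G, \ZG) = 0$ — the generic case — or $H^n(G, \ZG) \neq 0$, and one must show $G$ is a $PD(n)$ group with $H^n(G, \ZG) = \Z$. For this I would invoke the characterization of duality groups: $G$ is a duality group of dimension $n$ iff $G$ has type $FP$, $H^i(G, \ZG) = 0$ for $i \neq n$, and $H^n(G, \ZG)$ is torsion-free (Bieri–Eckmann). The Stein hypothesis is what forces $cd(G) \leq n$: a Stein manifold of complex dimension $d$ has the homotopy type of a $d$--complex, and the minimality built into $M$ realizing $ht_\SSS$ (Proposition \ref{cx=htstein}) pins $cd(G)$ down. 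Combined with $H^i(G, \ZG) = 0$ for $i < n$ (from $(n-1)$--connectivity of $\til M$) and $H^n(G, \ZG) \neq 0$, one concludes $cd(G) = n$ and that $G$ is an $n$--dimensional duality group; that $H^n(G, \ZG) \cong \Z$ then follows from the Stein/Lefschetz structure, which forces $\til M$ to be homotopy equivalent to a wedge of $n$--spheres with $H_n$ that, as a $\Z G$--module, must be free of rank governing the duality module — and the only way the dualizing module $H^n(G, \ZG)$ can be cyclic is that it is $\Z$, giving a $PD(n)$ group.

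The main obstacle I anticipate is the passage from "$\til M$ is $(n-1)$--connected Stein of complex dimension $\geq n$" to the precise statement that $H^n(G, \ZG)$ is either zero or $\Z$ — ruling out intermediate possibilities where $H^n(G, \ZG)$ is a more complicated $\Z G$--module. This requires knowing that $\til M$ is homotopy equivalent to a wedge of $n$--spheres (the final clause of Proposition \ref{cx=htstein}), so that $H_n(\til M)$ is a free $\Z$--module on which $G$ acts, and then a finiteness/duality argument forcing the dualizing module to be cyclic. Handling the boundary case $ht_\SSS(G) = n$ exactly (as opposed to $> n$) and the associated trivial-action hypothesis — parallel to the $n=2$ Riemann surface statement — is where the argument is most delicate, and I expect that is where one must appeal most heavily to the Lefschetz hyperplane theorem (Section \ref{lhpt}) and to Napier–Ramachandran-type results on ends of Stein covers.
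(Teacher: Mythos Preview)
Your proposal misses the main mechanism and contains a genuine error.

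The paper's argument for the third statement (and the first, which is the case $n=2$) is much more direct than what you outline, and rests on a case division you do not make: whether $ht_\SSS(G)$ is finite or infinite. In the finite case, take $M$ realizing the height; by Proposition~\ref{cx=htstein} its complex dimension $m$ equals $ht_\SSS(G) > n$, so in particular $m > n$. Then Proposition~\ref{leraycoh} gives $H^n(G,\ZG) = H^n(M,\ZG)$, and
\[
H^n(M,\ZG) \;=\; H^n_c(\til M) \;=\; H_{2m-n}(\til M) \;=\; 0,
\]
the last vanishing because $2m - n > m$ and $\til M$ is Stein of complex dimension $m$ (Theorem~\ref{af}). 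So in the finite case $H^n(G,\ZG)=0$ outright; there is no dichotomy to establish. The ${\rm PD}(n)$ alternative arises \emph{only} when $ht_\SSS(G) = \infty$ and there happens to exist an aspherical $M \in \SSS$ of complex dimension $m = n/2$; then $G$ is ${\rm PD}(2m) = {\rm PD}(n)$ by Proposition~\ref{ndim}, and $H^n(G,\ZG) = \Z$ follows from Poincar\'e duality for $G$. If no such aspherical $M$ exists, the finite-case computation still applies to some $M$ and again gives vanishing.

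Your attempt to reach the ${\rm PD}(n)$ conclusion via the Bieri--Eckmann characterization fails at a specific step: you assert that ``the Stein hypothesis is what forces $cd(G) \leq n$.'' This is backwards. A Stein manifold of complex dimension $d$ has the homotopy type of a $d$--complex, but here $d = m > n$, so this only gives $cd(G) \leq m$, not $cd(G) \leq n$. In fact Corollary~\ref{steincd} shows $cd(G) \geq ht_\SSS(G) > n$. So your route to $cd(G) = n$ is blocked, and with it the duality-group argument collapses. The wedge-of-spheres structure of $\til M$ is also not what produces $H^n(G,\ZG) = \Z$; that comes simply from $M$ being a closed aspherical $n$--manifold in the one exceptional case.

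For the second statement (the $ht_\SSS(G)=2$ case with trivial $G$--action on $\pi_2$), the paper does not use anything like Napier--Ramachandran. It uses Proposition~\ref{leraycoh} to embed $H^2(G,\ZG)$ as a $\ZG$--submodule of $H^2(M,\ZG) = \pi_2(M)$, which is a trivial module by hypothesis; hence $H^2(G,\ZG)$ is itself a trivial module, and if nonzero contains a copy of $\Z$. Kleiner's Theorem~\ref{kleiner} then forces $G$ to be virtually a surface group. You should replace your vague appeal to ends of Stein covers with this concrete use of Kleiner's result.
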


In Section \ref{kollar}, we note   a simple but curious connection  between
Koll\'ar's Question \ref{kollar-qn} and Carlson-Toledo's Conjecture \ref{tolcon}.

Section \ref{fin} is devoted to examining, generalizing and topologically characterizing certain counterexamples to Koll\'ar's Question \ref{kollar-qn}
discovered by Dimca, Papadima and Suciu.
In particular we prove the following Theorem (see Theorem 
\ref{dps-gen-th}) which provides (via Corollary \ref{dps-gen-cor}) a generalization of the counterexamples 
constructed in  \cite{dps-bb}. Our approach replaces the constructions using characteristic varieties  in \cite{dps-bb}
by some group cohomology computations.

\begin{theorem}
Let $f: M \longrightarrow S$ be an irrational topological Lefschetz fibration
that is not a Kodaira fibration, with $\dim M \,=\, 2n+2$, $n \geq 2$. 
Let $K$ be the finite critical set of $f$. Further suppose that $\til{M}$
is contractible.  Let $F$ denote the regular fiber and $N=\pi_1(F)$.  
   Then \\
a) $\pi_k(F) =0$ for $1<k<n$, \\
b) $\pi_{n}(F)$ is a free $\Z N$--module, 
with generators in one-to-one correspondence with $K\times \pi_1(S)$, \\
c) $\widetilde{F}$ is homotopy equivalent to a wedge of $n$--spheres, \\
d) $N$ cannot be of type FP; in particular, there does not exist a
quasiprojective $K(N,1)$ space. \end{theorem}

The examples in  \cite{dps-bb} come from complex analytic maps
$f: M\longrightarrow S$ between a product $M=S_1 \times \cdots \times S_r$ of Riemann surfaces and a Riemann surface $S$ of genus one.
The next Theorem (see Theorem \ref{charzn-th}) gives a purely topological characterization of homotopy classes of maps
$f: M\longrightarrow S$ that come from  irrational Lefschetz fibrations (that are not Kodaira fibrations). 

\begin{theorem}
Let $M$ be a product $S_1 \times \cdots \times S_r$ of closed, orientable
surfaces $S_i$ of positive genus with $r\,\geq\, 2$.
Fix $(x_1, \cdots , x_r )\,\in\, S_1 \times \cdots \times S_r$
and let $j_k: S_k \longrightarrow S_1 \times \cdots \times S_r$ be
given by $j_k(y_k) = (x_1, \cdots, x_{k-1}, y_k, x_{k+1},  \cdots, x_{r})$.

If there exists a product complex structure on $M$ (coming from complex structures on $S_k$) and an irrational Lefschetz fibration
 $f: M \longrightarrow S$ which is not a Kodaira fibration, then \\
a) $S$ has genus one, and\\
b) for all $k$, we have ${\rm deg} ((f\circ j_k))> [\pi_1(N): f_\ast(\pi_1(M))]$.\\

Conversely, suppose that $S$ is a closed orientable surface of positive genus and there exists
a continuous map $f: M \longrightarrow S$ such that  for all $k$,
$${\rm deg} ((f\circ j_k))\,>\, [\pi_1(N): f_\ast(\pi_1(M))]\, .$$
Then  $S$ has genus one and there exist complex structures on $S_k, S$
such that $f: M \longrightarrow S$ is homotopic to a Lefschetz fibration (for $M$ equipped with the product complex structure). 
Further,  there exist  complex analytic maps $\phi_k : S_k \longrightarrow S$ such that $\phi (z_1, \cdots , z_k) = m(\phi_1(z_1), \cdots , \phi_r(z_r))$,
where $m(.,.)$ denotes  addition on $S$. \end{theorem}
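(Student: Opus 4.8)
The plan is to treat the two directions separately, with the ``only if'' direction following quickly from the theory of Lefschetz pencils and the ``converse'' direction being the substantial part. For the forward direction, suppose $M = S_1 \times \cdots \times S_r$ carries a product complex structure and an irrational Lefschetz fibration $f \colon M \to S$ that is not Kodaira. Part (a), that $S$ has genus one, is exactly the hypothesis structure appearing in Theorem~\ref{dps-gen-th} and its corollary; ``irrational'' forces $g(S) \geq 1$, and if $g(S) \geq 2$ then (after the usual argument) $f$ would be a fiber bundle by the absence of rational curves in the fibers and hyperbolicity of the base, contradicting the existence of critical points or the non-Kodaira hypothesis. For part (b), I would compute the degree $\deg(f \circ j_k)$ by restricting the fibration to the coordinate curve $S_k$: since the total space $\widetilde{M}$ is contractible, part (b) of Theorem~\ref{dps-gen-th} identifies $\pi_n(F)$ as a free $\Z N$-module with basis in bijection with $K \times \pi_1(S)$, and a Hurewicz/Euler-characteristic bookkeeping on the restricted fibration $f|_{S_k}$ over $S$ relates the number of critical points lying over generic points to $\deg(f\circ j_k)$ minus the index $[\pi_1(N) : f_\ast \pi_1(M)]$; positivity of the critical set (which is nonempty since $f$ is not Kodaira) gives the strict inequality.

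For the converse, the heart of the matter is to upgrade a purely topological degree condition into the existence of complex structures making $f$ holomorphic. Given $f \colon M \to S$ with $\deg(f \circ j_k) > [\pi_1(N) : f_\ast \pi_1(M)]$ for all $k$, I would first argue that $S$ has genus one: the map $f$ induces $f_\ast \colon \pi_1(M) = \pi_1(S_1) \times \cdots \times \pi_1(S_r) \to \pi_1(S)$, and if $g(S) \geq 2$ then $\pi_1(S)$ is a nonelementary hyperbolic group, so a homomorphism from a direct product with at least two infinite factors has image of small ``rank'' in a precise sense (this is the standard argument — a hyperbolic group contains no $\Z^2$, so the image is cyclic or the map factors through one factor, forcing some $\deg(f \circ j_k) = 0$, contradicting the hypothesis since the index on the right is finite hence $\geq 1 > 0$ is violated). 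Once $g(S) = 1$, write $S = \C / \Lambda$; then $H^1(S;\Z) \cong \Z^2$ and the composite $\phi_k \colon S_k \xrightarrow{j_k} M \xrightarrow{f} S$ is a degree-$d_k$ map to a torus, which is therefore homotopic to a holomorphic map once we fix a complex structure on $S_k$ — every continuous map to a complex torus is homotopic to a holomorphic one after composing with a translation, by Hodge theory (harmonic representatives of the two $1$-forms pulled back from $S$ give a holomorphic map of the same degree). Running this for each $k$ and using the product structure, I would build $\phi \colon M \to S$, $\phi(z_1,\dots,z_r) = m(\phi_1(z_1),\dots,\phi_r(z_r))$, holomorphic and homotopic to $f$. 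Finally, a Bertini-type/Lefschetz argument shows $\phi$ is a Lefschetz fibration: its critical points are where $d\phi$ drops rank, which for a sum of pullbacks from coordinate curves happens exactly at products of critical points of the $\phi_k$, and after a generic perturbation of the complex structures these are nondegenerate; the non-Kodaira conclusion is automatic because the critical set is nonempty (each $\phi_k$ has critical points when $d_k \geq 2$, and $d_k > [\pi_1(N):f_\ast\pi_1(M)] \geq 1$), hence $\deg \phi_k \geq 2$.

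The main obstacle I expect is the passage from ``$f$ is homotopic to the holomorphic map $\phi$'' to ``$f$ is homotopic to a genuine \emph{Lefschetz} fibration,'' i.e.\ controlling the singularities of $\phi$. The product-of-curves structure is what saves this: a holomorphic map $M \to S$ to an elliptic curve of the form $m(\phi_1(z_1),\dots,\phi_r(z_r))$ has differential $d\phi = \sum_k \mathrm{pr}_k^\ast(d\phi_k)$, so its rank drops precisely on the union of loci $\{$critical set of $\phi_k\} \times \prod_{j \neq k} S_j$ intersected appropriately, and by choosing the complex structures on the $S_k$ generically one arranges all critical points of all $\phi_k$ to be simple (Morse) and their images in $S$ to be distinct, yielding isolated nondegenerate critical points of $\phi$ — that is, a Lefschetz fibration. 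Making this genericity argument precise (and checking it is compatible with the prescribed homotopy class, which it is since degree is a homotopy invariant and deformations of complex structure do not change the homotopy class of $\phi_k$) is where the real work lies; the degree inequalities in the hypothesis are exactly what guarantees that after this perturbation the fibration is irrational ($g(S) = 1 \geq 1$) and not Kodaira (nonempty critical set).
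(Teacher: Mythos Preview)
Your proposal misses the single tool on which the paper's entire proof rests: Edmonds' theorem (Theorem~\ref{edmonds}), which says that a map $g\colon \Sigma \to \Sigma'$ of closed orientable surfaces is homotopic to a branched covering if and only if $g_\ast$ is injective on $\pi_1$ or $\deg(g) > [\pi_1(\Sigma'):g_\ast\pi_1(\Sigma)]$. Both directions of Theorem~\ref{charzn-th} are essentially corollaries of this fact via Proposition~\ref{charzn}.

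For the forward direction, your argument for (a) is roughly in the right spirit (it is Lemma~\ref{torus}: commuting nontrivial images force a $\Z^2$ in $\pi_1(S)$), but the ``absence of rational curves / hyperbolicity'' remark is not how this goes. More seriously, your argument for (b) via Theorem~\ref{dps-gen-th} does not work: that theorem computes $\pi_n$ of the regular fiber of $f$, which has nothing to do with the degree--index inequality for the \emph{restricted} maps $f\circ j_k\colon S_k \to S$. The paper instead applies Edmonds' dichotomy to each $f\circ j_k$ (which is holomorphic, hence a branched cover) and rules out the ``$\pi_1$-injective'' alternative by observing it would make $f\circ j_k$ an unramified cover for that $k$, forcing $f$ to be a submersion and hence Kodaira.

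For the converse, your Hodge/Albanese approach produces holomorphic maps $\phi_k\colon S_k \to S$, but then you need a genericity argument to arrange that all branch points are simple. This is exactly where your sketch becomes vague, and it is not obvious that varying the complex structure on $S_k$ within a fixed homotopy class of $\phi_k$ suffices. The paper's order of operations avoids this completely: first apply Edmonds' theorem \emph{topologically} to homotope each $f\circ j_k$ to a branched cover $\phi_k$ with every branch point of local degree~$2$; then fix any complex structure on $S$ and \emph{pull it back} along $\phi_k$ to $S_k$. This makes each $\phi_k$ holomorphic with simple branch points by construction, and $\phi(z_1,\dots,z_r)=m(\phi_1(z_1),\dots,\phi_r(z_r))$ then has nondegenerate critical points exactly at tuples of branch points, giving the Lefschetz fibration directly.
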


Finally, in Section \ref{zoo}, we end with a zoo of examples, counterexamples and questions. 

\section{Preliminaries}

\subsection{Preliminaries on CW complexes and smooth manifolds}

A well-known theorem going back to Dehn \cite{stillwell-bk}, Markov \cite{markov} and Boone-Haken-Poenaru \cite{bhp} states that
every finitely presented group appears as the fundamental group of a smooth closed 4-manifold. The main ideas of their proof can be generalized
as follows.

\begin{theorem}
The inequality $ht_\SM(G)\,\geq\, k$ holds for all finitely presented
groups $G$ of type $FP_k$. In particular, $ht_\SM(G)\,=\, \infty$
for all finitely presented groups $G$ of type $FP_\infty$.\label{sm-soft}
\end{theorem}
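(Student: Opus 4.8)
The plan is to carry out a higher-dimensional and more connected version of the classical Dehn--Markov--Boone--Haken--Poenaru thickening construction. Since $G$ is finitely presented and of type $FP_k$, it is of type $F_k$, hence admits a $K(G,1)$ CW-complex $Y$ with finite $k$-skeleton. We may assume $k\ge 2$: for $k\le 2$ the conditions $\pi_i=0$ for $1<i<k$ are vacuous and $ht_\SM(G)\ge 1$ or $\ge 2$ is just the classical statement (recovered below up to a harmless shift of dimension). Put $Z:=Y^{(k)}$, a finite complex of dimension at most $k$. As $Y$ is obtained from $Z$ by attaching cells of dimension $>k$, the pair $(Y,Z)$ is $k$-connected, so $\pi_i(Z)\to\pi_i(Y)$ is an isomorphism for $i\le k-1$; combined with the asphericity of $Y$ and $k\ge2$ this yields $\pi_1(Z)=G$ and $\pi_i(Z)=0$ for $1<i<k$.

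Next I would thicken $Z$ into a manifold of ample dimension. Choose a piecewise-linear (or smooth) embedding of $Z$ into $\R^{2k+2}$ — possible since $2k+2\ge 2\dim Z+1$ — and let $W$ be a regular neighbourhood of $Z$. Then $W$ is a compact smooth manifold-with-boundary of dimension $2k+2$ that deformation retracts onto $Z$, so $\pi_1(W)=G$ and $\pi_i(W)=0$ for $1<i<k$. I claim the closed smooth manifold $M:=\partial W$, of positive dimension $2k+1$, realizes $ht_\SM(G)\ge k$; it is enough that $\partial W\hookrightarrow W$ induces an isomorphism on $\pi_1$ and on $\pi_i$ for $1<i<k$. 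Now $Z$ has codimension $k+2$ in $W$, so by general position any map of a pair $(D^i,S^{i-1})\to(W,W\setminus Z)$ with $i\le k+1$ can be homotoped rel $S^{i-1}$ to land in $W\setminus Z$; hence $\pi_i(W,W\setminus Z)=0$ for $i\le k+1$, that is, $W\setminus Z\hookrightarrow W$ is $(k+1)$-connected. Since removing the spine $Z$ from its regular neighbourhood leaves a space deformation-retracting onto $\partial W$, we have $\partial W\simeq W\setminus Z$, and the long exact sequence of $(W,W\setminus Z)$ gives $\pi_i(\partial W)\cong\pi_i(W)\cong\pi_i(Z)$ for all $i\le k$. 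In particular $\pi_1(M)=G$ and $\pi_i(M)=0$ for $1<i<k$, so $ht_\SM(G)\ge k$.

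Finally, if $G$ is of type $FP_\infty$ the construction applies to every $k$, producing closed manifolds $M_k$ (of dimension $2k+1$) with $\pi_1(M_k)=G$ and $\pi_i(M_k)=0$ for $1<i<k$; thus $ht_\SM(G)\ge k$ for all $k$, i.e.\ $ht_\SM(G)=\infty$. The one point requiring genuine care is the topology of the thickening: one must know that $Z$ admits a regular neighbourhood $W$ that deformation-retracts onto it and whose complement of the spine deformation-retracts onto $\partial W$, and that the general-position bound on the connectivity of $\partial W\hookrightarrow W$ is valid in the PL (hence, after smoothing, smooth) category — this is exactly where the codimension hypothesis $\dim W\ge 2k+2$ is used. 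All of this is standard regular-neighbourhood theory; an equivalent, perhaps more transparent route is to build $W$ directly as a handlebody, attaching to $D^{2k+2}$ one handle of index $\lambda$ for each $\lambda$-cell of $Z$ (so $\lambda\le k$) along embedded attaching spheres realizing the attaching maps of $Z$ — possible because at each stage the current boundary is highly connected — after which $W\simeq Z$ and the $(k+1)$-connectivity of $\partial W\hookrightarrow W$ follow from standard handle and Morse theory.
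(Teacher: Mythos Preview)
Your argument is correct. Both proofs are thickening constructions, but the executions differ in a way worth noting. The paper first thickens the $k$-skeleton $X_k$ to a compact manifold-with-boundary $W_k\subset\R^N$ (for some large $N$), and then takes a \emph{second} regular neighbourhood of $W_k$ inside $\R^{N+k+1}$; the boundary of this second neighbourhood is then analysed via the explicit decomposition
\[
Y_k \;=\; (W_k\times S^k)\;\cup_{V_k\times S^k}\;(D^{k+1}\times V_k),
\]
together with the relative homotopy sequence of the pair $(Y_k,\,W_k\times S^k)$. Your route is more direct: you embed the $k$-skeleton once in $\R^{2k+2}$ with codimension $\ge k+2$, take a single regular neighbourhood $W$, and read off the connectivity of $\partial W\hookrightarrow W$ from general position (equivalently, from the vanishing of $\pi_i(W,W\setminus Z)$ for $i\le k+1$). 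This avoids the explicit boundary decomposition entirely and yields a manifold of the smaller dimension $2k+1$ rather than $N+k$. Your passing remark that finitely presented plus $FP_k$ gives $F_k$ (hence a genuine finite $k$-skeleton) is a point the paper glosses over. Finally, since your $\partial W$ is again a closed hypersurface in Euclidean space, it also has stably trivial tangent bundle, so the paper's Corollary~\ref{stt-soft} follows from your construction just as well.
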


\begin{proof} Let $X$ be a $K(G,1)$ space with finitely many cells in each
dimension less than $(k+1)$, and let $X_k$ denote its $k$--skeleton. Embed $X_k$ simplicially
in a simplex $\Delta^N$ of large enough dimension $N$. Embed $\Delta^N$ simplicially in $\R^N$ where the latter is equipped with a standard
simplicial structure. Then $X_k$ is a simplicial subcomplex of $\R^N$ and
hence by \cite[p. 95, Ex. 3]{dold-bk}, the subspace $X_k$ is a strong deformation retract of a neighborhood
$U_k$ in $\R^N$. 

The argument in  \cite{bhp} (see also \cite[Ch 9.4]{stillwell-bk}) ensures that the boundary of $U_k$ in $\R^N$ may be taken to be a closed $(N-1)$ manifold
and hence the closure $W_k\,=\,\overline{U_k}$ is a manifold with boundary.
Let $\partial\, W_k \,= \,V_k$. Since $W_k$ has the same homotopy type as
$X_k$, it follows that $\pi_i(W_k)\,= \,0$ for $1\,<\,i\,<\,k$.

We now embed $\R^N$ in $\R^{N+k+1}$ and consider a regular neighborhood 
$Z_k$ of $W_k$ in $\R^{N+k+1}$. Then $$Y_k \,=\, \partial\, Z_k \,= \,(W_k
\times S^k) \cup (D^{k+1} \times V_k)\, ,$$ where $S^k$ and $D^{k+1}$ denote the
$k$--sphere and the $(k+1)$--disk respectively. Observe that $\pi_i(W_k
\times S^k)\,=\, 0$ for $1\,<\,i\,<\,k$, as $\pi_i(W_k) \,=\, 0$ for $1\,
<\,i\,<\,k$. Finally, consider the relative homotopy exact sequence of the pair
$(Y_k\, , W_k \times S^k)$:
$$
\longrightarrow \, \pi_i( W_k \times S^k)\,\longrightarrow \,
\pi_i( Y_k)\,\longrightarrow\, \pi_i(Y_k\, , W_k \times S^k)\,\longrightarrow\,
 \pi_{i-1}( W_k \times S^k)\, .$$
{}From it we have $\pi_i(Y_k)\,=\, \pi_i(Y_k\, , W_k \times S^k)$ for
$2\,<\,i\,<\,k$. Further, $$\pi_1 ( W_k \times S^k)\,=\, \pi_1(X_k)
\,= \,\pi_1(Y_k)$$ as in \cite{bhp}
(see also \cite[Ch 9.4]{stillwell-bk}), where the isomorphisms are natural and given via inclusions into $Z_k$.
Hence $\pi_i( Y_k)\,=\, \pi_i(Y_k\, , W_k \times S^k)$ for $2\,\leq\, i\,<\,k$.

Now, $\pi_i(Y_k\, , W_k \times S^k)\,=\, \pi_i ((D^{k+1} \times V_k)/\sim)$, where
$\sim$ identifies the boundary $S^k  \times V_k\,=\, \partial\,(D^{k+1} \times V_k)$ to a point.
Hence $\pi_i(Y_k\, , W_k \times S^k)\,=\,0$ for $i\,<\,k$
and so $\pi_i(Y_k) \,=\,0$ for $2\,\leq\, i\,<\,k$. Since $G\,=\,\pi_1(X_k)
\,=\, \pi_1(Y_k)$, the result follows. 
\end{proof}

Let $STT$ denote the class of smooth closed manifolds with stably trivial
tangent bundle. The proof of Theorem \ref{sm-soft} gives us more.
Observe that $Y_k$ is a codimension one closed submanifold
of $\R^{N+k+1}$. So the normal bundle to $Y_k$ in $\R^{N+k+1}$ is a trivial
line bundle. Hence the tangent bundle to  $Y_k$ is stably trivial. This
gives us the following corollary of Theorem \ref{sm-soft} which will be useful later:

\begin{cor}
The inequality $ht_{STT}(G)\,\geq\, k$ holds for all finitely presented groups
$G$ of type $FP_k$. In particular, $ht_{STT}(G)\,=\,\infty$ for all finitely presented groups $G$ of type $FP_\infty$.   \label{stt-soft} \end{cor}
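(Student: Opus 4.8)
The plan is to observe that the manifold $Y_k$ produced in the proof of Theorem \ref{sm-soft} already lies in the class $STT$, so that essentially nothing new needs to be done beyond the remark that precedes the corollary statement. First I would recall the construction: starting from the $k$--skeleton $X_k$ of a $K(G,1)$ with finitely many cells in each dimension $\leq k$, we pass to a regular neighborhood $W_k$ in $\R^N$ with boundary $V_k$, then to a regular neighborhood $Z_k$ of $W_k$ in $\R^{N+k+1}$, and we set $Y_k = \partial Z_k$. The proof of Theorem \ref{sm-soft} shows $\pi_1(Y_k) = G$ and $\pi_i(Y_k) = 0$ for $2 \leq i < k$, which is exactly the inequality $ht(G) \geq k$ for whatever class $Y_k$ belongs to.

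The only additional point is that $Y_k \in STT$. I would argue this as follows: $Y_k = \partial Z_k$ is a closed hypersurface in $\R^{N+k+1}$, hence has codimension one there. A closed codimension-one submanifold of Euclidean space has trivial normal bundle (it is orientable as the boundary of $Z_k$, so the normal line bundle is trivial), and therefore $TY_k \oplus \underline{\R} \cong T\R^{N+k+1}|_{Y_k} = \underline{\R}^{N+k+1}$; that is, $TY_k$ is stably trivial. Hence $Y_k \in STT$, and the construction above witnesses $ht_{STT}(G) \geq k$ for every finitely presented $G$ of type $FP_k$. Taking $k \to \infty$ gives $ht_{STT}(G) = \infty$ for $G$ of type $FP_\infty$, since a group of type $FP_\infty$ is of type $FP_k$ for every $k$ (and one must check the homotopy-vanishing conditions are compatible as $k$ grows, but the definition of $ht_\CC(G) = \infty$ only asks that for each $n$ there be \emph{some} manifold in $\CC$ realizing $ht \geq n$, which is exactly what the family $\{Y_k\}_k$ provides).

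There is essentially no obstacle here — the corollary is a bookkeeping observation extracting extra structure already present in the proof of Theorem \ref{sm-soft}. The one place to be slightly careful is the orientability of $Y_k$: it bounds the compact region $Z_k \subset \R^{N+k+1}$, so it separates Euclidean space and is two-sided, giving the trivial normal line bundle directly; alternatively one notes $Z_k$ is a compact manifold with boundary sitting in $\R^{N+k+1}$, hence orientable, hence so is its boundary. Either way the stable triviality of $TY_k$ follows from the standard fact that $TW \oplus \nu = T\R^m|_W$ for a submanifold $W \subset \R^m$ together with triviality of $\nu$.
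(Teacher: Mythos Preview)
Your proposal is correct and follows essentially the same argument as the paper: the manifold $Y_k$ constructed in the proof of Theorem \ref{sm-soft} is a codimension-one closed submanifold of $\R^{N+k+1}$, so its normal bundle is a trivial line bundle and hence its tangent bundle is stably trivial. You supply a bit more detail on why the normal line bundle is trivial (via $Y_k$ bounding $Z_k$), but the approach is identical to the paper's.
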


\subsection{Preliminaries on Duality and Poincar\'e Duality groups}
We refer the reader to \cite{be, brown} for details about duality and
Poincar\'e Duality ({\rm PD}) groups.

\begin{lemma} Let $G$ be a one-ended group. Let $M$ be a closed orientable
$4$--manifold such that $\pi_1(M)\,=\,G$ and $\pi_2(M)\,=\,0$.
Then $M$ is a $K(G,1)$ space and $G$ is a {\rm PD}(4) group. \label{gurjar}
\end{lemma}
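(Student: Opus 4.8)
The plan is to prove that the universal cover $\til{M}$ is contractible; then $M$ is a $K(G,1)$, and the Poincar\'e duality assertion follows from the standard fact (see \cite{be, brown}) that the fundamental group of a closed orientable aspherical $n$--manifold is an orientable ${\rm PD}(n)$ group. As a preliminary, note that a one-ended group is infinite, so the covering $\til{M}\to M$ is infinite and $\til{M}$ is a connected, non-compact, boundaryless $4$--manifold; it is orientable because it covers the orientable $M$. A covering induces isomorphisms on $\pi_i$ for $i\geq 2$, hence $\pi_1(\til{M})=0$ and $\pi_2(\til{M})=\pi_2(M)=0$; in particular $H^1(\til{M};\Z)=0$. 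Finally, since $G$ acts freely, properly discontinuously and cocompactly on $\til{M}$, the space $\til{M}$ has exactly one end (its number of ends equals that of $G$).

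The heart of the argument is the vanishing $\pi_3(\til{M})=0$. Since $\til{M}$ is simply connected with $\pi_2=0$, the Hurewicz theorem gives $\pi_3(\til{M})\cong H_3(\til{M};\Z)$, and Poincar\'e duality for the oriented open $4$--manifold $\til{M}$ gives
$$
H_3(\til{M};\Z)\ \cong\ H^1_c(\til{M};\Z)\ =\ \varinjlim_{K}\, H^1(\til{M},\,\til{M}\setminus K;\Z)\, ,
$$
the colimit running over the compact subsets $K\subseteq\til{M}$. Because $\til{M}$ is one-ended, the compact sets $K$ for which $\til{M}\setminus K$ is connected (and non-empty, as $\til{M}$ is non-compact) are cofinal among all compact subsets; for such a $K$ the long exact sequence of the pair reads
$$
H^0(\til{M};\Z)\ \longrightarrow\ H^0(\til{M}\setminus K;\Z)\ \longrightarrow\ H^1(\til{M},\,\til{M}\setminus K;\Z)\ \longrightarrow\ H^1(\til{M};\Z)=0\, ,
$$
and the first map is the isomorphism $\Z\to\Z$ sending the constant function $1$ to the constant function $1$. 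Hence $H^1(\til{M},\til{M}\setminus K;\Z)=0$ for a cofinal family of $K$, so $H^1_c(\til{M};\Z)=0$ and therefore $\pi_3(\til{M})=0$.

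Now $\til{M}$ is $3$--connected, and $H_k(\til{M};\Z)=0$ for every $k\geq 4$ since $\til{M}$ is a connected non-compact $4$--manifold. Iterating the Hurewicz theorem yields $\pi_k(\til{M})=0$ for all $k\geq 2$, so $\til{M}$ is weakly contractible; being homotopy equivalent to a CW complex (it is a manifold), $\til{M}$ is contractible by Whitehead's theorem. Hence $M$ is aspherical, i.e.\ a $K(G,1)$ space. Moreover $M$ is a finite $K(G,1)$, so $G$ is of type ${\rm FP}$, and $\til{M}$ is a free cocompact model for $EG$; therefore $H^k(G;\ZG)\cong H^k_c(\til{M};\Z)\cong H_{4-k}(\til{M};\Z)$, which equals $\Z$ for $k=4$ and $0$ otherwise because $\til{M}$ is contractible. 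This says exactly that $G$ is an orientable (since $M$ is) ${\rm PD}(4)$ group.

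The main obstacle is the vanishing $H^1_c(\til{M};\Z)=0$: one has to know that one-endedness of the group $G$ passes to $\til{M}$ and, in the colimit formulation above, to be a little careful about bounded components of $\til{M}\setminus K$ when extracting a cofinal family with connected complement (equivalently, that the reduced zeroth cohomology of the end of $\til{M}$ vanishes). Everything else is routine. Note that the one-endedness hypothesis is genuinely needed: for $M=S^1\times S^3$ one has $\pi_1(M)=\Z$ (two-ended) and $\pi_2(M)=0$, yet $\pi_3(M)=\pi_3(S^3)=\Z\neq 0$, so $M$ is not a $K(\Z,1)$.
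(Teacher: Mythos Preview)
Your proof is correct and follows essentially the same route as the paper: both arguments use Poincar\'e duality on the open oriented $4$--manifold $\til{M}$ to identify $H_3(\til{M})$ with $H^1_c(\til{M})$, invoke one-endedness to make the latter vanish, and then iterate Hurewicz using the non-compactness of $\til{M}$. The only cosmetic difference is that the paper quotes the standard identification $H^1(G,\ZG)\cong H^1_c(\til{M})$ together with the ends formula, whereas you unpack $H^1_c$ directly via the colimit over compacta; the content is the same.
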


\begin{proof}
Let $\til{M}$ be the universal cover of $M$.
Since $\pi_2(M)\,=\,\pi_2(\til{M})$, it follows that $\pi_2(\til{M})\,=\,0$. 
As $\dim H^1(G,\,\ZG)+1$ coincides with the number of ends of $M$,
and $G$ is one-ended, it follows that $H^1(M,\, \ZG) = 0$. Hence
$H^1_c(\til{M})\,=\,0$. By Poincar\'e Duality, $H_3(\til{M})=0$.
Therefore, $\pi_3(\til{M})\,=\,0$ by Hurewicz' Theorem. 

Also, since $G$ is one-ended, it is infinite. Hence $\til M$ is non-compact,
and $H_4(\til{M})\,=\,0$. Again, by Hurewicz' Theorem, $\pi_4(\til{M})\,=\,0$. 
Further, since $H_i(\til{M})\,=\,0$ for all $i\,>\,4$, we conclude that
$\pi_i(\til{M})\,=\,0$ for all $i\, \geq\, 1$.
Consequently, $\til M$ is contractible and the result follows.
\end{proof}

We shall also need the following theorem due to  Kleiner (see also \cite{bowditch}).

\begin{theorem}[\cite{kleiner}]\label{kleiner}
Suppose 
$G$ is a group which is ${\rm FP}_2$ over $\Z$. If $H^2 (G,\,\ZG)$ has a 
1--dimensional $G$--invariant submodule, then $G$ is virtually a surface group. 
Hence $H$ is a ${\rm PD}(2)$ group over $\Q$ if and only if $H$ is virtually 
a surface group.
\end{theorem}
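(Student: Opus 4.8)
Since $G$ is $FP_2$ over $\Z$, fix a free, cocompact, cellular action of $G$ on a simply connected, uniformly contractible complex $X$ of dimension $2$ (built from the presentation $2$-complex together with finitely many $2$-cells carrying the $FP_2$ data). Then $H^\ast(G,\,\Z G)\cong H^\ast_c(X)$, and the hypothesis supplies a nonzero class $\omega\in H^2_c(X)$ spanning a $G$-submodule of $\Z$-rank one; the $G$-action on this line is by $\pm 1$, trivially in the orientable case and through an index-two ``orientation'' character otherwise. I would interpret $\omega$ as a \emph{coarse fundamental class}: one first shows that the hypothesis forces $G$ to be one-ended. The two-ended case is excluded since then $G$ is virtually $\Z$ and $H^2(G,\,\Z G)=0$; in the infinitely-ended case, accessibility of $FP_2$ groups (Dunwoody) gives a graph-of-groups decomposition over finite edge groups, and a Mayer--Vietoris computation shows $H^2(G,\,\Z G)$ cannot contain a $G$-invariant rank-one submodule unless some one-ended vertex group already does, which reduces the claim to that vertex group by induction on complexity (and $\Z$, $\Z_2\ast\Z_2$ are already virtually surface groups). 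So assume $G$ one-ended.

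\textbf{Coarse Poincar\'e duality and a circular boundary.} The plan is to upgrade the algebraic datum $\omega$ to the statement that $X$ is a coarse $PD(2)$ space in the sense of Kapovich--Kleiner, i.e.\ cap product with $\omega$ induces isomorphisms $H^k_c(X)\xrightarrow{\ \sim\ }H^{\mathrm{lf}}_{2-k}(X)$ in all degrees, compatibly with restriction to large complementary regions. From coarse Alexander duality one then extracts the key geometric consequence: every sufficiently deep complementary component of a large metric ball coarsely separates $X$ into exactly two pieces along a quasi-line, and the collection of such ``coarse separating quasi-lines'' carries a $G$-invariant cyclic order; equivalently the space of ends of $X$ is coarsely a circle, so $G$ acts geometrically on a space quasi-isometric to $\R^2$. \emph{This is the step I expect to be the main obstacle:} one must rule out the failure modes in which $X$ has an isolated non-manifold point at infinity, or $H^{\mathrm{lf}}_1$ produces spurious separating classes, and this requires combining the duality isomorphisms in \emph{all} degrees with the uniform contractibility and $2$-dimensionality of $X$.

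\textbf{From a circle at infinity to a virtually surface group, and the $PD(2)$ over $\Q$ statement.} Given that $G$ acts geometrically on a coarse $PD(2)$ space with circular ideal boundary, I would conclude as follows. If $G$ is word-hyperbolic its Gromov boundary is then homeomorphic to $S^1$, and the convergence-group theorem of Tukia, Gabai and Casson--Jungreis makes $G$ virtually Fuchsian, hence virtually the fundamental group of a closed surface. In general, run Bowditch's cut-pair/JSJ analysis: a one-ended $FP_2$ group acting geometrically on the coarse plane is either virtually a closed surface group or splits over a two-ended subgroup in a manner incompatible with the rank-one $G$-invariant submodule of $H^2(G,\,\Z G)$, so the JSJ is trivial and $G$ is virtually a closed surface group. (Equivalently, one may quote that a finitely presented group quasi-isometric to $\R^2$ is virtually a surface group, which is precisely this circle of ideas.) For the final sentence: if $H$ is virtually a surface group then $H$ is $PD(2)$ over $\Z$, hence over $\Q$; conversely a $PD(2)$ group $H$ over $\Q$ is in particular $FP_2$ over $\Z$ and satisfies $H^2(H,\,\Q H)\cong\Q$, so by a universal-coefficient argument $H^2(H,\,\Z H)$ contains a rank-one $H$-invariant submodule and the main assertion applies, giving that $H$ is virtually a surface group.
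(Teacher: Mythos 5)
First, a contextual point: the paper does not prove this statement at all --- it is imported verbatim from Kleiner's unpublished work, with Bowditch's planar-groups paper cited as an alternative source --- so there is no internal proof to compare yours against, and your proposal has to stand on its own. It does not. Your outline reproduces the known strategy (a coarse-topological route close in spirit to both Kleiner's and Bowditch's arguments), and the peripheral steps are broadly sound: the reduction to the one-ended case via Dunwoody accessibility plus a Mayer--Vietoris/induced-module argument, the appeal to Tukia--Gabai--Casson--Jungreis once a circle at infinity is in hand, and the deduction of the $\Q$-statement at the end (granting that $H$ is $FP_2$ over $\Z$, so that $H^2(H,\Q H)\cong H^2(H,\Z H)\otimes\Q$). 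But the step you yourself flag as ``the main obstacle'' --- upgrading the bare existence of a rank-one $G$-invariant submodule of $H^2(G,\ZG)$ to a coarse ${\rm PD}(2)$ structure with circular boundary --- \emph{is} the theorem. The hypothesis gives no control on the rest of $H^2(G,\ZG)$ (which could a priori be enormous), no vanishing in other degrees, and no duality; extracting from the single class $\omega$ a $G$-invariant family of coarsely separating quasi-lines with the crossing pattern of lines in a plane is precisely the hard content of Kleiner's and Bowditch's proofs, and nothing in your sketch supplies it. Everything before and after that step is routine, so what you have is a roadmap, not a proof.

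There is also a defect in your setup. A group of type $FP_2$ over $\Z$ does not in general admit a free, cocompact, cellular action on a \emph{uniformly contractible} two-dimensional complex: that would force $cd(G)\leq 2$ and type $FP$, which is far stronger than $FP_2$ (and false for many groups satisfying the hypotheses a priori). What you actually have is the universal cover $X$ of a finite presentation $2$-complex, which is simply connected but not contractible; consequently $H^k(G,\ZG)\cong H^k_c(X)$ only for $k\leq 1$, while in degree two one gets only an injection $H^2(G,\ZG)\hookrightarrow H^2_c(X)$. Bowditch's argument is organized around exactly this weaker relationship, and any correct write-up must be as well; as stated, your identification of $\omega$ with a compactly supported class on a contractible $2$-complex, and the subsequent cap-product isomorphisms ``in all degrees,'' are not available.
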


\subsection{Preliminaries on K\"ahler Groups} We begin with some known facts and simple observations. 

We recall a well-known theorem of Gromov.

\begin{theorem}[\cite{gro-free}]\label{1end}
Any infinite K\"ahler group is one-ended.
\end{theorem}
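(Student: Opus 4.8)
The plan is to combine the classical trichotomy for ends of a finitely generated group with the theory of equivariant harmonic maps to trees. An infinite Kähler group $G$ is finitely presented and, being infinite, has either $2$ or infinitely many ends unless it is one-ended; so it suffices to rule out these two cases. The two-ended case is elementary: if $G$ has two ends it is virtually $\Z$, so some finite-index subgroup $H$ is infinite cyclic; realizing $G=\pi_1(M)$ for $M$ compact Kähler and passing to the finite cover $M_H\to M$ with $\pi_1(M_H)=H$, the cover $M_H$ is again compact Kähler, so $b_1(H)=b_1(M_H)$ must be even by Hodge theory, contradicting $b_1(\Z)=1$.

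So assume $G$ has infinitely many ends. By Stallings' theorem $G$ splits nontrivially over a finite subgroup, and hence acts on the associated Bass--Serre tree $T$ minimally, with finite edge stabilizers, and without a global fixed point or a fixed end (the action is non-elementary, which is automatic once there are infinitely many ends). Write $G=\pi_1(M)$ with $M$ compact Kähler and let $\til M$ be its universal cover. Here I would invoke the Gromov--Schoen/Korevaar--Schoen theory: since the $G$-action on $T$ is non-elementary there is a non-constant $G$-equivariant harmonic map $u\colon\til M\to T$; the Bochner--Sampson formula in the Kähler setting forces $u$ to be pluriharmonic with singular set of complex codimension at least $2$, and the resulting holomorphic foliation integrates, after passing to a finite étale cover $M'\to M$, to a surjective holomorphic map $f\colon M'\to\Sigma$ with connected fibres onto a compact hyperbolic Riemann orbifold $\Sigma$, through whose orbifold fundamental group the action of $G':=\pi_1(M')$ on $T$ factors.

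It remains to extract a contradiction. The subgroup $G'$ has finite index in $G$, hence still acts on $T$ without a global fixed point (it contains a hyperbolic isometry of $T$, being a power of one in $G$); let $T'\subseteq T$ be the minimal $G'$-subtree, on which the edge stabilizers are still finite. The kernel of the surjection $q\colon G'\twoheadrightarrow\pi_1^{\mathrm{orb}}(\Sigma)$ acts trivially on $T'$, so it is contained in an edge stabilizer and is therefore finite. Thus $G'$ is an extension of the compact hyperbolic $2$-orbifold group $\pi_1^{\mathrm{orb}}(\Sigma)$ by a finite normal subgroup; passing to the $q$-preimage of a torsion-free finite-index surface subgroup of $\pi_1^{\mathrm{orb}}(\Sigma)$ produces a finite-index subgroup of $G$ that is an extension of a closed surface group of genus $\ge 2$ by a finite group, hence one-ended. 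Since the number of ends is a commensurability invariant, $G$ itself is one-ended, contradicting the hypothesis.

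The only hard ingredient — the one I would treat as a black box, citing \cite{gro-free} and its refinements (Gromov--Schoen, Korevaar--Schoen, or the $L^2$-cohomological route of Arapura--Bressler--Ramachandran) — is the analytic core just used: existence, pluriharmonicity, and the Stein-type factorization through a Riemann orbifold of an equivariant harmonic map from a Kähler manifold to a tree. Everything surrounding it (Stallings' structure theorem, the Bass--Serre dictionary, stability of ends under commensurability, and the parity of $b_1$) is soft. An alternative organization, avoiding trees, would replace the second paragraph by the $L^2$-Castelnuovo--de~Franchis theorem: a multi-ended Kähler group that is not virtually $\Z$ has positive first $L^2$-Betti number, which again yields a surjective holomorphic map to a hyperbolic Riemann orbifold, after which the same endgame applies.
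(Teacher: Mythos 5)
The paper does not prove this statement at all: Theorem \ref{1end} is imported verbatim from Gromov's paper \cite{gro-free}, with the remark that a more general result appears in Delzant--Gromov \cite{dg}. So there is no in-paper argument to compare against; what you have written is a proof sketch of the cited theorem itself.

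As a sketch it is a faithful outline of the known arguments, and the soft parts are all correct: the reduction via the $1/2/\infty$ trichotomy; the two-ended case killed by parity of $b_1$ on a finite K\"ahler cover (note $b_1(M_H)=b_1(H)=1$ is odd, while odd Betti numbers of compact K\"ahler manifolds are even); Stallings plus Bass--Serre in the infinitely-many-ends case; the observation that the kernel of the factorization map fixes an edge of the minimal subtree and is therefore finite; and the endgame using that the number of ends is invariant under passing to finite-index subgroups and under quotients by finite normal subgroups. The one honest caveat is that your ``black box'' --- existence and pluriharmonicity of an equivariant harmonic map $\til M\to T$ for a non-elementary action with finite edge stabilizers, together with its factorization through a fibration over a hyperbolic Riemann orbifold --- is not a lemma feeding into the theorem; it essentially \emph{is} the theorem, in the stronger form proved by Gromov--Schoen and Delzant--Gromov (and it is exactly the ``more general result'' of \cite{dg} that the paper alludes to). So your argument should be read as a correct derivation of one-endedness from the Delzant--Gromov cut/factorization theorem (or, in your alternative organization, from the $L^2$-Castelnuovo--de Franchis theorem of Arapura--Bressler--Ramachandran), rather than as a self-contained proof; within that framing there is no gap.
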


A more general result is proved in \cite{dg}.

Lemma \ref{gurjar} and Theorem \ref{1end} together yield the following:

\begin{cor}\label{surf} 
Let $M$ be a smooth complex projective surface with $\pi_2(M)\,=\,0$.
Then $M$ is a $K(\pi_1(M),1)$ space, and $\pi_1(M)$ is a {\rm PD}(4) group.
\end{cor}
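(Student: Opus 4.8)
The plan is to deduce Corollary \ref{surf} directly from Lemma \ref{gurjar} and Theorem \ref{1end}, since the hypotheses have been arranged precisely so that these two results snap together. First I would observe that a smooth complex projective surface $M$ is a closed, orientable, real $4$--manifold (orientability is automatic for complex manifolds, and compactness together with the assumed ``surface'' meaning complex dimension two gives real dimension $4$). So the only thing to check before quoting Lemma \ref{gurjar} is that $G = \pi_1(M)$ is one-ended.

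For that, I would split into cases according to whether $G$ is finite or infinite. If $G$ is infinite, then $G$ is an infinite K\"ahler group (a projective manifold is K\"ahler), so by Gromov's Theorem \ref{1end} it is one-ended, and Lemma \ref{gurjar} applies verbatim to give that $M$ is a $K(G,1)$ and that $G$ is a $\mathrm{PD}(4)$ group. If $G$ is finite, then the universal cover $\til M$ is a closed simply connected $4$--manifold with $\pi_2(\til M) = \pi_2(M) = 0$; by Hurewicz all its reduced homology vanishes, so $\til M$ is contractible by Whitehead's theorem, forcing the finite group $G$ to act freely on a contractible finite-dimensional complex and hence to be torsion-free, i.e.\ trivial. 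In that degenerate case $M$ itself is contractible, which is impossible for a closed $4$--manifold (it would have $H_4 = \Z \ne 0$); so the finite case does not actually occur under the stated hypotheses, and we may assume $G$ is infinite.

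There is, however, one genuine subtlety I would want to be careful about: the wording of Lemma \ref{gurjar} presupposes $G$ infinite and one-ended, but a priori one should rule out $G$ trivial separately (the trivial group is arguably ``zero-ended'' or treated as a special case), and this is exactly handled by the finite-case paragraph above. The main obstacle, such as it is, is therefore purely bookkeeping: confirming that ``complex projective surface'' in this paper means complex dimension $2$ (hence real dimension $4$, matching Lemma \ref{gurjar}) and that no additional hypotheses like smoothness of the boundary or connectedness are needed beyond what ``manifold'' already supplies. Once that is pinned down, the corollary is immediate.

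In summary, the proof is: a smooth complex projective surface is a closed orientable $4$--manifold; if $\pi_1(M)$ were finite it would be trivial and $M$ contractible, contradicting $H_4(M) = \Z$; hence $\pi_1(M)$ is infinite, so one-ended by Theorem \ref{1end}; now Lemma \ref{gurjar} gives that $M$ is a $K(\pi_1(M),1)$ and that $\pi_1(M)$ is a $\mathrm{PD}(4)$ group. I do not expect any step here to require real work beyond citing the two ingredients.
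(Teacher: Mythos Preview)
Your overall approach is exactly the paper's: combine Lemma \ref{gurjar} with Gromov's one-endedness Theorem \ref{1end}. The paper simply states that these two results together yield the corollary, without explicitly addressing the finite-$\pi_1$ case.

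However, your treatment of the finite case contains a genuine slip. You write that for the compact simply connected $4$--manifold $\til M$, ``by Hurewicz all its reduced homology vanishes.'' Hurewicz only gives you $H_1(\til M)=0$ and $H_2(\til M)=\pi_2(\til M)=0$; it says nothing directly about $H_3$ or $H_4$. In fact $H_4(\til M)=\Z$ since $\til M$ is closed and orientable, so $\til M$ is certainly not contractible---you yourself invoke $H_4=\Z$ two lines later to derive a contradiction, so the argument is internally inconsistent as written.

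The fix is shorter than what you attempted: if $G$ were finite, then $\til M$ would be a \emph{compact K\"ahler} manifold (a finite cover of a projective surface), hence $H^2(\til M,\reals)\neq 0$ because the K\"ahler class is nonzero. But Hurewicz gives $H_2(\til M)=\pi_2(\til M)=0$, a contradiction. So $G$ is infinite, Theorem \ref{1end} applies, and Lemma \ref{gurjar} finishes the proof exactly as you outlined.
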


A similar statement holds in every dimension. Towards this. we
 first recall a classical theorem of Andreotti, Frankel and Narasimhan \cite{af, an, nar}.

\begin{theorem}\label{af} 
Let $M$ be a (not necessarily smooth) complex projective variety of
complex dimension $n$ such that the universal
cover $\til M$ is Stein. Then $H_i(\til{M},\, {\mathbb Z})\,=\, 0$ for $i\,>\,n$. In
fact $\til{M}$ is homotopy equivalent to a CW--complex of real dimension
$n$. Also, the second homotopy group $\pi_2(M)$ is free abelian.

If $X$ is any Stein manifold embedded in $\cc^N$ for some $N$, then for a dense set of points $x\,\in\,
\cc^N$, the square of the distance function
$z\,\longmapsto\, d^2(x\, , z)$ is Morse and strongly plurisubharmonic on $X$.
\end{theorem}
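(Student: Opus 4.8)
The plan is to reduce the statement to classical Morse theory on Stein manifolds, applied to the universal cover $\til M$. First I would recall the key analytic input: a Stein manifold of complex dimension $n$ admits a strongly plurisubharmonic exhaustion function, and any such function is, after a small generic perturbation, a Morse function whose critical points have Morse index at most $n$. The reason the index is bounded by $n$ is purely linear-algebraic: at a critical point, the real Hessian of a strongly plurisubharmonic function, restricted to any complex line, has signature $(1,1)$ or is positive; more precisely the Hessian is positive on a maximal complex subspace, so the negative eigenspace is totally real and hence of real dimension $\le n$. Granting this, standard Morse theory (attaching a cell of dimension equal to the index as one crosses a critical value) shows that a Stein manifold of complex dimension $n$ has the homotopy type of a CW–complex with cells only in dimensions $0,1,\dots,n$. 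In particular its homology vanishes above degree $n$.

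Next I would apply this to $\til M$, which is Stein by hypothesis: it follows immediately that $H_i(\til M,\Z)=0$ for $i>n$ and that $\til M$ is homotopy equivalent to an $n$–dimensional CW–complex. For the statement that $\pi_2(M)$ is free abelian: since $\pi_2(M)=\pi_2(\til M)$ and $\til M$ is simply connected, the Hurewicz theorem gives $\pi_2(\til M)\cong H_2(\til M,\Z)$; and $H_2$ of a space homotopy equivalent to a $2$–dimensional (or higher, but here the relevant point is $n$–dimensional with $H_{n+1}$-type obstructions absent)… more carefully, one uses that for a CW–complex of dimension $n$ the homology $H_n$ is free (it is a subgroup of the free cellular chain group $C_n$, with no boundaries from above), and for $H_2$ one notes $H_2(\til M,\Z)$ injects into $C_2$ modulo image of $\partial_3$, which need not be free in general — so instead I would invoke that $\til M$ being Stein of dimension $n\ge 2$ has $H_2$ which is the homology of a chain complex; the cleaner route is: $\pi_2(M)$ is finitely generated (if $M$ is compact) and torsion in $H_2(\til M)$ would have to come from a $3$–cell, but one can run the Morse theory so that the $2$–skeleton is built before any $3$–cells in a way that forces $H_2$ to be free — alternatively, and most simply, cite that for the universal cover of a projective variety with Stein universal cover this is the original statement of Andreotti–Frankel–Narasimhan (\cite{af, an, nar}), so I would either reproduce their handle-cancellation argument or simply attribute it.

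Finally, for the last paragraph — that for a Stein $X\subset\cc^N$ and a dense set of $x\in\cc^N$ the function $z\mapsto d^2(x,z)$ is Morse and strongly plurisubharmonic on $X$ — strong plurisubharmonicity is automatic since $|z-x|^2$ is strongly plurisubharmonic on all of $\cc^N$ and restricts to such on any complex submanifold; the Morse property for generic $x$ is the standard Sard-type argument (the critical points of $d^2(x,\cdot)$ on $X$ are degenerate only for $x$ in the image of the focal/normal exponential map at degenerate points, a set of measure zero), essentially the parametrized transversality lemma used by Milnor in the Euclidean case. The main obstacle I anticipate is the freeness of $\pi_2(M)$: the cell dimension bound only gives that $H_n$ is free, not $H_2$ when $n>2$, so one genuinely needs the handle-rearrangement argument of Andreotti–Frankel–Narasimhan (building the homotopy type so that $3$–handles can be slid off the $2$–skeleton), and I would either carry that out or invoke it from the cited references rather than rederive it.
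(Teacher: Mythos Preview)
The paper does not supply its own proof of this theorem; it is stated as a classical result of Andreotti--Frankel and Narasimhan and attributed to \cite{af, an, nar}. So there is no ``paper's proof'' to compare against --- your outline \emph{is} essentially the standard argument behind those references: strongly plurisubharmonic exhaustion, genericity to make it Morse, the Levi--form bound forcing all indices $\le n$, and then cellular Morse theory to get the $n$--dimensional CW model and the vanishing of $H_i$ for $i>n$. The final paragraph (generic $d^2(x,\cdot)$ is Morse and strongly plurisubharmonic) is likewise the standard Sard/transversality argument you describe.

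Your hesitation about the clause ``$\pi_2(M)$ is free abelian'' is well placed, and your self-diagnosis is exactly right: the CW--dimension bound gives only that $H_n(\til M,\Z)$ is free (as a subgroup of the top cellular chain group), not $H_2$ when $n>2$. There is no handle-sliding trick that forces $H_2$ of an arbitrary Stein manifold to be torsion-free, and the cited papers \cite{af,an,nar} prove freeness of $H_n$, not of $H_2$. What rescues the paper is that the only place this clause is invoked is Proposition~\ref{h2stein1}, where $M$ is a projective \emph{surface}, i.e.\ $n=2$; there $\pi_2(M)=H_2(\til M,\Z)=H_n(\til M,\Z)$ and freeness follows from the standard argument you gave. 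So: do not try to manufacture a general proof of $\pi_2$ freeness for all $n$ --- note instead that the assertion is immediate from $H_n$ free when $n=2$, and that this is the only case used downstream.
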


\begin{prop}\label{ndim} 
Let $M$ be a  smooth complex projective variety of complex dimension $n$ 
whose universal cover $\til M$ is Stein. 
Also suppose that $\pi_i(M)\,=\,0$ for all $1\,<\,i\,\leq\, n$.
Then $M$ is a $K(\pi_1(M),1)$ space, and $\pi_1(M)$ is a {\rm PD}(2n) group.
In particular, $\pi_i(M)\,=\,0$ for all $i\,>\,1$.
\end{prop}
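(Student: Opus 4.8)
The plan is to mimic the proof of Lemma~\ref{gurjar}, replacing the dimension-four Poincar\'e duality argument by the general Stein/projective duality package of Theorem~\ref{af}, and using Theorem~\ref{1end} to control the number of ends. First I would pass to the universal cover $\til M$, which is Stein by hypothesis. By Theorem~\ref{af}, $\til M$ has the homotopy type of a CW-complex of real dimension $n$, so $H_i(\til M,\Z)=0$ for all $i>n$, and in particular $H_i(\til M,\Z)=0$ for $i>2n$; moreover $\pi_2(M)$ is free abelian. The hypothesis $\pi_i(M)=0$ for $1<i\le n$ gives $\pi_i(\til M)=0$ for $1<i\le n$ as well. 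The goal is to upgrade this to $\pi_i(\til M)=0$ for all $i\ge 2$, which forces $\til M$ contractible and hence $M=K(\pi_1(M),1)$.

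The key step is to kill the homotopy groups in the range $n<i\le 2n$ using duality. Since $\pi_1(M)=G$ is infinite (it is one-ended, either by hypothesis in the relevant case or because any group with a $K(G,1)$ that is an aspherical closed manifold of positive dimension is infinite — in fact one invokes Theorem~\ref{1end} only in the K\"ahler case; here $M$ is projective so $G$ is Kähler, hence one-ended if infinite, and it is infinite because otherwise $\til M$ would be a compact contractible manifold, impossible in positive dimension), $\til M$ is a non-compact $2n$-manifold, so $H_{2n}(\til M,\Z)=0$. For the intermediate degrees I would argue as in Lemma~\ref{gurjar}: one-endedness of $G$ gives $H^1(M,\ZG)=0$, hence $H^1_c(\til M)=0$, and then Poincar\'e duality for the (orientable) $2n$-manifold $\til M$ converts the top-degree compactly-supported cohomology vanishing into vanishing of $H_{2n-1}(\til M,\Z)$. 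Combined with $H_i(\til M,\Z)=0$ for $i>n$, one has $\til M$ simply connected with trivial homology in all degrees $\ge 2$ once $n\ge 2$, and one concludes $\til M$ is contractible by the Hurewicz theorem; for $n=1$ the statement is the classical fact that a projective curve with Stein universal cover is aspherical. Thus $M$ is a $K(G,1)$ space.

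Finally, to see that $G$ is a $\mathrm{PD}(2n)$ group: $M$ is now a closed orientable aspherical $2n$-manifold, so $\til M$ is a contractible $2n$-manifold on which $G$ acts freely, properly discontinuously and cocompactly; the Poincar\'e duality of $M$ with local coefficients translates into the statement that $G$ has a dualizing module concentrated in degree $2n$ which is $\Z$, i.e. $G$ is a $\mathrm{PD}(2n)$ group (see \cite{be,brown}). Asphericity also gives $\pi_i(M)=0$ for all $i>1$ directly. The main obstacle I anticipate is the bookkeeping in the intermediate homology degrees $n<i<2n$: one must be careful that Poincar\'e duality on the non-compact manifold $\til M$ (relating $H^k_c$ and $H_{2n-k}$) together with the Stein bound $H_i(\til M)=0$ for $i>n$ really does close the gap, and that the one-endedness input is correctly used to get $H^1_c(\til M)=0$. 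Once those vanishings are in hand, Hurewicz finishes the argument uniformly.
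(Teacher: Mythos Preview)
Your argument is correct, but you have taken a considerably more roundabout path than the paper does. The paper's proof is three lines: since $\til M$ is simply connected and $\pi_i(\til M)=0$ for $2\le i\le n$, the space $\til M$ is $n$-connected; Theorem~\ref{af} gives $H_i(\til M,\Z)=0$ for all $i>n$; repeated application of Hurewicz then yields $\pi_i(\til M)=0$ for all $i$, so $\til M$ is contractible and $M$ is a closed aspherical $2n$-manifold, hence $\pi_1(M)$ is a $\mathrm{PD}(2n)$ group.

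Your detour through one-endedness and Poincar\'e duality on $\til M$ (to deduce $H_{2n-1}(\til M)=0$ from $H^1_c(\til M)=0$) is redundant: the Stein bound from Theorem~\ref{af} already kills $H_i(\til M)$ for \emph{every} $i>n$, which for $n\ge 2$ certainly includes $i=2n-1$. You are imitating the structure of Lemma~\ref{gurjar}, but that lemma genuinely needed Poincar\'e duality because no Stein-type vanishing was available in the bare smooth $4$-manifold setting; here the Stein hypothesis does all the work. The ``bookkeeping in the intermediate homology degrees $n<i<2n$'' you flagged as the main obstacle therefore does not arise at all. Likewise the discussion of whether $G$ is infinite is unnecessary for the contractibility argument and only enters implicitly at the end (a closed aspherical manifold of positive dimension has infinite $\pi_1$).
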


\begin{proof} Since $\pi_i(M)\,=\, \pi_i(\til{M})$ for $i\, >\, 1$,
it follows that $\pi_i(\til{M})\,=\,0$ for $1\,<\,i\,\leq\, n$. From Theorem \ref{af}
and repeated application of Hurewicz' Theorem, it follows that $\pi_i(\til{M})
\,=\,0$ for all $i$, and the proposition follows.
\end{proof}

\begin{rmk} For the conclusion in Proposition \ref{ndim} we do not quite need to assume that universal cover $\til M$ is Stein.
It suffices to assume that $\pi_i(M)\,=\,0$ for $1\,<\,i \,<\, 2n-1$ in
view of the argument in the proof of Lemma \ref{gurjar}. 
\end{rmk}

\subsection{The trivial group}\label{sec1.6}

We illustrate the notion of homotopical height with the example of the 
trivial group, and justify the dimensional restriction in Definition 
\ref{htd}.

\begin{prop} For the trivial group $G\,=\,\{ 1 \}$,
we have $ht_\SP (G)\,=\, ht_\KK (G)=ht_\PP (G) = ht_\HC (G) =ht_\SSS (G)=2$,
and $ht_\CT  (G) =   ht_\CR  (G)= ht_\St(G)=ht_\aff(G)=ht_\CA(G)=ht_\AC(G)=\infty$.
\label{trivial} \end{prop}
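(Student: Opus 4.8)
The plan is to treat the ``hard'' classes and the ``soft'' classes separately. For the classes $\SP$, $\KK$, $\PP$, $\HC$ and $\SSS$, I would first establish the upper bound $ht_\CC(\{1\}) \le 2$ and then exhibit a single manifold in $\SSS$ (hence in all the larger classes) that realizes $ht = 2$. For the upper bound, suppose $M$ is a closed symplectic (resp. K\"ahler, projective, etc.) manifold with $\pi_1(M) = \{1\}$. If $\pi_2(M) = 0$ then, since $M$ is simply connected and $\dim M \ge 1$, repeated application of Hurewicz would force $H_i(M) = 0$ for all $i > 0$, so $M$ would be a homotopy sphere of dimension $\ge 1$; but a closed symplectic manifold has even dimension $2m \ge 2$ with $[\omega]^m \ne 0$ in $H^{2m}(M;\R)$, forcing $b_2(M) > 0$, a contradiction (for the K\"ahler/projective cases the same argument applies, or one simply notes $H^2 \ne 0$ via the K\"ahler class). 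Hence $\pi_2(M) \ne 0$ for every such $M$, so by Definition \ref{htd} we get $ht_\CC(\{1\}) = 2$ provided the value $2$ is actually attained. For that, take $M = \mathbb{CP}^1 = S^2$ with its standard structure: it is a smooth complex projective surface (complex dimension one), $\pi_1 = \{1\}$, and the condition ``$\pi_i(M) = 0$ for $1 < i < 2$'' is vacuous, so $M$ realizes $ht = 2$. One must also check $\mathbb{CP}^1 \in \SSS$, i.e.\ that its universal cover is Stein; but the universal cover of $\mathbb{CP}^1$ is $\mathbb{CP}^1$ itself, which is compact --- here I would instead cite that $\SSS \subseteq \PP$ and simply produce the chain of inclusions, using $\mathbb{CP}^1$ to witness realizability in $\SP, \KK, \PP$ and a different example (see next paragraph) for $\HC$ and $\SSS$; alternatively $S^2$ with its complex structure has simply connected (hence trivially Stein-from-below, but compact) cover, so one should phrase the $\SSS$ and $\HC$ statements by noting that $\mathbb{CP}^1$ trivially has holomorphically convex universal cover (it is compact, hence holomorphically convex), so $\mathbb{CP}^1 \in \HC$; and $S^2$ is Stein-coverable only in the degenerate compact sense, so for $\SSS$ one checks directly from the definition that a compact complex manifold whose universal cover equals itself is vacuously allowed, or restricts attention to the inequality $ht_\SSS(\{1\}) \le ht_\HC(\{1\}) \le ht_\PP(\{1\}) = 2$ together with $\mathbb{CP}^1 \in \SSS$ trivially.

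The main obstacle I expect is precisely this last point: verifying that $\mathbb{CP}^1$ (or some simply connected projective surface) genuinely lies in $\HC$ and $\SSS$ as those classes are defined in the paper. The cleanest resolution is to observe that for a compact complex manifold the universal cover question is trivial when $\pi_1 = \{1\}$: the universal cover is the manifold itself, which is compact and hence holomorphically convex, so $\mathbb{CP}^1 \in \HC$; and since a compact complex manifold is (vacuously, by the paper's conventions, or after noting there is no non-constant plurisubharmonic exhaustion issue) admitted into $\SSS$ only if one reads ``Stein universal cover'' loosely --- to avoid this subtlety I would instead use a genuinely fibered example only if needed, but since the paper's later Proposition \ref{cx=htstein} and related results freely use that $S^2 = \mathbb{CP}^1$ realizes the trivial group's height, I will follow that and simply assert $\mathbb{CP}^1 \in \SSS$ by the compactness/simple-connectivity remark.

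For the ``soft'' classes $\CT, \CR, \St, \aff, \CA, \AC$, the plan is to invoke softness together with explicit contractible or aspherical models. Since $\{1\}$ is of type $FP_\infty$ (a point is a finite $K(\{1\},1)$), Theorem \ref{omni-soft} gives $ht_\CC(\{1\}) = \infty$ directly for $\CC \in \{\CT, \CR, \St, \AC\}$. For $\CA$, one can take any contractible complex manifold of positive dimension --- e.g.\ $\C^n$ --- but $\C^n$ is non-compact while $\CA$ is closed complex manifolds; so instead I would note that $\CA$ being Type 3 is not enough and that we need a closed complex $K(\{1\},1)$, which does not exist --- hence for $\CA$ the correct argument is the one from the ``intermediate'' section: Theorem on $ht_\CA$ shows $ht_\CA(\{1\}) = \infty$ because a finite group (here trivial) has $ht_\CA = \infty$ by Proposition \ref{ht-ca-finite}, realized not by a $K(\{1\},1)$ but by a sequence of complex manifolds $M_n$ with $\pi_1 = \{1\}$ and $\pi_i(M_n) = 0$ for $1 < i < n$, e.g.\ suitable complete intersections in projective space of growing dimension (by the Lefschetz hyperplane theorem a smooth complete intersection $n$-fold in $\mathbb{CP}^{N}$ is $(n{-}1)$-connected, hence has $\pi_1 = \{1\}$ and $\pi_i = 0$ for $1 < i < n$). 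Similarly for $\aff$ one uses $\C^n$ or smooth affine complete intersections. Thus each soft class gets value $\infty$, and assembling the two cases completes the proof; the only real content is the upper-bound Hurewicz/symplectic-volume argument for the hard classes and locating the $(n{-}1)$-connected models for $\CA$ and $\aff$.
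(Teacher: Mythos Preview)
Your argument for the hard classes is essentially the paper's: Hurewicz gives $H_2(M)=\pi_2(M)$ for simply connected $M$, and the symplectic/K\"ahler class forces $H_2(M)\neq 0$, so $\pi_2(M)\neq 0$ and $ht_\CC(\{1\})=2$, witnessed by $S^2=\mathbb{CP}^1$. Your detour through ``homotopy spheres'' is unnecessary; the paper's two-line version is cleaner. You are right to flag the $\SSS$ issue: a simply connected projective manifold is its own universal cover, and a compact complex manifold of positive dimension is never Stein, so strictly speaking $\mathbb{CP}^1\notin\SSS$. The paper's own proof glosses over this, simply invoking $\SSS\subset\KK$; you have correctly located a genuine subtlety that neither you nor the paper resolves.

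Where your route genuinely diverges is for the soft classes. You invoke the full softness machinery (Theorem \ref{omni-soft}, Proposition \ref{ht-ca-finite}, Lefschetz hyperplane for complete intersections) to handle $\CT,\CR,\St,\AC,\CA,\aff$. The paper instead writes down explicit elementary witnesses: odd spheres $S^{2n+1}$ carry contact and CR structures, giving $ht_\CT=ht_\CR=\infty$; affine space $\C^n$ gives $ht_\St=ht_\aff=\infty$; and the Calabi--Eckmann complex structures on $S^{2m+1}\times S^{2n+1}$ give $ht_\CA=ht_\AC=\infty$. Your approach is logically valid (there is no circularity), and the complete-intersection idea for $\CA$ is a nice alternative, but it imports heavy later theorems to prove what is meant to be an elementary illustrative proposition; the paper's direct examples are both shorter and self-contained.
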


\begin{proof} Let $M$ be a simply connected 
closed symplectic manifold. Then, by the Hurewicz' Theorem, $H_2(M)\,=\,\pi_2(M)$. Since $M$ is symplectic, $H_2(M)\,\neq\, 0$
and it follows that $ht_\SP (G)\,=\,2$. As $\KK$ is a subclass of
$\SP$, and the 2-sphere $S^2$ belongs to the class, it follows that
$ht_\SP (G)\,= \,ht_\KK (G)\,=\,2$. Also, since we are considering only
manifolds of positive dimension,
the classes $\PP, \HC, \SSS$ are all subclasses of $\KK$ and so $ht_\PP (G)\,=\,ht_\HC (G)\,=\,ht_\SSS (G)=2$.

Next, since the sphere of dimension $(2n+1)$ admits both a contact and CR structure, it
follows that  $ht_\CT  (G)\,=\, ht_\CR  (G)\,=\,\infty$.

Since the affine plane is Stein, we have $ht_\St  (G)\,=\,ht_\aff(G)\,=\, \infty$.

Finally, the Calabi--Eckmann manifolds are complex with underlying real
manifold $S^{2m+1} \times S^{2n+1}$. Therefore, we have 
$ht_\CA(G)\,=\, \infty$. Complex manifolds are, of course, almost complex,
implying that $ht_\AC(G)\,=\, \infty$. 
\end{proof}

\section{Soft Classes of Manifolds}\label{soft}

\subsection{Open almost complex manifolds}

We first show that the category $ACO$ of open almost complex manifolds is 
soft of Type 1.

\begin{theorem}
The inequality $ht_{ACO}(G)\,\geq\, k$ holds for all finitely presented groups
$G$ of type $FP_k$. In particular, $ht_{ACO}(G) = \infty$  for all finitely presented groups $G$ of type $FP_\infty$. \label{aco-soft} \end{theorem}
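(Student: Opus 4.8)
The plan is to reduce the statement for open almost complex manifolds to the already-established Theorem~\ref{sm-soft} for closed smooth manifolds, using a general position argument to upgrade an honest smooth manifold to one carrying an almost complex structure, at the cost of passing to a product with Euclidean space (which makes the manifold open and does not change the fundamental group or the lower homotopy groups). Concretely, given a finitely presented group $G$ of type $FP_k$, Theorem~\ref{sm-soft} produces a closed smooth manifold $Y_k$ (call it $Y$) with $\pi_1(Y)=G$ and $\pi_i(Y)=0$ for $1<i<k$. One then considers the open manifold $X = Y \times \R^{2m}$ for $m$ chosen large enough. Since $\R^{2m}$ is contractible, $\pi_1(X)=G$ and $\pi_i(X)=0$ for $1<i<k$, so all the homotopical height bookkeeping is inherited for free; the only remaining task is to put an almost complex structure on $X$.

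First I would recall the classical obstruction-theoretic fact that an open manifold $X$ of even dimension admits an almost complex structure if and only if its tangent bundle admits a complex vector bundle structure, and that for an \emph{open} manifold the relevant lifting problem is governed only by the bundle's stable class together with the dimension/connectivity of the base (Gromov--Haefliger/Hirsch-type h-principle, or more elementarily the fact that over a CW complex of dimension $d$ one can always reduce the structure group of a stable complex bundle to $U(n)$ once $2n \geq d$). Indeed, recall from Corollary~\ref{stt-soft} that the manifold $Y$ produced by the proof of Theorem~\ref{sm-soft} has \emph{stably trivial} tangent bundle: it is built as a codimension-one submanifold of Euclidean space. Hence $TX = TY \times T\R^{2m} = TY \oplus \underline{\R^{2m}}$ is itself stably trivial, so it is a stably trivial bundle over the open manifold $X$. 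A stably trivial real bundle of rank $\dim X = \dim Y + 2m$ over a complex (or manifold) of dimension $\dim Y + 2m$ is, once $2m$ is large relative to $\dim Y$, actually trivial as a real bundle (stably trivial plus rank exceeding the dimension of the base forces honest triviality), and a trivial real bundle of even rank visibly carries a complex structure. Thus $X = Y \times \R^{2m}$ is an open almost complex manifold realizing the required homotopy data, giving $ht_{ACO}(G) \geq k$, and $=\infty$ when $G$ is of type $FP_\infty$.

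The order of steps, then, is: (1) invoke Theorem~\ref{sm-soft}/Corollary~\ref{stt-soft} to get a closed $Y$ with $\pi_1(Y)=G$, $\pi_i(Y)=0$ for $1<i<k$, and $TY$ stably trivial; (2) set $X=Y\times\R^{2m}$, note it is open, even-dimensional, has the same $\pi_1$ and the same vanishing low homotopy groups; (3) observe $TX$ is stably trivial over $X$, and for $m$ large, rank considerations make $TX$ honestly trivial, hence complex; (4) conclude $X\in ACO$ realizes $ht_{ACO}(G)\geq k$. The main obstacle, and the point requiring the most care, is step (3): one must make precise the ``stably trivial plus high rank implies trivial'' claim over the \emph{open} manifold $X$ — this is where one needs either that $X$ is homotopy equivalent to the finite-dimensional complex $Y$ (it is, being a vector bundle over $Y$) so that standard bundle-stabilization bounds apply, or, if one prefers not to trivialize $TX$ outright, the weaker statement that $TX$ admits a complex structure, which follows from the obstruction groups $\pi_i(SO/U)$ vanishing in the relevant range or from the open-manifold h-principle of Gromov. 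A secondary point worth stating explicitly is why enlarging to a product with $\R^{2m}$ keeps us inside the class of manifolds of positive dimension and does not disturb Definition~\ref{htd}; this is immediate since $\pi_1$ and $\pi_i$ ($1<i<k$) are homotopy invariants and $Y\times\R^{2m}\simeq Y$.
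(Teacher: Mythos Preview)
Your approach is essentially identical to the paper's: invoke Corollary~\ref{stt-soft} to obtain a closed manifold $Y$ with stably trivial tangent bundle and the required homotopy vanishing, then cross with a large Euclidean space so that the tangent bundle becomes honestly trivial and hence carries a complex structure. The paper's proof does exactly this, and your justification of step~(3) via the homotopy equivalence $X\simeq Y$ and the definition of stable triviality is the same mechanism the paper uses.

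One small slip: you write $X=Y\times\R^{2m}$ and assert it is even-dimensional, but the $Y$ produced by Theorem~\ref{sm-soft} need not have even dimension. The paper handles this by taking $Y\times\R^m$ with $m$ chosen of the appropriate parity so that $\dim Y+m$ is even; you should do the same (or note that the dimension of $Y$ in the construction of Theorem~\ref{sm-soft} can be adjusted by one). Apart from this parity detail, your argument is complete and matches the paper's; the h-principle and obstruction-theoretic alternatives you mention are unnecessary since the direct triviality argument already suffices.
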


\begin{proof}
By Corollary \ref{stt-soft}, we have $ht_{STT}(G)\,\geq\, k$ for all finitely
presented groups $G$ of type $FP_k$. In particular, $ht_{STT}(G)\,=\,\infty$ for
all finitely presented groups $G$ of type $FP_\infty$. Hence for a finitely
presented group $G$ of type $FP_k$, there exists a manifold $M$ such that
\begin{enumerate}
\item[(a)] $\pi_1(M) \,= \,G$,
\item[(b)] $\pi_i(M)\,=\,0$ if $1<i<k$, and
\item[(c)] the tangent bundle $TM$ of $M$ is stably trivial.
\end{enumerate}

Let $E_m$ denote the trivial vector bundle of rank $m$ over $M$. Then there exists
an integer $N$ such that the vector bundle $W_M \,= \,TM \oplus E_m$ over $M$ is
trivial for all $m \,\geq\, N$.
Taking $m\,\geq\, N$ to be odd or even according as the dimension of $M$ is
odd or even, we see that the open manifold $M \times \R^m$ is even
dimensional and has a trivial
tangent bundle. So, $T(M \times \R^m) \,=\,
(M \times \R^m)\times \R^{2k}\,= \,(M \times \R^m)\times \cc^{k}$.
Hence $M \times \R^m$ is almost complex. The theorem follows.
\end{proof}

\begin{rmk} \label{ac2stein} Note that in the proof of Theorem \ref{aco-soft}, $m$ (the dimension of the factor $\R^m$) can be taken to be arbitrarily large.
This will be useful in the proof of Theorem \ref{stein-soft} below.
\end{rmk}

\subsection{Almost CR and almost complex manifolds}

Following \cite{blair-bk}, we use
the terminology {\it almost CR} for manifolds whose tangent bundles have a
codimension one distribution equipped with a complex structure $J$. If,
moreover, this distribution is formally integrable, we say that the
manifold is CR. Sometimes, in the literature, the former is called CR and
the latter integrable CR. We explicate this to avoid confusion.
We denote the class of closed  almost CR manifolds by $ACR$.

We observe that Theorem \ref{aco-soft} immediately furnishes softness 
(Type 1) for closed almost CR  manifolds 
and almost complex manifolds. Theorem \ref{ac-soft} below is a stronger 
version of a Theorem of Kotschick \cite{kotschick-cont} that every finitely presented group occurs as the fundamental group of an almost contact manifold.

\begin{theorem}
The inequality $ht_{ACR}(G)\,\geq\, k$ holds for all finitely presented 
groups $G$ of type $FP_k$. In particular, $ht_{ACR}(G)\,=\,\infty$ for all 
finitely presented groups $G$ of type $FP_\infty$.

The inequality  $ht_\AC(G)\,\geq\, k$ holds for all finitely presented groups
$G$ of type $FP_k$. In particular, $ht_\AC(G)\,=\,\infty$ for all finitely
presented groups $G$ of type  $FP_\infty$. \label{ac-soft}
\end{theorem}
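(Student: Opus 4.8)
The plan is to deduce Theorem \ref{ac-soft} directly from Theorem \ref{aco-soft} together with a dimension-adjustment trick, since the only gap between ``open almost complex'' and the closed categories $ACR$, $\AC$ is the issue of compactness. First I would recall from the proof of Theorem \ref{sm-soft} and Corollary \ref{stt-soft} that for any finitely presented $G$ of type $FP_k$ there is a \emph{closed} manifold $Y_k$ with $\pi_1(Y_k)=G$, with $\pi_i(Y_k)=0$ for $1<i<k$, and with stably trivial tangent bundle. This already gives a closed manifold; the task is only to upgrade the stably trivial tangent bundle to the relevant almost-structure without destroying the homotopy-theoretic properties.

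For the almost complex case, I would proceed as follows. Take $Y_k$ as above and observe that $TY_k \oplus E_m$ is trivial for all large $m$. Now form $Y_k \times S^m$ (rather than $Y_k \times \R^m$, to stay compact) for a suitably chosen $m$: since $T(Y_k\times S^m) = TY_k \oplus TS^m$, and $TS^m \oplus E_1$ is trivial, we get $T(Y_k\times S^m)\oplus E_1 \cong (TY_k\oplus E_m) \oplus (TS^m \oplus E_1 \ominus E_m)$; more cleanly, $T(Y_k\times S^m)$ is stably trivial, and by choosing $m$ so that $\dim(Y_k\times S^m)$ is even and large relative to its own dimension, stable triviality of the tangent bundle of an even-dimensional manifold in the stable range forces actual triviality (a rank argument: a stably trivial bundle of rank $>$ dimension of the base is trivial). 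Hence $T(Y_k\times S^m)$ is a trivial bundle of even rank, so it admits a complex structure, i.e. $Y_k\times S^m$ is a closed almost complex manifold. Since $m\geq 2$ can be arranged, $\pi_i(S^m)=0$ for $1<i<m$, so $\pi_1(Y_k\times S^m)=G$ and $\pi_i(Y_k\times S^m)=0$ for $1<i<\min(k,m)$; choosing $m\geq k$ gives $ht_{\AC}(G)\geq k$. The $FP_\infty$ statement follows by letting $k\to\infty$.

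For the almost CR case, the argument is nearly identical but one dimension cheaper: an almost CR structure on a manifold $Z$ of dimension $2d+1$ is a codimension-one distribution carrying a complex structure, which exists as soon as $TZ \cong E_1 \oplus \xi$ with $\xi$ admitting a complex structure. So I would take $Y_k\times S^m$ with $m$ chosen so the total dimension is \emph{odd} and large; stable triviality of the tangent bundle again forces triviality in the stable range, hence $T(Y_k\times S^m) \cong E_{2d+1}$, and splitting off one trivial line leaves $E_{2d}$, which carries a complex structure. Thus $Y_k\times S^m$ is a closed almost CR manifold, and the same homotopy bookkeeping (with $m\geq k$) gives $ht_{ACR}(G)\geq k$.

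The main obstacle, such as it is, is the bundle-theoretic bookkeeping: one must be careful that ``stably trivial'' plus ``rank exceeds base dimension'' genuinely yields ``trivial,'' and that multiplying by $S^m$ rather than $\R^m$ does not spoil stable triviality of the tangent bundle — it does not, since $TS^m$ is itself stably trivial and a direct sum of stably trivial bundles is stably trivial. The parity juggling (choosing $m$ of the right parity and large enough, simultaneously to control the stable range and the vanishing of $\pi_i(S^m)$) is the only place requiring genuine care, but it is routine. Everything else is a direct transcription of the proof of Theorem \ref{aco-soft} with $S^m$ in place of $\R^m$.
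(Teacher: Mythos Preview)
There is a genuine gap in your bundle-theoretic step. The principle ``a stably trivial bundle of rank exceeding the dimension of the base is trivial'' is correct, but the tangent bundle of a manifold has rank \emph{equal} to the dimension of the base, so this criterion never applies to $T(Y_k\times S^m)$. No choice of $m$ puts the tangent bundle ``in the stable range.'' Concretely, $S^4$ has stably trivial tangent bundle (it bounds a ball) yet is \emph{not} almost complex: one has $c_1\in H^2(S^4)=0$, so $p_1=c_1^2-2c_2=-2c_2$, but $p_1(S^4)=0$ while $c_2=e(TS^4)=\chi(S^4)=2$, a contradiction. Thus ``stably trivial tangent bundle'' does not imply ``almost complex,'' and your $\AC$ argument fails as written. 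The same objection applies to your $ACR$ argument: $S^5$ has stably trivial but non-trivial tangent bundle.

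The paper's argument sidesteps this entirely. For $ACR$ it uses your manifold $M\times S^{m-1}$ (essentially your $Y_k\times S^m$), but instead of attempting to trivialize the tangent bundle, it simply observes that $M\times S^{m-1}$ sits as a smooth real hypersurface inside the almost complex manifold $(M\times\R^m,J)$ supplied by Theorem~\ref{aco-soft}; the codimension-one distribution $TH\cap J(TH)$ then carries the restriction of $J$ and is automatically an almost CR structure. For $\AC$, the paper takes the product $H\times S^{2m+1}$ of two almost CR manifolds and builds $J$ by hand: use the given complex structures on the two hyperplane distributions, and pair the two transverse line fields $\xi,\eta$ via $J\xi=\eta$, $J\eta=-\xi$. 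No triviality of any tangent bundle is needed. Your instinct to pass from $\R^m$ to $S^m$ to get compactness is exactly right; the missing idea is that the almost complex structure on the open model is what induces the structure on the closed hypersurface, rather than a separate parallelizability argument.
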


\begin{proof}
Given a finitely presented group $G$ of type $FP_k$, by  Theorem \ref{aco-soft}
there exists a manifold $M$ and a natural number $n$ such that
\begin{enumerate}
\item[(a)] $\pi_1(M) \,=\, G$,
\item[(b)] $\pi_i(M)\,=\,0$ for all $1\,<\,i\,<\,k$, and
\item[(c)] $M\times\R^m$ carries an almost complex structure
$(M \times \R^m, J)$ for all $m \geq n$ such that $m+\dim M$ is even.
\end{enumerate}

The smooth real hypersurface $H\,=\, M \times S^{m-1}$ of the 
almost complex manifold $(M \times \R^m\, , J)$ has a 
complex tangent subspace $T H \cap J(T H) \subset TH$   forming a (not necessarily integrable) complex tangent distribution. Clearly,  $T H \cap J(T H)$
is of real
codimension one in $TH$. Hence $J$ restricted to $T H$ gives an almost CR-structure on $H$.
If $m > k$, then
\begin{enumerate}
\item[(a)] $\pi_1(H) \,=\, G$, and

\item[(b)] $\pi_i(H)\,=\, 0$ for $1\,<\,i\,<\,k$.
\end{enumerate}
The first statement of the theorem, which is for closed almost CR manifolds,
follows.

To prove the second statement, consider $W\,=\,H \times S^{2m+1}$. Both $H$
and $S^{2m+1}$ admit almost CR structures; let
$\HH\,\subset\, TH$ and $\DD\,\subset\, TS^{2m+1}$ be distributions
giving almost CR structures on $H$ and $S^{2m+1}$
respectively. Let $\xi$ and
$\eta$ denote the unit vector fields on $H$ and $S^{2m+1}$ normal to $\HH$ and
$\DD$ respectively. Let $J_H$ and $J_S$ denote the almost complex structures on
$\HH$ and $\DD$ respectively.

We define an almost complex structure $J$ on $TW$ as follows.
First, identify $TW$  naturally with the direct sum  $\HH \oplus \DD \oplus \R
\xi \oplus \R \eta$. Now define, for every $p \in W$,
a linear homomorphism $J_p\,:\,T_pW\,\longrightarrow\, T_pW$ by
\begin{enumerate}
\item $J_p(v)\,=\, J_H(v)$ for $v \in \HH$,
\item $J_p(v)\,=\, J_D(v)$ for $v \in \DD$,
\item $J_p(\xi_p)\,=\, \eta_p$, and
\item $J_p(\eta_p)\,=\, -\xi_p$.
\end{enumerate}
Since $J_p$ is clearly a smoothly varying family of automorphisms satisfying
$J_p^2\,=\, - {\rm Id}$, it is an almost complex structure on $W$. Further, 
\begin{enumerate}
\item[(a)] $\pi_1(W) \,=\, G$, and

\item[(b)] $\pi_i(W)\,=\,0$ for $1\,<\,i\,<\,k$.
\end{enumerate}
The second statement of the theorem now follows.
\end{proof}

\subsection{Stein, CR and Contact manifolds}

We shall require the following celebrated theorem of Eliashberg to move from
the category of open almost complex manifolds to the category of Stein manifolds.

\begin{theorem}[\cite{eli-stein}]\label{eli-stein}
Let $X$ be a real $2n$--dimensional smooth manifold, with $n\,>\,2$, such that
$X$ carries an almost complex structure $J$ and a proper Morse function $\phi$ 
all of whose critical points are of index at most $n$. Then $J$ is homotopic
to a complex structure $J^\prime$ on $X$ with respect to which $\phi$ is
plurisubharmonic. In particular, $(X\, ,J^\prime)$ is Stein.
\end{theorem}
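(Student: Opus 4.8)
The plan is to prove this by an inductive handle-attachment scheme: use the proper Morse function $\phi$ to filter $X$ by its sublevel sets $X_c = \{\phi \le c\}$, and build the integrable structure $J'$ one critical level at a time, keeping $\phi$ strictly plurisubharmonic on each $X_c$. Because $\phi$ is proper and exhausting, each $X_c$ is compact and contains only finitely many critical points, and passing a critical value of index $k$ amounts to attaching a smooth $k$-handle. The hypothesis that every critical point has index at most $n$ is exactly what makes this feasible in complex dimension $n$: it forces all attaching spheres to have dimension $\le n-1$, the maximal dimension of an isotropic submanifold in a contact $(2n-1)$-manifold. This index bound is also the Andreotti--Frankel constraint of Theorem \ref{af}, that a Stein $n$-fold has the homotopy type of an $n$-complex, so the condition is natural and necessary.

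First I would dispose of the base case. Once the filtration passes the index-$0$ critical points (the minima of $\phi$), the sublevel set $X_c$ is a disjoint union of smooth balls, each carrying the standard strictly plurisubharmonic model (the squared radius) after homotoping $J$ to the standard complex structure on a ball. The inductive hypothesis is then that on $X_c$ we have an integrable $J'$, homotopic to $J$ through almost complex structures, for which $\phi$ is strictly plurisubharmonic; consequently each regular level $\partial X_c$ inherits a contact structure from its field of complex tangencies.

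The heart of the argument is the handle-attachment step. When we pass a critical point of index $k$, the attaching $(k-1)$-sphere $\Lambda$ sits inside the contact level set $\partial X_c$. Weinstein's handle construction shows that if $\Lambda$ can be made \emph{isotropic} for the contact structure, then the $k$-handle glues on in the Stein category, extending $\phi$ as a strictly plurisubharmonic function and extending $J'$ within its homotopy class. For subcritical handles $k < n$, so that $\dim \Lambda = k-1 < n-1$, I would invoke Gromov's h-principle for isotropic embeddings to make $\Lambda$ isotropic after a $C^0$-small isotopy, the requisite formal data being supplied precisely by the given almost complex structure $J$. For a critical handle $k = n$, the attaching sphere $S^{n-1}$ must instead be made \emph{Legendrian} (maximal isotropic) in the $(2n-1)$-dimensional contact boundary, and here one invokes Eliashberg's h-principle for Legendrian embeddings.

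The main obstacle, and the reason for the hypothesis $n > 2$, lives entirely in this last step. The Legendrian h-principle breaks down in the lowest dimension: when $n = 2$ the attaching spheres are Legendrian knots $S^1$ in a contact $3$-manifold, and these carry genuine classical invariants — the Thurston--Bennequin and rotation numbers — that obstruct flexibility, so the formal data no longer determines the geometry. For $n > 2$, i.e. contact dimension $2n-1 \ge 5$, the relevant Legendrian spheres are flexible and the h-principle realizes the required isotropic embedding within the prescribed formal class, which is exactly what lets the induction continue past each critical level. Running this through the countable exhaustion, the resulting $J'$ is a complex structure homotopic to $J$ for which the exhausting function $\phi$ is strictly plurisubharmonic; since admitting such a function is the defining property of a Stein manifold, $(X, J')$ is Stein and the proof is complete.
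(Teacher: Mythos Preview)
The paper does not prove this statement at all: Theorem \ref{eli-stein} is quoted from Eliashberg's paper \cite{eli-stein} and used as a black box (it is immediately applied in the proof of Theorem \ref{stein-soft}). There is therefore no ``paper's own proof'' to compare against.

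That said, your sketch is a faithful outline of Eliashberg's original argument: exhaust $X$ by sublevel sets of $\phi$, model the passage through each critical value as a Weinstein handle attachment, and use h-principle arguments to arrange that the attaching spheres are isotropic (subcritical case) or Legendrian (critical case $k=n$) in the contact boundary, with the dimension restriction $n>2$ reflecting the failure of flexibility for Legendrian knots in contact $3$--manifolds. One historical caveat: the clean ``h-principle for Legendrian embeddings'' you invoke in high dimensions is really a later formulation (cf.\ Murphy's loose Legendrians); Eliashberg's 1990 paper handles the critical-index step by a more hands-on argument specific to spheres, though the underlying geometric mechanism is the one you describe. As a proof \emph{sketch} this is fine, but if you intend it as a self-contained proof you would need to either supply that argument or cite the appropriate h-principle precisely.
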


We now have the tools ready to show that the category $\St$ of Stein 
manifolds is soft of type 1.

\begin{theorem} 
The inequality $ht_{\St}(G)\,\geq\, k$ holds for all finitely presented groups
$G$ of type $FP_k$. In particular, $ht_{\St}(G)\,= \,\infty$  for all finitely
presented groups $G$ of type $FP_\infty$. \label{stein-soft}
\end{theorem}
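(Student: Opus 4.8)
The plan is to combine the softness result for open almost complex manifolds (Theorem \ref{aco-soft}) with Eliashberg's Theorem \ref{eli-stein}. Given a finitely presented group $G$ of type $FP_k$, Theorem \ref{aco-soft} (together with Remark \ref{ac2stein}) produces, for every sufficiently large even integer $2n$, an open almost complex manifold $X$ of real dimension $2n$ with $\pi_1(X) = G$ and $\pi_i(X) = 0$ for $1 < i < k$. The key point is to arrange that $X$ also admits a proper Morse function all of whose critical points have index at most $n$, so that Eliashberg's theorem applies and upgrades the almost complex structure to a genuine Stein structure without changing the homotopy type.

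First I would trace through the construction behind Theorem \ref{aco-soft}: the manifold there is of the form $X = M \times \R^m$, where $M$ is a closed manifold realizing $ht_\SM(G) \geq k$ with stably trivial tangent bundle, and $m$ can be taken arbitrarily large and of the right parity. I would then observe that $X = M \times \R^m$ retracts onto $M$ and, more importantly, carries an exhausting proper Morse function: take a Morse function $\psi$ on the closed manifold $M$ together with the function $|y|^2$ on $\R^m$, and form $\phi(x,y) = \psi(x) + |y|^2$, which is proper since $M$ is compact. Its critical points are pairs (critical point of $\psi$, origin of $\R^m$), and the index of such a critical point equals $\mathrm{ind}_\psi(p) \leq \dim M$. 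Since $\dim M$ is fixed while we may take $m$ (hence $n = \tfrac{1}{2}(\dim M + m)$) as large as we like, for $m$ large enough we get $\dim M \leq n$, so all critical points of $\phi$ have index at most $n$, and moreover $n > 2$ automatically. Eliashberg's Theorem \ref{eli-stein} then gives a complex structure $J'$ on $X$ making $\phi$ plurisubharmonic, so $(X, J')$ is Stein; since we have only changed the complex structure and not the underlying smooth manifold, $\pi_1(X) = G$ and $\pi_i(X) = 0$ for $1 < i < k$ still hold. This shows $ht_{\St}(G) \geq k$, and letting $k \to \infty$ gives the statement for groups of type $FP_\infty$.

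I expect the main obstacle to be purely bookkeeping: verifying that one can simultaneously satisfy all of Eliashberg's hypotheses (the dimension condition $n > 2$, the index condition, properness of $\phi$, and the existence of the almost complex structure $J$ compatible with the rest of the construction) while keeping $\pi_1$ and the lower homotopy groups under control, and checking that the parity constraints from Theorem \ref{aco-soft} (needing $m + \dim M$ even) are compatible with taking $m$ large. None of these is deep, but they must be arranged together; the one genuinely external input is Eliashberg's theorem itself, and everything else is a matter of choosing the Morse function on the Euclidean factor and reading off indices. An alternative, slightly slicker route would be to take $X = M \times \R^m$ and directly note that $M \times \R^m$ deformation retracts to $M$, hence has the homotopy type of a CW complex of dimension $\dim M$; combined with any compatible almost complex structure and proper Morse function this again lands us in the hypotheses of Theorem \ref{eli-stein}.
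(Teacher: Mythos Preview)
Your proposal is correct and follows essentially the same route as the paper: invoke Theorem \ref{aco-soft} and Remark \ref{ac2stein} to obtain $X = M \times \R^m$ with $m \geq \dim M$, then apply Eliashberg's Theorem \ref{eli-stein}. The only difference is cosmetic---you explicitly write down the Morse function $\phi(x,y) = \psi(x) + |y|^2$, while the paper simply asserts the existence of a proper Morse function with indices at most $\tfrac{1}{2}\dim(M\times\R^m)$.
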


\begin{proof}
By Theorem \ref{aco-soft}, we have $ht_{ACO}(G)\,\geq\, k$ for all finitely
presented groups $G$ of type $FP_k$. In particular, $ht_{ACO}(G)\,=\, \infty$
for all finitely presented groups $G$ of type $FP_\infty$. By Remark
\ref{ac2stein}, for a finitely presented group $G$ of type $FP_k$, there exists an
open almost complex manifold $(M\times \R^m\, , J)$ with $M$ compact such that
\begin{enumerate}
\item[(a)] $\pi_1(M\times \R^m)\,=\,\pi_1(M)\,=\,G$,
\item[(b)] $\pi_i(M\times \R^m)\,=\,\pi_i(M)\,=\,0$ if $1\,<\,i\,<\,k$, and
\item[(c)] $m \,\geq\, \dim M$.
\end{enumerate}

Since $m\,\geq\,\dim M$, there exists  a proper Morse function $\phi$ on
$M\times \R^m$ all of whose critical points are of index at most
$\frac{1}{2} \dim (M\times \R^m)$. Hence by Theorem \ref{eli-stein}, the almost
complex structure $J$ is homotopic to a complex structure $J^\prime$ such
that $(M\times \R^m\, , J^\prime)$ is Stein. The theorem follows.
\end{proof}

As an immediate corollary of Theorem \ref{stein-soft}, we obtain softness 
for the class of closed integrable CR manifolds as well as closed 
contact manifolds.

\begin{theorem}
The inequality $ht_{\CR}(G)\,\geq\, k$ holds for all finitely presented groups
$G$ of type $FP_k$. In particular, $ht_{\CR}(G)\,=\,\infty$  for all finitely presented groups $G$ of type $FP_\infty$.

The inequality $ht_\CT(G)\,\geq\, k$ holds for all finitely presented groups
$G$ of type $FP_k$. In particular, $ht_\CT(G)\,=\,\infty$ for all finitely
presented groups $G$ of type $FP_\infty$. \label{ct-soft}
\end{theorem}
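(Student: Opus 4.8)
The plan is to derive this corollary from Theorem~\ref{stein-soft} by exploiting the standard fact that the odd-dimensional boundary-type hypersurfaces inside a Stein manifold inherit a strictly pseudoconvex, hence integrable, CR structure, together with the equally standard fact that such hypersurfaces carry a contact structure. So the proof is essentially bookkeeping on top of the Stein case already established.

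\smallskip

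First I would invoke Theorem~\ref{stein-soft}: given a finitely presented group $G$ of type $FP_k$, there is a Stein manifold $X$ (concretely, $X = M \times \R^m$ with $M$ closed, $m \geq \dim M$, equipped with a complex structure $J'$ for which a proper Morse function $\phi$ is plurisubharmonic) satisfying $\pi_1(X) = G$ and $\pi_i(X) = 0$ for $1 < i < k$. Next I would pass to a regular sublevel set: choose a regular value $c$ of $\phi$ large enough that $\phi^{-1}((-\infty, c])$ contains the relevant cells, and set $H = \phi^{-1}(c)$. Since $\phi$ is a proper plurisubharmonic exhausting function on the Stein manifold $X$, the hypersurface $H$ is a closed, smooth, \emph{strictly pseudoconvex} real hypersurface; hence the complex tangencies $TH \cap J'(TH)$ form a codimension-one integrable CR structure, so $H \in \CR$. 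Moreover, strict pseudoconvexity means the Levi form is definite, so the $1$-form $\eta = -d\phi \circ J'|_{TH}$ (the restriction of $d^c\phi$) is a contact form on $H$; thus $H \in \CT$ as well. It remains to check the homotopy groups of $H$: because $\phi$ is a proper Morse function whose critical points have index at most $n = \tfrac{1}{2}\dim X$, passing from $X$ down to the sublevel set $\phi^{-1}((-\infty,c])$ only attaches cells of dimension $\leq n$, and $H$ differs from this sublevel set by attaching handles of the complementary codimension; a direct argument (as in the proof of Theorem~\ref{sm-soft}, using the relative homotopy exact sequence of the pair) shows $\pi_1(H) = \pi_1(X) = G$ and $\pi_i(H) = 0$ for $1 < i < k$, provided $m$ (equivalently $\dim X$) was chosen large enough relative to $k$ at the outset — which is permitted by Remark~\ref{ac2stein}. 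This yields $ht_\CR(G) \geq k$ and $ht_\CT(G) \geq k$, and letting $k \to \infty$ gives the $FP_\infty$ statement.

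\smallskip

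\textbf{The main obstacle} I anticipate is the homotopy-group bookkeeping for $H$: one must be careful that slicing the Stein manifold at a level set does not introduce new low-dimensional homotopy. The cleanest route is to arrange, as in the Stein construction, that the sublevel set $W = \phi^{-1}((-\infty,c])$ is itself homotopy equivalent to a CW complex of dimension $\leq n$ with $\pi_1(W) = G$, $\pi_i(W)=0$ for $1<i<k$; then $H = \partial W$ sits inside the doubled/regular-neighborhood picture exactly as $Y_k$ did relative to $W_k$ in the proof of Theorem~\ref{sm-soft}, and the same relative homotopy exact sequence argument applies verbatim after ensuring $\dim X - n$ is large compared to $k$. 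Alternatively — and perhaps more simply — one observes that $H$ is the boundary of the Stein domain $W$, so $W$ deformation retracts onto its Morse skeleton while $H$ retracts onto $W$ with handles of index $\geq \dim X - n$ removed; for $\dim X$ large this leaves $\pi_1$ and all $\pi_i$ with $1<i<k$ unchanged. Either way, the real content was already in Theorem~\ref{stein-soft}; what remains is routine, so I would present it tersely as a corollary rather than re-deriving the handle estimates in full.
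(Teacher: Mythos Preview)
Your approach is correct and essentially the same as the paper's: both take a level set of a strictly plurisubharmonic Morse function on the Stein manifold from Theorem~\ref{stein-soft}, observe that strict pseudoconvexity gives both an integrable CR structure and a contact structure, and then check the homotopy groups. The only real difference is cosmetic: the paper first embeds the Stein manifold in $\cc^N$ and uses the squared-distance function (invoking the second part of Theorem~\ref{af}), whereas you use the intrinsic exhausting function $\phi$ already supplied by Eliashberg's theorem. Your route is arguably slightly more direct since it avoids the auxiliary embedding; the paper's route has the advantage that the strict plurisubharmonicity of $d^2(x,\cdot)$ is a citable classical fact. On the homotopy bookkeeping --- which you rightly flag as the one nontrivial point --- the paper is just as terse as you propose to be: it asserts without further argument that for $R$ large the level set $M_R$ inherits $\pi_1 = G$ and $\pi_i = 0$ for $1<i<k$. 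Your outlined justification (turning the Morse function upside down so that $W$ is built from a collar of $\partial W$ by handles of index $\geq n$, hence $(W,\partial W)$ is $(n-1)$-connected, and then taking $n$ large via Remark~\ref{ac2stein}) is the correct way to fill this in, and is more explicit than what the paper provides.
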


\begin{proof}
Let $G$ be of type $FP_k$. Then by Theorem \ref{stein-soft}, there exists a
Stein manifold $M$ such that
\begin{enumerate}
\item $\pi_1(M)\,=\, G$, and
\item $\pi_i(M)\,=\, 0$ for all $1\,<\,i\,<\,k$.
\end{enumerate}
By the property of being Stein, there exists a natural number $N$ such that
$M$ can be embedded in $\cc^N$. We assume that $M$ comes equipped with such an embedding.
The second part of Theorem \ref{af} says that the square of the distance
function $z\,\longmapsto\, d^2(x\, ,z)$ is Morse and strongly plurisubharmonic for a dense
set of points $x\, \in\, \cc^N$. Without loss of generality, let the origin $0$ be such a point.
Then the intersection with the sphere $S(0\, ,R) \cap M \,=:\,M_R $ is a
strongly pseudo-convex integrable CR manifold for all $R$ that
are regular values of the function $z\,\longmapsto\, d^2(0\, ,z)$.

Since $G$ is of type $FP_k$, the above Morse function has finitely many critical values of index less than $k$. Suppose all critical values of index less than $k$
are contained in the ball of radius $R_0$ about $0$. Then for all $R>R_0$, the
strongly  pseudo-convex integrable CR manifold $M_R$ satisfies the following
two conditions:
\begin{enumerate}
\item[(1)] $\pi_1(M_R)\,=\, G$, and

\item[(2)] $\pi_i(M_R)\,=\,0$ for all $1\,<\,i\,<\,k$.
\end{enumerate}
Therefore, he softness of closed integrable CR manifolds (the first statement
of the theorem) follows.

Since strongly  pseudo-convex integrable closed CR manifolds are contact \cite[p. 72-73]{blair-bk}, 
softness of closed contact manifolds (the second statement of the theorem)
follows.
\end{proof}

A'Campo and Kotschick, \cite{acko}, showed that an arbitrary finitely presented group can be realized
as the fundamental group of a closed contact manifold of 
dimension five. Consequently, closed contact manifolds are of Type 3 in our classification. Theorem \ref{ct-soft}
upgrades $\CT$ from Type 3 to Type 1 in the softness scale.

\section{Complex and Symplectic Manifolds} \label{casp}

The classes $\SP$ and $\CA$ of closed symplectic manifolds and closed complex 
manifolds respectively, exhibit intermediate behavior in terms of 
softness and hardness. By Theorem \ref{gompf-pi1} and Theorem
\ref{taubes-pi1}, both $\SP$ and $\CA$ are of Type 3.

\subsection{Complex Manifolds}

The class $\CA$ is also of Type 2.
However, we do not know whether the class $\CA$ can be upgraded to Type 1.
We rephrase and elaborate Question \ref{yau-qn-ht}, which essentially asks
if this can be done.

\begin{qn} \label{yau-qn-ht2}
Is $ht_\CA (G) \,\geq\, k$ for every finitely presented group $G$ of type
$FP_k$? For a finitely presented group $G$ of
type $FP_\infty$, is $ht_\CA (G) \,=\, \infty$?
\end{qn}

\begin{rmk} \label{ac2ca} {\rm
Theorem \ref{ac-soft} shows that the answer to Question \ref{yau-qn-ht2}
is ``Yes'' if the class $\CA$ is replaced by the class $\AC$ of closed almost complex manifolds.
Hence a group for which the answer to Question
\ref{yau-qn-ht2} is negative will produce an almost complex manifold
which does not admit a complex structure.}
\end{rmk}

Proposition \ref{trivial} answers Question \ref{yau-qn-ht2} positively for the trivial group. Note that Taubes' twistor construction
in \cite{taubes} furnishes manifolds whose second homotopy group is non-zero and hence we cannot hope to answer Question \ref{yau-qn-ht2} 
positively using his techniques.
We give a number of other examples of $G$ below for which the answer of
Question \ref{yau-qn-ht2} is positive.

We shall need the following theorem due to Morimoto. Let $M$ be an almost contact manifold. The almost contact structure
induces a natural almost complex structure on $M \times \R$. If this almost
complex structure is  integrable, then the almost contact manifold $M$ is said
to be {\it normal}. In particular, a Sasakian manifold admits a normal almost contact structure.

\begin{theorem}[\cite{morimoto-nacs}]\label{morimoto}
The Cartesian product of two  normal almost contact manifolds admits a  complex structure.
\end{theorem}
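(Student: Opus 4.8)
The plan is to write down an explicit almost complex structure $J$ on the product $M_1\times M_2$ built directly from the two almost contact structures, and then to establish its integrability via the Newlander--Nirenberg theorem, i.e. to show that the Nijenhuis tensor $N_J$ vanishes identically. Recall that an almost contact structure on $M_i$ is a triple $(\phi_i,\xi_i,\eta_i)$ with $\eta_i(\xi_i)=1$ and $\phi_i^2=-\mathrm{Id}+\eta_i\otimes\xi_i$; writing $\mathcal{D}_i=\ker\eta_i$ we have $TM_i=\mathcal{D}_i\oplus\R\xi_i$, and hence
\[
T(M_1\times M_2)=\mathcal{D}_1\oplus\R\xi_1\oplus\mathcal{D}_2\oplus\R\xi_2 .
\]
Define $J$ on $M_1\times M_2$ by $J|_{\mathcal{D}_i}=\phi_i$, $J\xi_1=\xi_2$ and $J\xi_2=-\xi_1$; then $J^2=-\mathrm{Id}$ is immediate. (This is the almost complex structure built on $H\times S^{2m+1}$ in the proof of Theorem~\ref{ac-soft}, now with normal almost contact data in place of the almost CR data used there; normality of the factors is precisely what will upgrade it from merely almost complex to integrable.)

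The heart of the argument is the computation of $N_J(U,V)=[JU,JV]-[U,V]-J[JU,V]-J[U,JV]$, which by tensoriality may be carried out on a spanning set of vector fields: local sections of $\mathcal{D}_1$ and of $\mathcal{D}_2$ (each extended to be independent of the other factor), together with $\xi_1$ and $\xi_2$. The structural input is the hypothesis of normality. On $M_i$ one has the four classical tensors
\[
N^{(1)}_i(X,Y)=[\phi_i,\phi_i](X,Y)+2\,d\eta_i(X,Y)\,\xi_i ,
\]
together with $N^{(2)}_i(X,Y)=(\mathcal{L}_{\phi_i X}\eta_i)(Y)-(\mathcal{L}_{\phi_i Y}\eta_i)(X)$, $N^{(3)}_i(X)=(\mathcal{L}_{\xi_i}\phi_i)(X)$ and $N^{(4)}_i(X)=(\mathcal{L}_{\xi_i}\eta_i)(X)$; normality of $M_i$ means $N^{(1)}_i\equiv 0$, and by the classical Sasaki--Hatakeyama identities (equivalently, Morimoto's) this already forces $N^{(2)}_i=N^{(3)}_i=N^{(4)}_i=0$.

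With these vanishings in hand the computation falls into three regimes. First, when $U,V$ are both tangent to $M_1$, the expression for $N_J(U,V)$ coincides with the one computing the Nijenhuis tensor of the natural almost complex structure on $M_1\times\R$ — the role of the $\R$-factor being played by the line field $\R\xi_2$ — and the classical decomposition of that tensor into $N^{(1)}_1,\dots,N^{(4)}_1$ shows it is zero; symmetrically for $U,V$ tangent to $M_2$. Second, in the mixed case $U$ tangent to $M_1$ and $V$ tangent to $M_2$, every bracket of a field on $M_1$ with a field on $M_2$ vanishes, and a short manipulation leaves only terms of the form $(\mathcal{L}_{\xi_i}\phi_i)(\cdot)$ and $(\mathcal{L}_{\xi_i}\eta_i)(\cdot)$, i.e. $N^{(3)}_i$ and $N^{(4)}_i$, hence zero. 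Third, $N_J(\xi_1,\xi_2)=0$ because $\xi_1,\xi_2$ live on different factors and $J$ rotates the plane field $\R\xi_1\oplus\R\xi_2$ into itself by a constant. Therefore $N_J\equiv 0$, and Newlander--Nirenberg produces an integrable complex structure on $M_1\times M_2$.

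\textbf{Main obstacle.} The only genuinely laborious part is the first regime together with the bookkeeping in the mixed case: one must arrange the Nijenhuis identity so that each term which is not manifestly zero is recognized as an instance of one of $N^{(1)}_i,\dots,N^{(4)}_i$, and this relies on the non-obvious fact that normality propagates from $N^{(1)}$ to $N^{(2)},N^{(3)},N^{(4)}$. Conceptually the statement is transparent — a normal almost contact manifold is locally $\R\times(\text{complex }n_i\text{-disc})$ with $\xi_i$ spanning the $\R$-direction, so $M_1\times M_2$ is locally $\R^2\times\C^{n_1}\times\C^{n_2}=\C^{1+n_1+n_2}$ with $J\xi_1=\xi_2$ corresponding to multiplication by $i$ on the $\R^2=\C$ factor — and one could instead try to run the argument through such a local normal form, or by realizing $M_1\times M_2$ as a holomorphic slice or quotient inside the complex manifolds $(M_1\times\R)\times(M_2\times\R)$; but the direct Nijenhuis verification is the most self-contained route.
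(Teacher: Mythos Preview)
The paper does not supply its own proof of this statement; it is quoted with attribution to Morimoto \cite{morimoto-nacs} and used as a black box. Your outline is correct and is in fact a faithful sketch of Morimoto's original argument: the almost complex structure you write down is exactly his, and the strategy of reducing $N_J=0$ to the vanishing of the four tensors $N^{(1)}_i,\dots,N^{(4)}_i$ (with the Sasaki--Hatakeyama propagation from $N^{(1)}_i=0$ to the rest) is precisely how he proceeds.
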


\begin{lemma}
The height $ht_\CA (G) \,=\, \infty$ if $G$ is any of the following groups:
\begin{enumerate}
\item[(a)] finitely generated abelian groups, and

\item[(a)] fundamental group of any spherical space form.
\end{enumerate}
\label{ht-ca-eg}
\end{lemma}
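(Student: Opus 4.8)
The plan is to produce, for each group $G$ in the list, a closed complex manifold $M$ with $\pi_1(M)=G$ that is a $K(G,1)$ space, so that $ht_\CA(G)=\infty$ trivially. The strategy in both cases is to realize $G$ as the fundamental group of a closed smooth aspherical manifold that carries a complex structure, and the tool that converts ``smooth'' into ``complex'' is Morimoto's Theorem \ref{morimoto}: a product of two normal almost contact (e.g. Sasakian) manifolds is complex.

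For part (a), first reduce a finitely generated abelian group to its pieces: $G\cong \Z^r \oplus \Z/n_1 \oplus \cdots \oplus \Z/n_s$. For the free part I would use the fact that even-dimensional tori $T^{2m}$ are complex (indeed Kähler) and aspherical, and that $S^1$ and odd tori admit Sasakian structures (the standard contact structure). For each cyclic factor $\Z/n$, I would realize it by a lens space $L(n;\ldots)$ or, more uniformly, take a spherical space form $S^{2m+1}/\Gamma$ with $\Gamma$ cyclic; being a quotient of an odd sphere by a free isometric action, it carries a Sasakian structure, hence a normal almost contact structure. Now assemble: write $G$ as a direct sum and pick an odd-dimensional normal almost contact manifold for as many of the summands as needed so that the total product has even dimension, pairing them off two at a time via Morimoto's theorem to get a complex structure on the product; the remaining even-dimensional factors (tori) are already complex, and a product of complex manifolds is complex. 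The resulting $M$ is a product of aspherical manifolds with $\pi_1(M)=G$, hence aspherical, so $ht_\CA(G)=\infty$.

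For part (b), let $\Gamma$ be the fundamental group of a spherical space form, so $X=S^{2m+1}/\Gamma$ (we may take the dimension odd by crossing with an extra sphere factor if necessary — $S^{2m+1}/\Gamma \times S^{2k+1}$ has the same fundamental group, and more to the point a spherical space form in question is automatically odd-dimensional when the group is nontrivial of odd order, while the general case is handled by the product trick). The point is that $S^{2m+1}/\Gamma$ inherits the standard Sasakian structure of the sphere since $\Gamma$ acts by isometries preserving the contact form; hence it is normal almost contact. Then $X \times X$ (or $X \times X'$ for another such space form, or $X \times S^{2k+1}$) is complex by Theorem \ref{morimoto}, has fundamental group $\Gamma$ (or $\Gamma \times \Gamma$), and is finitely covered by $S^{2m+1}\times S^{2m+1}$ — but that last manifold is \emph{not} aspherical, so I cannot conclude $ht_\CA=\infty$ this way, and this is the main obstacle.

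The way around the obstacle — and the real content of the lemma — is that for $ht_\CA(G)=\infty$ I do not need an aspherical manifold, only a complex manifold $M$ with $\pi_1(M)=G$ and $\pi_i(M)=0$ for all $1<i<\infty$, i.e. a $K(G,1)$; a spherical space form with $G$ finite is itself a $K(G,1)$ only in infinitely many... no — so instead I expect the intended argument is exactly the reduction already carried out in the paper's later propositions, namely that finite groups and finitely generated abelian groups have $ht_\CA=\infty$ because one can find a \emph{complex} $K(G,1)$ by taking suitable products of Sasakian space forms and tori and appealing to Morimoto. Concretely: for a finite spherical-space-form group $\Gamma$ acting on $S^{2m+1}$, the infinite-dimensional analogue is not available, so the honest route is to note $\Gamma$ also acts freely on $S^{2m+1}\times S^{2m+1}\times\cdots$; since $K(\Gamma,1)$ requires infinite dimension when $\Gamma$ is finite, we cannot literally build it, so the lemma must be read as: \emph{for every $k$ there is a complex $M$ with $\pi_1(M)=\Gamma$ and $\pi_i(M)=0$ for $1<i<k$}, which gives $ht_\CA(\Gamma)=\infty$ by definition. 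Such $M$ is built by taking $S^{N}/\Gamma$ for $N$ large (Sasakian, normal almost contact) times an auxiliary Sasakian manifold to make the dimension even and to invoke Morimoto, killing $\pi_i$ for $i<k$ by choosing $N>k$. The hard part is precisely this bookkeeping — choosing dimensions and pairings so that Morimoto's product-of-two hypothesis applies while $\pi_1$ stays equal to $G$ and the low homotopy groups vanish — rather than anything deep; once it is set up, the conclusion $ht_\CA(G)=\infty$ is immediate from Definition \ref{htd}.
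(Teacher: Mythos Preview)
Your final paragraph has the right reading of Definition~\ref{htd} and the right strategy: for each $k$, exhibit a closed complex $M_k$ with $\pi_1(M_k)=G$ and $\pi_i(M_k)=0$ for $1<i<k$, using Morimoto's theorem on a product of normal almost contact manifolds. This is exactly the paper's approach. But two gaps remain. First, your argument for part (a) as written is simply wrong: you assert that the product of lens spaces and tori is ``a product of aspherical manifolds \ldots\ hence aspherical'', but a lens space $L_p^{2n+1}$ has universal cover $S^{2n+1}$ and is not aspherical. The fix is the same ``for each $k$'' move you eventually adopt in part (b); the paper reduces to cyclic groups by taking Cartesian products, then for $\Z/p$ takes $W_n=L_p^{2n+1}\times S^{2n+1}$ (complex by Morimoto, with $\pi_i=0$ for $1<i\le 2n$), and for $\Z$ uses the Hopf manifolds $S^1\times S^{2n+1}$.

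Second, and this is the one genuinely missing idea, in part (b) you write ``take $S^N/\Gamma$ for $N$ large'' without explaining how $\Gamma$ acts freely and isometrically on spheres of dimension larger than the original one. The paper's device is the iterated join: if $\Gamma$ acts freely by isometries on the round $S^N$ (with $N$ odd), then the diagonal action on the $m$-fold join $S^N\ast\cdots\ast S^N\cong S^{mN+m-1}$ is again free and isometric, so $M_m:=S^{mN+m-1}/\Gamma$ is again a spherical space form (hence normal almost contact), and $W_m:=M_m\times S^{2m+1}$ is complex by Morimoto with $\pi_i(W_m)=0$ for $1<i\le 2m$. Your alternative of a diagonal action on a \emph{product} of spheres could also be made to work, but then you must verify that $\Gamma$ acts holomorphically for some Calabi--Eckmann structure on the product; that route is closer to the paper's separate argument for arbitrary finite groups in Proposition~\ref{ht-ca-finite} than to the present lemma.
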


\begin{proof}
Groups in (a):\, By taking Cartesian products of manifolds, it reduces
to showing that $ht_\CA (G) \,= \,\infty$ for every cyclic group $G$.
Now, if $G \,=\, \Z$, then the Hopf manifolds $M_n$ homeomorphic to
$S^1 \times S^{2n+1}$ have $\pi_i(M_n) \,=\, 0$ for $1\,<\, i\,\leq\, 2n$.
Hence $ht_\CA (\Z) \,=\, \infty$.

For any positive integer $p$, let $L_p^{2n+1}$ denotes the
$(2n+1)$--dimensional Lens space with fundamental group $\Z/p\Z$. Then 
$W_n \,=\,L_p^{2n+1} \times S^{2n+1}$ admits a complex structure by
Theorem \ref{morimoto}, and also $\pi_i(W_n)\,=\,0$ for $1\,<\,i\,\leq\, 2n$.
Hence $ht_\CA (\Z_p)\,=\, \infty$.

\medskip
Groups in (b):\, Let $M \,=\, S^N/G$, where $G$ acts freely by isometries on
the round sphere $S^N$ of odd dimension $N$. Take $m$ times iterated join
$S^N \ast S^N \ast \cdots \ast S^N$. The group $G$ acts on it freely using
the diagonal action, with $G$ acting on each factor as before. Therefore,
we obtain a free action by isometries
of $G$ on the round sphere $S^{mN+m-1}$. Define $M_m\,:=\,S^{mN+m-1}/G$, and
let $W_m \,:=\, M_m \times S^{2m+1}$. Then, by Theorem \ref{morimoto}
again, $W_m$ admits a complex structure, and $\pi_i(W_m)\,=\, 0$ for
$1\,<\, i\,\leq\, 2m$. Hence $ht_\CA (G)\,=\, \infty$.
For a classification of fundamental groups of spherical space
forms, see \cite[Ch.~7]{wolf-bk}.\end{proof}

We shall now rework an argument going back to Oliver \cite[p. 
546--7]{oliver} to provide a ``very inefficient'' model space for arbitrary 
finite groups. The models in Lemma \ref{ht-ca-eg} are considerably more 
efficient. In fact we shall be using the argument for finite cyclic groups 
in the proof of Lemma \ref{ht-ca-eg} implicitly in the proof of 
Proposition \ref{ht-ca-finite} below.

\begin{prop}
For any  finite group $G$,
$$ht_\CA (G) \,=\, \infty\, .$$
\label{ht-ca-finite} \end{prop}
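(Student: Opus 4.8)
The plan is to reduce the general finite group case to the finite cyclic case already handled in Lemma \ref{ht-ca-eg}, using the classical fact (due to Oliver, and going back to P.~A.~Smith theory) that every finite group $G$ acts freely on a product of spheres — or, more efficiently for our purposes, that $G$ embeds in a compact Lie group $U(k)$ and hence acts freely on $S^{2k-1}$ only when $G$ is cyclic, so instead one should use the fact that $G$ acts freely on the join $S^{n_1}*\cdots*S^{n_r}$ of spheres built from the irreducible representations of $G$. Concretely, choose a faithful unitary representation $G \hookrightarrow U(N)$; then $G$ acts on the unit sphere $S^{2N-1}\subset \cc^N$, but this action need not be free. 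To fix this, one takes a sufficiently large iterated join of copies of this sphere: the diagonal $G$-action on $S^{2N-1}*\cdots*S^{2N-1}$ (with $m$ factors) becomes free for $m$ large (this is exactly the Oliver-type argument referenced in the excerpt — a point is a formal convex combination $\sum t_j x_j$, and its stabilizer must fix some $x_j$ with $t_j>0$, but after enough joins the fixed-point sets of nontrivial elements can be avoided, or rather one argues that the join of free-away-from-a-small-set actions is eventually free).

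Once I have, for every $m$, a free smooth action of $G$ by isometries on a round sphere $\Sigma_m$ of odd dimension $d_m$ with $d_m \to \infty$, I set $M_m := \Sigma_m / G$. This is a spherical space form with $\pi_1 = G$ and $\pi_i(M_m) = 0$ for $1 < i < d_m$. Then, exactly as in the proof of Lemma \ref{ht-ca-eg}, I form $W_m := M_m \times S^{2\ell+1}$ for a suitable $\ell$ with $2\ell+1$ chosen so that the dimensions work out; since $M_m$ is a Sasakian (in particular normal almost contact) manifold — being a quotient of a round sphere by a finite group of isometries, it inherits the Sasakian structure — and $S^{2\ell+1}$ is Sasakian, Theorem \ref{morimoto} gives a complex structure on $W_m$. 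Moreover $\pi_1(W_m) = G$ and $\pi_i(W_m) = 0$ for $1 < i \le \min(d_m - 1, 2\ell)$. Letting $m \to \infty$ (and $\ell$ with it) shows that for every $n$ there is a complex manifold in $\CA$ with fundamental group $G$ that is $(n-1)$-connected above degree $1$, hence $ht_\CA(G) = \infty$.

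The main obstacle, and the step requiring genuine care, is verifying two things simultaneously: (i) that the iterated-join action of $G$ is actually \emph{free} (not merely that a high-connectivity quotient exists), and (ii) that the resulting quotient carries a \emph{normal} almost contact — ideally Sasakian — structure so that Theorem \ref{morimoto} applies. For (i) the cleanest route is: take a faithful representation of $G$, observe that for each nontrivial $g \in G$ the fixed sphere $(S^{2N-1})^g$ has codimension at least $1$, and use the standard fact that in an $m$-fold join the fixed set of $g$ is the $m$-fold join of the individual fixed sets, whose codimension grows with $m$; choosing $m$ large enough that this codimension exceeds $0$ in a way that forces emptiness is not quite automatic, so the correct statement is rather that one should instead pass to the quotient after noting the action on the join \emph{of the principal orbit} is free — this is precisely the Oliver argument \cite{oliver} that the paper says it is reworking, so I would invoke it directly. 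For (ii), the point is that a free finite group of isometries of $(S^{2N-1}, g_{round})$ with the standard Sasakian structure need not preserve the contact form unless it preserves the Reeb flow; the honest fix is to use that $G \subset U(N)$ acts by \emph{holomorphic isometries} of $\cc^N$, hence preserves the standard Sasakian structure on $S^{2N-1}$, and the diagonal action on the join — realized as a sphere in a larger $\cc^M$ — is again by a subgroup of the relevant unitary group, so the quotient is Sasakian. Thus the representation-theoretic packaging of the join is what makes both (i) and (ii) go through, and that is the delicate bookkeeping I would spell out.
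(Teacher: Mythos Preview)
Your proposal has a genuine and fatal gap: you are trying to produce, for an arbitrary finite group $G$, a \emph{free} action of $G$ on a single sphere. This is impossible in general. By P.~A.~Smith theory, any finite group acting freely on a sphere has all abelian subgroups cyclic; in particular $\Z/p\Z \times \Z/p\Z$ (and hence any group containing it, e.g.\ $(\Z/2\Z)^2$ or the symmetric group $S_4$) admits no free action on any sphere whatsoever. Your iterated-join argument does not circumvent this: the join $S^{2N-1} * \cdots * S^{2N-1}$ is again a sphere, and the fixed set of $g$ under the diagonal action is the join of the individual fixed sets, which is nonempty whenever $(S^{2N-1})^g$ is nonempty. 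So the codimension grows, but the fixed set never becomes empty. The sentence in which you note that ``forcing emptiness is not quite automatic'' is exactly where the argument collapses --- it is not merely delicate, it is false. Consequently your space-form $M_m = \Sigma_m/G$ does not exist for general $G$, and the subsequent Sasakian/Morimoto step never gets off the ground.

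The paper's argument is different in an essential way. It never attempts a free action on a sphere. Instead it works with \emph{products} of Calabi--Eckmann manifolds $C_n \cong S^{2n+1}\times S^{2n+1}$, which already carry complex structures, and builds a free holomorphic $G$-action directly. The key device is coinduction: given a free holomorphic action of a cyclic subgroup $\langle g\rangle$ on $C_n$, one promotes it to a holomorphic $G$-action on $C_n^{[G:\langle g\rangle]}$ (permuting factors via $G/\langle g\rangle$) on which $\langle g\rangle$ still acts freely. Taking the product over all nontrivial $g\in G$ then forces every nontrivial element to act freely, hence the diagonal $G$-action is free and holomorphic, and the quotient is a closed complex manifold with $\pi_1=G$ and $\pi_i=0$ for $1<i\le 2n$. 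This is the Oliver-type trick the paper alludes to --- but it is crucially a product construction, not a join, and it stays entirely in the complex category rather than passing through Sasakian quotients.
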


\begin{proof}
Let $ G$ be any finite group. Let $ H$ be a  subgroup of $G$ and $M$ a complex manifold on which $H$ acts freely
by holomorphic automorphisms. Denote the index $\# G/H$ by $N$. We identify the
product complex manifold $M^N$ with the space of all maps from $G/H$ to $M$.
Then $M^N$ can be naturally identified with the space of 
all $H$--equivariant maps from $G$ to $M$. The action of $H$ on $M^N$ is
the diagonal one. This action of $H$ on $M^N$ naturally extends to
a $G$--action on $M^N$ using the left--translation action of $G$
on $G/H$. Clearly this action of $G$ on $M^N$ is also by
holomorphic automorphisms.

We denote by $C_n$ the Calabi--Eckmann manifold with underlying manifold
$S^{2n+1} \times S^{2n+1}$. We observe that for any $m$,
the cyclic group $\Z/m\Z$ acts freely by complex 
automorphisms on $C_n$ (see \cite{morimoto-nacs} for instance). Hence for any non-trivial
element $g\,\in\, G$, the cyclic group $\langle g\rangle \,\subset\, G$ acts freely by 
complex automorphisms on $C_n$.

Let $i_g\,=\, [G : \langle g\rangle]$ denote the index of $\langle g\rangle$ in $G$. We
have seen in the first part of the proof that there is an action of $G$ on
the Cartesian product $C_n^{i_g}$ by
holomorphic automorphisms, such that the action of the subgroup $\langle g\rangle$ is 
free. Hence the diagonal action of $G$ on the product $$W_n \,:=\, \prod_{g\in G, 
g\neq 1} C_n^{i_g}$$ is free and by holomorphic automorphisms. Let $V_n \,=\, 
W_n/G$ denote the quotient manifold. Then $\pi_1(V_n) \,=\, G$, and 
$\pi_i(V_n)\,=\,0$ for $1\,<\,i\,\leq\, 2n$.

Hence $ht_\CA (G) \,=\, \infty$ for any finite group $G$. \end{proof}

\begin{prop}\label{ht-ca-nacs}
The height $ht_\CA (G) \,=\, \infty$ if $G$ is one of the following:
\begin{enumerate}
\item[(a)] the (integral) Heisenberg group $H^{2m+1}$, and

\item[(b)] the fundamental group of an aspherical closed Sasakian manifold, or, more
generally, an aspherical  manifold equipped with an almost normal contact structure.
\end{enumerate}
\end{prop}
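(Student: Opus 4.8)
The plan is to realize $G$ as the fundamental group of a closed aspherical manifold carrying a normal almost contact structure, and then to ``thicken'' it by a high-dimensional odd sphere so that Morimoto's theorem (Theorem \ref{morimoto}) supplies a complex structure while the homotopy groups remain trivial in a long range. This is in the same spirit as the arguments for Lemma \ref{ht-ca-eg} and Proposition \ref{ht-ca-finite}, with the closed aspherical contact manifold playing the role that the lens space or Calabi--Eckmann factor played there.

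I would first prove (b) in the stated generality. Let $N$ be a closed aspherical manifold equipped with a normal almost contact structure and with $\pi_1(N)=G$; recall that a Sasakian manifold in particular carries a normal almost contact structure, so this covers the case of a closed aspherical Sasakian manifold. For each integer $k\geq 1$, give the odd sphere $S^{2k+1}$ its standard Sasakian structure, which is again normal almost contact. By Theorem \ref{morimoto} the product $W_k:=N\times S^{2k+1}$ admits a complex structure. Since $S^{2k+1}$ is simply connected, $\pi_1(W_k)=\pi_1(N)=G$; and since $N$ is aspherical and $\pi_i(S^{2k+1})=0$ for $i\leq 2k$, the Künneth-type identity $\pi_i(X\times Y)=\pi_i(X)\times\pi_i(Y)$ gives $\pi_i(W_k)=0$ for all $1<i<2k+1$. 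Hence $ht_\CA(G)\geq 2k+1$ for every $k\geq 1$, and letting $k\to\infty$ yields $ht_\CA(G)=\infty$, proving (b).

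For (a) I would identify the integral Heisenberg group $H^{2m+1}$ with the fundamental group of the compact Heisenberg nilmanifold $N^{2m+1}:=\mathcal H^{2m+1}(\R)/\mathcal H^{2m+1}(\Z)$. Nilmanifolds are aspherical, being $K(\pi_1,1)$ spaces: the ambient simply connected nilpotent Lie group is diffeomorphic to a Euclidean space and the integral lattice acts freely and properly discontinuously. It is classical that $N^{2m+1}$ carries a left-invariant Sasakian structure --- the Heisenberg nilmanifold is the standard model of a Sasakian nilmanifold, the contact form being induced from the left-invariant contact form on $\mathcal H^{2m+1}(\R)$. Thus $N^{2m+1}$ is a closed aspherical Sasakian manifold with fundamental group $H^{2m+1}$, and (a) follows from (b) applied to $N=N^{2m+1}$.

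The only step requiring an input beyond the results already assembled is the assertion that the Heisenberg nilmanifold is Sasakian, equivalently that it carries a normal almost contact structure; for this I would cite the standard literature on Sasakian geometry, observing that normality is precisely integrability of the induced almost complex structure on $N^{2m+1}\times\R$, which is well documented for Heisenberg groups. Everything else reduces to the asphericity of nilmanifolds, the routine homotopy computation for products, and a direct invocation of Theorem \ref{morimoto}; I do not anticipate any genuine difficulty there.
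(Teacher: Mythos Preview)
Your proof is correct and follows essentially the same approach as the paper: take the product of the given aspherical normal almost contact manifold with an odd-dimensional sphere and apply Morimoto's theorem. The only cosmetic differences are that the paper treats (a) first and then says (b) is identical, and that the paper describes the Heisenberg manifold as a nontrivial circle bundle over an even-dimensional torus (hence normal almost contact) rather than invoking its Sasakian structure; both descriptions yield the needed input for Theorem~\ref{morimoto}.
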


\begin{proof}
Groups in (a):\, In this case $G$ is the fundamental group of a non-trivial circle
bundle over an even dimensional torus. Hence $G\,=\,\pi_1(M)$, where
$M$ admits a normal almost contact structure. By Theorem \ref{morimoto}, the manifold
$V_n\,:=\,M \times S^{2n+1}$ admits a complex structure.
We have $\pi_1(V_n) \,=\, G$ and $\pi_i(V_n)\,=\,0$ for $1\,<\,i\,\leq\, 2n$.
Hence $ht_\CA (G) \,=\, \infty$.

Groups in (b):\, The argument is same as in part (a), namely take products with odd
dimensional spheres.
\end{proof}

\begin{rmk} \label{ht-sas} If {\it NACS} (respectively, $SAS$) denotes the category of closed manifolds equipped with an almost normal contact structure
(respectively, closed Sasakian manifolds), then the above argument shows that
$$ht_{SAS} (G)\,\leq\, ht_{\it NACS}(G)\,\leq\, ht_\CA(G)$$ for all $G$.
\end{rmk}

The next question gives two natural test cases for Question \ref{yau-qn-ht2}.

\begin{qn} \label{yau-qn-ht3} Is $ht_\CA (G) \,=\, \infty$ for the 
solvable groups or the fundamental groups of closed real hyperbolic manifolds 
of dimension greater than two?
\end{qn}

\subsection{Symplectic Manifolds} All
 the results in this subsection apply to the more general class of c--symplectic
(cohomologically symplectic) manifolds, i.e., closed manifolds $M^{2n}$ with
$\omega\,\in\, H^2(M,\, {\mathbb R})$ such that $\omega^n\,\neq\, 0$.  We, nevertheless,
prefer to retain the notation $\SP$ and talk about symplectic manifolds
to prevent further proliferation of categories.

Theorem \ref{trivial} shows that $ht_\SP(\{ 1 \})\,=\, 2$. This phenomenon is far more general
as shown by the following proposition.

\begin{prop} Let $G$ be a finitely presented group such that $b_2(G)\,=\, 0$. Then
$ht_\SP(G)\,=\, 2$.  Similarly, if $b_{2k}(G)\,=\,0$, then $ht_\SP(G)\,\leq \,2k$.
\label{b20-sp}
\end{prop}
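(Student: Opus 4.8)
The plan is to exploit the fact that for a symplectic (or merely c-symplectic) manifold $M^{2n}$, the class $\omega^k \in H^{2k}(M,\R)$ is nonzero for every $1 \le k \le n$, since $\omega^n \ne 0$ forces all intermediate powers to be nonzero. So it suffices to show that if $M \in \SP$ realizes $ht_\SP(G) > 2k$ (so that $\pi_i(M)=0$ for $1 < i < 2k+1$, hence in particular for $i \le 2k$), then $b_{2k}(G) \ne 0$, which gives a contradiction when $b_{2k}(G)=0$; the case $k=1$ recovers $ht_\SP(G) \le 2$, and combined with $\dim M > 0$ together with the existence of a symplectic $M$ with $\pi_1 = G$ (Theorem \ref{gompf-pi1}), plus the lower bound $ht_\SP(G) \ge 2$ which holds whenever $b_2(G)=0$ gives $ht_\SP(G) = 2$.

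First I would set up the classifying map. Let $M$ realize $ht_\SP(G)$ and suppose $ht_\SP(G) > 2k$; by definition of homotopical height $\pi_i(M) = 0$ for $1 < i < ht_\SP(G)$, so $\pi_i(M) = 0$ for $2 \le i \le 2k$. The classifying map $f : M \to K(G,1)$ (inducing an isomorphism on $\pi_1$) can then be chosen to be an isomorphism on $\pi_i$ for all $i \le 2k$ and a surjection on $\pi_{2k+1}$: indeed one builds $K(G,1)$ from $M$ by attaching cells of dimension $\ge 2k+2$ to kill $\pi_i(M)$ for $i \ge 2k+1$, which does not affect homology below degree $2k+1$. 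Hence $f_* : H_j(M,\R) \to H_j(G,\R)$ is an isomorphism for $j \le 2k$ and, dually, $f^* : H^j(G,\R) \to H^j(M,\R)$ is an isomorphism for $j \le 2k$. In particular $f^*$ is an isomorphism in degree $2k$.

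Now the key step: the class $\omega^k \in H^{2k}(M,\R)$ is nonzero. Since $f^* : H^{2k}(G,\R) \to H^{2k}(M,\R)$ is an isomorphism, there is a class $\alpha \in H^{2k}(G,\R)$ with $f^*\alpha = \omega^k \ne 0$, so $\alpha \ne 0$ and therefore $b_{2k}(G) = \dim H^{2k}(G,\R) \ge 1 > 0$. This contradicts the hypothesis $b_{2k}(G) = 0$, so we must have $ht_\SP(G) \le 2k$. Taking $k=1$: if $b_2(G) = 0$ then $ht_\SP(G) \le 2$. For the reverse inequality, by Theorem \ref{gompf-pi1} there is a closed symplectic manifold $M$ with $\pi_1(M) = G$; since such $M$ has positive dimension, the vacuous condition ``$\pi_i(M) = 0$ for $1 < i < 2$'' shows $ht_\SP(G) \ge 2$. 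Combining, $ht_\SP(G) = 2$.

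The main obstacle I anticipate is purely bookkeeping around the definition of homotopical height and the precise range in which the classifying map induces cohomology isomorphisms — one has to be careful that ``$ht_\SP(G) > 2k$'' gives vanishing of $\pi_i$ exactly up through $i = 2k$ and hence a homology isomorphism through degree $2k$, which is exactly what is needed to pull back $\omega^k$; there is a potential off-by-one subtlety since the cell-attachment argument kills homotopy starting in dimension $2k+1$ and a priori only guarantees a homology isomorphism in degrees $< 2k+1$, i.e., $\le 2k$, which is just enough. No deep input is required beyond this and the elementary observation $\omega^n \ne 0 \Rightarrow \omega^k \ne 0$; the first assertion $b_2(G)=0 \Rightarrow ht_\SP(G)=2$ is then the special case $k=1$ together with the Gompf lower bound, and the examples of finite groups and finitely generated free groups follow since these have $b_2 = 0$ (being of cohomological dimension $\le 1$, or having finite or free models of low dimension).
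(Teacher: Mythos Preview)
Your proof is correct and follows essentially the same approach as the paper's: both argue that if $ht_\SP(G) > 2k$ then one can build a $K(G,1)$ from a symplectic $M$ by attaching cells of dimension $\geq 2k+2$, forcing $b_{2k}(G) = b_{2k}(M) > 0$. The only cosmetic difference is that the paper phrases this directly via cell attachment while you phrase it via the classifying map and pullback of $\omega^k$; you are also more explicit than the paper in invoking Gompf's theorem for the lower bound $ht_\SP(G) \geq 2$.
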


\begin{proof}
Suppose it is not true. Then there exists a closed symplectic manifold $M$ such that
$\pi_1(M)\,=\,G$ and $\pi_2(M)\,=\,0$.
By attaching cells of dimension greater than three to $M$, we obtain a $K(G,1)$
space which will be denoted by $X$. Then $b_2(M)\,=\, b_2(X)\,= \,b_2(G)\,=\,0$.
This contradicts the assumption that $M$ is symplectic. 

Similarly, if $b_{2k}(G)\,=\,0$, and $M$ is a closed symplectic manifold such that
$\pi_1(M)\,=\,G$ and $\pi_i(M)\,=\,0$ for $1\,<\,i\,\leq\, 2k$, then we can construct
a $K(G,1)$ space $X$ by attaching cells of dimension greater than $(2k+1)$ to $M$. 
In that case, $b_{2k}(M)\,=\, b_{2k}(X)\,=\, b_{2k}(G)\,=\,0$, contradicting the fact that
$b_{2k}(M) \,> \,0$ (ensured by the existence of the symplectic structure).\end{proof} 

It follows that all finite groups and the infinite cyclic group $G\,=\,\Z$ have 
$ht_\SP(\Z)\,=\, 2$, and, more generally, $b_{2k}(\Z^{2k-1} )\,\leq\, 2k$.

Recall that the $\CC$--homotopical dimension $hdim_\CC(G)$ is the smallest number among
dimensions of manifolds realizing $ht_\CC(G)$.
Lemma \ref{gurjar} furnishes the following lower bounds on $hdim_\SP$.

\begin{prop}\label{hdim-gur} If a one-ended finitely presented
group $G$ has $ht_\SP(G)\,>\, 2$,
then
\begin{enumerate}
\item[(a)] either $hdim_\SP(G)\,=\,2$, and $G$ is the fundamental group of a 2-manifold of
positive genus,

\item[(b)] or $hdim_\SP(G)\,=\,4$ and $G$ is a {\rm PD}(4) group,

\item[(c)]  or $hdim_\SP(G)\,\geq\, 6$.
\end{enumerate}
\end{prop}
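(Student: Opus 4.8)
The plan is to analyze what the dimension of a symplectic manifold realizing $ht_\SP(G)$ can be, under the hypotheses that $G$ is one-ended and $ht_\SP(G) > 2$. Let $M$ be such a realizing manifold, so $\pi_1(M) = G$ and $\pi_2(M) = 0$ (since $ht_\SP(G) > 2$), and $M$ is closed symplectic of real dimension $2n$ for some $n \geq 1$. Since any symplectic manifold has $\dim \geq 2$, and a symplectic surface ($n=1$) is a closed orientable surface, the case $2n = 2$ forces $G$ to be the fundamental group of a closed orientable surface; one-endedness rules out $S^2$ (trivial group) and the torus is fine, so $G$ must have positive genus. This gives case (a). The remaining possibility at the low end is $2n = 4$.

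For $2n = 4$: here $M$ is a closed symplectic $4$--manifold, hence closed orientable, with $\pi_1(M) = G$ one-ended and $\pi_2(M) = 0$. This is precisely the hypothesis of Lemma \ref{gurjar}, which concludes that $M$ is a $K(G,1)$ and $G$ is a $\mathrm{PD}(4)$ group. That yields case (b). If no realizing manifold has dimension $2$ or $4$, then every realizing manifold has real dimension $\geq 6$, which is case (c). So the trichotomy is really: the minimal realizing dimension $hdim_\SP(G)$ is either $2$, or $4$, or $\geq 6$, and in the first two cases we get the stated extra structure on $G$ by the surface observation and Lemma \ref{gurjar} respectively.

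The only genuine subtlety is bookkeeping about which manifold witnesses $hdim_\SP(G)$. By definition $hdim_\SP(G)$ is the smallest dimension of a manifold in $\SP$ realizing $ht_\SP(G)$; since $ht_\SP(G) > 2$ such a manifold exists and realizes the height, i.e. has $\pi_1 = G$ and $\pi_i = 0$ for $1 < i < ht_\SP(G)$, so in particular $\pi_2 = 0$ because $ht_\SP(G) \geq 3$. One then just applies the dimension-by-dimension argument above to this minimal witness: if its dimension is $2$ we are in (a), if $4$ we are in (b), otherwise (c). I expect the main (very mild) obstacle to be simply making sure the edge cases are handled — namely that $\dim M = 2$ genuinely forces a surface group (using that closed symplectic $2$--manifolds are exactly closed orientable surfaces) and that one-endedness excludes the sphere, so positive genus is forced; and that in the $\dim M = 4$ case the hypotheses of Lemma \ref{gurjar} are met verbatim. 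No further input is needed beyond Lemma \ref{gurjar} and these elementary facts.
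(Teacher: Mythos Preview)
Your proposal is correct and follows essentially the same approach as the paper: case-split on the dimension of a minimal-dimension symplectic manifold realizing the height, observe that dimension $2$ forces a positive-genus surface group, and invoke Lemma~\ref{gurjar} in dimension $4$ to get the $\mathrm{PD}(4)$ conclusion. The paper's proof is terser but structurally identical.
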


\begin{proof} Suppose that $hdim_\SP(G) \,< \,6$. Then there exists a symplectic manifold $M$ of dimension 2 or 4 such that 
$\pi_1(M)\,=\, G$ and $\pi_2(M)\,=\,0$. If $M$ is 2-dimensional,
then we land in case (a).
If $M$ is 4-dimensional, then Lemma \ref{gurjar} says that $G$ is a PD(4) group.
\end{proof}

Fundamental groups of closed orientable surfaces of positive genus are called {\it surface groups}.

The next observation shows that $hdim$ imposes restrictions on homotopical height.
This might be an approach to Question \ref{tol-h2}.

\begin{prop}
Assume that $G$ is not a surface group, and $H^2(G,\, \ZG)\,\neq\, 0$. Also, assume
that $hdim_\SP(G)\,=\,4$. Then, $ht_\SP(G) \,= \,2$. \end{prop}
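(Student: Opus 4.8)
The plan is to argue by contradiction, showing that $ht_\SP(G)>2$ would force $H^2(G,\ZG)=0$. First note that $ht_\SP(G)\,\geq\,2$ in any case, since by Gompf's Theorem \ref{gompf-pi1} the finitely presented group $G$ is the fundamental group of \emph{some} closed symplectic manifold. So suppose $ht_\SP(G)>2$. Because $hdim_\SP(G)=4$, there is a closed symplectic $4$--manifold $M$ realizing $ht_\SP(G)$; in particular $\pi_1(M)=G$, and since $ht_\SP(G)\geq 3$ we also get $\pi_2(M)=0$. Being symplectic, $M$ is oriented, so its universal cover $\widetilde M$ is an oriented $4$--manifold without boundary.

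The heart of the argument is a chain of three identifications terminating in $0$. First, since $\pi_2(M)=0$, a $K(G,1)$ is obtained from $M$ by attaching cells of dimension $\geq 4$, so the inclusion $M\hookrightarrow K(G,1)$ induces an isomorphism on cohomology with any system of local coefficients in degrees $\leq 2$; hence $H^2(G,\ZG)\,\cong\, H^2(M,\ZG)$. Second, since $M$ is a closed (hence finite) complex, $H^2(M,\ZG)\,\cong\, H^2_c(\widetilde M,\Z)$, compactly supported cohomology of the universal cover computing the cohomology of $M$ with coefficients in the $\ZG$--module $\ZG$. Third, $\widetilde M$ is simply connected with $\pi_2(\widetilde M)=\pi_2(M)=0$, so $H_2(\widetilde M,\Z)=0$ by Hurewicz, and Poincar\'e duality on the oriented $4$--manifold $\widetilde M$ gives $H^2_c(\widetilde M,\Z)\,\cong\, H_2(\widetilde M,\Z)=0$. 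Stringing these together yields $H^2(G,\ZG)=0$, contradicting the hypothesis, so $ht_\SP(G)=2$. Note that this is essentially the computation underlying Lemma \ref{gurjar}, just isolating the piece needed for $H^2$.

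Two remarks on the plan. The hypothesis that $G$ is not a surface group is in fact automatic from $hdim_\SP(G)=4$: a surface group of positive genus is already the fundamental group of a $2$--dimensional symplectic manifold realizing $ht_\SP=\infty$, forcing $hdim_\SP=2$; thus only $H^2(G,\ZG)\neq 0$ is actually used to get the contradiction. Also, when $G$ happens to be one--ended one could shortcut the third identification by quoting Lemma \ref{gurjar} (or Proposition \ref{hdim-gur}) directly: $M$ is then a $K(G,1)$ and $G$ is a ${\rm PD}(4)$ group, so $H^2(G,\ZG)=0$ immediately. I would nonetheless prefer the Poincar\'e duality computation above because it is uniform in $G$, covering the finite and multi--ended cases with no extra work. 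The only points requiring care are routine: transferring the orientation from $M$ to $\widetilde M$, applying Poincar\'e duality correctly on the possibly non--compact $\widetilde M$, and the two coefficient identifications ($H^2(M,\ZG)\cong H^2_c(\widetilde M)$ and the degree-$\leq 2$ comparison with $K(G,1)$) — all standard and already implicit in the machinery assembled earlier in the paper. I do not anticipate a genuine obstacle.
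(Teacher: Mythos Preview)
Your proof is correct and follows essentially the same route as the paper's own argument: assume $ht_\SP(G)>2$, take a symplectic $4$--manifold $M$ with $\pi_1(M)=G$ and $\pi_2(M)=0$ (using $hdim_\SP(G)=4$), and chain the identifications $H^2(G,\ZG)\cong H^2(M,\ZG)\cong H^2_c(\widetilde M)\cong H_2(\widetilde M)=0$ via Hurewicz and Poincar\'e duality to reach a contradiction. Your added observation that the hypothesis ``$G$ is not a surface group'' is already implied by $hdim_\SP(G)=4$ is correct and a nice bonus.
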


\begin{proof}
Suppose that $ht_\SP(G) \,>\, 2$. Consequently, there exists a symplectic 4-manifold $M$
with $\pi_1(M)\,=\, G$ and $\pi_2(M)\,=\,0$. Therefore, $\pi_2(\til{M})\,=\,0$, where
$\til M$ is the universal cover of $M$. By the Hurewicz' Theorem, $H_2(\til{M})\,=\,0$,
and hence by Poincar\'e duality, $H^2_c(\til{M})\,=\,0$. Therefore $H^2 (M, \,\ZG)
\,=\,0$. Since $\til{M}$ is 2-connected, and has a finite 3-skeleton modulo $G$,
we have $H^2 (M,\, \ZG)\,=\,H^2 (G,\, \ZG)$. Hence $H^2 (G, \ZG)\,=\,0$,
yielding a contradiction.
\end{proof}

The next proposition provides some weak evidence towards Conjecture \ref{tolcon}.

\begin{prop}\label{toledob2}
Let $G$ be an infinite finitely presented group
with $ht_\SP (G) \,>\, 2$. Then $b_2 (G)$ is positive.
\end{prop}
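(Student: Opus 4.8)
The plan is to argue by contradiction, following the same cell-attachment strategy used in Proposition~\ref{b20-sp}. Suppose $ht_\SP(G) > 2$, so there is a closed symplectic manifold $M^{2n}$ with $\pi_1(M) = G$ and $\pi_2(M) = 0$. If moreover $b_2(G) = 0$, then by attaching cells of dimension $\geq 3$ to $M$ we build a $K(G,1)$ space $X$ with $M$ as a subcomplex; since the attached cells are all of dimension $\geq 3$, the inclusion $M \hookrightarrow X$ induces an isomorphism on $H_2$, whence $b_2(M) = b_2(X) = b_2(G) = 0$. This contradicts the fact that a closed symplectic $2n$--manifold has $b_2(M) > 0$ (the class $[\omega]$, or rather the relevant power argument: $\omega \in H^2(M,\R)$ with $\omega^n \neq 0$ forces $\omega \neq 0$ in $H^2$, and by Poincar\'e duality $[\omega] \cup [\omega^{n-1}] \neq 0$, so $[\omega]$ is a nonzero real class). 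Hence $b_2(G) > 0$.

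The only extra ingredient needed beyond Proposition~\ref{b20-sp} is the hypothesis that $G$ is infinite, which is what lets us conclude $b_2(G) > 0$ from a purely rational/Betti-number computation combined with the contradiction above. In fact the infiniteness is not strictly needed for the inequality $b_2(G) > 0$ as stated via the cell-attachment argument; but it is the natural hypothesis that makes the statement a genuine partial result toward Carlson--Toledo (Conjecture~\ref{tolcon}), and in particular it ensures $G$ is not finite, so the conclusion is non-vacuous. I would simply transcribe the argument of Proposition~\ref{b20-sp} in the case $k = 1$: the content is that $ht_\SP(G) > 2$ supplies an $M$ with $\pi_2(M) = 0$, and then $b_2(M) = b_2(G)$ by the skeletal argument, while symplecticness gives $b_2(M) > 0$.

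There is essentially no hard step here; the proposition is a restatement of the contrapositive of the first assertion of Proposition~\ref{b20-sp} together with the observation that for an infinite K\"ahler group $ht_\SP(G) > 2$ would be the hypothesis of interest. The one point deserving a line of care is the claim that $H_2$ is unchanged under attaching cells of dimension $\geq 3$: this is immediate from cellular homology, since such cells contribute only to chains in degree $\geq 3$ and their boundaries land in degree $\geq 2$ but affect $H_2$ only through $C_3 \to C_2$; one checks that a $K(G,1)$ can be built from $M$ using only cells of dimension $\geq 3$ precisely because $\pi_1(M) = G$ and $\pi_2(M) = 0$ already match $K(G,1)$ through dimension $2$. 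I expect the main (mild) obstacle to be purely expository: making sure the reader sees that this follows from the already-proved Proposition~\ref{b20-sp} without repeating the whole argument.

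\begin{proof}
Suppose, for a contradiction, that $b_2(G) = 0$. By Proposition~\ref{b20-sp}, this forces $ht_\SP(G) = 2$, contrary to hypothesis. Hence $b_2(G) > 0$.
\end{proof}
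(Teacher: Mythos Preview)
Your proposal is correct and takes essentially the same approach as the paper: the cell-attachment argument showing that $\pi_2(M)=0$ forces $b_2(M)=b_2(G)$, combined with $b_2(M)>0$ for closed symplectic $M$. The only difference is packaging: you invoke Proposition~\ref{b20-sp} directly as a contrapositive, whereas the paper re-runs the argument (and splits into the cases $ht_\SP(G)$ finite versus infinite, which as you implicitly note is unnecessary since $ht_\SP(G)>2$ already supplies an $M$ with $\pi_2(M)=0$ in either case). Your observation that the infiniteness hypothesis on $G$ plays no role in the actual argument is also correct.
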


\begin{proof} As usual,
first suppose that $ht_\SP (G)$ is finite.
Let $M$ be a closed smooth symplectic manifold realizing $ht_\SP (G)$. Then $\pi_2(M)\,=
\,0$.  Hence a $K(G,1)$ space $X$ may be constructed from $M$ by adding cells in dimensions greater than three to kill higher homotopy groups.
Therefore, $b_2(G)\,=\, b_2(X)\,=\, b_2(M)$; the last equality holds because $M$ and $X$ differ
only in cells of dimension four or more.
Since $M$ is symplectic, we have $b_2(M) > 0$, and the proposition follows in this case.

Next, assume that $ht_\SP (G)\,=\, \infty$. Then there exists $M \,\in\, \SP$
with $\pi_1(M)\,=\, G$ and $\pi_2(M)\,= \,0$. The same argument now furnishes the conclusion that $ b_2 (G)$ is positive. \end{proof}

\begin{rmk}  \label{toledob2app} We 
note that Proposition \ref{toledob2} offers the following approach to Conjecture 
\ref{tolcon}. Given an infinite K\"ahler group $G$ and a compact K\"ahler manifold $M$ with 
fundamental group $G$, can one construct a new symplectic (or 
c--symplectic) manifold $M_1$ from $M$ by attaching cells in dimensions greater than 
three such that $\pi_2(M_1) \,=\, 0$? This construction can be generalized a bit by 
demanding that we are required to attach cells only to a finite-sheeted cover of $M$, 
converting the latter into a manifold with vanishing $\pi_2$.

For groups $G$ admitting such constructions, Proposition \ref{toledob2} shows that 
virtually $ b_2 (G)$ is positive. The advantage of the approach is that it takes the 
problem into the domain of symplectic topology, where surgery techniques are much more 
plentifully available. In particular, K\"ahler symplectically aspherical groups of class 
$\mathcal A$ in the terminology of \cite{ikrt} verify Conjecture 
\ref{tolcon}.
\end{rmk}

Cohomological dimension also imposes restrictions on homotopical height for 
straightforward reasons as follows.

\begin{prop}\label{cdhtsp}
If $cd(G)\,=\, 2n$, then either $ht_\SP(G) \,=\, \infty$ with $G$ being a {\rm PD}(2n)
group, or $ht_\SP(G) \,\leq\, 2n+2$. If $cd(G)\,=\, 2n+1$, then $ht_\SP(G)\,\leq\, 2n+2$.
\end{prop}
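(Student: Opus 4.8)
The plan is to argue by contradiction, exactly in the spirit of Propositions \ref{b20-sp} and \ref{toledob2}, using the basic principle that a closed symplectic manifold $M$ with $\pi_i(M)=0$ for $1<i<n$ can be completed to a $K(G,1)$ space $X$ by attaching cells in dimensions $>n$, and that such $X$ agrees with $M$ in homology up to degree $n-1$. First I would handle the case $cd(G)=2n$. Suppose $ht_\SP(G)$ were strictly larger than $2n+2$; then there is a closed symplectic manifold $M$ of dimension $2m$ (for some $m$) with $\pi_1(M)=G$ and $\pi_i(M)=0$ for $1<i\leq 2n+2$, in particular $\pi_i(M)=0$ for $1<i\leq 2n+1$. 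Attaching cells of dimension $>2n+2$ to $M$ produces a $K(G,1)$ space $X$, and since $M$ and $X$ differ only above degree $2n+2$, we get $H^j(G)=H^j(X)=H^j(M)$ for $j\leq 2n+1$, and more importantly, since $cd(G)=2n$, we have $H^j(G;A)=0$ for all $j>2n$ and all coefficient modules $A$.

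The key step is then to extract a contradiction with the symplectic condition $\omega^m\neq 0$ in $H^{2m}(M;\R)$, i.e. with the non-vanishing of all even cup powers $\omega,\omega^2,\dots$ up through degree $\leq 2n$ being forced to ``see'' the top class. The cleanest route: if $2m\leq 2n+2$ the dimension is small enough that $M$ itself is close to aspherical and Lemma \ref{gurjar}-type Poincaré duality reasoning applies — but the more robust argument is to look at $H^{2n+2}(M;\R)$ versus $H^{2n+2}(G;\R)=0$. Because $M$ is $(2n+1)$--connected through degree $2n+1$ relative to $X$, the map $H^{2n+2}(X;\R)\to H^{2n+2}(M;\R)$ is injective (cells attached in degree $\geq 2n+3$ cannot affect $H^{2n+2}$; one may need a cell in degree exactly $2n+3$ whose boundary could kill a class in $H_{2n+2}$, so more carefully $H^{2n+2}(X;\R)$ surjects onto, and the relevant comparison is $H^{2n+2}(M)\supseteq \mathrm{image}$, so $H^{2n+2}(M;\R)$ need not vanish a priori). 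The honest fix is to use the \emph{bottom} of the range instead: the hard Lefschetz-type observation is not available, so instead I would invoke that $\omega\in H^2(M;\R)$, $\omega$ pulls back from $H^2(X;\R)=H^2(G;\R)$ (since $M\to X$ is $2$--connected-ish: $H^2(M)\cong H^2(X)$ as $M,X$ agree through degree $\leq 2n+1\geq 2$), hence $\omega=f^*\bar\omega$ for a class $\bar\omega\in H^2(G;\R)$ along the classifying map $f:M\to X=BG$. Then $\omega^{n+1}=f^*(\bar\omega^{n+1})$, and $\bar\omega^{n+1}\in H^{2n+2}(G;\R)=0$, so $\omega^{n+1}=0$ in $H^{2n+2}(M;\R)$. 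But if $ht_\SP(G)>2n+2$ we may moreover take $\dim M=2m$ with $m\geq n+1$, and then $\omega^{n+1}=0$ forces $\omega^m=0$, contradicting symplecticity.

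The main obstacle — and the step to be careful about — is justifying that $\omega$ is pulled back from $BG$, i.e. that the classifying map $f:M\to BG$ induces an isomorphism (or at least a surjection) on $H^2(\,\cdot\,;\R)$. This holds precisely because $\pi_i(M)=0$ for $1<i\leq 2n+2$ (and $n\geq 1$), so $M$ is $2$--connected in the relevant range relative to its Postnikov/classifying approximation: building $X=K(G,1)$ from $M$ by adding cells of dimension $>2n+2\geq 3$ leaves $H^2$ and $H_2$ untouched, whence $H^2(M;\R)\cong H^2(X;\R)=H^2(G;\R)$ and $f^*$ is this isomorphism. For the odd case $cd(G)=2n+1$, one runs the same argument: if $ht_\SP(G)>2n+2$, pick symplectic $M$ with $\pi_i(M)=0$ for $1<i\leq 2n+2$ and $\dim M=2m\geq 2n+4$; again $\omega=f^*\bar\omega$ with $\bar\omega\in H^2(G;\R)$, so $\omega^{n+2}=f^*(\bar\omega^{n+2})$ with $\bar\omega^{n+2}\in H^{2n+4}(G;\R)=0$ (as $2n+4>2n+1$), forcing $\omega^{n+2}=0$; since $m\geq n+2$ this kills $\omega^m$, a contradiction. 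Finally, in the case $cd(G)=2n$, if instead $ht_\SP(G)=\infty$ then there is $M\in\SP$ with $\pi_i(M)=0$ for all $i>1$, i.e. $M$ is a closed aspherical symplectic manifold with $\pi_1(M)=G$; then $M$ is a $K(G,1)$ with $\dim M=2m$, so $cd(G)\leq 2m$ and in fact, $M$ being a closed orientable (symplectic manifolds are orientable) aspherical manifold, $G$ is a $PD(2m)$ group, forcing $2m=cd(G)=2n$ — this is the ``$PD(2n)$ group'' alternative, and it is genuinely possible (e.g. products of surface groups). This disposes of all cases and completes the plan.
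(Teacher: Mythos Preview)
Your approach is essentially the paper's: both build a $K(G,1)$ from $M$ by attaching high-dimensional cells and exploit that cohomology in degree $2n+2$ is then shared between $M$ and $G$. The paper phrases this as $b_{2n+2}(M)=b_{2n+2}(G)$ with $b_{2n+2}(M)>0$ from symplecticity; your route via $\omega=f^*\bar\omega$ and $\omega^{n+1}\in f^*H^{2n+2}(G;\R)=0$ is the same argument made explicit through the cup-product structure.

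There is one organizational gap. Your assertion ``we may moreover take $\dim M=2m$ with $m\geq n+1$'' is not justified, and your handling of the PD(2n) alternative --- ``if $ht_\SP(G)=\infty$ then there is $M\in\SP$ with $\pi_i(M)=0$ for all $i>1$'' --- does not follow from the definition of homotopical height (which only supplies a witness $M_k$ for each finite $k$, not a single aspherical one). Both issues are fixed by splitting on $\dim M$ once you have chosen $M$ with $\pi_i(M)=0$ for $1<i\le 2n+2$: if $\dim M\le 2n+2$ then (since $cd(G)\ge 1$ forces $G$ infinite) Hurewicz makes $\widetilde M$ contractible, so $M$ is a symplectic $K(G,1)$, $G$ is $\mathrm{PD}(\dim M)$, and $\dim M=cd(G)=2n$ --- this \emph{is} the PD(2n) alternative, and it yields $ht_\SP(G)=\infty$; otherwise $\dim M\ge 2n+4$ and your $\omega^{n+1}=0$ contradiction applies directly. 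The paper's proof leaves this same dichotomy implicit in its opening hypothesis.
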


\begin{proof} Suppose that $G$ is not a PD(2n) group that is realizable as the fundamental
group of a symplectic $K(G,1)$ manifold.
Also, assume that $cd(G)\,=\, 2n$ or $2n+1$. If $ht_\SP(G) \,>\, 2n+2$, then there exists a symplectic manifold $M$ such that
$\pi_1(M)\,=\,G$ and $\pi_i(M) \,=\,0$ for $1\,<\,i\,\leq\, 2n+2$. By attaching cells of
dimension greater than $2n+3$ we obtain a $K(G,1)$ space $X$. Then
$$b_{2n+2}(G, \,\Z)\,=\, b_{2n+2}(X,\, \Z)\,=\,b_{2n+2}(M,\, \Z)\,>\,0\, ,$$
contradicting the assumption that $cd(G)\,=\, 2n$ or $2n+1$.
\end{proof}

We shall give {\it lower bounds} on $ht_\SP(G)$ for certain groups including $\Z^N$ (for
$N$ odd) later on (see Proposition \ref{csp-abelian})
using some techniques from the hard category of projective manifolds, in particular the
Lefschetz hyperplane theorem.

\subsection{Affine and Quasi-projective varieties}

Let $\aff$ and $\QP$ denote the classes of smooth affine and smooth quasiprojective varieties respectively.
Homotopical height does not distinguish between these categories:

\begin{lemma} For $G$ a finitely presented group, we have
$ht_\aff (G) = ht_\QP (G)$. \label{aff=qp} \end{lemma}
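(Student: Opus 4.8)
The plan is to establish both inequalities $ht_\aff(G) \leq ht_\QP(G)$ and $ht_\QP(G) \leq ht_\aff(G)$. The first is essentially trivial: every smooth affine variety is in particular a smooth quasi-projective variety, so any manifold $M \in \aff$ realizing a given value of the homotopical height of $G$ is automatically a manifold in $\QP$ realizing it. Hence $ht_\aff(G) \leq ht_\QP(G)$ directly from Definition \ref{htd}.

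The content is in the reverse inequality, and the natural tool is the Lefschetz hyperplane theorem for quasi-projective varieties (the affine/Stein version, due to Hamm and others, building on the Andreotti--Frankel philosophy already invoked in Theorem \ref{af}). Starting from a smooth quasi-projective variety $M$ of complex dimension $d$ with $\pi_1(M) = G$ and $\pi_i(M) = 0$ for $1 < i < n$ (where $n = ht_\QP(G)$, assuming first that it is finite), I would compactify $M$ to a smooth projective variety $\overline{M}$ with $\overline{M} \setminus M$ a divisor, embed $\overline{M}$ projectively, and intersect with a generic hyperplane section. Iterating, one passes to a generic linear section of $\overline M$ of complex dimension, say, $n$, and removes the part of the boundary divisor that meets it; the Lefschetz theorem for the affine variety obtained this way guarantees that the inclusion of the section into $M$ induces an isomorphism on $\pi_i$ for $i < n$ and a surjection on $\pi_n$. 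The generic linear section of the affine variety $M \setminus (\text{hyperplane at infinity})$ is itself a smooth affine variety (a closed subvariety of $\cc^N$ intersected with a linear subspace), and its homotopy groups agree with those of $M$ in the relevant range, so it realizes $ht_\QP(G)$ inside $\aff$. This yields $ht_\QP(G) \leq ht_\aff(G)$.

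The case $ht_\QP(G) = \infty$ is handled the same way: if $M \in \QP$ is a $K(G,1)$, or more precisely has $\pi_i(M) = 0$ for $1 < i < n$ for arbitrarily large $n$, then for each $n$ one produces, by the same generic-section construction applied to a high-dimensional affine model, an affine variety with $\pi_1 = G$ and vanishing $\pi_i$ for $1 < i < n$; letting $n \to \infty$ gives $ht_\aff(G) = \infty$. One subtlety to be careful about is that the homotopical height as defined concerns $\pi_i(M) = 0$ only in the open range $1 < i < n$, so one only needs the Lefschetz isomorphisms below the top degree, which is exactly what the affine Lefschetz theorem supplies with room to spare; there is no need to control $\pi_n$ itself.

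The main obstacle I expect is purely a matter of citing and correctly applying the right form of the Lefschetz hyperplane theorem for possibly non-closed (quasi-projective / affine) varieties --- ensuring that after deleting the divisor at infinity the generic linear section really is affine and that the connectivity statement is in terms of homotopy groups (not merely homology), and that it holds for singular ambient compactifications or else arranging smoothness by a resolution that does not disturb $M$. Given that the Lefschetz hyperplane theorem is invoked elsewhere in the paper (Section \ref{lhpt}), this should be a clean application, and the rest of the argument is bookkeeping with Definition \ref{htd}.
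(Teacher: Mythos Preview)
Your easy direction is fine, but the hard direction has a genuine gap: a generic linear section of a quasi-projective variety is again quasi-projective, not affine. Concretely, if $M = \overline{M}\setminus D$ with $D$ a divisor, then intersecting with a linear subspace $L$ and removing the boundary gives $M\cap L = (\overline{M}\cap L)\setminus(D\cap L)$, which is still only the complement of a divisor in a projective variety. There is no reason for $D\cap L$ to be (the support of) an ample divisor on $\overline{M}\cap L$, and without that the complement need not be affine; think of $M=\mathbb{P}^1\times\mathbb{A}^1\subset\mathbb{P}^1\times\mathbb{P}^1$, whose boundary $\mathbb{P}^1\times\{\infty\}$ is not ample and whose linear sections are never affine. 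Your phrase ``the affine variety $M\setminus(\text{hyperplane at infinity})$'' conflates two different operations: removing a further hyperplane from $\overline{M}$ does produce something affine, but it is not $M$ and has no a priori relation to the homotopy type of $M$. So the Lefschetz hyperplane theorem, however sharp, cannot by itself manufacture an affine model out of a quasi-projective one.

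The paper sidesteps this entirely via Jouanolou's trick \cite{jo}: for every quasi-projective $M$ there is an affine variety $W$ together with a Zariski-locally-trivial $\mathbb{A}^k$-bundle $W\to M$; in particular $W$ is homotopy equivalent to $M$. This immediately gives $ht_\aff(G)\ge ht_\QP(G)$ with no dimension-cutting and no case analysis on $ht_\QP(G)=\infty$. That is the missing idea in your argument.
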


\begin{proof}
Jouanolou showed that for any quasi-projective variety $M$,
there exists an affine variety $W$  mapping surjectively onto $M$ with contractible
fibers \cite{jo}. In particular, this $W$ is homotopy equivalent to $M$. It follows that $ht_\aff(G) \geq ht_\QP (G)$.

Since $\aff \subset \QP$, we have $ht_\QP(G) \geq ht_\aff (G)$.\end{proof}

\section{Basic Restrictions on Homotopical Height of K\"ahler Groups}\label{kk1}

This and the next two sections deal with the ``hard'' categories of K\"ahler ($\KK$) and complex
projective ($\PP$) manifolds. We shall also deal with the subclasses $\HC$ and $\SSS$ of 
$\PP$ consisting of smooth projective varieties whose universal covers are, 
respectively, holomorphically convex and Stein. This section deals with some basic 
restrictions on homotopical height coming from the Lefschetz hyperplane theorem and 
formality.

\subsection{Lefschetz Hyperplane Theorem}\label{lhpt}

\begin{lemma}\label{cx=ht}
If $-\infty \, <\, ht_\PP (G)\, <\, \infty$, then there exists
a smooth complex projective variety $M$ realizing it such that the
complex dimension of $M$ is at most $ht_\PP (G)$.
\end{lemma}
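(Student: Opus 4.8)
The idea is to start with any smooth complex projective variety $M_0$ realizing the homotopical height $n := ht_\PP(G)$ (which exists by definition, since $ht_\PP(G) > -\infty$), and then cut it down by successive generic hyperplane sections until its complex dimension drops to at most $n$, all the while preserving the relevant low-dimensional homotopy. So suppose $\dim_\C M_0 = d$. If $d \le n$ we are done, so assume $d > n$. By the Lefschetz Hyperplane Theorem, a smooth hyperplane section $M_1 = M_0 \cap H$ (for generic $H$, after embedding $M_0$ in some $\P^N$ via a sufficiently ample line bundle) is again a smooth complex projective variety, and the inclusion $M_1 \hookrightarrow M_0$ induces an isomorphism on $\pi_i$ for $i < d-1$ and a surjection on $\pi_{d-1}$. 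In particular, since $d - 1 \ge n$, we get $\pi_1(M_1) \cong \pi_1(M_0) = G$ and $\pi_i(M_1) \cong \pi_i(M_0) = 0$ for all $1 < i < n$ (these indices all satisfy $i < d-1$). Hence $M_1$ also realizes $ht_\PP(G) \ge n$, and $\dim_\C M_1 = d-1$.

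The plan is then simply to iterate: replace $M_0$ by $M_1$ and repeat. Each step decreases the complex dimension by one while keeping $\pi_1 = G$ and $\pi_i = 0$ for $1 < i < n$, until after $d - n$ steps we arrive at a smooth projective $M$ of complex dimension exactly $n$ with $\pi_1(M) = G$ and $\pi_i(M) = 0$ for $1 < i < n$, i.e. a variety of complex dimension at most (in fact equal to) $n$ realizing $ht_\PP(G)$. One should double-check that $M$ does not accidentally realize a \emph{larger} height — but that is not required by the statement, which only asserts existence of \emph{some} $M$ realizing $ht_\PP(G)$ with $\dim_\C M \le ht_\PP(G)$; the definition of "realizing" only demands $\pi_1 = G$ and vanishing of $\pi_i$ for $1 < i < ht_\PP(G)$, which is exactly what the Lefschetz argument preserves.

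The main point requiring care — and the step I expect to be the only mildly delicate one — is the range of validity of the Lefschetz isomorphism at the boundary index. The Lefschetz Hyperplane Theorem gives $\pi_i(M_1) \to \pi_i(M_0)$ an isomorphism for $i < \dim_\C M_0 - 1$ and a surjection for $i = \dim_\C M_0 - 1$; one must check that throughout the induction the indices $i$ we care about (namely $1 \le i < n$, plus $i = 1$ for the fundamental group) stay strictly below $\dim_\C M_0 - 1$ at every stage. Since we stop exactly when the complex dimension reaches $n$, at every intermediate stage $\dim_\C M_j - 1 \ge n > i$ for all relevant $i$, so we are always safely inside the isomorphism range and never need the boundary surjection for these groups. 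A minor bookkeeping remark: to apply Lefschetz one must re-embed each $M_j$ projectively (e.g. via a very ample line bundle) before taking the next hyperplane section, and one should note that a generic such section is again smooth by Bertini; neither of these is an obstacle. Thus the argument is essentially a clean induction on $\dim_\C M_0 - n$ using the Lefschetz Hyperplane Theorem.
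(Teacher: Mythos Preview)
Your proposal is correct and follows essentially the same approach as the paper: start with any smooth projective variety realizing $ht_\PP(G)=n$, and iteratively take generic hyperplane sections, using the Lefschetz Hyperplane Theorem at each step to preserve $\pi_1=G$ and the vanishing of $\pi_i$ for $1<i<n$, until the complex dimension drops to $n$. Your write-up is in fact more careful than the paper's own proof (you explicitly invoke Bertini for smoothness, note the need to re-embed, and verify the relevant indices stay strictly inside the Lefschetz isomorphism range), but the underlying argument is identical.
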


\begin{proof}
Let $N$ be a smooth complex projective manifold realizing 
$ht_\PP (G)\,=\,n$. Assume that the complex dimension
of $N$ is strictly bigger than $n$ (otherwise there is nothing to prove).
Let $N_1$ be a hyperplane section of $N$. By the  Lefschetz hyperplane
theorem, we have $\pi_i(N_1)\,=\, \pi_i(N)$ for all
$i \,<\, \dim_{\mathbb{C}} N_1$. In particular,
$\pi_1(N_1) \,=\, G$ and $\pi_i(N_1) \,=\, 0$ for $1\,<\,i\,<\,n$. We can repeat
this argument replacing $N$ by $N_1$ and iterating till the dimension of the
hyperplane section reduces to $n$.
\end{proof}

\begin{rmk}\label{cx=htrmk}
The same argument works for the categories $\HC$ and $\SSS$ as hyperplane
sections preserve both the categories $\HC$ and $\SSS$ (see Proposition 2.1 of \cite{bm-cd}
for instance). Thus if $ht_\HC (G)\,=\,n$ (respectively, $ht_\SSS (G)\,=\,n$), then there
exists a complex projective manifold $M\,\in\, \HC$ (respectively,
$M\,\in\, \SSS$) of complex dimension at most $n$ realizing it.
\end{rmk}

For the category $\SSS$ we have a stronger result.

\begin{prop}\label{cx=htstein}
Assume that $-\infty \, <\,ht_\SSS (G)\,=\,n \,<\, \infty$. Then there exists a
complex projective manifold $M$ of complex dimension exactly
$n$ realizing it.  Further, no smooth 
projective manifold $M$ of complex dimension less than $n$ realizes
$ht_\SSS (G)$. Hence $hdim_\SSS(G)\,=\, 2n$.

Also, the universal cover $\til M$ of the above complex projective manifold $M$ is
homotopy equivalent to a wedge of $n$--spheres.
\end{prop}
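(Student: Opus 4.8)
The plan is to leverage Remark \ref{cx=htrmk}, which already produces a projective manifold $M\in\SSS$ with $\pi_1(M)=G$, $\pi_i(M)=0$ for $1<i<n$, and $\dim_{\mathbb C}M\leq n$; the task is to upgrade ``at most $n$'' to ``exactly $n$'' and to pin down the homotopy type of $\til M$. First I would dispose of the case $\dim_{\mathbb C}M<n$: if $\dim_{\mathbb C}M=m<n$, then since $\til M$ is Stein, Theorem \ref{af} gives that $\til M$ is homotopy equivalent to a CW complex of real dimension $m\leq n-1<n$; combined with $\pi_i(M)=\pi_i(\til M)=0$ for $1<i<n$ and simple connectivity of $\til M$, repeated use of Hurewicz (as in Proposition \ref{ndim}) forces $\pi_i(\til M)=0$ for all $i$, so $\til M$ is contractible and $M$ is a $K(G,1)$; but then $ht_\SSS(G)=\infty$, contradicting $n<\infty$. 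Hence every realizing $M\in\SSS$ has $\dim_{\mathbb C}M=n$ exactly, which simultaneously gives the first assertion (existence in dimension $n$, from Remark \ref{cx=htrmk}) and the second (no smaller-dimensional projective manifold realizes $ht_\SSS(G)$, since such a manifold would in particular lie in $\SSS$ and we just ruled that out). The equality $hdim_\SSS(G)=2n$ is then immediate: the real dimension of any realizing manifold is $2n$.

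For the homotopy type of $\til M$: fix a realizing $M$ of complex dimension $n$. By Theorem \ref{af}, $\til M$ is homotopy equivalent to a CW complex of real dimension $n$, and $\til M$ is simply connected with $\pi_i(\til M)=\pi_i(M)=0$ for $1<i<n$. So $\til M$ is $(n-1)$-connected and has the homotopy type of an $n$-dimensional CW complex. A standard fact (e.g.\ via a minimal CW model, or Hurewicz plus the fact that an $(n-1)$-connected CW complex of dimension $n$ has free $H_n$ and trivial higher homology) then shows $\til M$ is homotopy equivalent to a wedge of $n$-spheres, indexed by a basis of $H_n(\til M;\Z)$; the freeness of $H_n(\til M;\Z)$ follows since it is the top homology of an $n$-dimensional complex, hence torsion-free. (One should note $H_n(\til M)$ is nonzero here — otherwise $\til M$ would be contractible and again $ht_\SSS(G)=\infty$ — so it really is a nonempty wedge.)

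The main obstacle is the final homotopy-theoretic step — arguing that an $(n-1)$-connected CW complex homotopy equivalent to one of dimension $n$ must be a wedge of $n$-spheres. This is true but deserves a line of justification: one builds a CW structure on $\til M$ with a single $0$-cell and no cells in dimensions $1,\dots,n-1$ (possible since $\til M$ is $(n-1)$-connected and, after attaching cells of dimension $\geq n+1$ which can then be shown unnecessary, homotopy equivalent to an $n$-complex), so the $n$-skeleton is already a wedge of $n$-spheres $\bigvee S^n$; the attaching maps of any $(n+1)$-cells are null-homotopic because, by the dimension bound from Theorem \ref{af} together with the hypotheses, there are effectively no such cells in a minimal model, and $H_n$ being free means the cellular chain complex has no differential into degree $n$. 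I would present this either by citing the standard classification of $(n-1)$-connected $n$-dimensional complexes or by the explicit minimal-cell-structure argument; everything else is bookkeeping with Hurewicz and Theorem \ref{af}.
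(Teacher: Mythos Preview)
Your proof is correct and follows essentially the same approach as the paper's: use Remark \ref{cx=htrmk} to get a realizing $M\in\SSS$ of complex dimension at most $n$, rule out dimension strictly less than $n$ via Theorem \ref{af} and Hurewicz (forcing $\til M$ contractible, contradicting $n<\infty$), and deduce the wedge-of-spheres statement from $(n-1)$-connectedness of $\til M$ together with the $n$-dimensional CW bound from Theorem \ref{af}. One small overstatement to fix: the sentence ``Hence every realizing $M\in\SSS$ has $\dim_{\mathbb C}M=n$ exactly'' is literally false (realizing manifolds of higher complex dimension certainly exist---that is where hyperplane sections start), so you should say ``$\dim_{\mathbb C}M\geq n$'' there; the rest of your paragraph already extracts the correct conclusions from this.
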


\begin{proof}
By Lemma \ref{cx=ht} and Remark \ref{cx=htrmk}, there exists a
smooth complex projective manifold $M$ of dimension at most
$n$ realizing $ht_\SSS (G)$. The first assertion in the proposition follows from the
second assertion of Lemma \ref{cx=ht}.

To prove the second assertion, suppose that there exists a 
complex projective manifold $M$ of dimension $m$ less than
$n$ realizing $ht_\SSS (G)$. Then by the definition of $ht_\SSS (G)$,
we have $\pi_i(M) \,=\, 0$ for $2\,\leq\, i \,\leq \,m$. Hence
$\pi_i(\til{M}) \,=\, 0$ for $1\,\leq\, i \,\leq\, m$.
But $H_i (\til{M}) \,=\,0$ for $i \,>\, m$ by Theorem \ref{af}. Hence
$\pi_i(\til{M}) \,=\, 0$ for all $i$ implying that $M$ is a $K(G,1)$ space.
But this contradicts the hypothesis that
$ht_\SSS (G)\,=\,n \,<\, \infty$. Therefore, the dimension of a complex projective manifold
realizing $ht_\SSS (G)$ is at least $n$.

Combining the first two assertions we have $hdim_\SSS(G) \,=\, 2n$.

The last assertion now follows from the last statement of Theorem \ref{af}, which says
that $\til M$ is homotopy equivalent to an $n$--complex $K$. Since $\pi_i (K) \,=\, 0$ for
$i\,<\,n$, the $n$--complex $K$ must be homotopy equivalent to a wedge of $n$--spheres.
\end{proof}

\subsubsection{Lefschetz Hyperplane Theorem for Symplectic Manifolds}\label{lhptsp}

The following fundamental Theorem of Donaldson \cite{don1} generalizes the   Lefschetz Hyperplane Theorem to the context of symplectic manifolds.

\begin{theorem}[{\cite[Theorem 1; Proposition 39]{don1}}]\label{don-lhpt}
Let $(V,\omega)$ be a compact symplectic manifold of dimension $2n$, and suppose
that the de Rham cohomology class
$[\frac{\omega}{2\pi}]\,\in\, H^2(V,\, {\mathbb R})$ lies in the integral
lattice $H^2(V,\, {\mathbb Z})/Torsion$. Let $h\,\in\, H^2(V,\, {\mathbb Z})$
 be a lift of $[\frac{\omega}{2\pi}]$ to an integral class. Then for sufficiently
large integers $k$ the Poincar\'e dual of $kh$ in 
$H_{2n-2}(V,\, {\mathbb Z})$ can be
realized by a symplectic submanifold $W\,\subset\, V$.

Let $L\,\longrightarrow\, V$ be a complex line bundle over  $V$ with compatible
almost complex structure, and
with first Chern class $c_1(L) \,=\, [\frac{\omega}{2\pi}]$.
Let $W_k$ be the zero-set of a section $s$ of $L^{\otimes k}$. When $k$ is
sufficiently large, the general $W_k$ is a submanifold and the
inclusion $i\,:\, W_k \longrightarrow V$ induces an isomorphism on homotopy groups $\pi_p$
for $p < n - 2$ and a surjection on $\pi_{n-1}$.\end{theorem}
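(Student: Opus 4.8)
The plan is to follow Donaldson's approximately holomorphic program, since the statement is precisely his (\cite{don1}); within this paper it is enough to invoke that reference, but here is the strategy I would carry out. First I would fix a compatible almost complex structure $J$ and Riemannian metric $g$ on $(V,\omega)$, together with a Hermitian line bundle $L\to V$ carrying a unitary connection of curvature $-i\omega$, so that $c_1(L)=[\frac{\omega}{2\pi}]$. The central device is to rescale the metric to $g_k=kg$: at this scale the curvature of $L^{\otimes k}$ is $O(1)$ while the injectivity radius grows like $\sqrt{k}$, so on $g_k$-balls of bounded radius the pair $(V,L^{\otimes k})$ is uniformly $C^1$-close to the flat model $(\mathbb{C}^n,\omega_0)$ with its standard line bundle. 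In this rescaled picture one builds \emph{asymptotically holomorphic} sections $s_k$ of $L^{\otimes k}$: sequences with $|s_k|$ and $|\nabla s_k|$ bounded above and, near their zeros, bounded below, and with $|\bar\partial s_k|$ and $|\nabla\bar\partial s_k|$ of size $O(k^{-1/2})$ uniformly. Gaussian peak sections concentrated near a point furnish such sections locally, and a partition of unity subordinate to a bounded-geometry cover patches them into global ones.

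The crux is Donaldson's quantitative transversality theorem. By finitely many successive perturbations --- each supported in a $g_k$-ball of bounded radius and each using a local peak section to correct the one-jet of $s_k$ --- one arranges that $s_k$ is \emph{uniformly transverse to zero}: $|\nabla s_k|\geq c>0$ wherever $|s_k|<c$, with $c$ independent of $k$. For such $s_k$ and $k$ large, $W_k:=s_k^{-1}(0)$ is a smooth codimension-two submanifold, and since $s_k$ is asymptotically holomorphic and uniformly transverse, $TW_k$ is $C^0$-close to a $J$-complex subbundle on which $\omega$ is positive; hence $\omega|_{W_k}$ is nondegenerate and $W_k$ is symplectic. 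By construction $[W_k]$ is Poincar\'e dual to $c_1(L^{\otimes k})=kh$, which gives the first two assertions (taking $W=W_k$).

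For the homotopy statement I would analyze $\varphi_k=\log|s_k|^2$ on $V\setminus W_k$. Asymptotic holomorphicity together with uniform transversality force the complex Hessian of $\varphi_k$ to dominate near any critical point away from $W_k$, which is the mechanism behind the classical fact that $\log|z|^{-2}$ is plurisubharmonic; so after one further small perturbation of $s_k$ rendering $\varphi_k$ Morse, every critical point of $\varphi_k$ outside $W_k$ has index at least $n$. Thus, up to homotopy, $V$ is obtained from a tubular neighbourhood of $W_k$ (which retracts onto $W_k$) by attaching cells of dimension $\geq n$. Standard handle-attachment and relative Hurewicz arguments then show that the pair $(V,W_k)$ is $(n-1)$-connected, so $i_\ast\colon\pi_p(W_k)\to\pi_p(V)$ is an isomorphism for $p<n-1$ and a surjection on $\pi_{n-1}$, which contains the stated conclusion.

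The step I expect to be the main obstacle is the quantitative transversality: ordinary Sard-type genericity is useless here because the perturbation scale shrinks with $k$, so one needs the local corrections to be \emph{uniformly} effective over all of $V$ with constants independent of $k$. This is precisely Donaldson's globalization lemma, proved by an induction over a bounded number of ``colours'' in a cover of $V$ by $g_k$-balls; by comparison, the rescaling bookkeeping and the Morse-theoretic handle count are routine.
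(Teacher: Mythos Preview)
Your proposal is a faithful outline of Donaldson's original argument, and you correctly anticipate that within this paper it suffices to invoke the reference: the paper does exactly that, stating the theorem with the citation \cite[Theorem 1; Proposition 39]{don1} and providing no proof of its own. So there is nothing to compare against --- your sketch goes well beyond what the paper does, and matches Donaldson's approach rather than any argument appearing here.

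One small remark: your Morse-theoretic conclusion that the pair $(V,W_k)$ is $(n-1)$-connected is actually slightly stronger than the statement as quoted in the paper (isomorphism for $p<n-2$ rather than $p<n-1$), but your version is the correct Lefschetz-type bound from Donaldson's Proposition 39, so if anything this is a typo in the paper's transcription rather than an error on your part.
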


As an immediate corollary, we have the following. The proof of it
is an exact replica of the proof of Lemma \ref{cx=ht} and we omit it.

\begin{cor}\label{sp=ht}
If $ht_\SP (G)\, <\, \infty$, then there exists
a symplectic manifold $M$ realizing it such that the
 dimension of $M$ is at most $2ht_\SP (G)$; in other words,
$hdim_\SP(G)\,\leq\, 2ht_\SP (G)$.
\end{cor}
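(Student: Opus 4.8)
The plan is to mimic exactly the proof of Lemma~\ref{cx=ht}, substituting Donaldson's symplectic Lefschetz theorem (Theorem~\ref{don-lhpt}) for the classical Lefschetz hyperplane theorem. So suppose $ht_\SP(G) = n < \infty$ and let $V$ be a closed symplectic manifold realizing it, i.e. $\pi_1(V) = G$ and $\pi_i(V) = 0$ for $1 < i < n$. If $\dim_\R V \le 2n$ there is nothing to prove, so assume $\dim_\R V = 2m$ with $m > n$. First I would perturb the symplectic form so that $[\omega/2\pi]$ is integral (rationality of the class, then rescaling — this is standard and I would just invoke it, noting that the homotopy type of the manifold is unaffected); then Theorem~\ref{don-lhpt} produces, for $k \gg 0$, a symplectic submanifold $W \subset V$ of real dimension $2m - 2$ such that the inclusion $W \hookrightarrow V$ induces isomorphisms on $\pi_p$ for $p < m - 2$ and a surjection on $\pi_{m-1}$.

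The key point is then to check that $W$ still realizes $ht_\SP(G)$, i.e. that $\pi_1(W) = G$ and $\pi_i(W) = 0$ for $1 < i < n$. For $\pi_1$: since $m > n \ge 2$ we have $m - 2 \ge 1$, but to get $\pi_1(W) \cong \pi_1(V)$ I actually need $p = 1 < m - 2$, i.e. $m \ge 4$, OR I use the surjection-on-$\pi_{m-1}$ statement when $m-1 = 1$... Here I would be slightly more careful: Donaldson's statement gives $\pi_1$-isomorphism once $m \ge 4$, and the low-dimensional cases ($m = 3$, so $\dim_\R V = 6$) must be handled by noting $2m = 6 \le 2n$ would already contradict $m > n \ge 2$ only when $n = 2$... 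Actually the cleanest route: iterate. Replace $V$ by $W$ and repeat. At each stage the real dimension drops by $2$, and as long as the current real dimension $2m$ satisfies $m - 1 > n$ (equivalently $2m \ge 2n + 4$), the isomorphism range $p < m - 2$ covers all $p \le n$ (since $n \le m - 3 < m - 2$), so $\pi_1$ and $\pi_i$ ($1 < i < n$) are all preserved. Hence I can descend until $\dim_\R W = 2n + 2$, at which point I stop; this gives $hdim_\SP(G) \le 2n + 2 = 2\,ht_\SP(G) + 2$.

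Here is the genuine obstacle: Donaldson's isomorphism range is $p < n - 2$ (in his notation, where $2n = \dim V$), which is \emph{two} less than the classical Lefschetz range $p < \dim_\C N_1$. So literally copying the Lemma~\ref{cx=ht} argument only yields $hdim_\SP(G) \le 2\,ht_\SP(G) + 2$, not the clean bound $hdim_\SP(G) \le 2\,ht_\SP(G)$ asserted in the corollary. The plan to recover the stated bound is to use the surjectivity on $\pi_{m-1}$ together with a Hurewicz/relative-homotopy argument at the last step — or, more simply, to observe that one can take two further hyperplane-type sections: once the homotopy groups $\pi_i$ for $i \le n$ are already in the stable range, dropping two more real dimensions costs nothing because $\pi_i(W)$ for $i < n$ is unaffected when $n \le m$ (not just $n \le m-3$), using the long exact sequence of the pair $(V, W)$ and the fact that $(V,W)$ is $(m-1)$-connected by Donaldson's surjectivity statement plus the structure of the normal bundle. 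I expect the main effort — and the part most likely to need a careful argument rather than a one-line citation — is precisely this bookkeeping at the bottom two dimensions, bridging the gap between Donaldson's ``$p < n-2$ and surjection on $\pi_{n-1}$'' and the classical ``$p < n$'' statement; everything else (integrality perturbation, the iteration) is routine.
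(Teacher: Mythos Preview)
Your approach is exactly what the paper intends: the paper omits the proof entirely, saying it ``is an exact replica of the proof of Lemma~\ref{cx=ht}.'' So there is nothing to compare beyond that.

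The obstacle you flag, however, is not a genuine gap in the argument but a typo in the paper's transcription of Theorem~\ref{don-lhpt}. Donaldson's actual result (Proposition~39 of \cite{don1}) is that for $V$ of real dimension $2n$ the inclusion $W_k \hookrightarrow V$ induces an isomorphism on $\pi_p$ for $p \le n-2$ (equivalently $p < n-1$) and a surjection on $\pi_{n-1}$ --- exactly the classical Lefschetz range, not the weaker ``$p < n-2$'' printed in the paper. With the correct bound the iteration runs cleanly: if $V$ has real dimension $2m$ with $m > n = ht_\SP(G)$, then $m \ge n+1$ gives $n-1 \le m-2$, so $\pi_p(W) \cong \pi_p(V)$ for all $p \le n-1$, and $W$ (of real dimension $2m-2$) again realizes the height. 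Repeating brings the dimension down to $2n$, as claimed.

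Your attempted workarounds (squeezing the last two dimensions out of surjectivity on $\pi_{m-1}$, or arguing $(V,W)$ is $(m-1)$-connected) are therefore unnecessary; and as stated they would not close the gap anyway, since surjectivity of $\pi_{m-1}(W) \to \pi_{m-1}(V)$ says nothing about the vanishing of $\pi_{m-1}(W)$ or about $\pi_{m-2}(W)$. The remark about perturbing $\omega$ to a rational class is correct and harmless, since only the diffeomorphism type of $V$ matters for the homotopical height.
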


\subsection{Restrictions from formality}\label{formality}

We refer to \cite{ps} for a survey of formality of spaces and algebras relevant
to our context.  
Recall that a  commutative differential graded algebra (cdga for short) $(A\, , d_A )$ is
formal if it is weakly equivalent to its cohomology algebra, endowed with
the zero differential. Also a group $G$ is formal when the Eilenberg--MacLane space $K (G, 1)$ is formal.

\begin{definition}[\cite{macinic}] 
A cdga $(A\, , d_A )$ is $q$--stage formal if there is a zig-zag of
homomorphisms connecting $(A\, , d_A )$ to
$(H^\ast(A\, , d_A )\, , 0)$, with each one of these homomorphisms inducing an isomorphism in cohomology up to degree
$q$, and a monomorphism in degree $q + 1$. 

A group $G$ is  $q$--stage formal when the
Eilenberg--MacLane space $K (G, 1)$ is $q$--stage formal.
\end{definition}

The next theorem is a special case of a Theorem of Macinic.

\begin{theorem}[\cite{macinic}]\label{mac}
Suppose that $M$ is a  formal space but $\pi_1 (M )$ is not $q-$stage formal for some
$q\,\geq\, 2$. Then $\pi_i (M )\,\neq\, 0$ for some $i\,\in\, [2 \, ,q]$.
\end{theorem}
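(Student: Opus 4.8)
The plan is to deduce Theorem~\ref{mac} from the general $q$-stage formality statement by contradiction, using the fact that $M$ being formal is a much stronger condition than being $q$-stage formal, and then transferring information between $M$ and $K(\pi_1(M),1)$ along the Postnikov/skeletal comparison. First I would set $G = \pi_1(M)$ and suppose, for contradiction, that $\pi_i(M) = 0$ for all $i \in [2,q]$. The goal is then to show that $G$ must be $q$-stage formal, contradicting the hypothesis.

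The key step is to compare the minimal model (or the relevant cdga invariants up to degree $q+1$) of $M$ with that of $K(G,1)$. Under the assumption $\pi_i(M) = 0$ for $2 \le i \le q$, the natural map $M \to K(G,1)$ classifying the universal cover is $(q+1)$-connected (it induces isomorphisms on $\pi_i$ for $i \le q$ and a surjection on $\pi_{q+1}$), hence it induces an isomorphism on cohomology in degrees $\le q$ and an injection in degree $q+1$ — by the relative Hurewicz / Whitehead-type argument, or directly from the Serre spectral sequence of the fibration $\widetilde{M} \to M \to K(G,1)$ whose fiber $\widetilde{M}$ is $q$-connected. Pulling back forms (or applying a functorial cdga model such as the Sullivan $A_{PL}$ functor), one gets a cdga homomorphism $A_{PL}(K(G,1)) \to A_{PL}(M)$ that is an isomorphism on $H^{\le q}$ and a monomorphism on $H^{q+1}$. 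Composing this with a formality zig-zag for $M$ (which exists by hypothesis and consists of quasi-isomorphisms, so in particular induces isomorphisms in all degrees) and then truncating appropriately, one produces a zig-zag connecting $A_{PL}(K(G,1))$ to $(H^\ast(G),0)$ in which every map is an isomorphism in cohomology up to degree $q$ and a monomorphism in degree $q+1$. That is exactly the definition of $G$ being $q$-stage formal, giving the contradiction.

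The main obstacle I expect is being careful with the cohomology-algebra identification: the zig-zag for $G$ should land in $(H^\ast(K(G,1)),0) = (H^\ast(G),0)$, but what we naturally obtain by composing is a zig-zag landing in $(H^\ast(M),0)$. One must check that the $(q+1)$-connected map $M \to K(G,1)$ identifies $H^{\le q}(G)$ with $H^{\le q}(M)$ \emph{as graded algebras} (not merely as vector spaces) and injects $H^{q+1}(G)$ into $H^{q+1}(M)$, so that the truncation of $(H^\ast(M),0)$ relevant up to degree $q+1$ is modeled by $(H^\ast(G),0)$ through maps with the required (iso up to $q$, mono in $q+1$) property. Since the degree of formality at stage $q$ only sees the algebra structure through degree $q+1$, this suffices. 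A secondary technical point is to ensure the zig-zag can be arranged with the maps pointing consistently (inserting $A_{PL}(M)$ as an intermediate object), but this is routine once the connectivity of $M \to K(G,1)$ is in hand. Alternatively, and perhaps more cleanly, one can invoke Macinic's result directly: $q$-stage formality of a space depends only on its $(q+1)$-type, so $M$ and $K(G,1)$ have the same $q$-stage formality status under our assumption, and $M$ being fully formal is in particular $q$-stage formal — contradiction. I would present the spectral-sequence/connectivity argument as the backbone and remark on this shortcut.
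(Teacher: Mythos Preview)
The paper does not actually prove Theorem~\ref{mac}; it is stated as a special case of a theorem of Macinic and simply cited from \cite{macinic}, so there is no in-paper proof to compare your proposal against. That said, your argument is a correct and natural proof of the statement: under the hypothesis $\pi_i(M)=0$ for $2\le i\le q$, the classifying map $M\to K(G,1)$ has $q$-connected homotopy fiber $\widetilde{M}$, so the induced map on cohomology is an isomorphism through degree $q$ and a monomorphism in degree $q+1$; splicing the cdga map $A_{PL}(K(G,1))\to A_{PL}(M)$ into the formality zig-zag for $M$ and appending the algebra map $(H^\ast(G),0)\to(H^\ast(M),0)$ gives a zig-zag witnessing $q$-stage formality of $G$, the desired contradiction. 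Your identification of the one subtle point --- that the target of the zig-zag must be $(H^\ast(G),0)$ rather than $(H^\ast(M),0)$, handled by adding one more arrow with the right connectivity --- is exactly the thing to flag, and you handle it correctly.
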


The following bound on $ht_\KK(G)$ is an immediate consequence.

\begin{prop} Let $G$ be a K\"ahler group. Let $q_0$ be the smallest value of $q$ such that $G$ is  not $q-$stage formal.
Then $ht_\KK(G)\,\leq\, q_0$. \label{formalKdim} \end{prop}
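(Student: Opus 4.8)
\textbf{Proof proposal for Proposition \ref{formalKdim}.}

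The plan is to reduce the statement directly to Theorem \ref{mac} (Macinic's theorem) together with the classical fact that compact K\"ahler manifolds are formal, so that essentially no new work is required. First I would recall that if $G$ is a K\"ahler group, then by definition there is a compact K\"ahler manifold $M$ with $\pi_1(M) = G$; moreover, we may replace $M$ by any manifold realizing $ht_\KK(G)$, i.e.\ one with $\pi_i(M) = 0$ for $1 < i < ht_\KK(G)$. The key input is that such an $M$ is a formal space, by the Deligne--Griffiths--Morgan--Sullivan theorem; I would cite this (it is in the references on formality, e.g.\ \cite{ps}) rather than reprove it.

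Next I would argue by contradiction. Suppose $ht_\KK(G) > q_0$, where $q_0$ is the smallest $q$ for which $G$ fails to be $q$-stage formal. Choosing a K\"ahler manifold $M$ with $\pi_1(M) = G$ realizing the homotopical height, we get $\pi_i(M) = 0$ for all $i$ with $1 < i < ht_\KK(G)$, hence in particular for all $i$ with $2 \leq i \leq q_0$ (since $q_0 < ht_\KK(G)$ means $q_0 \leq ht_\KK(G) - 1$, so $i \leq q_0$ implies $i < ht_\KK(G)$). But $M$ is formal and $\pi_1(M) = G$ is not $q_0$-stage formal with $q_0 \geq 2$, so Theorem \ref{mac} (applied with $q = q_0$) forces $\pi_i(M) \neq 0$ for some $i \in [2, q_0]$. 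This contradicts the vanishing just established, and therefore $ht_\KK(G) \leq q_0$.

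A couple of small points need to be checked for the argument to be clean. One must ensure $q_0 \geq 2$ so that Theorem \ref{mac} applies: every group is trivially $1$-stage formal (the relevant condition up to degree $1$ and injectivity in degree $2$ is automatic for the minimal model of a $K(G,1)$ through degree $2$), so the first possible failure is at $q = 2$, giving $q_0 \geq 2$; I would state this explicitly. One should also dispense with the degenerate cases $ht_\KK(G) = \infty$ (then $G$ admits a K\"ahler $K(G,1)$, which is formal, so $G$ is $q$-stage formal for all $q$, meaning $q_0$ does not exist and the inequality is vacuous or interpreted as $q_0 = \infty$) and note that if $G$ is $q$-stage formal for every $q$ there is nothing to prove. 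The only genuine ingredient beyond bookkeeping is the formality of compact K\"ahler manifolds, which is quoted; thus I expect no real obstacle, the "hard part" being merely to state the hypotheses of Theorem \ref{mac} precisely enough that the index range $[2, q_0]$ matches the vanishing range coming from $ht_\KK(G) > q_0$.
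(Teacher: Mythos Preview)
Your argument is correct and is essentially the paper's own proof: invoke Deligne--Griffiths--Morgan--Sullivan formality of any compact K\"ahler $M$ with $\pi_1(M)=G$, then apply Theorem \ref{mac} with $q=q_0$ to get a nonvanishing $\pi_i(M)$ in $[2,q_0]$, which caps the height at $q_0$. The paper phrases this directly (for \emph{every} such $M$) rather than by contradiction via a realizing manifold, which is cleaner and avoids your side discussion of degenerate cases.

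One small correction: in your treatment of $ht_\KK(G)=\infty$ you assert that $G$ then admits a K\"ahler $K(G,1)$, but the definition of infinite height only guarantees, for each $n$, a K\"ahler $M_n$ with $\pi_i(M_n)=0$ for $1<i<n$, not a single aspherical one. This does not affect the result---your contradiction argument already covers the infinite case by taking any $M$ witnessing height $\geq q_0+1$---so simply drop the $K(G,1)$ claim and let the main argument handle all cases uniformly, as the paper does.
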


\begin{proof} Let $M$ be any compact K\"ahler manifold with fundamental group $G$.
A fundamental result of Deligne, Griffiths, Morgan and Sullivan
asserts that $M$ is a formal space (cf. \cite[Chapter 3]{abckt}).
Hence using Theorem \ref{mac} we have $\pi_i (M )\,\neq\, 0$ for some
$i\, \in\, [2\, , q_0]$. So $ht_\KK(G)\,\leq\, q_0$ by Definition \ref{htd}.
\end{proof}

\section{Consequences in Group Cohomology}\label{kk2}

Gromov \cite{gro-free} showed that $H^1(G,\,\ZG)\,=\,0$ (Theorem \ref{1end})
for any infinite K\"ahler group $G$.
In this section we start looking at higher group cohomologies. The guiding questions are Question \ref{tol-h2}
and Conjecture \ref{tolcon}.

\subsection{Cohomology with local coefficients}\label{specs}

We first consider the Leray--Serre cohomology spectral sequence of the 
classifying maps $M \,:=\, \til{M}/G \,\longrightarrow\, K(G,1)$
 for the principal $G$--bundle
$\til{M}\,\longrightarrow\, M$ (cf. \cite[p. 286]{hu}, \cite[Theorem 2.2]{dyer},
\cite[Proposition 1]{klingler}, \cite[Proposition 3.5]{bm-cd}).

\begin{prop} \label{leraycoh}
Let $M$ be a smooth complex projective variety with 
Stein universal cover $\til M$. Assume that $\pi_i(M) \,=\, 0$ for
all $1<i<n$. Also, assume that $G\,:=\, \pi_1(M)$ is 
torsion-free. Let $R$ be any left $\ZG$--module. Then 
\begin{enumerate}
\item $H^p(G,\,R)\,= \,H^p(M,\, R)\,$  for $0\,<\,p\,<\,n$, and
\item there is an exact sequence of $G$-modules,
$$
0 \,\longrightarrow\, H^n(G,\, R) \,\longrightarrow\, H^n(M,\, R)
\,\longrightarrow\, H^n(\til{M},\, R)^G \,\longrightarrow\,
H^{n+1}(G, \,R)
$$
$$
\longrightarrow\, H^{n+1}(M,\, R)\,
\longrightarrow \,H^1(G,\, H^n(\til{M},\, R))
\,\longrightarrow\, H^{n+2}(G,\, R) \,\longrightarrow\, H^{n+2}(M,\, R) 
$$
$$
\longrightarrow \, H^2(G,\, H^n(\til{M},\, R))\,
\longrightarrow\, H^{n+3}(G,\, R)\, \longrightarrow\,\cdots \,
\longrightarrow\,
H^{n-1}(G,\, H^n(\til{M},\, R))
$$
$$
\longrightarrow\, H^{2n}(G,\, R)\,
\longrightarrow\, H^{2n}(M,\, R)\, \longrightarrow \, H^n(G,\, H^n(\til{M},\, R))\,
\longrightarrow\, H^{2n+1}(G,\, R)\, \longrightarrow\,
0\, .
$$
\item $H^{p+n+1}(G,\, R)\,=\, H^{p}(G, \,H^n(\til{M},\, R))$ for
all $ p\,\geq \,n+1$.
\end{enumerate}
\end{prop}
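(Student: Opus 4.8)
The plan is to run the Leray--Serre cohomology spectral sequence of the fibration $\til M \to M \to K(G,1)$ and exploit the fact that, by Theorem \ref{af}, the Stein universal cover $\til M$ has the homotopy type of an $n$--dimensional CW complex, so its cohomology is concentrated in degrees $0$ through $n$. Since $\pi_i(M)=0$ for $1<i<n$ and $\pi_1=G$ is torsion-free, $\til M$ is $(n-1)$--connected, so $H^q(\til M, R)=0$ for $0<q<n$ as well; thus the only nonzero columns of the $E_2$-page are $E_2^{p,0}=H^p(G,R)$ and $E_2^{p,n}=H^p(G, H^n(\til M,R))$ (the coefficients being the local systems determined by the $G$-action on the cohomology of the fibre). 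With two nonzero rows separated by $n$, the spectral sequence has a single possibly-nonzero differential on each relevant page, namely $d_{n+1}: E_{n+1}^{p,n} \to E_{n+1}^{p+n+1,0}$, and it degenerates at $E_{n+2}$.

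First I would set up the $E_2$-page precisely, identifying the two rows and recording that everything else vanishes; this gives parts (1) and (2) almost immediately. For (1): in total degree $p<n$ the only contribution is $E_\infty^{p,0}$, and since the only differential out of or into column-$0$ row-$0$ entries in this range is trivial (the source $E_{n+1}^{p-n-1,n}$ has negative first index when $p<n$), we get $H^p(M,R)\cong E_2^{p,0}=H^p(G,R)$. For (2): I would write the long exact ``Gysin-type'' sequence that results from the $d_{n+1}$ differential relating the two rows — this is the standard five-term-style exact sequence for a spectral sequence with two nonzero rows, extended all the way up. Concretely, the edge maps give $0\to H^n(G,R)\to H^n(M,R)\to E_2^{0,n}=H^n(\til M,R)^G \xrightarrow{d_{n+1}} H^{n+1}(G,R)\to H^{n+1}(M,R)\to E_2^{1,n}\to\cdots$, and since the spectral sequence degenerates at the $E_{n+2}$ page, the total cohomology $H^{p+n}(M,R)$ is built from exactly the two pieces $E_\infty^{p+n,0}$ and $E_\infty^{p,n}$, each a subquotient of $H^{*}(G,-)$ via a single $d_{n+1}$. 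Splicing these short exact sequences together yields the displayed long exact sequence, which terminates at $H^{2n}(M,R)\to H^n(G,H^n(\til M,R))\to H^{2n+1}(G,R)\to 0$ because $E_2^{p,0}$ and $E_2^{p,n}$ vanish once $p$ exceeds $\mathrm{cd}(G)$ — but more to the point, beyond total degree $2n$ the manifold's own cohomology is handled in part (3).

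For part (3), the key point is that for $p\ge n+1$ the total degree $p+n+1 \ge 2n+2 > 2n \ge \dim_{\R} M$... wait, that's not quite the argument since $M$ is $2n$-real-dimensional; rather, once we are past the range where $H^*(M)$ interferes, the two-row spectral sequence forces an isomorphism. Precisely: in total degree $m = p+n+1$ with $p\ge n+1$, the contribution from row $0$ is $E_\infty^{m,0}$ and from row $n$ is $E_\infty^{m-n,n}=E_\infty^{p+1,n}$, while $H^m(M,R)$ receives both. The differential $d_{n+1}:E_{n+1}^{p,n}\to E_{n+1}^{p+n+1,0}$ must be examined; I expect that by comparing with the vanishing of $H^{m}(M,R)$ in the appropriate range — or rather by the fact that $H^{n}(\til M, R)$ is the \emph{top} nonzero cohomology of $\til M$ so the spectral sequence truncates — one shows $d_{n+1}$ becomes an isomorphism there, yielding $H^{p+n+1}(G,R)\cong H^p(G,H^n(\til M,R))$. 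The main obstacle I anticipate is being careful about exactly where the ``stable range'' begins and confirming that $d_{n+1}$ is genuinely an isomorphism (not merely injective or surjective) for all $p\ge n+1$; this requires knowing that $H^m(M,R)=0$ for $m>2n$ (clear, as $M$ is a closed $2n$-real-manifold, or open of real dimension $2n$ with cohomology below the top) together with the vanishing of the row-$n$ and row-$0$ entries interacting correctly — essentially, the only way the long exact sequence can continue consistently past degree $2n$ is if each $d_{n+1}$ in that range is an isomorphism. Assembling these observations in the right order gives all three claims.
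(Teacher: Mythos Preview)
Your proposal is correct and follows essentially the same approach as the paper: run the Leray--Serre spectral sequence of the fibration $\til M \to M \to K(G,1)$, observe that by Theorem~\ref{af} and $(n-1)$--connectedness of $\til M$ the only nonzero rows on the $E_2$--page are $q=0$ and $q=n$, so the only nonzero differential is $d_{n+1}$, and then read off parts (1)--(3). Your slight hesitation in part~(3) resolves exactly as the paper does it: the key input is $H^m(M,R)=0$ for $m>2n$ (since $\dim_{\mathbb R} M = 2n$), which forces both $E_\infty^{m,0}$ and $E_\infty^{m-n,n}$ to vanish in that range, hence $d_{n+1}\colon E_2^{p,n}\to E_2^{p+n+1,0}$ is an isomorphism for all $p\ge n+1$.
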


\begin{proof}
Let $\til{M}\,\longrightarrow \,M$ be a principal $G$--bundle. Take
a $K(G,1)$ space $K$; its universal cover $\widetilde K$ is 
contractible. Let $f\,:\, M\,\longrightarrow\, K$ 
be a classifying map. Let $$\til{g}\,:\, \til{M} \times 
\til{K}\,\longrightarrow\, \til{K}$$
be the natural projection. The group $G$ acts on
$\til K$ through deck transformations, and it acts on
$\til{Y} \times \til{K}$ via the diagonal action.
Since $\til g$ is equivariant with respect to these
actions, it induces a map
$$
g\,:\, W\, :=\, (\til{M}\times\til{K})/G\, \longrightarrow \,
{\widetilde K}/G \,=\, K\, .
$$
The fibers of $g$ are homotopy equivalent to $\til Y$
(see \cite[pp. 285--286]{hu} for more details).

Note that $H^i(\til{M},\, R)\,= \,0\,$ for $i>n$  by  Theorem \ref{af}. Hence 
$H^i(\til{M},\, R) \,=\, 0$ for $i \,\neq\, 0, n$. 

The Leray--Serre cohomology spectral sequence for the above fibration with 
local coefficients~$R$ gives
$$H^p(K,\, (H^q(\til{M} ,\, R)) \,\Longrightarrow \,H^{p+q}(M,\, R)\, ,$$ and 
hence $$H^p(G,\, (H^q(\til{M},\, R))\,\Longrightarrow\, H^{p+q}(M,\, R)$$
since $K$ is a $K(G,1)$ space.

As $H^i(\til{M},\, R) \,=\, 0$, $i \,\neq\, 0,\, n$, it follows that $E_2^{p, 0}\,
=\,H^p(G,\, R)$ and $ E_2^{p,n}\,=\, H^p(G,\, H^n(\til{M},\, R))$ are 
the only (possibly) non-zero $E_2^{p, q}$ terms. Since $E_2^{p, i}\,=\,0$, for $0<i<n$, the 
differentials $d_2\, \cdots, d_n=\,0$. Also, the differentials $d_i$ are zero for 
$i\,>\,n+1$. Thus $d_{n+1}$ is the only (possibly) non-zero differential.
Hence $$E_{n+1}\,= \cdots =\,E_2 ~\, \text{ and }\, ~E_{n+2}\,=\,E_{n+3}\,=\,\cdots\,= \,
E_{\infty}\, ,$$ and also
\begin{itemize}
\item $E_{\infty}^{0, 0} \,=\, H^0(G,\, H^0(\til{M},\, R))\,=\,
H^0(G,\, R)\,=\,R^G$ (see \cite[p. 58]{brown} for instance), 

\item $E_{\infty}^{p,0}\,=\, H^p(G,\, H^0(\til{M},\, R))\,=\, H^p(G,\,R)$, for $p=1, \cdots, n$,

\item $E_{\infty}^{p, q}\,=\,H^p(G,\, H^q(\til{M},\, R))\,=\,H^p(G,\, 0)\,
=\,0$, for $q\, \neq\, 0,n$.
\end{itemize}

Further, we have the following exact sequences:
\begin{itemize} 
\item $0\,\longrightarrow\, E_{\infty}^{0,n}\,\longrightarrow\, H^n(\til{M},
\,R)^G \,\stackrel{d_{n+1}}{\longrightarrow}\,
H^{n+1}(G,\, R)) \,\longrightarrow\, 0$,

\item $0\,\longrightarrow\, H^{p-n-1}(G,\, H^n(\til{M},\, R))\,
\stackrel{d_{n+1}}{\longrightarrow}\, H^p(G,\, R)\,\longrightarrow\,
E_{\infty}^{p, 0}\,\longrightarrow\, 0$, for all $p \,\geq\, {n+1}$, and

\item $0\,\longrightarrow\, E_{\infty}^{p, n}\,\longrightarrow\, H^p(G,\,
H^n(\til{M},\, R))\,\stackrel{d_{n+1}}{\longrightarrow}\,
H^{p+n+1}(G,\, R)\,\longrightarrow\, 0$, for all $p\,\geq\, 1$.
\end{itemize}

The above descriptions of the $E_{\infty}^{p,q}$ terms can be assembled
to produce the following  exact sequences for the fibration:
$$
0\,\longrightarrow\,H^p(G,\,R)\,\longrightarrow\,H^p(M,\, R)\,\longrightarrow
\,0
$$
(assembling $E_{\infty}^{0,p}$ and $E_{\infty}^{p,0}$), for $0<p<n$,
$$
0\,\longrightarrow\,H^n(G,\,R)\,\longrightarrow\,H^n(M,\, R)\,\longrightarrow
\,(H^n(\til{M},\, R))^G\,\stackrel{d_{n+1}}{\longrightarrow}\, H^{n+1}(G,\, R))
$$
(assembling $E_{\infty}^{0,n}$ and $E_{\infty}^{n,0}$), and
$$
H^{p-n-1}(G,\,H^n(\til{M},\,R))\,\stackrel{d_{n+1}}{\longrightarrow}\, H^p(G,\,
R)\,\longrightarrow \,H^p(M,\,R)\,\longrightarrow\, H^{p-n}(G,\,
H^n(\til{M},\, R))
$$
$$
\stackrel{d_{n+1}}{\longrightarrow}\, H^{p+1}(G,\, R))~\,
$$
(assembling $E_{\infty}^{0,p}$ and $E_{\infty}^{p,0}$), for all  $ p
\,\geq\, {n+1}\,$.

Since $H^p(M, R)\,=\,0$ for all $p\,>\,2n$, we immediately get from
the above  exact sequence that
$$H^{p+1}(G,\, R)\,=\, H^{p-n}(G, \,H^n(\til{M},\, R))$$ for all $ p\,\geq \,2n+1$.
\end{proof}

\begin{rmk} \label{leraycohrmk} Note that in the proof of Proposition \ref{leraycoh}, the Stein property of $\til M$ is only used to conclude
that the homology of $\til M$ is concentrated in the middle dimension. Thus, Proposition \ref{leraycoh} and its conclusions remain valid when 
$M$ is only a closed orientable manifold of dimension $2n$ such that $H_i(\til{M}) =0$ for $i\neq 0, n$. \end{rmk}

\begin{cor}\label{steincd} If $ht_\SSS (G)\,=\,n$, then $cd (G)\,\geq\, n$.
\end{cor}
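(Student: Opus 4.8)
The plan is to exhibit a closed manifold realizing $ht_\SSS(G)$ with non-vanishing top cohomology, and then to extract $cd(G)\,\geq\, n$ from the exact sequence of Proposition \ref{leraycoh}. First I would dispose of the trivial case: if $G$ has torsion then $cd(G)\,=\,\infty$ and there is nothing to prove, so I may assume $G$ is torsion-free. Since $ht_\SSS(G)\,=\,n$ (finite), Proposition \ref{cx=htstein} supplies a smooth complex projective manifold $M\,\in\,\SSS$ of complex dimension \emph{exactly} $n$ with $\pi_1(M)\,=\,G$ and $\pi_i(M)\,=\,0$ for $1\,<\,i\,<\,n$. Being a compact complex manifold, $M$ is a closed oriented real $2n$-manifold, so $H^{2n}(M,\,\Z)\,\cong\,\Z\,\neq\,0$; this is the non-vanishing I aim to contradict. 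It is precisely here that the sharp dimension statement of Proposition \ref{cx=htstein} is needed rather than the weaker bound of Lemma \ref{cx=ht}: a model of smaller complex dimension would only produce a weaker lower bound on $cd(G)$.

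Next I would argue by contradiction. Suppose $cd(G)\,\leq\, n-1$, so that $H^j(G,\, A)\,=\,0$ for every $\ZG$-module $A$ and every $j\,\geq\, n$. The manifold $M$ satisfies the hypotheses of Proposition \ref{leraycoh} (it is smooth projective with Stein universal cover, $\pi_i(M)\,=\,0$ for $1<i<n$, and $G$ is torsion-free), so I may plug $R\,=\,\Z$ (with trivial $G$-action) into the exact sequence in part (2). Its tail reads
$$
H^{2n}(G,\,\Z) \longrightarrow H^{2n}(M,\,\Z) \longrightarrow H^n\bigl(G,\, H^n(\til M,\,\Z)\bigr),
$$
and both outer terms vanish because $cd(G)\,\leq\, n-1\,<\,n\,\leq\, 2n$. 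Hence $H^{2n}(M,\,\Z)\,=\,0$, contradicting $H^{2n}(M,\,\Z)\,\cong\,\Z$. Therefore $cd(G)\,\geq\, n$.

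There is no deep obstacle here: all the needed machinery is already in place in Propositions \ref{cx=htstein} and \ref{leraycoh}. The points that require a little care are purely bookkeeping: separating off the torsion case so that $cd(G)$ is finite and Proposition \ref{leraycoh} is applicable; noting that the statement is meant for finite $n$ (when $ht_\SSS(G)\,=\,\infty$ one has instead $cd(G)\,=\,2m\,<\,\infty$ by Proposition \ref{ndim}); and picking out the correct segment of the long exact sequence, namely the one in which both $G$-cohomology groups flanking $H^{2n}(M,\,\Z)$ live in degrees $\geq n$. One could equally run the argument directly with the Leray--Serre spectral sequence of $\til M \to M \to K(G,1)$, but quoting Proposition \ref{leraycoh} is cleaner.
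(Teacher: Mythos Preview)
Your proof is correct but takes a genuinely different route from the paper's. Both arguments invoke Proposition \ref{cx=htstein} to obtain a model $M\in\SSS$ of complex dimension exactly $n$, and both feed this into Proposition \ref{leraycoh}; the divergence is in the choice of coefficient module and which part of the proposition is used. The paper takes $R=\ZG$ and uses part (1), i.e.\ $H^p(G,\ZG)=H^p(M,\ZG)$ for $p<n$, then computes $H^p(M,\ZG)=H^p_c(\til M)=H_{2n-p}(\til M)=0$ via Poincar\'e duality and Theorem \ref{af}, concluding $H^p(G,\ZG)=0$ for all $p<n$ and hence $cd(G)\geq n$. You instead take $R=\Z$, look at the top-degree segment of the long exact sequence in part (2), and derive a contradiction from $H^{2n}(M,\Z)\cong\Z$. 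Your argument is slightly more self-contained: the paper's last step (``$H^p(G,\ZG)=0$ for $p<n$ implies $cd(G)\geq n$'') tacitly uses the standard fact that a group of type $FP_m$ with $cd(G)=m<\infty$ has $H^m(G,\ZG)\neq 0$, together with the observation that $G$ is of type $FP_n$ because $M$ has finite $n$-skeleton. Your approach sidesteps this by appealing only to the definition of $cd$. On the other hand, the paper's computation yields the stronger intermediate statement $H^p(G,\ZG)=0$ for $0<p<n$, which is of independent interest and feeds into the later results of Section \ref{hngzg}.
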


\begin{proof}
Choose $R = \ZG$ in Proposition \ref{leraycoh}. Then $H^p(G,\,\ZG)\,=\, H^p(M,\,\ZG)$ for
$p\,<\,n$. Now $H^p(M,\,\ZG)\,=\, H^p_c(\til{M})$ and by Poincar\'e Duality
$H^p_c(\til{M})\,=\,H_{2n-p}(\til{M})$. Further, by Theorem \ref{af}, we have
$H_{2n-p}(\til{M})\,=\, 0$ if $p\,<\,n$. Hence $H^p(G,\,\ZG)\,=\, 0$ for $p\,<\,n$. The
corollary follows.
\end{proof}

Now by Proposition \ref{cx=htstein}, if $ht_\SSS (G) \,=\,n$ is finite, then
there exists a smooth complex projective variety $M$ of complex dimension $n$ realizing $ht_\SSS (G)$  with 
Stein universal cover $\til M$.

\begin{cor} \label{leraycohstein}
Suppose $ht_\SSS (G)\,=\,n$ is finite and $M$ realizes $ht_\SSS (G)$.
Then 
\begin{enumerate}
\item $H^p(G,\,\ZG)\,= 0$ for $0\,<\,p\,<\,n$,

\item there is an exact sequence of $G$-modules,
$$
0 \,\longrightarrow\, H^n(G,\, \ZG) \,\longrightarrow\, H^n(M,\, \ZG)
\,\longrightarrow\, (H^n(\til{M},\, \ZG))^G \,\longrightarrow\,
H^{n+1}(G, \,\ZG) \,\longrightarrow\, 0\, ,
$$

\item 
$$
H^i(G,\, H^n(\til{M},\, \ZG))
\,= H^{n+i}(G,\,\ZG) \, \, 1 \leq i \leq n-2\, , $$
 
\item there is an exact sequence of $G$-modules,
$$ 0 \longrightarrow\,
H^{n-1}(G,\, H^n(\til{M},\, \ZG))
\,\longrightarrow\, H^{2n}(G,\, \ZG) \,\longrightarrow\, \Z
$$
$$
\longrightarrow \, H^n(G,\, H^n(\til{M},\, \ZG))\,
\longrightarrow\, H^{2n+1}(G,\, \ZG)\, \longrightarrow\,
0\, ,$$

\item $H^{p+n+1}(G,\, \ZG)\,=\, H^{p}(G, \,H^n(\til{M},\, \ZG))$ for all $ p\,\geq \,n+1$.
\end{enumerate}
\end{cor}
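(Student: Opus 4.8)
The plan is to obtain all five items as immediate consequences of Proposition \ref{leraycoh} specialized to $R = \ZG$, the only genuine work being a computation of $H^p(M,\ZG)$ showing it is concentrated in two degrees. First I would fix, via Proposition \ref{cx=htstein}, a realizing manifold $M\,\in\,\SSS$ of complex dimension \emph{exactly} $n$: then $\pi_1(M)\,=\,G$, $\pi_i(M)\,=\,0$ for $1<i<n$, the universal cover $\til M$ is Stein, and --- the decisive extra input --- $\til M$ is homotopy equivalent to a wedge of $n$--spheres. With these hypotheses in force (and $G$ torsion-free, as Proposition \ref{leraycoh} requires), that proposition applies with $R\,=\,\ZG$; its item (1) is the statement $H^p(G,\ZG)\,=\,H^p(M,\ZG)$ for $0<p<n$, and its item (3) is precisely item (5) of the corollary, so neither of those needs any further argument.

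Next I would compute $H^p(M,\ZG)$ for every $p$. Since $M$ is a closed orientable $2n$--manifold, Poincar\'e duality with local coefficients gives $H^p(M,\ZG)\,\cong\, H^p_c(\til M;\Z)\,\cong\, H_{2n-p}(\til M;\Z)$; and because $\til M$ has the homotopy type of a wedge of $n$--spheres, its integral homology vanishes outside degrees $0$ and $n$ and is $\Z$ in degree $0$. Hence $H^p(M,\ZG)\,=\,0$ for $0\le p\le 2n$ with $p\neq n,2n$, while $H^{2n}(M,\ZG)\,=\,\Z$, and $H^p(M,\ZG)\,=\,0$ for $p>2n$ because $\dim\til M\,=\,2n$. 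In particular item (1) of the corollary drops out of Proposition \ref{leraycoh}(1) (this is also the content of Corollary \ref{steincd}). Substituting these groups into the long exact sequence of Proposition \ref{leraycoh}(2) now yields the remaining items by inspection of degrees: the vanishing of $H^{n+1}(M,\ZG)$ truncates the initial portion of that sequence and produces the four--term exact sequence of item (2); the vanishing of $H^p(M,\ZG)$ for all $n<p<2n$ forces the relevant connecting homomorphisms to be isomorphisms, which gives the identifications of item (3) in the stated range $1\le i\le n-2$; and the single nonzero value $H^{2n}(M,\ZG)\,=\,\Z$, flanked on both sides by zero groups, cuts out exactly the five--term exact sequence of item (4).

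Thus the proof is essentially bookkeeping inside the long exact sequence of Proposition \ref{leraycoh}(2), matched against the computed groups $H^p(M,\ZG)$. The only points that really require care --- and the only places I would expect anything like an obstacle --- are the two structural inputs: that $\til M$ may be taken to be a wedge of $n$--spheres, which is precisely what Proposition \ref{cx=htstein} supplies and which itself rests on the ``complex dimension exactly $n$'' conclusion there, and the torsion-freeness of $G$ needed to invoke Proposition \ref{leraycoh}. Beyond that I anticipate no substantive difficulty; it is just a matter of keeping the indices straight.
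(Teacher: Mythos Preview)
Your proposal is correct and follows essentially the same route as the paper: apply Proposition \ref{leraycoh} with $R=\ZG$, compute $H^p(M,\ZG)\cong H^p_c(\til M)\cong H_{2n-p}(\til M)$ via Poincar\'e duality, and read off items (1)--(5) from the resulting vanishing pattern and the single nonzero value $H^{2n}(M,\ZG)=\Z$. The paper's proof is terser --- it just lists the three cohomology computations and invokes Proposition \ref{leraycoh} --- and it obtains the vanishing of $H_{n-i}(\til M)$ for $0<i<n$ directly from $\pi_{n-i}(\til M)=0$ rather than via the wedge-of-spheres description, but the substance is identical; your explicit appeal to Proposition \ref{cx=htstein} to pin down $\dim_\C M=n$ is a reasonable clarification, and your flag about torsion-freeness is a legitimate caveat that the paper leaves implicit.
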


\begin{proof} This follows from Proposition \ref{leraycoh} putting $R=\ZG$ and using the following two facts:
\begin{enumerate}
\item $H^{n+i}(M,\, \ZG)=H^{n+i}_c(\til{M})  = H_{n-i}(\til{M})=\pi_{n-i}(\til{M})=0$ for $0< i<n$.
\item $H^p(M,\, \ZG) = H^{p}_c(\til{M})=H_{2n-p}(\til{M})= 0 $ $0< p <n$, where the last equality is from Theorem \ref{af}
\item $H^{2n}(M,\, \ZG)  = H^{2n}_c(\til{M})= \Z$
\end{enumerate} \end{proof}

\begin{rmk} Note that the proof of Corollary \ref{leraycohstein} does not use the complex analytic (Stein) properties of the universal cover $\til{M}$.
It uses only the following topological facts: \\
a) $M$ is a closed orientable $2n$--dimensional manifold, \\
b) $\til M$ is homotopy equivalent to a wedge of $n-$spheres. \\ Thus the conclusions of Corollary \ref{leraycohstein} hold whenever $G= \pi_1(M)$ with $M$
satisfying properties (a), (b). \label{leraycohsteinrmk} \end{rmk}

\subsection{Higher Group Cohomologies}\label{hngzg}

\begin{prop} If $ht_\SSS(G)\,>\, 2$, then either $H^2(G,\,\ZG)\,=\,0$ or $G$ is the fundamental group of a Riemann surface. \label{h2stein} \end{prop}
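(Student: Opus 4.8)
The plan is to run the analysis of Corollary \ref{leraycohstein} in the special case $n=2$, and then invoke Kleiner's theorem (Theorem \ref{kleiner}) to identify $G$. First I would apply Proposition \ref{cx=htstein}: since $ht_\SSS(G) > 2$ we may assume $ht_\SSS(G)$ is finite and strictly greater than $2$ (if it is $\infty$ the argument below needs only trivial modification, since we still get a projective $M$ with Stein universal cover and $\pi_2(M)=0$), so there is a smooth projective surface $M$ with $\pi_1(M)=G$, Stein universal cover $\til M$, and $\pi_2(M)=0$. By Theorem \ref{af} applied with $n=2$, together with $\pi_2(\til M)=\pi_2(M)=0$, Hurewicz gives $H_i(\til M)=0$ for $i\neq 0$, so $\til M$ is contractible and $M$ is already a $K(G,1)$. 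Wait — that would force $ht_\SSS(G)=\infty$, contradicting finiteness; so in the finite case the only surviving possibility is that $ht_\SSS(G)=\infty$ after all, which is fine. So really the hypothesis $ht_\SSS(G)>2$ produces a projective surface $M$ with Stein universal cover and $\pi_2(M)=0$, hence (by Corollary \ref{surf}, using Gromov's one-endedness Theorem \ref{1end} if $G$ is infinite) $G$ is a PD$(4)$ group and $M$ is aspherical.

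Hmm, that route makes $G$ a PD$(4)$ group and doesn't obviously touch $H^2(G,\ZG)$. Let me instead take the honest reading: the interesting content is when $G$ is \emph{not} such a group, i.e. when the surface $M$ realizing things has $\pi_2(M)=0$ is impossible, so take $M$ realizing $ht_\SSS(G)$ of complex dimension $n = ht_\SSS(G) > 2$ and feed $R = \ZG$ into Proposition \ref{leraycoh}/Corollary \ref{leraycohstein}. The key output is part (1): $H^p(G,\ZG)=0$ for $0<p<n$. Since $n>2$ this already gives $H^2(G,\ZG)=0$ outright — unless $n$ could be $2$, which it cannot since $n = ht_\SSS(G) > 2$. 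So in the finite case we are done immediately, with the first alternative $H^2(G,\ZG)=0$. The remaining case is $ht_\SSS(G)=\infty$: then $G$ admits a projective $K(G,1)$ with Stein universal cover, so $G$ is a PD$(2m)$ group for some $m\geq 2$ (Proposition \ref{ndim}); for such $m\geq 2$, Poincaré duality gives $H^2(G,\ZG)=H_{2m-2}(G,\ZG)=0$ unless $m=1$, so again $H^2(G,\ZG)=0$ — but then where do Riemann surfaces enter?

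They enter precisely in the borderline $n=2$ case, which the statement $ht_\SSS(G)>2$ is designed to \emph{allow} as an edge: if $ht_\SSS(G)=\infty$ but the minimal-dimension realizing manifold is a projective \emph{surface}, then $G$ is PD$(4)$ and $H^2(G,\ZG)$ need not vanish. More to the point, I think the intended argument is: suppose $H^2(G,\ZG)\neq 0$; I want to show $G$ is a surface group. Take $M \in \SSS$ realizing $ht_\SSS(G)$, of complex dimension $n = ht_\SSS(G)$. If $n > 2$, Corollary \ref{leraycohstein}(1) forces $H^2(G,\ZG)=0$, contradiction; so $n \leq 2$. The case $n=2$ is what we must analyze: $M$ is a projective surface, Stein universal cover, $\pi_2(M)=0$ — but by Corollary \ref{surf} this makes $M$ a $K(G,1)$, contradicting $ht_\SSS(G)<\infty$; so in fact $ht_\SSS(G)=\infty$, and then $G$ is a PD$(4)$ group, and $H^2(G,\ZG) = H_2(G,\ZG)$ which can be nonzero. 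Now here is where Kleiner's Theorem \ref{kleiner} must be used: $G$ is FP$_2$ (it is a PD group, in particular FP$_\infty$), and we need $H^2(G,\ZG)$ to contain a $1$-dimensional $G$-invariant submodule. For a PD$(4)$ group, $H^2(G,\ZG) = H_2(G,\ZG)$, and one must argue — using that $M$ is a projective surface, so by the Lefschetz structure / the classification of projective surface groups, or via the fact that $H_2$ of an aspherical projective surface carries a natural class, the fundamental class of a curve — that such an invariant line exists; then Theorem \ref{kleiner} concludes $G$ is virtually a surface group, and one upgrades ``virtually'' to ``is'' a Riemann-surface group using that $G$ is itself a projective (Kähler) group and finite-index subgroups of surface groups that are Kähler are surface groups, or by a direct covering-space argument on $M$.

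\textbf{Main obstacle.} The delicate point is the $n = 2$ case: extracting a $1$-dimensional $G$-invariant submodule of $H^2(G,\ZG)$ so that Kleiner's theorem applies. For a PD$(4)$ group $G = \pi_1(M)$ with $M$ a projective surface, I expect one argues that the image of $H^2(M,\Z) \to H^2(M,\ZG) = H^2(G,\ZG)$ (or a suitable map coming from the Leray sequence of Proposition \ref{leraycoh}) detects an ample class, and ampleness/positivity forces the relevant submodule to be $G$-invariant and $1$-dimensional — essentially this is where the projectivity (not just the manifold structure) of $M$ is used. Passing from ``$G$ is virtually a surface group'' to ``$G$ is the fundamental group of a Riemann surface'' is then routine (a finite-index surface subgroup of a Kähler group, together with the fact that Kähler subgroups of surface groups are surface groups, and one lifts to a finite cover of the projective surface $M$ which must then be a curve).
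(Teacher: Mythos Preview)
Your handling of the finite case is essentially right: if $ht_\SSS(G)=n$ is finite and $>2$, then any $M\in\SSS$ realizing the height has complex dimension $m>2$ (else $m\le 2$ together with $\pi_2(M)=0$ would force $M$ to be aspherical by Corollary~\ref{surf} or the Riemann-surface case, contradicting finiteness), and then Proposition~\ref{leraycoh} with Poincar\'e duality and Theorem~\ref{af} gives $H^2(G,\ZG)=H^2_c(\til M)=H_{2m-2}(\til M)=0$ since $2m-2>m$. This is exactly what the paper does.

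Your treatment of the infinite case, however, goes badly wrong. You assert that for a PD$(4)$ group $G$ one has $H^2(G,\ZG)=H_2(G,\ZG)$ ``which can be nonzero''. This is false: for any PD$(n)$ group, $H^i(G,\ZG)=0$ for $i\neq n$; in particular a PD$(4)$ group has $H^2(G,\ZG)=0$. (Equivalently, $H_2(G,\ZG)=0$ since $\ZG$ is free over itself.) So there is no obstacle in the PD$(4)$ case, and your entire ``main obstacle'' paragraph --- extracting a $G$-invariant line in $H^2(G,\ZG)$ and invoking Kleiner's Theorem~\ref{kleiner} --- is chasing a phantom. Kleiner's theorem is not used in this proposition at all (it appears instead in the neighboring Proposition~\ref{h2stein1}, which treats $ht_\SSS(G)=2$).

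You also miss where the Riemann-surface alternative actually comes from. When $ht_\SSS(G)=\infty$, two things can happen: either there exist $M\in\SSS$ with $\pi_1(M)=G$ and $\pi_i(M)=0$ for $1<i<n$ with $n$ arbitrarily large (so the finite-case computation above applies verbatim and gives $H^2(G,\ZG)=0$), or there is an aspherical $M\in\SSS$ of some complex dimension $m$, making $G$ a PD$(2m)$ group. In the latter case $H^2(G,\ZG)=0$ whenever $m\ge 2$; the \emph{only} way to get $H^2(G,\ZG)\neq 0$ is $m=1$, i.e.\ $M$ is a Riemann surface. That is the entire content of the second alternative in the statement --- it is not reached via Kleiner's theorem or any PD$(4)$ analysis. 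Your unjustified assumption ``$m\ge 2$'' is precisely what caused you to lose track of this.
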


\begin{proof} We first deal with the case where $ht_\SSS(G)\,=\,n \,>\, 2$ is finite. 
Let $M$ be a projective variety 
of (complex) dimension $m$ realizing the homotopical height.  By Corollary \ref{surf},
we have $m \,>\, 2$.

Then by Proposition \ref{leraycoh}, $H^2(G, \,\ZG)
\,=\, H^2(M,\,\ZG)$ and the latter group is isomorphic to $H^2_c(\til{M})$. By Poincar\'e Duality
and Theorem \ref{af}, $H^2_c(\til{M})\,=\,H_{2m-2}(\til{M})\,=\,0$ and
so $H^2(G,\, \ZG)\,=\,0$.

Next suppose that $ht_\SSS(G)$ is infinite. Then one of the following two holds:
\begin{enumerate}
\item[(a)] For all $m \,>\, 2$, there exists $n\,\geq\, m$
and  a projective variety  $M\,\in\, \SSS$ with $\pi_1(M)\,=\,G$ such that
$\pi_i(M)\,= \,0$ for all $1\,<\,i\,<\,n$, but $\pi_n(M) \,\neq\, 0$. The same proof
as above goes through in this case to show that $H^2(G, \,\ZG) \,=\,0$.

\item[(b)] There exists a complex projective variety  $M \,\in \,\SSS$ of (complex)
dimension $m$ such that $M$ is a $K(G,1)$ space.
Then from Proposition \ref{ndim}, this $G$ is a PD(2m) group.
If the complex dimension of $M$ is greater than one, then again $H^2(G,\,\ZG)\,=\,0$.
\end{enumerate}

The only case left is that the complex dimension of $M$ is equal to one, when $M$ is a Riemann surface and $G$ 
is the fundamental group of a Riemann surface.
\end{proof}

The following slightly weaker statement holds in all dimensions.

\begin{prop} If $ht_\SSS(G)\, >\, n$, then either $H^n(G,\,\ZG)\,=\,0$ or $G$ is
a {\rm PD}(n) group, in which case $H^n(G,\,\ZG)\,=\,\Z$. \label{hnstein}
\end{prop}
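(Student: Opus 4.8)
The plan is to mimic the proof of Proposition \ref{h2stein}, replacing the dimension-two input (Corollary \ref{surf}) with the general-dimension facts available from Theorem \ref{af} and Proposition \ref{leraycoh}, and to use Corollary \ref{leraycohstein}(1) together with Poincar\'e duality to pin down when $H^n(G,\ZG)$ can fail to vanish. First I would split into the two cases according to whether $ht_\SSS(G)$ is finite or infinite, exactly as in the proof of Proposition \ref{h2stein}.

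In the finite case, let $M$ be a smooth projective variety realizing $ht_\SSS(G)=:N>n$; by Proposition \ref{cx=htstein} we may take $\dim_\C M = N$, so $M$ is a closed orientable $2N$--manifold with Stein universal cover, $\pi_i(M)=0$ for $1<i<N$, and $\til M$ homotopy equivalent to a wedge of $N$--spheres. Applying Proposition \ref{leraycoh}(1) with $R=\ZG$ gives $H^n(G,\ZG)=H^n(M,\ZG)$ for $0<n<N$; and $H^n(M,\ZG)=H^n_c(\til M)\cong H_{2N-n}(\til M)$ by Poincar\'e duality. Since $\til M$ is homotopy equivalent to an $N$--dimensional complex (Theorem \ref{af}) and $2N-n>N$ whenever $n<N$, this homology group vanishes, so $H^n(G,\ZG)=0$. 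In the infinite case one of two subcases holds: either for every $m>n$ there is an $M\in\SSS$ with $\pi_1=G$, $\pi_i(M)=0$ for $1<i<m$ and $\pi_m(M)\neq 0$ — in which case the same computation with a large enough $m$ gives $H^n(G,\ZG)=0$ — or there is an $M\in\SSS$ of complex dimension $m$ which is a $K(G,1)$, so by Proposition \ref{ndim} the group $G$ is a ${\rm PD}(2m)$ group; if $2m\neq n$ then $H^n(G,\ZG)=0$ again, and if $2m=n$ then $G$ is a ${\rm PD}(n)$ group with $H^n(G,\ZG)\cong\Z$. The only remaining possibility across all cases is $m=\dim_\C M$ small enough that $2m=n$, i.e. $G$ a ${\rm PD}(n)$ group, which is the conclusion; note $n$ must be even in that case, but the statement as written already subsumes this since a ${\rm PD}(n)$ group has $H^n(-,\ZG)\cong\Z$.

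The one point requiring a little care — and the only candidate for a genuine obstacle — is making sure the $K(G,1)$ subcase is handled for all parities and that we really can conclude $H^n(G,\ZG)=\Z$ rather than merely nonzero when $G$ is ${\rm PD}(n)$. For the latter, orientability of the projective manifold $M$ forces the dualizing module to be $\Z$ with trivial action (a smooth complex manifold is canonically oriented), so $H^n(G,\ZG)\cong H_0(G,\ZG)=\Z$; this is standard from \cite{brown, be}. I do not expect any serious difficulty: the argument is essentially a dimension count feeding the exact sequences of Proposition \ref{leraycoh} and Corollary \ref{leraycohstein}, and every ingredient — Theorem \ref{af}, Proposition \ref{ndim}, Proposition \ref{cx=htstein}, Proposition \ref{leraycoh} — is already in place. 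The cleanest writeup simply runs the proof of Proposition \ref{h2stein} verbatim with "$2$" replaced by "$n$" and "Riemann surface / ${\rm PD}(2)$" replaced by "${\rm PD}(n)$ group", invoking Corollary \ref{surf}'s higher-dimensional analogue (Proposition \ref{ndim}) at the step where one rules out a low-dimensional $K(G,1)$ model.
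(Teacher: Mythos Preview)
Your proposal is correct and follows essentially the same approach as the paper's proof: both split into the finite and infinite $ht_\SSS(G)$ cases, use Proposition~\ref{leraycoh}(1) together with Poincar\'e duality and Theorem~\ref{af} to kill $H^n(G,\ZG)$ in the generic case, and isolate the $K(G,1)$ subcase via Proposition~\ref{ndim} as the only source of a ${\rm PD}(n)$ group. The only cosmetic difference is that you invoke Proposition~\ref{cx=htstein} to pin down $\dim_\C M = N$ exactly, whereas the paper instead argues directly from Proposition~\ref{ndim} that the realizing variety must have $\dim_\C M > n$; either route suffices.
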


\begin{proof}
As before,  first suppose that $ht_\SSS(G)$ is finite. Let $M$ be a projective variety 
of complex dimension $m$ realizing the homotopical height. From
Proposition \ref{ndim} we have $m \,>\, n$.

Again, as in the proof of Proposition \ref{h2stein},
we have $$H^n(G, \,\ZG) \,= \,H^n(M,\,\ZG) \,= \,H^n_c(\til{M})\,=\,
H_{2m-n}(\til{M})\,=\,0$$ and so $H^n(G, \,\ZG) \,=\,0$.

If on the other hand, $ht_\SSS(G)$ is infinite, then, one of the
following two holds:
\begin{enumerate}
\item[(a)] For all $m \,>\, n$, there exists $k\,\geq\, m$
and  a projective variety $M\,\in\, \SSS$ with $\pi_1(M)\,=\,G$ such that $\pi_i(M)\,=
\,0$ for all $1\,<\,i\,<\,k$, and $\pi_k(M)\,\neq\, 0$. The same proof
as above goes through in this case to show that $H^n(G, \,\ZG)\,=\,0$.

\item[(b)] There exists a complex projective variety  $M\,\in\, \SSS$ of (complex)
dimension $m$ such that $M$ is a $K(G,1)$ space.
Then, again from Proposition \ref{ndim}, this $G$ is a PD(2m) group.
If $n \,\neq\, 2m$, then again $H^n(G,\,\ZG)\,=\,0$.
Otherwise, $n\,=\,2m$, $G$ is a PD(n) group, and $H^n(G,\,\ZG)\,=\,\Z$.
\end{enumerate}
This completes the proof.
\end{proof}

We next investigate $H^2(G,\,\ZG)$ when $ht_\SSS(G)\,=\, 2$.

\begin{prop} Suppose that $ht_\SSS(G)\,=\,2$, and $M$ is a smooth complex projective
surface realizing the homotopical height of $G$.
Assume that the action of $G \,=\, \pi_1(M)$ on $\pi_2(M)$ is the trivial one. Then
either $H^2(G,\, \ZG)\,=\,0$ or $G$ is virtually the fundamental group of a
Riemann surface. \label{h2stein1}
\end{prop}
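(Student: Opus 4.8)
The plan is to run the spectral-sequence machinery of Proposition \ref{leraycoh} (in its topological incarnation, Remark \ref{leraycohsteinrmk}) for the smooth projective surface $M$ realizing $ht_\SSS(G)=2$, and then feed the result into Kleiner's Theorem \ref{kleiner}. Here $n=2$, so $\til M$ is homotopy equivalent to a wedge of $2$--spheres, and $\pi_2(M)=\pi_2(\til M)=H_2(\til M)$ is a free abelian group on which, by hypothesis, $G$ acts trivially. Hence $H^2(\til M,\,\ZG)=\mathrm{Hom}(H_2(\til M),\,\ZG)$ carries the induced $G$--action, which is just the diagonal action on a direct sum (or product) of copies of $\ZG$; in particular $H^2(\til M,\,\ZG)^G$ is a sum of copies of $(\ZG)^G$.

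First I would extract from Corollary \ref{leraycohstein} (applicable by Remark \ref{leraycohsteinrmk}, since $M$ is a closed orientable $4$--manifold with $\til M\simeq\bigvee S^2$) the exact sequence
\begin{equation*}
0 \,\longrightarrow\, H^2(G,\,\ZG) \,\longrightarrow\, H^2(M,\,\ZG) \,\longrightarrow\, (H^2(\til M,\,\ZG))^G \,\longrightarrow\, H^3(G,\,\ZG) \,\longrightarrow\, 0.
\end{equation*}
By Poincaré duality on the open manifold $\til M$ together with Theorem \ref{af}, $H^2(M,\,\ZG)=H^2_c(\til M)=H_2(\til M)=\pi_2(M)$, which is free abelian with trivial $G$--action. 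So $H^2(G,\,\ZG)$ is a subgroup of this free abelian group; in particular it is a (possibly trivial) $G$--module with \emph{trivial} $G$--action. Next I want to bring in the hypothesis that $ht_\SSS(G)=2$ rather than $>2$: this is precisely what guarantees $\pi_2(M)\neq 0$, but more importantly it prevents $M$ from being aspherical. The key extra input is that $G$ is $FP_2$ (indeed it is a K\"ahler/projective group, so of type $FP_\infty$), which is the hypothesis needed for Kleiner's theorem.

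The crux of the argument is to show that if $H^2(G,\,\ZG)\neq 0$ then $H^2(G,\,\ZG)$ contains a $1$--dimensional $G$--invariant submodule, whereupon Theorem \ref{kleiner} gives that $G$ is virtually a surface group and we are done. Since we have just seen the $G$--action on $H^2(G,\,\ZG)$ is trivial and the group is a nonzero subgroup of a free abelian group, any $\Z$--direct summand — or, after tensoring with $\Q$, any $1$--dimensional $\Q$--subspace — is automatically $G$--invariant and $1$--dimensional. Kleiner's theorem is stated over $\Z$ for the "$FP_2$ implies virtually surface" direction and over $\Q$ for the PD(2) characterization; so I would invoke it with the $G$--invariant $\Z$-submodule $\Z\cdot v$ for any nonzero $v\in H^2(G,\,\ZG)$ (which exists and is a pure/direct-summand-generating element after possibly dividing by a common factor, using freeness of the ambient group), concluding that $G$ is virtually the fundamental group of a Riemann surface. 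Strictly one should also note that "virtually a surface group'' here means virtually a closed orientable surface group of positive genus, since the finite and $\Z^2$ cases are excluded: $G$ is infinite by Theorem \ref{1end} (it is an infinite K\"ahler group, as a finite group would force $M$ aspherical-impossible, or rather force $\pi_2(M)=0$), and a $\Z^2$ would make $G$ itself PD(2), contradicting $ht_\SSS(G)=2<\infty$ together with $\pi_2(M)\neq0$; but this cleanup is routine.

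The main obstacle I anticipate is the passage from "$H^2(G,\,\ZG)$ is a nonzero abelian group with trivial $G$--action sitting inside a free abelian group'' to "$H^2(G,\,\ZG)$ has a $1$--dimensional $G$--invariant \emph{submodule} in the sense Kleiner requires.'' One has to be careful that Kleiner's hypothesis is about a submodule of the $\ZG$--module $H^2(G,\,\ZG)$ that is $1$--dimensional \emph{over $\Z$} (i.e.\ infinite cyclic) and $G$--invariant; triviality of the action makes $G$--invariance automatic, and infinite-cyclicity is free: pick any $0\neq v$, the subgroup $\Z v$ works since it is torsion-free (subgroup of a free abelian group) hence $\cong\Z$. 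If the cleanest route is instead via the rational version, I would tensor the whole exact sequence with $\Q$ (using $FP_\infty$ to justify that group cohomology commutes with this) and run the same argument to get a $1$--dimensional $\Q[G]$-invariant subspace, then apply the $\Q$--form of Theorem \ref{kleiner}. Either way the geometric content is entirely in Corollary \ref{leraycohstein} plus Poincaré duality on $\til M$; the rest is the representation-theoretic bookkeeping around the trivial-action hypothesis.
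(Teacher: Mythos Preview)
Your proposal is correct and follows essentially the same route as the paper: embed $H^2(G,\ZG)$ into $H^2(M,\ZG)\cong\pi_2(M)$ via the exact sequence of Proposition \ref{leraycoh}/Corollary \ref{leraycohstein}, use the trivial-action hypothesis to conclude $H^2(G,\ZG)$ is a trivial $\ZG$--module, and if nonzero invoke Kleiner's Theorem \ref{kleiner} on the resulting copy of $\Z$. The extra worry about direct summands, the $\Q$--version, and the $\Z^2$/finite cleanup is unnecessary: Kleiner only needs a $1$--dimensional $G$--invariant submodule, and any nonzero $\Z v$ suffices once the action is trivial.
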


\begin{proof} We use Proposition \ref{leraycoh} with $n\,=\,2$. 

{}From Poincar\'e Duality and the Hurewicz' Theorem, 
$$H^2(M,\,\ZG)\,=\, H^2_c(\til{M})\,=\,H_2(\til{M})\,=\,\pi_2(M)$$
and the latter is a trivial $\ZG$ module by hypothesis.
By Theorem \ref{af}, the group $\pi_2(M)$ is free abelian.

The long exact sequence of Proposition \ref{leraycoh} shows that $H^2(G,\, \ZG)$ is a
$\ZG$--submodule of $H^2(M,\,\ZG)$. Hence $H^2(G,\, \ZG)$ is a trivial $\ZG$ module. 

Then either $H^2(G,\,\ZG)\,=\,0$ or it contains a copy of the trivial
$\ZG$-module $\Z$.
The latter case forces $G$ to be  virtually the fundamental group of a Riemann surface by
Kleiner's Theorem \ref{kleiner}.\end{proof}

\subsection{Groups with a quasi-projective $K(G,1)$ space}\label{kollar}

We shall denote the class of smooth complex quasiprojective varieties
as $\QP$. We recall Koll\'ar's question \ref{kollar-qn} below and observe in this subsection a simple but curious connection with Carlson-Toledo's Conjecture \ref{tolcon}. 

\begin{qn}[\cite{kollar-shaf}]\label{kg1} If a torsion-free group
$G\,=\,\pi_1(M)$ for some $M\,\in \,\PP$, does there exist a $K(G,1)$ space in $\QP$?
\end{qn}

The only known set of  counter-examples to Question \ref{kg1} arise from some recent examples due
to Dimca, Papadima and Suciu \cite{dps-bb} (see Theorem \ref{dps-gen-th} below for generalizations). We believe therefore that the class of groups admitting a quasiprojective 
$K(G,1)$ space deserves special attention. We denote the class of  quasiprojective 
$K(G,1)$ spaces as $\QPK$. 

\begin{prop} Suppose $M\,\in\, \QPK$ contains a positive dimensional projective
subvariety. Then $b_2(\pi_1(M)) \,>\, 0$. \label{qprojb2}
\end{prop}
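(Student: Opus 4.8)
The plan is to exploit the fact that a positive-dimensional projective subvariety $Y \subset M$ carries a nonzero class in $H^2$ that survives in the cohomology of $M$, and then to transport this to group cohomology using the hypothesis that $M$ is itself a $K(G,1)$ space, so that $H^2(M,\reals) = H^2(G,\reals)$ and hence $b_2(\pi_1(M)) = b_2(M)$.

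First I would pick a smooth projective curve $C \subset Y \subset M$ (a generic curve section of $Y$, or just any irreducible projective curve inside $Y$; if $\dim_\cc Y \geq 1$ such a $C$ exists). The fundamental class $[C] \in H_2(M,\reals)$ is nonzero: indeed, restricting an ample (hence K\"ahler) class $\omega$ from an ambient projective space — or, more intrinsically, using a K\"ahler form on the projective subvariety $Y$ — one gets $\int_C \omega > 0$, so $[C]$ pairs nontrivially with $[\omega]|_Y$ pulled back to a class in $H^2(M,\reals)$. Concretely: the inclusion $i : C \hookrightarrow M$ and the ample class give $\langle i^\ast[\omega], [C]\rangle = \deg_\omega C > 0$, which forces $i_\ast[C] \neq 0$ in $H_2(M,\reals)$ and therefore $H_2(M,\reals) \neq 0$, equivalently $b_2(M) > 0$.

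Then, since $M \in \QPK$ means $M$ is a $K(G,1)$ space with $G = \pi_1(M)$, the cohomology of $M$ with any coefficients coincides with the group cohomology of $G$; in particular $b_2(G) := \dim_\reals H^2(G,\reals) = \dim_\reals H^2(M,\reals) = b_2(M) > 0$, which is the assertion.

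The only genuinely delicate point is making sure the K\"ahler/ample class on the projective subvariety $Y$ actually pulls back to (or is the restriction of) a class in $H^2(M,\reals)$ that pairs nontrivially with $[C]$ — if $M$ is merely quasi-projective one cannot assume $M$ is K\"ahler, so one should argue via an ambient projective embedding of the projective subvariety $Y$ (a projective variety is always projective-embeddable) and the naturality of the cap/cup pairing: the restriction map $H^2(M,\reals) \to H^2(Y,\reals) \to H^2(C,\reals)$ need not be used in that direction; rather, one uses that $i_\ast : H_2(C,\reals) \to H_2(M,\reals)$ is injective because already the composite $H_2(C,\reals) \to H_2(Y,\reals) \hookrightarrow H_2(\mathbb{P}^N,\reals)$ detects $[C]$ (degree of a curve in projective space is positive). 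Since this composite factors through $H_2(M,\reals)$ only if $Y \subset M \subset \mathbb{P}^N$, and in general $Y$ sits in $M$ but $M$ need not sit in $\mathbb{P}^N$, the cleanest route is: $Y$ itself has $b_2(Y) > 0$ as a projective variety, the class $[C] \in H_2(Y,\reals)$ is nonzero, and I must check $i_\ast : H_2(Y,\reals) \to H_2(M,\reals)$ does not kill it. This is where I expect to spend effort — most likely by choosing $C$ to be a smooth very ample curve in $Y$ and using that its normal data / a transverse argument shows $[C]$ is nonzero in $M$, or by invoking that a projective subvariety of a quasi-projective variety carries a class pairing positively against a relative/Poincar\'e–Lefschetz-type class. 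Modulo this transversality-flavored step, the argument is short.
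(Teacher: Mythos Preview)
Your overall strategy is the same as the paper's: find a curve $C$ inside the projective subvariety, show its class is nonzero in $H_2(M,\reals)$, and conclude via $b_2(M)=b_2(G)$ since $M$ is a $K(G,1)$. But you leave the crucial step unfinished and, in fact, talk yourself out of the correct argument by misreading the hypothesis.

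The gap is your claim that ``$M$ need not sit in $\mathbb{P}^N$''. It does: $M\in\QPK$ means in particular that $M$ is a smooth \emph{quasi-projective} variety, so by definition there is an embedding $M\subset\overline{M}\subset\C{\mathbb P}^N$ with $\overline{M}$ projective and $M$ Zariski-open in it. This is precisely what the paper uses. Restricting the Fubini--Study class $\omega$ from $\C{\mathbb P}^N$ to $\overline{M}$ and then to $M$ produces a class in $H^2(M,\reals)$, and for any curve $Z$ inside the projective subvariety one has $\int_Z\omega>0$. Equivalently, the restriction $H^2(M,\C)\to H^2(Z,\C)$ is nonzero (it factors through $H^2(\overline{M},\C)\to H^2(Z,\C)$, which is nonzero by the integral), so $b_2(M)>0$. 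No transversality or Poincar\'e--Lefschetz argument is needed; the ``delicate point'' you flag dissolves once you use the quasi-projective hypothesis. Your related remark that a quasi-projective $M$ cannot be assumed K\"ahler is also off: any smooth quasi-projective variety inherits the restricted Fubini--Study K\"ahler form.
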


\begin{proof}
Let $N\,\subset\, M$ denote a positive dimensional projective subvariety of $M$. Let
$$M\, \subset\, \overline{M}\,\subset\, \C{\mathbb P}^N$$ be the closure of an embedding
of $M$. Restrict the usual Fubini--Study K\"ahler class $\omega$ on $\C{\mathbb P}^N$ to
$\overline{M}$ and thence to $N$. Let $Z\,\subset\, N$ be an algebraic curve in $N$ (obtained by
taking enough hyperplane sections of $N$, say). Then
\begin{equation}\label{exz}
\int_Z \omega \,\neq\, 0\, .
\end{equation}

Let $i_0\,:\, M \,\hookrightarrow\, \overline{M}$,
$i\,:\, N \,\hookrightarrow\, M$ and $j\,:\, Z \,\hookrightarrow\, N$ be the
inclusions. From \eqref{exz} it
follows that
$$
(i_0 \circ i\circ j)^\ast\,:\, H^2(\overline{M},\, \C)\,\longrightarrow\, H^2(Z,\, \C)
$$
is non-zero. Therefore, $(i\circ j)^\ast\,:\, H^2(M,\, \C)\,\longrightarrow
\,H^2(Z,\, \C)$ is non-zero. Hence $b_2(M)$ is nonzero.
Since $M$ is a $K(\pi_1(M),1)$ space, this implies that $b_2(\pi_1(M)) > 0$.
\end{proof}

\begin{rmk} All known examples of projective groups $G$ having quasiprojective $K(G,1)$'s are constructed in a way that the quasi-projective
$K(G,1)$ contains a projective subvariety with $G$ as its fundamental group (see \cite[Ch. 8]{abckt} for instance). They thus satisfy the hypothesis
of Proposition \ref{qprojb2} \end{rmk}

\begin{cor} Suppose $M\,\in\, \QPK$ admits a nonconstant holomorphic map $\phi$ from a
compact complex analytic manifold $N$. 
Then $b_2(\pi_1(M)) \,>\, 0$. \label{qprojb2cor} \end{cor}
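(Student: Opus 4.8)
The plan is to reduce Corollary \ref{qprojb2cor} to Proposition \ref{qprojb2} by showing that the nonconstant holomorphic map $\phi\colon N\longrightarrow M$ forces $M$ to contain a positive-dimensional projective subvariety, namely (a curve inside) the image $\phi(N)$. First I would observe that since $N$ is compact and $\phi$ is holomorphic, the image $\phi(N)\subset M$ is a compact complex analytic subset of $M$; by Chow's theorem applied in the ambient projective closure $\overline{M}\subset\C{\mathbb P}^K$, $\phi(N)$ is in fact a (projective) algebraic subvariety of $M$. Because $\phi$ is nonconstant, $\dim_{\C}\phi(N)\geq 1$, so $\phi(N)$ is exactly the kind of positive-dimensional projective subvariety $N'\subset M$ required in the hypothesis of Proposition \ref{qprojb2}.

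Next I would simply invoke Proposition \ref{qprojb2} with this subvariety $N' = \phi(N)$ to conclude $b_2(\pi_1(M)) > 0$, which is the desired statement. One could alternatively bypass the appeal to Chow's theorem and argue directly, mimicking the proof of Proposition \ref{qprojb2}: pull back the Fubini--Study class $\omega$ from the projective closure $\overline M$ to $N$ via $i_0\circ\phi$, note that $\phi^{\ast}i_0^{\ast}\omega$ is a nonzero class in $H^2(N,\C)$ (since its top power integrates to a positive multiple of the volume of the image, using that $\phi$ is nonconstant and $N$ compact Kähler — or more carefully, restricting to a curve $Z\subset N$ on which $\phi$ is nonconstant so that $\int_Z\phi^{\ast}i_0^{\ast}\omega\neq 0$), and deduce that $i_0^{\ast}\colon H^2(\overline M,\C)\to H^2(M,\C)$ composed with $\phi^{\ast}$ is nonzero, hence $b_2(M)\neq 0$; since $M\in\QPK$ is a $K(\pi_1(M),1)$, this gives $b_2(\pi_1(M))>0$.

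The only subtlety, and the point I would be most careful about, is ensuring one genuinely obtains a \emph{nonzero} degree-two cohomology class after pulling back. If $N$ itself happened to be, say, a high-dimensional manifold on which $i_0^{\ast}\omega$ pulls back to something whose self-intersection could conceivably vanish, the naive argument stalls; the clean fix is to pass to a curve $Z\subset N$ chosen so that $\phi|_Z$ is nonconstant (possible since $\phi$ is nonconstant, by slicing $N$ with generic hyperplane sections or generic complex lines through a point where $d\phi\neq 0$), and then $\int_Z(i_0\circ\phi|_Z)^{\ast}\omega = \int_{\phi(Z)}\omega \cdot(\deg) \neq 0$ exactly as in \eqref{exz}. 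With that in hand the argument of Proposition \ref{qprojb2} applies verbatim. I expect this curve-slicing step to be the main (though still routine) obstacle; everything else is formal.
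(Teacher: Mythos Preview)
Your proposal is correct and its primary line of argument---showing via Chow's theorem that $\phi(N)$ is a positive-dimensional projective subvariety of $M$ and then invoking Proposition~\ref{qprojb2}---is exactly the paper's proof. Your alternative direct argument and the curve-slicing discussion are more than the paper provides, but they are sound and simply unpack what Proposition~\ref{qprojb2} already does.
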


\begin{proof} Consider $M\,\subset\, \overline{M}\subset \C{\mathbb P}^N$ as in the proof of
Proposition \ref{qprojb2}. Then, by Chow's Theorem, the image $\phi(N)\,\subset\, M\,
\subset\, \C{\mathbb P}^N$ is algebraic. Since $\phi$ is
non-constant, $\phi(N)$ is positive dimensional, and hence Proposition \ref{qprojb2}
completes the proof.
\end{proof}

\begin{rmk}\label{jou-trick}
If $G$ is an infinite K\"ahler group admitting a quasiprojective $K(G,1)$
space $M$, in order to show that $b_2(G) \,>\, 0$, according to Corollary \ref{qprojb2cor}
it suffices to show that $M$ admits a nonconstant holomorphic map from some compact
complex analytic manifold.

Recall Jouanolou's trick \cite{jo} (cf. Lemma \ref{aff=qp}), where he showed that for any quasi-projective variety $M$,
there exists an affine variety $W$  mapping surjectively onto $M$ with contractible
fibers. Thus, if a group $G$ admits a quasi-projective $K(G,1)$ space, it also
admits an affine  $K(G,1)$ space. Since affine varieties do not
admit any nonconstant holomorphic maps from any compact complex analytic manifold,
the restriction on $M$ in Corollary \ref{qprojb2cor} is nontrivial.
\end{rmk}

Instead of demanding that $M$ admits  a nonconstant holomorphic map from a compact complex analytic manifold, we could
also demand that $M$ receives a symplectic map $\phi$ from a symplectic manifold of
positive dimension.

\begin{prop} Suppose $M\,\in\, \QPK$ equipped with some symplectic structure
 receives a symplectic map $\phi$ from a compact symplectic map manifold $N$
of positive dimension. Then $b_2(\pi_1(M))\,>\, 0$. \label{qprojb2symp}
\end{prop}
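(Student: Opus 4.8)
The plan is to mimic the argument of Proposition \ref{qprojb2cor} almost verbatim, replacing the use of Chow's theorem (which produced an algebraic, hence positive-dimensional, subvariety whose fundamental cycle paired nontrivially with a K\"ahler class) with a symplectic volume argument. First I would fix a symplectic form $\omega_M$ on $M$; since $M\,\in\,\QPK$ is in particular a smooth quasi-projective variety, I may as well take $\omega_M$ to be the restriction of the Fubini--Study form under an embedding $M\,\subset\,\overline{M}\,\subset\,\C{\mathbb P}^N$, so that $[\omega_M]$ is (a rational multiple of) an integral class. Let $\phi\,:\,N\,\longrightarrow\, M$ be the given symplectic map, with $N$ a compact symplectic manifold of positive dimension, say $\dim N\,=\,2d$ with $d\,\geq\,1$, and $\phi^\ast\omega_M\,=\,\omega_N$.

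The key step is to produce a $2$-cycle in $N$ on which $\omega_M$ pulls back nontrivially, exactly as $Z$ was used in Proposition \ref{qprojb2}. Because $\omega_N^d$ is a volume form on the closed oriented manifold $N$, we have $\int_N \omega_N^d\,=\,\int_N (\phi^\ast\omega_M)^d\,\neq\,0$, so in particular $[\omega_N]^d\,\neq\,0$ in $H^{2d}(N,\,\R)$, forcing $[\omega_N]\,\neq\,0$ in $H^2(N,\,\R)$. Hence there exists a class $c\,\in\, H_2(N,\,\R)$ (equivalently, by taking rational and then integral approximations and clearing denominators, an integral homology class, or even the image of the fundamental class of a map from a closed surface) with $\langle [\omega_N],\,c\rangle\,\neq\,0$. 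Then $\langle \phi^\ast[\omega_M],\,c\rangle\,=\,\langle[\omega_N],\,c\rangle\,\neq\,0$, so $\phi^\ast\,:\,H^2(M,\,\R)\,\longrightarrow\, H^2(N,\,\R)$ is nonzero, and a fortiori $H^2(M,\,\R)\,\neq\,0$, i.e.\ $b_2(M)\,>\,0$. Since $M$ is a $K(\pi_1(M),1)$ space, $b_2(\pi_1(M))\,=\,b_2(M)\,>\,0$.

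The only genuinely delicate point — and the one I expect to be the main obstacle — is making precise the phrase ``symplectic map'': one must ensure that $\phi^\ast\omega_M$ is again a symplectic (in particular nondegenerate, hence top-power nonzero) form on $N$, or at least a closed $2$-form whose top power is a nonzero multiple of the orientation class. If ``symplectic map'' is taken to mean $\phi^\ast\omega_M\,=\,\omega_N$ for a chosen symplectic form $\omega_N$ on $N$, this is immediate; if it means only that $\phi$ intertwines the two structures in a weaker cohomological sense, one should instead phrase the hypothesis c-symplectically, as in the remark preceding Proposition \ref{b20-sp}, and the same computation $[\phi^\ast\omega_M]^d\,=\,[\omega_N]^d\,\neq\,0$ goes through. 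Everything else is the routine Poincar\'e-duality/Hurewicz-free argument already used for Corollary \ref{qprojb2cor}: the substantive content is simply that a compact symplectic manifold has nonzero second real cohomology and this nonvanishing is detected through $\phi^\ast$.
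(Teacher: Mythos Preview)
Your proposal is correct and follows essentially the same approach as the paper's proof: interpret ``symplectic map'' as $\phi^\ast\omega_M=\omega_N$, observe that $[\omega_N]\neq 0$ in $H^2(N,\R)$ because $N$ is compact symplectic, conclude $[\omega_M]\neq 0$, and use that $M$ is a $K(\pi_1(M),1)$. The paper's version is terser---it skips the intermediate $2$-cycle $c$ and simply notes that $\phi^\ast[\eta]=[\omega]\neq 0$ forces $[\eta]\neq 0$---but your added detail (the top-power argument for $[\omega_N]\neq 0$, the discussion of what ``symplectic map'' should mean) is harmless elaboration rather than a different route; your suggestion to take $\omega_M$ to be Fubini--Study is unnecessary since the hypothesis already hands you a symplectic form on $M$.
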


\begin{proof}
We have a compact symplectic manifold $(N\, ,\omega)$ of positive dimension,
a symplectic form $\eta$ on $M$, and a map $\phi\, :\, N\, \longrightarrow\, M$
such that $\phi^\ast\eta\,=\, \omega$.
Since the cohomology class $0\,\not=\, [\omega]\,\in\, H^2(N,\, {\mathbb R})$, it
follows that $[\eta]\,\in\, H^2(M,\, {\mathbb R})$ is nonzero, and hence
$b_2(\pi_1(M))\,= \,b_2(M)$ is nonzero.
\end{proof}

\begin{rmk}
Examples of fundamental groups $G$ of smooth projective varieties admitting a
quasiprojective $K(G,1)$ space are constructed by Toledo in \cite{tol1} and in
Theorem 8.8 of \cite{abckt} using work of Goresky--MacPherson
\cite[Section 2]{gormac}. These examples satisfy the hypotheses in both Proposition 
\ref{qprojb2} and Proposition \ref{qprojb2symp}.
\end{rmk}

\section{Finiteness Properties}\label{fin}

In this section we compute explicitly the $\SSS$--homotopical
dimension of certain projective groups discovered by Dimca, Papadima and Suciu \cite{dps-bb} (see also \cite{suciu-imrn}). We then show how to generalize and topologically characterize them.
Our techniques replace the constructions using characteristic varieties  in \cite{dps-bb}
by the group cohomology computations developed in Section \ref{kk2}.

\subsection{Projective groups violating $FP_r$}
The examples in \cite{dps-bb} are inspired by certain
examples due to Bestvina and Brady \cite{BB}, but adapted to the context of K\"ahler groups.

\begin{theorem}[{\cite[Theorem A]{dps-bb}}]\label{dpsgps}
For every $r\,\ge \,3$, there is an $(r-1)$--dimensional smooth 
complex projective variety $H_r$ with fundamental 
group $G_r$ satisfying the following conditions:
\begin{enumerate}
\item\label{ko1}
The  homotopy group $\pi_i(H_r)$ vanish for all $2\,\leq\, i\,\leq\, r-2$, 
while $\pi_{r-1}(H)\,\neq\, 0$.
\item \label{ko0}
The universal cover $\widetilde{H_r}$ of $H_r$ is a Stein manifold.
\item \label{ko2}
The group $G_r$ is of type $FP_{r-1}$, but not of type $FP_r$.
\item \label{ko3}
The group $G_r$ is not commensurable (up to finite kernels) 
to any group having a classifying space of finite type.
\item \label{ko4} The homotopy group  $\pi_{r-1}(H_r)$ is an infinitely generated
free $\ZG_r$--module.
\end{enumerate}
\end{theorem}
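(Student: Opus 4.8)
\quad The plan is to realize $H_r$ as a regular fibre of a ``sum map'' from a product of curves onto an elliptic curve, and then to extract all five assertions from the Stein property of the universal cover together with a Picard--Lefschetz computation of the homotopy of the fibre carried out equivariantly over $\pi_1$. Fix $r\,\geq\, 3$, choose smooth projective curves $S_1,\dots ,S_r$ of genus at least two together with nonconstant holomorphic maps $\phi_i\colon S_i\longrightarrow E$ to a fixed genus--one curve $E$; by Riemann--Hurwitz each $\phi_i$ necessarily has nonempty ramification. Put $M\,:=\,S_1\times\cdots\times S_r$ and
\[
f\,:=\,m_E\circ(\phi_1,\dots ,\phi_r)\colon M\longrightarrow E,
\]
where $m_E$ denotes the addition on $E$. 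For a generic choice of the $\phi_i$ the map $f$ is an irrational Lefschetz fibration whose critical set $K$ is the finite set of points all of whose coordinates are critical points of the corresponding $\phi_i$. Set $H_r\,:=\,f^{-1}(e_0)$ for a regular value $e_0\,\in\, E$; this is a smooth complex projective variety of complex dimension $r-1$, and put $G_r\,:=\,\pi_1(H_r)$. (This is the construction behind Theorems \ref{dps-gen-th} and \ref{charzn-th}.)

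I would first establish assertion \ref{ko0}. Over the complement $E\setminus B$ of the finite critical--value set $B$ the map $f$ is a fibre bundle with fibre $H_r$, so the homotopy exact sequence of $H_r\hookrightarrow f^{-1}(E\setminus B)\longrightarrow E\setminus B$, together with the asphericity of the open surface $E\setminus B$, gives $\pi_1(H_r)\hookrightarrow\pi_1(f^{-1}(E\setminus B))$; one then checks that the composite $\pi_1(H_r)\longrightarrow\pi_1(M)$ remains injective. Consequently the inclusion $H_r\hookrightarrow M$ lifts to a closed holomorphic embedding of $\til{H_r}$ into the universal cover $\til M\,=\,\D\times\cdots\times\D$ of $M$, and a closed complex submanifold of a Stein manifold is Stein. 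Granting this, Theorem \ref{af} (with its ``$M$'' taken to be $H_r$) shows that $\til{H_r}$ is homotopy equivalent to a CW complex of real dimension $r-1$; in particular $H_i(\til{H_r},\,\Z)\,=\,0$ for $i\,>\,r-1$, and $\pi_2(H_r)$ is free abelian.

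The heart of the argument is the computation of $H_\ast(\til{H_r},\,\Z)$ as a $\ZG_r$--module: I would show that it is concentrated in degrees $0$ and $r-1$, with $H_{r-1}(\til{H_r})$ a \emph{free} $\ZG_r$--module whose basis is in bijection with $K\times\pi_1(E)$. One ingredient is the vanishing above degree $r-1$ established in the previous step; the other is a Leray/Wang--sequence analysis of the bundle $f^{-1}(E\setminus B)\longrightarrow E\setminus B$, lifted along the universal cover $\C\longrightarrow E$ so as to record the $G_r$--action, in which the monodromy of a Lefschetz fibration is described by the Picard--Lefschetz vanishing cycles at the $|K|$ critical points: each critical point contributes, on each sheet over $E$, one vanishing $(r-1)$--cycle and nothing in lower positive degree. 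This is precisely the point at which the characteristic--variety computation of \cite{dps-bb} can be replaced by the group--cohomology spectral sequence of Proposition \ref{leraycoh} (applied, via Remark \ref{leraycohrmk}, in the generalization Theorem \ref{dps-gen-th}). Once $H_\ast(\til{H_r})$ is pinned down, $\til{H_r}$ is a simply connected CW complex of dimension $r-1$ whose reduced homology is free abelian and concentrated in degree $r-1$, hence homotopy equivalent to a wedge of $(r-1)$--spheres; Hurewicz' Theorem then yields $\pi_i(H_r)\,=\,\pi_i(\til{H_r})\,=\,0$ for $2\,\leq\, i\,\leq\, r-2$ and $\pi_{r-1}(H_r)\,=\,H_{r-1}(\til{H_r})$, the infinitely generated free $\ZG_r$--module just described. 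This gives assertions \ref{ko1}, \ref{ko4}, and the wedge--of--spheres statement.

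It remains to deduce \ref{ko2} and \ref{ko3}. Since $H_r$ is a finite complex with $\pi_i(H_r)\,=\,0$ for $1\,<\,i\,<\,r-1$, attaching cells in dimensions $\geq r$ to kill the higher homotopy groups produces a $K(G_r,1)$ with finitely many cells in each dimension less than $r$, so $G_r$ is of type $FP_{r-1}$. The equivariant cellular chain complex of $\til{H_r}\simeq\bigvee S^{r-1}$ also provides an exact sequence $0\longrightarrow\pi_{r-1}(H_r)\longrightarrow C_{r-1}\longrightarrow\cdots\longrightarrow C_0\longrightarrow\Z\longrightarrow 0$ of $\ZG_r$--modules with all $C_i$ finitely generated and free, exhibiting $\pi_{r-1}(H_r)$ as the top syzygy of a partial free resolution of $\Z$ that is finitely generated in every lower degree; by a Schanuel's--lemma comparison, $G_r$ is of type $FP_r$ only if $\pi_{r-1}(H_r)$ is finitely generated over $\ZG_r$, which by \ref{ko4} it is not. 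Hence $G_r$ is not $FP_r$. For \ref{ko3}, type $FP_r$ is inherited by and from finite--index subgroups and is preserved under passage to the quotient by a finite normal subgroup (see \cite{brown}); a group admitting a classifying space of finite type is of type $FP_\infty$, so if $G_r$ were commensurable up to finite kernels to such a group it would be of type $FP_r$, contradicting \ref{ko2}. The main obstacle is the third paragraph --- the equivariant identification of $H_{r-1}(\til{H_r})$ as an infinitely generated free $\ZG_r$--module together with the vanishing of the intermediate homology; a secondary technical point is the injectivity $\pi_1(H_r)\hookrightarrow\pi_1(M)$ needed in \ref{ko0}.
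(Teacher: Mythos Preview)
The paper does not give its own proof of this statement: Theorem~\ref{dpsgps} is quoted from \cite[Theorem~A]{dps-bb} as a result to be used, not reproved. What the paper does prove is the topological generalization in Proposition~\ref{dps-gen} and Theorem~\ref{dps-gen-th}, and your sketch is essentially a specialization of that argument back to the original DPS setting. The construction of $H_r$ as a regular fiber of a sum map $M=S_1\times\cdots\times S_r\to E$, the injectivity $\pi_1(H_r)\hookrightarrow\pi_1(M)$ giving the Stein property of $\til{H_r}$, and the Picard--Lefschetz attachment of $(r{-}1)$--cells along vanishing cycles yielding $\pi_{r-1}(H_r)$ as a free $\ZG_r$--module on $K\times\pi_1(E)$ --- all of this matches Proposition~\ref{dps-gen} and the proof of Theorem~\ref{dps-gen-th} line for line. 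So for assertions \ref{ko1}, \ref{ko0}, \ref{ko4} and the wedge-of-spheres description you are recapitulating the paper's own method.

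There is one genuine difference in how the finiteness assertions are handled. The paper's Theorem~\ref{dps-gen-th}(d) only concludes that $N$ is \emph{not of type $FP$}, and it does so via Bieri's Theorem~\ref{bieri} (Corollary~\ref{biericor}): one assumes $N$ is PD$(2n)$, feeds this into the spectral sequence of Proposition~\ref{leraycoh}, and reaches a contradiction by comparing the countable group $\bigoplus_I\Z$ with the uncountable $\prod_I\Z N$. Your route is instead the direct Schanuel's-lemma argument on the equivariant cellular chain complex of $H_r$, using that the $(r{-}1)$st syzygy $\pi_{r-1}(H_r)$ is not finitely generated over $\ZG_r$; this is more elementary, closer to the original \cite{dps-bb} argument, and recovers the sharper ``not $FP_r$'' of assertion~\ref{ko2} rather than merely ``not $FP$''. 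Both are correct; the paper's Bieri-based route is tailored to the general topological-Lefschetz framework it is developing, while yours matches the precise threshold in the quoted theorem.
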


Proposition \ref{cx=htstein} shows that the above
Conditions \ref{ko1}, \ref{ko0} are satisfied by a large class of manifolds:
Take any $G$ that appears as the fundamental group of a  manifold in $\SSS$. Then
any manifold $M\,\in\, \SSS$ realizing $ht_\SSS(G)$ satisfies 
Conditions \ref{ko1}, \ref{ko0}. Further, any hyperplane section of such an $M$ also  satisfies 
Conditions \ref{ko1}, \ref{ko0}.
We now observe that the groups in Theorem \ref{dpsgps} have precisely identifiable $\CC$-homotopical height.

\begin{prop} With the notation of Theorem \ref{dpsgps}, we have
$$ht_\KK(G_r)\,=\, ht_\PP (G_r)\,=\, ht_\HC(G_r)\,=\, ht_\SSS (G_r)\,=\, r-1\, .$$
Also, $b_2(G_r) \,>\, 0$. \label{exactdim}
\end{prop}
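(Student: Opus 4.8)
The plan is to show the two chains of (in)equalities $ht_\SSS(G_r) \le ht_\HC(G_r) \le ht_\PP(G_r) \le ht_\KK(G_r)$ and $ht_\KK(G_r) \le r-1 \le ht_\SSS(G_r)$, and then read off $b_2(G_r) > 0$ from the structure of a realizing manifold. The first chain is immediate from the inclusions of classes $\SSS \subset \HC \subset \PP \subset \KK$ (homotopical height can only increase when the class gets larger), so the whole content is in the two outer bounds.

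First I would establish $ht_\SSS(G_r) \ge r-1$. By Theorem~\ref{dpsgps}, the variety $H_r$ has complex dimension $r-1$, Stein universal cover (so $H_r \in \SSS$), fundamental group $G_r$, and $\pi_i(H_r) = 0$ for $2 \le i \le r-2$. By Definition~\ref{htd}, this exhibits a manifold in $\SSS$ with $\pi_1 = G_r$ and vanishing intermediate homotopy up to index $r-2$, hence $ht_\SSS(G_r) \ge r-1$.

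Next I would show $ht_\KK(G_r) \le r-1$; since $ht_\KK$ dominates the others, this pins all four heights to $r-1$. Here is where the finiteness property enters. Suppose toward a contradiction that $ht_\KK(G_r) \ge r$. Then there is a compact K\"ahler manifold $M$ with $\pi_1(M) = G_r$ and $\pi_i(M) = 0$ for $1 < i < r$, i.e.\ $\pi_i(M) = 0$ for $2 \le i \le r-1$. Building a $K(G_r,1)$ from $M$ by attaching cells only in dimensions $\ge r+1$ (to kill $\pi_i$ for $i \ge r$), we get a $K(G_r,1)$ CW-complex whose $r$-skeleton is finite, because $M$ is a closed manifold and hence has the homotopy type of a finite CW-complex. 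This makes $G_r$ of type $FP_r$, contradicting Theorem~\ref{dpsgps}\eqref{ko2}. Therefore $ht_\KK(G_r) \le r-1$, and combining with the chain of inclusions and the previous paragraph, all four heights equal $r-1$.

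Finally, for $b_2(G_r) > 0$: the manifold $H_r$ realizes $ht_\SSS(G_r) = r-1$, its universal cover is Stein (so Theorem~\ref{af} applies), and $r - 1 \ge 2$, so $H_r$ is a positive-dimensional projective subvariety of itself with $\pi_1(H_r) = G_r$. One can then either invoke the argument of Proposition~\ref{qprojb2} directly (pulling back a Fubini--Study class along hyperplane sections of $H_r$ down to a curve, forcing $b_2(H_r) \ne 0$ and hence, since $\pi_i(H_r) = 0$ for $2 \le i \le r-2$ implies $H^2(H_r) = H^2(G_r)$ when $r \ge 4$) conclude $b_2(G_r) > 0$; the case $r = 3$ needs a small separate check since then $H^2(H_3)$ and $H^2(G_3)$ need not agree, but one can instead pass to the Leray--Serre comparison of Proposition~\ref{leraycoh} with $R = \Z$ (valid as $\widetilde{H_r}$ is Stein and $G_r$ is torsion-free), which gives an injection $H^2(G_r,\Z) \hookrightarrow H^2(H_r,\Z)$ in a range, combined with the fact that a K\"ahler class is nonzero in $H^2$. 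The main obstacle is handling this low-dimensional edge case cleanly; I expect the cleanest route is to observe that $H_r$ being compact K\"ahler of positive dimension already forces $b_2(H_r) > 0$, and then track how much of $H^2(H_r)$ comes from $H^2(G_r)$ via the spectral sequence of $\widetilde{H_r} \to H_r \to K(G_r,1)$ as in Proposition~\ref{leraycoh}.
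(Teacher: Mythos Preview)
Your argument for the equality of the four heights is correct and essentially identical to the paper's: the chain from class inclusions, the lower bound $ht_\SSS(G_r)\ge r-1$ witnessed by $H_r$, and the upper bound $ht_\KK(G_r)\le r-1$ from the failure of $FP_r$ are exactly what the paper does.

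For $b_2(G_r)>0$ with $r\ge 4$ your argument is also fine and matches the paper: $\pi_2(H_r)=0$, so $b_2(G_r)=b_2(H_r)>0$.

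The $r=3$ case, however, has a genuine gap. The spectral sequence (Proposition~\ref{leraycoh} with $n=r-1=2$) gives you an injection $H^2(G_3,\Z)\hookrightarrow H^2(H_3,\Z)$, but this is the wrong direction: knowing $H^2(H_3)\neq 0$ and that $H^2(G_3)$ injects into it tells you nothing about $H^2(G_3)$ being nonzero. To show the K\"ahler class lies in the image you would need it to die in $H^2(\widetilde{H_3})^{G_3}$, and there is no reason for that: $\widetilde{H_3}$ is homotopy equivalent to an infinite wedge of $2$--spheres, so $H^2(\widetilde{H_3})$ is large, and the pullback of an ample class need not vanish there. Your hedge (``track how much of $H^2(H_r)$ comes from $H^2(G_r)$'') does not resolve this.

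The paper handles $r=3$ by a completely different route, using the explicit construction of $G_r$ in \cite{dps-bb}: $G_3$ is the fundamental group of a generic fiber of a map from a product of three Riemann surfaces through an abelian variety to an elliptic curve, so $G_3$ surjects onto $\Z^4$ and in particular $b_1(G_3)>0$. Then the Albanese variety of $H_3$ is nontrivial, and one invokes the theorem of Klingler--Koziarz--Maubon \cite{kkm} (if the Albanese is nontrivial then $b_2$ of the fundamental group is positive) to conclude $b_2(G_3)>0$. You will need either this input or some other external fact specific to the DPS construction; the spectral sequence alone does not suffice.
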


\begin{proof} 
By Condition \ref{ko1} of Theorem \ref{dpsgps}, we have $ht_\KK(G_r)\,\geq\, r-1$. If
$ht_\KK(G_r)\,\geq\, r$, then there exists a smooth complex projective variety
$M$ such that $\pi_i(M)$ vanish for $2\,\leq\, i\,\leq\, r-1$. By attaching cells of dimension greater than $r$ to $M$, we can
construct a $K(G,1)$ space with finite $r$--skeleton (equal to the $r$--skeleton of $M$). This contradicts 
Condition \ref{ko2} of Theorem \ref{dpsgps} which says that $G$ is not of type $FP_r$.

Since $H_r$ is Stein and we have $$ht_\KK(G_r)\,\geq\, ht_\PP (G_r)
\,\geq\, ht_\HC(G_r)\,\geq\, ht_\SSS (G_r)$$
(a general fact) the first claim follows. 

For $r\,>\,3$, we have $b_2(G_r) = b_2(H_r) > 0$ as $H_r$ is K\"ahler. It remains to show that $b_2(G_3)\,>\, 0$.

The authors of \cite{dps-bb} construct a surjective holomorphic map of the product $W_r$ of $r$ Riemann surfaces
to an abelian variety $V_r$ of dimension $r$. The abelian variety
$V_r$ is then mapped to an elliptic curve $E$ and the
generic smooth fiber of the composition from $W_r$ to $E$ is $H_r$ with fundamental group $G_r$. It follows that
there is a surjective map from $G_3$  to $\Z^{6-2}\,= \,\Z^4$. Therefore, $b_1(G_3)
\,>\, 0$. Hence the Albanese variety of $H_3$
is non-trivial and so (see \cite{kkm} for instance)  $b_2(G_3)\,>\, 0$.
\end{proof}

\begin{rmk} Proposition \ref{exactdim} shows  that the examples in \cite{dps-bb} (Theorem \ref{dpsgps})  {\bf do not}
 provide counterexamples to Question \ref{kollar-qn1} (3). 
The authors of \cite{dps-bb} observe that $cd (G_3)$ is strictly greater than $ht_\KK(G_3)$ $(\,=\,2)$ as $G_3$ is not of type $FP_3$. \end {rmk}

We denote the free $\ZG_r$ module $\pi_{r-1}(H_r)$ by $\ZG_r^\alpha$ for $\alpha$ a countably infinite indexing set.
Then $H^{r-1}(\til{M},\,  \ZG_r) = Hom_\Z(\pi_{r-1}(H_r)\,  ,\ZG_r) $ and we denote this 
 $\ZG$ module  by $\ZG_r^{\alpha \ast}$.  Also, $(Hom_\Z(\pi_{r-1}(H_r)\,  ,
\ZG_r))^{G_r} = Hom_{\ZG_r}(\pi_{r-1}(H_r)\,  ,\ZG_r)$,
which we denote as $\ZG_r^{\alpha 0}$.
Combining Condition \ref{ko4} of Theorem \ref{dpsgps} with Proposition \ref{exactdim} and
Corollary \ref{leraycohstein}, we obtain the following information regarding the cohomology groups
$H^n(G,\, \ZG_r)$.

\begin{prop} \label{hrleraycoh} With the notation of Theorem \ref{dpsgps},
the following hold:
\begin{enumerate}
\item $H^p(G,\,\ZG_r)\,=\, 0$ for $0\,<\,p\,<\,r-1$.

\item There is an exact sequence of $G$-modules,
$$
0 \,\longrightarrow\, H^{r-1}(G,\, \ZG_r) \,\longrightarrow\, \ZG_r^\alpha
\,\longrightarrow\, \ZG_r^{\alpha 0} \,\longrightarrow\,
H^{r}(G, \,\ZG_r) \,\longrightarrow\, 0\, .
$$

\item $H^i(G,\, \ZG_r^{\alpha \ast})\,=\, H^{r-1+i}(G,\,\ZG_r)$ if
$1\,\leq\, i \,\leq\, r-3$.  

\item There is an exact sequence of $G$-modules,
$$ 0 \longrightarrow\,
H^{r-2}(G_r,\, \ZG_r^{\alpha \ast})
\,\longrightarrow\, H^{2(r-1)}(G,\, \ZG_r) \,\longrightarrow\, \Z
$$
$$
\longrightarrow \, H^{r-1}(G,\, \ZG_r^{\alpha \ast})\,
\longrightarrow\, H^{2r-1}(G,\, \ZG_r)\, \longrightarrow\,
0\, .$$

\item $H^{p+r}(G,\, \ZG_r)\,=\, H^{p}(G, \,\ZG_r^{\alpha \ast})$ for all $ p\,\geq \,r$.
\end{enumerate}
\end{prop}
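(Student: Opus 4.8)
The plan is to obtain Proposition \ref{hrleraycoh} as a direct specialization of Corollary \ref{leraycohstein} (equivalently, of Proposition \ref{leraycoh}) to the manifold $M = H_r$, using the identifications supplied by Theorem \ref{dpsgps}. First I would invoke Proposition \ref{exactdim}, which tells us $ht_\SSS(G_r) = r-1$ and that $H_r$ itself realizes the $\SSS$--homotopical height, so we are in exactly the setting of Corollary \ref{leraycohstein} with $n = r-1$ and $M = H_r$. The group $G_r$ is torsion-free (it is the fundamental group of a smooth projective variety whose universal cover, being Stein, is contractible up to homotopy of a wedge of spheres, hence $G_r$ acts freely with contractible... more simply: $G_r$ is of type $FP_{r-1}$ and is a projective group, and torsion-freeness is part of the Dimca--Papadima--Suciu construction); in any case the hypotheses of Proposition \ref{leraycoh} with $R = \ZG_r$ are met.

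Next I would substitute the module identifications. By Condition \ref{ko4} of Theorem \ref{dpsgps}, $\pi_{r-1}(H_r)$ is a free $\ZG_r$--module on a countably infinite set $\alpha$, so $\pi_{r-1}(H_r) \cong \ZG_r^\alpha$. By Theorem \ref{af} (or the Hurewicz theorem applied to $\til{H_r}$, which is homotopy equivalent to a wedge of $(r-1)$--spheres by Proposition \ref{cx=htstein}), we have $H_{r-1}(\til{H_r}) \cong \pi_{r-1}(\til{H_r}) = \pi_{r-1}(H_r)$, and hence by the universal coefficient theorem $H^{r-1}(\til{H_r},\, \ZG_r) \cong \mathrm{Hom}_\Z(\pi_{r-1}(H_r),\, \ZG_r) =: \ZG_r^{\alpha\ast}$, which is the only nonvanishing reduced cohomology of $\til{H_r}$ with these coefficients. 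Taking $G_r$--invariants, $(H^{r-1}(\til{H_r},\, \ZG_r))^{G_r} = \mathrm{Hom}_\Z(\pi_{r-1}(H_r),\,\ZG_r)^{G_r} = \mathrm{Hom}_{\ZG_r}(\pi_{r-1}(H_r),\,\ZG_r) =: \ZG_r^{\alpha 0}$, where the passage from $\Z$-linear invariant maps to $\ZG_r$-linear maps is the standard adjunction. Plugging $n = r-1$, $H^{r-1}(\til M, \ZG_r) = \ZG_r^{\alpha\ast}$, and $(H^{r-1}(\til M,\ZG_r))^{G_r} = \ZG_r^{\alpha 0}$, and also $H^{r-1}(M,\ZG_r) = H^{r-1}(H_r,\ZG_r) = H^{r-1}_c(\til{H_r}) = H_{r-1}(\til{H_r}) = \ZG_r^\alpha$ (Poincar\'e duality on the $2(r-1)$--manifold $\til{H_r}$, exactly as in the proof of Corollary \ref{leraycohstein}), the five statements of Corollary \ref{leraycohstein} become verbatim the five statements (1)--(5) of the proposition, with the index ranges translated via $n = r-1$ (e.g.\ $1 \le i \le n-2$ becomes $1 \le i \le r-3$, and $2n = 2(r-1)$, $2n+1 = 2r-1$, $n+1 = r$).

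I do not expect a genuine obstacle here; the proposition is essentially bookkeeping. The one point requiring a little care is the identification of the invariants $(H^{r-1}(\til{H_r},\ZG_r))^{G_r}$ with $\ZG_r^{\alpha 0} = \mathrm{Hom}_{\ZG_r}(\pi_{r-1}(H_r),\ZG_r)$: one must check that a $\Z$-linear map $\phi\colon \ZG_r^\alpha \to \ZG_r$ is $G_r$-fixed (for the conjugation/diagonal action, $g\cdot\phi = g\circ\phi\circ g^{-1}$) precisely when it is $\ZG_r$-linear, which is immediate from unwinding the definitions but deserves a sentence. A second, more cosmetic point is that the exact sequences in Corollary \ref{leraycohstein} were derived using $H^{n+i}(M,\ZG) = 0$ for $0 < i < n$ and $H^{2n}(M,\ZG) = \Z$; these carry over here because $\til{H_r}$ has the homology of a wedge of $(r-1)$-spheres and $H_r$ is a closed oriented $2(r-1)$-manifold, so Remark \ref{leraycohsteinrmk} applies. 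Thus the whole proof is: cite Proposition \ref{exactdim} to get $n = r-1$ with $M = H_r$ realizing it; cite Theorem \ref{dpsgps}(\ref{ko4}) and Theorem \ref{af} for the module identifications; apply Corollary \ref{leraycohstein}.
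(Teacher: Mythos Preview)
Your proposal is correct and follows exactly the paper's approach: the paper's proof is the single sentence that the proposition follows by combining Condition~\ref{ko4} of Theorem~\ref{dpsgps} with Proposition~\ref{exactdim} and Corollary~\ref{leraycohstein}, and you have spelled out precisely this specialization (with $n=r-1$, $M=H_r$) together with the module identifications that the paper records just before stating the proposition. Your write-up is in fact more detailed than the paper's, including the Poincar\'e-duality identification $H^{r-1}(H_r,\ZG_r)\cong \ZG_r^\alpha$ and the check that $G_r$-invariant $\Z$-linear maps are exactly the $\ZG_r$-linear ones.
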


\subsection{Topological Lefschetz Fibrations and $FP_r$}

The construction in \cite{dps-bb} can be generalized in different directions. 
We shall use the purely topological structure underlying the
(complex) Morse theory argument (\`a la Lefschetz \cite{lamotke, dps-bb}) to prove that there exists
$M$ with $G=\pi_1(M)$ satisfying the hypotheses of Corollary \ref{leraycohstein} or more generally
those of Remark \ref{leraycohsteinrmk} (Conditions (a), (b)). 

Then we shall use Theorem \ref{bieri} below from group cohomology  to conclude
that   $G$ violates property $FP$. Since quasiprojective varieties have the homotopy type of finite CW complexes, it follows immediately
that such a $G$ cannot have a quasiprojective $K(G,1)$ space. Thus we have a large source of counterexamples to Koll\'ar's Question \ref{kollar-qn}.
The aim of this subsection is thus to replace the arguments in  \cite{dps-bb} involving resonance and characteristic varieties
by group cohomology computations and thus generalize those examples to a purely topological context.

We  refer the reader to \cite{don1, don2} for details of the theory of topological and symplectic Lefschetz fibrations
and adapt the definition from \cite{don2, gompf-hid} below.

\begin{defn} A {\it topological Lefschetz fibration} on a smooth, closed, oriented $2n-$manifold
$M$ consists of the following data:
\begin{enumerate}
\item   a closed orientable 2-manifold $S$,
\item a finite set of points $K= \{ b_i \} \subset M$ called the critical set,
\item a smooth map $f : M  \longrightarrow
S$ whose differential $df$  is surjective outside $K$,
\item for each critical point $x$ of $f$, there are orientation preserving coordinate
 charts about $x$ and $f (x)$ (into $\C^n$ and $\C$, respectively) in which $f$ is
 given by $f (z_1 , \cdots , z_n ) = \sum_{i=1\cdots n} z_i^2$, and
\item $f$ is injective on the critical set $K \subset X$.
\end{enumerate}

A topological Lefschetz fibration with base $S$ a surface of
positive genus will be called an {\it irrational} topological Lefschetz fibration.
A Lefschetz  fibration without singular fibers is called a {\it Kodaira fibration}.

If $S$ is an open orientable manifold instead of being closed, and $K$ is 
only discrete rather than finite, the fibration will be called an {\it open 
topological Lefschetz fibration}.\end{defn}

Note that the definition of a topological Lefschetz fibration ensures that $|f|^2$ is  a Morse function near singular fibers
and a regular fibration away from the critical values.
It turns out that the structure of  a topological Lefschetz fibration
is all that one needs for the usual Lefschetz hyperplane theorem argument (cf. \cite{lamotke}) to go through and for the proof of
 Corollary 5.4 of \cite{dps-bb} to work out in a topological context (thus replacing a hypothesis involving characteristic varieties in  \cite{dps-bb}).

\begin{prop}
Let $f: M \longrightarrow S$ be a (possibly irrational) topological Lefschetz
fibration  with $\dim M \,=\, 2n+2$, $n \geq 2$. 
Let $K$ be the finite critical set of $f$. Suppose that $\til{M}$
is $(n+1)-$connected.  Let $F$ denote the regular fiber and $N=\pi_1(F)$.  
   Then \\
a) $\pi_k(F) =0$ for $1<k<n$, \\
b) $\pi_{n}(F)$ is a free $\Z N$-module, 
with generators in one-to-one correspondence with $K\times \pi_1(S)$, and \\
c) if, furthermore, $\til M$ is contractible, then $\til F$ is homotopy
equivalent to a wedge of $n$--spheres.
 \label{dps-gen} \end{prop}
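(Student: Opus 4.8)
The plan is to run the classical Lefschetz complex-Morse-theory argument --- in the form used by Dimca--Papadima--Suciu, cf. \cite{lamotke,dps-bb} --- on the pullback of $f$ to the universal cover of the base $S$, and then to read the homotopy of $F$ off the hypothesis on $\widetilde M$.

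\textbf{Set-up and the Lefschetz cell structure.} Since $\dim M=2n+2$ with $n\ge 2$, the vanishing spheres of $f$ have dimension $n\ge 2$, hence are nullhomotopic in the fibre, and the computation of $\pi_1$ of a Lefschetz fibration of real dimension $\ge 6$ (using simple connectivity of the vanishing cycles and of the twist regions, exactly as for symplectic Lefschetz fibrations) yields a short exact sequence $1\to N\to\pi_1(M)\xrightarrow{f_*}\pi_1(S)\to 1$ with $N=\pi_1(F)$. Let $p\colon M_1\to M$ be the covering with $p_*\pi_1(M_1)=\ker f_*=N$, so $M_1=M\times_S\widetilde S$ and $f$ lifts to an open topological Lefschetz fibration $f_1\colon M_1\to\widetilde S$ whose fibre is again $F$; the deck group of $p$ is $\pi_1(S)$ and the critical set of $f_1$ is $p^{-1}(K)$, in bijection with $K\times\pi_1(S)$. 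Assume first that $S$ has positive genus, so $\widetilde S$ is contractible (the sphere case only introduces a top cell, irrelevant in the range of degrees below, and is handled the same way). Then Lamotke's complex Morse theory --- which uses only the $C^\infty$ local normal form $\sum z_i^2$ built into the definition of a topological Lefschetz fibration --- shows that $M_1$ is homotopy equivalent to $F$ with one $(n+1)$-cell attached along the vanishing sphere for each critical point of $f_1$; thus $(M_1,F)$ is a relative CW pair with all cells of dimension $n+1$, indexed by $K\times\pi_1(S)$. As $n\ge 2$ we have $\pi_1(M_1)=\pi_1(F)=N$, so by relative Hurewicz (applied on universal covers) $(M_1,F)$ is $n$-connected and $\pi_{n+1}(M_1,F)$ is the free $\mathbb Z N$-module on $K\times\pi_1(S)$.

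\textbf{Reading off the hypotheses.} For $i\ge 2$ one has $\pi_i(M_1)\cong\pi_i(M)\cong\pi_i(\widetilde M)$, and these vanish for $2\le i\le n+1$ since $\widetilde M$ is $(n+1)$-connected. Attaching $(n+1)$-cells does not change $\pi_i$ for $i<n$, so $\pi_i(F)\cong\pi_i(M_1)=0$ for $1<i<n$, which is (a). For (b), the homotopy exact sequence of $(M_1,F)$ reads $0=\pi_{n+1}(M_1)\to\pi_{n+1}(M_1,F)\xrightarrow{\ \partial\ }\pi_n(F)\to\pi_n(M_1)=0$, so the $\mathbb Z N$-module map $\partial$ is an isomorphism and $\pi_n(F)$ is free over $\mathbb Z N$ on $K\times\pi_1(S)$. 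For (c), assume $\widetilde M$ contractible; its universal cover coincides with that of $M_1$, and since $\pi_1(M_1)=\pi_1(F)$ it restricts over $F$ to $\widetilde F$, so $\widetilde M=\widetilde F\cup(\text{lifted }(n+1)\text{-cells})$ and $H_*(\widetilde M,\widetilde F)$ is free abelian concentrated in degree $n+1$. The homology exact sequence of $(\widetilde M,\widetilde F)$ together with $\widetilde H_*(\widetilde M)=0$ forces $H_i(\widetilde F)=0$ for $i\ne 0,n$ and $H_n(\widetilde F)$ free abelian; since $\widetilde F$ is $(n-1)$-connected by (a), Hurewicz identifies $\pi_n(\widetilde F)\cong H_n(\widetilde F)$, a choice of basis yields a map $\bigvee S^n\to\widetilde F$ inducing an isomorphism on homology, and Whitehead's theorem upgrades it to a homotopy equivalence.

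\textbf{Main obstacle.} The substantive point is the Lamotke step: verifying that an arbitrary $C^\infty$ (rather than holomorphic) topological Lefschetz fibration over a contractible surface genuinely has the cell structure $F\cup(\text{$(n+1)$-cells})$ --- that the local model $\sum z_i^2$ supplies exactly one thimble per critical point with the correct attaching sphere, that no further cells appear over the contractible base, and that the (generally infinitely many) critical points of $f_1$ cause no trouble, e.g. by exhausting $\widetilde S$ by disks. This is precisely the promised topological reworking of \cite[Cor.~5.4]{dps-bb}. A secondary point requiring care is the exact sequence $1\to\pi_1(F)\to\pi_1(M)\to\pi_1(S)\to 1$, i.e. that filling in the singular fibres kills exactly the meridians of the critical values and nothing else of $\pi_1(F)$; this is again where $n\ge 2$ enters.
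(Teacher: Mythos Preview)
Your proof is correct and follows essentially the same route as the paper's: pass to the cover $M_1$ (the paper writes $M_N$) of $M$ corresponding to $N=\pi_1(F)$, invoke the Lefschetz/Lamotke cell structure to see that $M_1\simeq F\cup\bigl(\text{$(n{+}1)$-cells indexed by }K\times\pi_1(S)\bigr)$, and read everything off the long exact sequence of the pair $(M_1,F)$ together with the vanishing of $\pi_i(M_1)=\pi_i(\widetilde M)$ for $2\le i\le n+1$; part (c) is done identically by lifting the cell attachment to universal covers and applying Hurewicz--Whitehead. Your write-up is in fact slightly more explicit than the paper's in two places --- you spell out the short exact sequence $1\to N\to\pi_1(M)\to\pi_1(S)\to 1$ (the paper only parenthetically promises that $N$ injects), and you flag the genus-zero base case --- but these are elaborations, not a different argument.
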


\begin{proof} The proof below is based on ideas in \cite{lamotke, dps-bb}, particularly the proofs of Lefschetz-type theorems using Morse theory.

Let $M_N$ denote the cover of $M$ corresponding to the (image of the) subgroup
$N$ $(\,=\,\pi_1(F))$ of $\pi_1(M)$
(we shall observe below that $N$ injects into $\pi_1(M)$). Then $f$ induces   a (possibly open) topological  Lefschetz fibration
 $f_N: M_N \longrightarrow \til{S} $ with regular fiber $F$, where 
 $\widetilde{S}$ is the universal cover of $S$.

It follows (cf. \cite{lamotke, don1, dps-bb}) that $M_N$ can be obtained from $F$ (up to homotopy) by attaching exactly one $n+1$ cell $\D^{n+1}_i$
for every critical value $v_i$ of $f_N$ along a 
 vanishing cycle of the Lefschetz fibration $f_N: M_N \longrightarrow \til{S} $.
This is because $f$ is assumed to be injective on the critical set $K$. Since
$n \,\geq\, 2$, we have $\pi_1(M_N) = N$.
Also, $\pi_k(M_N,\, F)\,=\,0$ for $k\,\leq\, n$.

Since $\til{M}$ is $(n+1)-$connected, we have $\pi_k(M_N)\,=\, 0$, for $1\,<\,k
\,\leq\, n+1$. The  homotopy exact sequence of the pair 
$(M_N,\, F)$ gives the following: $$  \cdots \longrightarrow \pi_k(M_N)
\longrightarrow \pi_k(M_N,F) \longrightarrow  \pi_{k-1}(F)
\longrightarrow \pi_{k-1}(M_N) \longrightarrow \cdots$$
and hence $\pi_k(M_N,F) =  \pi_{k-1}(F)$ for $2< k \leq n+1$. In particular, $\pi_{n+1}(M_N,F) =  \pi_{n}(F)$. 
Since  $\pi_k(M_N,F)\,=\,0$ for $k\leq n$, we have $ \pi_{k-1}(F)\,=\,0$ for
$2<k\leq n$, which is statement (a) of the proposition.

Further,
$\pi_{n+1}(M_N,F)$ is generated as a free $\pi_1(M_N)-$module
precisely by the $(n+1)$ cells $\D^{n+1}_i$ attached to vanishing cycles of the Lefschetz fibration.
If $$\phi\,:\,M_N\,\longrightarrow\, M$$ denotes the  covering map, then the critical set of $f_N$ is $\phi^{-1}(K)$, which in turn can be naturally identified with
$K \times \pi_1(S)$. Statement (b) of the Proposition follows.

To prove part (c), first observe that the inclusion of the regular fiber $F$ in
$M_N$ lifts to an inclusion of $\til F$ into $\til M$, because $\pi_1(M_N)\,=\, N$.
Hence $\til M$ can be obtained from $\til F$ up to homotopy by attaching the
lifts of the $(n+1)$ cells $\D^{n+1}_i$ to $\til M$. So
$\pi_n (\til{F})$ is a free abelian group with generators in one-to-one
correspondence with the indexing set $I\,=\,K\times \pi_1(M)$.
Let $V_I$ denote the wedge of a collection of $n-$spheres indexed by $I$. 
Let $H: V_I \longrightarrow \til(\til{F})$ be a map inducing an isomorphism of $\pi_n$'s. Since the contractible space
$\til M$ can be obtained from $\til F$ up to homotopy by attaching the lifts of the $n+1$ cells $\D^{n+1}_i$, it follows from the Mayer Vietoris homology
exact sequence that $H$ induces an isomorphism of  homology groups $H_i (\til{F}) = H_i (V_I)$. By simple connectivity of $\til F$ and $V_I$, and from the Hurewicz' Theorem it follows that $\til F$ and $V_I$ are homotopy equivalent.
\end{proof}

We recall now a classical theorem of Bieri.

\begin{theorem}[\cite{bieri}]\label{bieri}
Let  $1 \longrightarrow N \longrightarrow G \longrightarrow Q
\longrightarrow 1$ be a short exact  sequence of groups,  with $N$ of  type  (FP) 
and $Q$ of finite  cohomology  dimension.  Then  $G$ is a duality (respectively, PD) group  if and only if $N$ and $Q$ are duality (respectively, PD) groups. 
Further $cd(G) = cd(N) + cd(Q)$. \end{theorem}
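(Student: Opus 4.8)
The plan is to deduce everything from the Lyndon--Hochschild--Serre spectral sequence
$$E_2^{p,q}\,=\,H^p(Q,\,H^q(N,\,M))\,\Longrightarrow\,H^{p+q}(G,\,M)$$
of the given extension $1\to N\to G\to Q\to 1$, run with coefficients $M\,=\,\ZG$, together with the standard homological characterisation of duality groups. I would first dispose of the finiteness bookkeeping. A duality group (in particular a $\mathrm{PD}$ group) is of type $FP$; so in the ``if'' direction $N$ and $Q$ are of type $FP$, and an extension of a group of type $FP$ by a group of type $FP$ is again of type $FP$ (combine the two finite, finitely generated projective resolutions in the standard way, using that $\ZG$ is $\Z N$-free), whence $G$ is of type $FP$. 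In the ``only if'' direction $G$ and $N$ are of type $FP$ and $\mathrm{cd}(Q)<\infty$; since a quotient of a group of type $FP_\infty$ by a normal subgroup of type $FP$ is of type $FP_\infty$, the group $Q$ is of type $FP_\infty$, hence of type $FP$ as $\mathrm{cd}(Q)<\infty$. Thus in every case $N$, $Q$ and $G$ are of type $FP$; write $m=\mathrm{cd}(N)$ and $d=\mathrm{cd}(Q)$, both finite, and recall that a group $H$ of type $FP$ with $\mathrm{cd}(H)=\ell$ satisfies $H^\ell(H,\Z H)\neq 0$ (a minimal finitely generated projective resolution of $\Z$ over $\Z H$ cannot be shortened below its projective dimension).

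The next step is a module computation. Because $N$ is of type $FP$, applying $\mathrm{Hom}_{\Z N}(-,\ZG)$ to a finite finitely generated projective resolution of $\Z$ and using that $\ZG$ is free, hence flat, over $\Z N$ gives, for every $q$,
$$H^q(N,\,\ZG)\,\cong\,H^q(N,\,\Z N)\otimes_{\Z N}\ZG\,.$$
Writing $D_N^q:=H^q(N,\Z N)$, and using that $\ZG$ is $\Z N$-free on a transversal of $N$, the right-hand side, regarded as a $\Z Q$-module, is a permutation module twisted only by the $\mathrm{Out}(N)$-action of $Q$ on $D_N^q$; such a twist by a homomorphism $Q\to\mathrm{Aut}(D_N^q)$ is undone by a straightening isomorphism, so
$$H^q(N,\,\ZG)\,\cong\,\Z Q\otimes_\Z D_N^q$$
as $\Z Q$-modules, with trivial action on the factor $D_N^q$. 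Since $Q$ is of type $FP$, feeding this through the universal coefficient theorem for the finite complex of free abelian groups computing $H^\ast(Q,\Z Q)$ gives, for every abelian group $A$,
$$H^p(Q,\,\Z Q\otimes_\Z A)\,\cong\,\bigl(H^p(Q,\,\Z Q)\otimes_\Z A\bigr)\,\oplus\,\mathrm{Tor}_1^{\Z}\!\bigl(H^{p+1}(Q,\,\Z Q),\,A\bigr)\,.$$

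For the ``if'' direction, suppose $N$ and $Q$ are duality groups with dualizing modules $D_N:=D_N^m$ and $D_Q:=H^d(Q,\Z Q)$, both $\Z$-torsion-free, so that $D_N^q=0$ for $q\neq m$ and $H^p(Q,\Z Q)=0$ for $p\neq d$. By the two displays above, $E_2^{p,q}=0$ unless $q=m$, while $E_2^{p,m}=H^p(Q,\Z Q\otimes_\Z D_N)=0$ for $p\neq d$ (the $\mathrm{Tor}$-term at $p=d-1$ vanishes because $D_Q$ is torsion-free) and $E_2^{d,m}\cong D_Q\otimes_\Z D_N$. Hence the spectral sequence collapses at $E_2$, giving $H^k(G,\ZG)=0$ for $k\neq m+d$ and $H^{m+d}(G,\ZG)\cong D_Q\otimes_\Z D_N$, which is torsion-free, being a tensor product over $\Z$ of torsion-free abelian groups. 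Since $G$ is of type $FP$, this is exactly the statement that $G$ is a duality group of dimension $m+d=\mathrm{cd}(N)+\mathrm{cd}(Q)$ with dualizing module $D_Q\otimes_\Z D_N$; and if $N$, $Q$ are $\mathrm{PD}$ groups then $D_N\cong\Z\cong D_Q$, so $D_Q\otimes_\Z D_N\cong\Z$ and $G$ is a $\mathrm{PD}(m+d)$ group.

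For the converse, $G$ is a duality group, say of dimension $n=\mathrm{cd}(G)$, so $H^k(G,\ZG)=0$ for $k\neq n$ and $H^n(G,\ZG)=D_G$ is torsion-free. The $E_2$-page is now confined to the finite rectangle $0\le p\le d$, $0\le q\le m$, so the corner term $E_2^{d,m}$ supports no differentials, equals $E_\infty^{d,m}$, and is the unique nonzero $E_\infty$-term in total degree $d+m$; by the computation above $E_2^{d,m}=D_Q\otimes_\Z D_N^m$. Since always $n\le m+d$, comparison with the abutment yields the dichotomy: either $n=m+d$, and then $D_G\cong D_Q\otimes_\Z D_N^m$, or $n<m+d$ and $D_Q\otimes_\Z D_N^m=0$. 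I expect the main obstacle to be ruling out the second alternative, that is, proving the additivity $\mathrm{cd}(G)=\mathrm{cd}(N)+\mathrm{cd}(Q)$; the point is to exploit that $D_N^m$ and $D_Q$ are \emph{nonzero, finitely presented} modules (cokernels of maps of finitely generated projectives over $\Z N$, resp.\ $\Z Q$) to force $D_Q\otimes_\Z D_N^m\neq 0$, and this finiteness analysis is the technical heart of the theorem. Once additivity is in hand, so that $n=m+d$, the abutment is concentrated in the top corner of the rectangle; inspecting the edges of the spectral sequence (the lowest nonzero row survives up to quotients, and degree reasons pin down the remaining columns near the corner) forces $D_N^q=0$ for $q<m$ and $H^p(Q,\Z Q)=0$ for $p<d$, while torsion-freeness of $D_N^m$ and of $D_Q$ descends from that of $D_G\cong D_Q\otimes_\Z D_N^m$ using the same finiteness. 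Therefore $N$ and $Q$ are duality groups, and the $\mathrm{PD}$ statement follows once more by specialising $D_G\cong\Z$ to get $D_N^m\cong\Z\cong D_Q$.
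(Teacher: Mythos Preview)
The paper does not give a proof of this statement: it is quoted verbatim from Bieri's 1976 paper and used as a black box to derive Corollary~\ref{biericor} and thence Theorem~\ref{dps-gen-th}. So there is nothing in the paper to compare your argument against.

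On the merits of your sketch: the overall strategy---run the Lyndon--Hochschild--Serre spectral sequence with coefficients $\ZG$ and use the identification $H^q(N,\ZG)\cong \Z Q\otimes_\Z H^q(N,\Z N)$---is exactly Bieri's, and your ``straightening'' of the twisted $Q$-action is correct (the map $q\otimes x\mapsto q\otimes q^{-1}\!\cdot x$ is the standard untwisting of an induced module). Your ``if'' direction is complete.

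The converse has a genuine gap where you yourself flag it. You reduce the additivity $\mathrm{cd}(G)=\mathrm{cd}(N)+\mathrm{cd}(Q)$ to the nonvanishing of $H^d(Q,\Z Q)\otimes_\Z H^m(N,\Z N)$, and then propose to force this using that both factors are ``finitely presented''. But they are finitely presented over $\Z N$ and $\Z Q$, not over $\Z$, and nothing you have written rules out, say, one factor being uniquely $p$-divisible while the other is $p$-torsion; a tensor product over $\Z$ of two nonzero abelian groups can perfectly well vanish. Bieri does not argue this way: rather than insisting on $M=\ZG$, he exploits that $H^m(N,-)$ is right exact and commutes with filtered colimits (since $N$ is of type $FP$) to manufacture a $\ZG$-module $M$ for which the corner $E_2^{d,m}=H^d(Q,H^m(N,M))$ is visibly nonzero. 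The later steps you gesture at---deducing $H^q(N,\Z N)=0$ for $q<m$, $H^p(Q,\Z Q)=0$ for $p<d$, and torsion-freeness of the dualizing modules---likewise require more than ``inspecting the edges''; in Bieri's paper they are handled by an induction together with short exact sequences of coefficient modules, and this is where the real work lies.
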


As an immediate corollary of it we have:

\begin{cor}\label{biericor}
Let  $1 \longrightarrow N \longrightarrow G \longrightarrow Q \longrightarrow 1$ be a short
exact  sequence of groups, with both $G, Q$ 
PD groups. Further suppose that $N$ is not a PD group of finite
cohomology  dimension $cd(G)-cd(Q)$.
 Then $N$ is not of type FP. Hence, $N$ cannot have a  K(G,1) space
homotopy equivalent to a finite CW complex. In particular, $N$ cannot have a
quasiprojective K(G,1) space. \end{cor}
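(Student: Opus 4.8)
The plan is to prove the first assertion by contradiction, invoking Bieri's Theorem \ref{bieri} essentially verbatim. Suppose, contrary to the claim, that $N$ is of type FP. The hypothesis that $Q$ is a PD group forces in particular $cd(Q)<\infty$, so the short exact sequence $1 \longrightarrow N \longrightarrow G \longrightarrow Q \longrightarrow 1$ meets the hypotheses of Theorem \ref{bieri}. That theorem then yields two things. First, $G$ is a PD group if and only if both $N$ and $Q$ are; since $G$ is assumed to be PD, the ``only if'' direction gives that $N$ is a PD group. Second, it gives the additivity $cd(G)=cd(N)+cd(Q)$, so $cd(N)=cd(G)-cd(Q)$ is finite and equal precisely to $cd(G)-cd(Q)$. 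Thus $N$ would be a PD group of finite cohomological dimension $cd(G)-cd(Q)$, contradicting the standing hypothesis on $N$. Hence $N$ is not of type FP.

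The remaining two assertions are formal consequences. If $N$ admitted a $K(N,1)$ space homotopy equivalent to a finite CW complex, the cellular chain complex of its universal cover would be a finite complex of finitely generated free $\Z N$--modules resolving $\Z$, so $N$ would be of type FL and a fortiori of type FP (see \cite{brown}); this contradicts what was just proved. Finally, every smooth quasi-projective variety is homotopy equivalent to a finite CW complex (for instance, after compactifying to a smooth projective variety with normal crossings boundary, it has the homotopy type of a finite CW complex), so a quasi-projective $K(N,1)$ space would be, in particular, a finite-CW $K(N,1)$ space, which has just been excluded.

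I do not expect a serious obstacle here, since the mathematical content is carried entirely by Theorem \ref{bieri}. The only points requiring a word of care are: (i) extracting the finiteness of $cd(Q)$ from the PD hypothesis on $Q$, so that Bieri's theorem is actually applicable; and (ii) using the ``only if'' half of Bieri's equivalence — that $G$ being a PD group propagates PD-ness to $N$ — in tandem with the cohomological-dimension additivity, so as to pin down both that $N$ must be PD and exactly what its cohomological dimension would have to be, thereby landing squarely in the forbidden case.
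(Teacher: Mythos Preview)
Your proof is correct and matches the paper's approach: the paper presents this as an immediate corollary of Theorem~\ref{bieri} without writing out an argument, and what you have done is precisely spell out that immediate deduction by contradiction. The two supplementary remarks (finite CW $K(N,1)$ implies type FP, and quasiprojective varieties having finite CW homotopy type) are exactly the standard facts the paper is tacitly invoking.
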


We combine Proposition \ref{dps-gen} with Corollary \ref{biericor} to obtain the following Theorem (and its Corollary \ref{dps-gen-cor})
providing a  source of counterexamples
to Koll\'ar's Question \ref{kollar-qn} generalizing those in \cite{dps-bb}.

\begin{theorem}
Let $f: M \longrightarrow S$ be an irrational topological Lefschetz fibration
that is not a Kodaira fibration, with $\dim M \,=\, 2n+2$, $n\,\geq\, 2$. 
Let $K$ be the finite critical set of $f$. Further suppose that $\til{M}$
is contractible.  Let $F$ denote the regular fiber and $N=\pi_1(F)$.  
   Then \\
a) $\pi_k(F) =0$ for $1<k<n$, \\
b) $\pi_{n}(F)$ is a free $\Z N$-module, 
with generators in one-to-one correspondence with $K\times \pi_1(S)$, \\
c) $\til{F}$ is homotopy equivalent to a wedge of $n$--spheres, and\\
d) $N$ cannot be of type FP; in particular, there does not exist a
quasiprojective $K(N,1)$ space. \label{dps-gen-th} \end{theorem}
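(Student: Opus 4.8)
The plan is to derive the theorem almost entirely as a formal consequence of the two results just cited, namely Proposition \ref{dps-gen} and Corollary \ref{biericor}, together with one standard fact about surface groups (a closed oriented surface of genus $\geq 1$ has fundamental group a $\mathrm{PD}(2)$ group). Parts (a), (b), (c) are immediate: since $\til M$ is contractible it is in particular $(n+1)$-connected, so Proposition \ref{dps-gen}(a),(b) give (a) and (b) verbatim, and Proposition \ref{dps-gen}(c) (whose hypothesis is exactly that $\til M$ be contractible) gives (c). Nothing more is needed there; the content is in part (d).

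For (d), the idea is to exhibit a short exact sequence $1 \to N \to \pi_1(M) \to \pi_1(S) \to 1$ to which Corollary \ref{biericor} applies. First I would record that $f$ induces a surjection $f_\ast : \pi_1(M) \to \pi_1(S)$ (a Lefschetz fibration is in particular a surjective submersion away from a codimension $\geq 4$ critical locus, so $f_\ast$ is onto; one may also argue via a section of $f$ over the $2$-skeleton of $S$, or via the long exact sequence of the fibration over $S \setminus f(K)$ together with the fact that the vanishing cycles lie in the fiber). Write $N$ for the image of $\pi_1(F) \to \pi_1(M)$; the proof of Proposition \ref{dps-gen} already observed that $\pi_1(F)$ injects into $\pi_1(M)$, so $N \cong \pi_1(F)$, and the usual fibration argument (the fiber over a regular value is connected and its $\pi_1$ surjects onto the kernel) identifies $N$ with $\ker f_\ast$. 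Thus we have the desired short exact sequence with $Q = \pi_1(S)$.

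Now I apply Corollary \ref{biericor}. Its hypotheses require: $\pi_1(M)$ is a $\mathrm{PD}$ group — true because $\til M$ is contractible, so $M$ is a closed aspherical $2n+2$-manifold and $\pi_1(M)$ is $\mathrm{PD}(2n+2)$; $Q = \pi_1(S)$ is a $\mathrm{PD}$ group — true since $S$ is a closed oriented surface of positive genus (here is where the hypothesis that $f$ is \emph{irrational} enters), so $\pi_1(S)$ is $\mathrm{PD}(2)$; and $N$ is \emph{not} a $\mathrm{PD}$ group of cohomological dimension $cd(\pi_1(M)) - cd(Q) = (2n+2) - 2 = 2n$. This last point is the crux of the argument and the step I expect to be the main obstacle. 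The way to get it from the hypothesis that $f$ is not a Kodaira fibration (so $K \neq \emptyset$) is to use part (b): $\pi_n(F)$, equivalently $\pi_n(\til F) = H_n(\til F)$, is a free $\Z N$-module on the nonempty index set $K \times \pi_1(S)$, which is \emph{infinite} because $\pi_1(S)$ is infinite. If $N$ were a $\mathrm{PD}(2n)$ group, then $F$ would be a closed aspherical-up-to-its-fiber situation; more precisely, by part (c) $\til F \simeq \bigvee_{K \times \pi_1(S)} S^n$, so $H_n(\til F)$ is free abelian of infinite rank, and then $\til F$ is not homotopy finite, so $N$ is not of type $\mathrm{FP}$, contradicting that $\mathrm{PD}$ groups are $\mathrm{FP}$. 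Alternatively, and more cleanly: a $\mathrm{PD}(2n)$ group has $H^{2n}(N, \Z N) = \Z$ and $H^i(N,\Z N) = 0$ for $i \neq 2n$, whereas the wedge-of-$n$-spheres description of $\til F$ forces $H^n_c(\til F) = H^n(N, \Z N)$ (via the spectral sequence / the fact that $\til F$ is the universal cover) to be a nonzero infinitely generated module — so $N$ fails Poincaré duality outright. Either way $N$ is not a $\mathrm{PD}(2n)$ group, Corollary \ref{biericor} applies, and we conclude $N$ is not of type $\mathrm{FP}$; since a quasiprojective variety has the homotopy type of a finite CW complex, $N$ admits no quasiprojective $K(N,1)$, which is (d). The one genuinely delicate bookkeeping point — the thing I would be most careful about in the full write-up — is verifying cleanly that $N = \ker(f_\ast)$ and that $N$ injects into $\pi_1(M)$ in the presence of singular fibers; this is handled exactly as in the proof of Proposition \ref{dps-gen} by passing to the cover $M_N$ and noting the vanishing cycles live in $F$.
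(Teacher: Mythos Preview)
Your plan for (a), (b), (c) is exactly the paper's, and your overall strategy for (d) --- produce the short exact sequence $1 \to N \to \pi_1(M) \to \pi_1(S) \to 1$, note that $\pi_1(M)$ is $\mathrm{PD}(2n+2)$ and $\pi_1(S)$ is $\mathrm{PD}(2)$, and invoke Corollary~\ref{biericor} once you know $N$ is not $\mathrm{PD}(2n)$ --- is also the paper's. The gap is in the last step: neither of your two proposed arguments actually shows $N$ is not $\mathrm{PD}(2n)$.

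Your first argument claims ``$\til F$ is not homotopy finite, so $N$ is not of type $\mathrm{FP}$''. This implication is false in general: $\til F$ is the universal cover of the \emph{finite} complex $F$, and whether $N$ is $\mathrm{FP}$ concerns the existence of a finite $K(N,1)$, which has nothing to do with $\til F$. (Think of $N=\Z$, which is $\mathrm{FP}$, yet one can easily cook up closed manifolds $F$ with $\pi_1=\Z$ and $\til F$ an infinite wedge.) Your second argument asserts $H^n_c(\til F) = H^n(N,\Z N)$; but the spectral sequence (Proposition~\ref{leraycoh}) only gives an \emph{injection} $H^n(N,\Z N) \hookrightarrow H^n(F,\Z N)=H^n_c(\til F)$, with cokernel sitting inside $H^n(\til F,\Z N)^N$. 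So if $N$ were $\mathrm{PD}(2n)$ you would get $H^n(N,\Z N)=0$, which is perfectly consistent with $H^n_c(\til F)\neq 0$ --- no contradiction there.

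What the paper does instead is read the exact sequence one step further. Assuming $N$ is $\mathrm{PD}(2n)$ (so $H^n(N,\Z N)=H^{n+1}(N,\Z N)=0$ since $n\geq 2$), Proposition~\ref{leraycoh} forces an isomorphism $H^n(F,\Z N)\cong H^n(\til F,\Z N)^N$. Now compute both sides: the left is $H^n_c(\til F)=H_n(\til F)=\bigoplus_I \Z$, a \emph{countable} group (Poincar\'e duality and Hurewicz); the right is $\mathrm{Hom}_{\Z N}(\pi_n(\til F),\Z N)=\prod_I \Z N$, \emph{uncountable} because the index set $I$ is infinite (this is where irrationality of $f$ and $K\neq\emptyset$ are used). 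That cardinality mismatch is the contradiction. Your sketch has all the right ingredients on the table but reads off the wrong term from the spectral sequence; the actual punchline is a direct-sum-versus-direct-product comparison, not a nonvanishing of $H^n(N,\Z N)$.
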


\begin{proof} Statements (a), (b) and (c) follow directly from Proposition \ref{dps-gen}.\\
{\it Proof of (d):} It is at this stage that we replace the arguments in  \cite{dps-bb} involving resonance and characteristic varieties
by group cohomology computations.

Note that $\pi_1(M)$ and $\pi_1(S)$ are PD groups of dimension $(2n+2)$ and $2$ respectively.
To show that $N$ cannot be of type FP, it suffices (by Corollary \ref{biericor}) to show that $N$ cannot be a PD(2n) group. 
By Proposition \ref{leraycoh} (and Remark \ref{leraycohrmk}), there is 
an exact sequence of $G$-modules,
$$
0 \,\longrightarrow\, H^n(N,\, \Z N) \,\longrightarrow\, H^n(F,\, \Z N)
\,\longrightarrow\, H^n(\til{F},\, \Z N)^N \,\longrightarrow\,
H^{n+1}(N, \,\Z N)\,\longrightarrow\, \cdots
$$
If $N$ is a PD(2n) group, then we have $H^n(N,\, \Z N)\,=\, H^{n+1}(N, \,\Z N)\,=\,0$
because $n \geq 2$. Hence $H^n(F,\, \Z N) \,=\, H^n(\til{F},\, \Z N)^N$. 

Now,  by Poincar\'e Duality and the Hurewicz' Theorem, we have,
$$H^n(F,\, \Z N)\,=\, H^n_c(\til{F})\,=\, H_n(\til{F})\,=\,
\pi_n(\til{F}) \,=\,\bigoplus_I \Z\, ,$$
where $\bigoplus_I \Z$ denotes direct sum of a collection of copies of $\Z$ indexed by $I$.
Also, $$H^n(\til{F},\, \Z N)^N \,=\, (Hom_\Z (H_n(\til{F}), \,\Z N))^N
\,=\, (Hom_\Z (\pi_n(\til{F}),\, \Z N))^N\,=\, $$
$$\hfill Hom_{\Z N}
(\pi_n(\til{F}), \,\Z N)\,=\, Hom_{\Z N} \left(\bigoplus_I \Z,\, \Z N\right)
\,=\, \prod_I \Z N\, ,$$
where $\prod_I \Z N$ denotes direct product of  a collection of copies of $\Z N$ indexed by $I$.
Hence $$\bigoplus_I \Z \,=\, \prod_I \Z N\, .$$

Note that as an abelian group $\Z N$ is a direct sum of (a non-zero number of) copies of $\Z$.
Since $f: M \longrightarrow S$ is an irrational topological Lefschetz fibration (this is
exactly the place where we use irrationality of $f$) the set $I$ is countably
infinite.  Therefore, $\bigoplus_I \Z$ is countable and $ \prod_I \Z N$ is uncountable and the two cannot be equal. This contradiction shows that $N$ 
cannot be a PD(2n) group and hence cannot be of type FP. 
In particular, there does not exist a quasiprojective $K(N,1)$ space.
\end{proof}

\begin{cor}\label{dps-gen-cor} Let $M$ be a smooth projective variety of complex dimension $n+1$ with $n \geq  2$,  such that $\til{M}$ is Stein
and $\pi_{n+1}(M)=0$. Suppose also that $M$ realizes $ht_\SSS (\pi_1(M))$.
Let $f: M \longrightarrow S$ be an irrational  Lefschetz fibration that is not a
Kodaira fibration, and let $F$ denote the regular fiber
of the fibration. Denote $N=\pi_1(F)$.  
   Then \\
a) $\pi_k(F) \,=\,0$ for $1<k<n$, \\
b) $\pi_{n}(F)$ is a free $\Z N$-module, 
with generators in one-to-one correspondence with $K\times \pi_1(S)$, \\
c) $\til{F}$ is homotopy equivalent to a wedge of $n$--spheres,  \\
d) $N$ cannot be of type FP; in particular, there does not exist a
quasiprojective $K(N,1)$ space, and\\
e) $\til{F}$ is Stein.
\end{cor}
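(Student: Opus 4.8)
The plan is to obtain (a)--(d) directly from Theorem~\ref{dps-gen-th} once its hypotheses are checked, and to prove the new statement (e) by exhibiting $\til F$ as a closed complex submanifold of $\til M$.

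First I would verify the hypotheses of Theorem~\ref{dps-gen-th}. A holomorphic Lefschetz fibration on a compact complex manifold is in particular a topological Lefschetz fibration in the sense of the paper: the local normal form $f(z_1,\dots,z_{n+1})=\sum_i z_i^2$ is precisely the required local model, the critical set is finite because $M$ is compact, and ``irrational'' and ``not a Kodaira fibration'' transfer verbatim; moreover $\dim_{\R}M=2n+2$ with $n\ge 2$. The substantive point is that $\til M$ must be contractible, and here I would use the remaining hypotheses. Since $M$ realizes $ht_\SSS(\pi_1(M))$, Definition~\ref{htd2} gives $\pi_i(M)=0$ for $1<i<ht_\SSS(\pi_1(M))$. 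If $ht_\SSS(\pi_1(M))$ were finite it would equal $n+1=\dim_{\C}M$ by Proposition~\ref{cx=htstein}; then $\pi_i(M)=0$ for $1<i\le n$, and combined with the hypothesis $\pi_{n+1}(M)=0$ we would get $\pi_i(M)=0$ for $1<i\le n+1$, whence Proposition~\ref{ndim} (using that $\til M$ is Stein of complex dimension $n+1$) would make $M$ a $K(\pi_1(M),1)$, contradicting finiteness of $ht_\SSS$. Hence $ht_\SSS(\pi_1(M))=\infty$, so $M$ is aspherical and $\til M$ is contractible. Now Theorem~\ref{dps-gen-th} applies and yields (a), (b), (c), (d) verbatim.

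For (e) I would argue as follows. As already established inside the proof of Proposition~\ref{dps-gen}, the homomorphism $\pi_1(F)\to\pi_1(M)$ is injective: the cover $M_N\to M$ corresponding to the image of $\pi_1(F)$ satisfies $\pi_1(M_N)=N$ because $M_N$ is obtained from $F$ by attaching cells of dimension $n+1\ge 3$, and $F\hookrightarrow M_N$ induces an isomorphism on $\pi_1$. Therefore, writing $p\colon\til M\to M$ for the universal covering, the closed complex submanifold $p^{-1}(F)\subset\til M$ has each connected component simply connected and covering $F$, so each component is a copy of $\til F$; since the connected components of a submanifold are closed, $\til F$ is realized as a closed complex submanifold of $\til M$. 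Finally, a closed complex submanifold of a Stein manifold is Stein --- restrict a strictly plurisubharmonic exhaustion function from $\til M$, where the restriction stays strictly plurisubharmonic and proper --- and $\til M$ is Stein by hypothesis. Hence $\til F$ is Stein, which is (e).

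I expect the main obstacle to be the bookkeeping of the hypotheses, i.e.\ convincing oneself that ``$M$ realizes $ht_\SSS(\pi_1(M))$'' together with $\pi_{n+1}(M)=0$ really forces $\til M$ to be contractible (equivalently, that $M$ is aspherical); the proof of (e) itself is short once one notices that $\til F\hookrightarrow\til M$ is a closed complex submanifold. As a sanity check, (c) and (e) together say that $\til F$ is a Stein manifold homotopy equivalent to a wedge of $n$-spheres --- exactly what Theorem~\ref{af} and Proposition~\ref{cx=htstein} predict for $F$, a smooth projective variety of complex dimension $n$ with $\pi_i(F)=0$ for $1<i<n$.
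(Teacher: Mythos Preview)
Your proof is correct and follows the same approach as the paper: establish that $\til M$ is contractible, invoke Theorem~\ref{dps-gen-th} for (a)--(d), and for (e) realize $\til F$ as a closed complex submanifold of the Stein manifold $\til M$. The paper reaches contractibility more directly---it simply asserts $\pi_k(M)=0$ for $1<k<n+1$ from the realizing hypothesis, adds $\pi_{n+1}(M)=0$, and appeals to Theorem~\ref{af} plus Hurewicz---whereas you run a contradiction argument through Proposition~\ref{ndim}; your claim that Proposition~\ref{cx=htstein} forces $ht_\SSS(\pi_1(M))=n+1$ (rather than merely $\le n+1$) in the finite case is a slight overstatement, but the paper's one-line assertion elides the very same point, so your argument is no less rigorous than the original.
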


\begin{proof} Since $M$ realizes $ht_\SSS (\pi_1(M))$, we have $\pi_k(M)\,=\,0$ for $1<k<n+1$. By hypothesis $\pi_{n+1}(M)=0$.
Hence by Theorem \ref{af} and the Hurewicz' Theorem, $\til M$ is contractible.
Theorem \ref{dps-gen} now applies directly to prove (a)--(d).

Since $\pi_1(F)$ injects into $\pi_1(M)$, it follows that $\til F$ is a complex analytic submanifold of the Stein manifold $\til M$. Hence $\til F$ is Stein, proving 
(e). \end{proof}

\subsubsection{Products of Riemann surfaces}\label{prs}  We modify the notation of Corollary  \ref{dps-gen-cor} slightly: let $M$ be
a smooth projective variety with contractible universal cover. We now turn to the specific examples of $M$ constructed  in \cite{dps-bb}, viz.
$M$ is a product $S_1 \times \cdots \times S_r$ of $r>2$
Riemann surfaces, $S$ is a torus, and $f$ is a branched cover restricted to every factor
$S_i$. Dimca, Papadima and Suciu construct explicit such examples in \cite{dps-bb}. We provide below a criterion at the level of fundamental
groups that furnish (in a sense) all examples arising this way. The main new
ingredient  is the following theorem of Edmonds \cite{edmonds}:

\begin{theorem}[\cite{edmonds}]\label{edmonds}
A map $f: M \longrightarrow N$ of closed, orientable surfaces is homotopic
to a branched covering if and only if $$f_\ast: \pi_1(M)\longrightarrow \pi_1(N)$$ is
injective  or $${\rm deg} (f)\,>\, [\pi_1(N): f_\ast(\pi_1(M))]\, .$$
If $f_\ast: \pi_1(M)\longrightarrow \pi_1(N)$ is injective, then $f$ is homotopic
to a covering map. If 
${\rm deg}(f)\,>\, [\pi_1(N): f_\ast(\pi_1(M))]$, then $f$ is homotopic to a
branched cover map but not to a covering map. \end{theorem}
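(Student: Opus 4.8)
The plan is to prove the three assertions in turn; the first two are formal consequences of covering-space theory and the topology of closed surfaces, while the third — converting the numerical inequality $\deg f>[\pi_1(N):f_\ast\pi_1(M)]$ into an actual homotopy to a branched covering — carries all the weight. Throughout I assume $M$ and $N$ have positive genus, the case relevant in the sequel; with this assumption a $\pi_1$-injective map automatically has nonzero degree.

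First I would record the ``only if'' direction together with the covering/branched-covering dichotomy. If $f$ is homotopic to a branched covering $g:M\longrightarrow N$, then $g$ is surjective of some positive degree $d$, its set of branch values $B\subset N$ is finite, and $g$ restricts to an honest covering $g^{-1}(N\setminus B)\longrightarrow N\setminus B$; hence $j:=[\pi_1(N):g_\ast\pi_1(M)]$ is finite, $j$ divides $d$, and $d=j$ exactly when $B=\emptyset$, i.e.\ exactly when $g$ is an unbranched covering, which by covering-space theory happens exactly when $g_\ast=f_\ast$ is injective. This yields the ``only if'' implication, the equivalence of injectivity with being homotopic to a genuine covering, and — since a covering map has degree equal to the index of its $\pi_1$-image — the fact that when $\deg f>j$ the map $f$ cannot be homotopic to any unbranched covering.

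Second, I would show that $f_\ast$ injective implies $f$ is homotopic to a covering. Put $j=[\pi_1(N):f_\ast\pi_1(M)]$; an infinite-index subgroup of a positive-genus closed surface group is free, whereas $f_\ast\pi_1(M)\cong\pi_1(M)$ is not, so $j<\infty$. Let $p:\widehat N\longrightarrow N$ be the corresponding $j$-sheeted covering (a closed surface) and lift $f$ to $\widetilde f:M\longrightarrow\widehat N$; then $\widetilde f_\ast$ is an isomorphism onto $\pi_1(\widehat N)$, so by asphericity $\widetilde f$ is a homotopy equivalence of closed surfaces, hence homotopic to a homeomorphism $h$, and $f\simeq p\circ h$ is a covering map.

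Finally, suppose $d:=\deg f>j$. A nonzero-degree map of closed surfaces has $f_\ast\pi_1(M)$ of finite index $j$ (otherwise, lifting to the noncompact cover $\widehat N$, the class $[M]$ would vanish in $H_2(\widehat N)=0$ while pushing forward to $d[N]\neq0$), so lifting through $p:\widehat N\to N$ replaces $f$ by $\widetilde f:M\to\widehat N$ of degree $e=d/j\ge2$ with $\widetilde f_\ast$ surjective; as the composite of a branched covering with a covering is again a branched covering, it suffices to realize $\widetilde f$ up to homotopy by a branched covering $M\to\widehat N$. Here I would run the classical Morse/handle-theoretic argument in two stages: (i) a \emph{normal form} step, deforming $\widetilde f$ so that it collapses a subsurface $\Sigma\subset M$ — whose genus is the excess $g(M)-\bigl(1+e(g(\widehat N)-1)\bigr)\ge0$ permitted by the inequality $\chi(M)\le e\,\chi(\widehat N)$ forced on any degree-$e$ map (via the Gromov norm when $g(\widehat N)\ge2$, and from $\pi_1$-surjectivity otherwise) — and is an honest covering off $\Sigma$; and (ii) a \emph{realization} step, adjusting the homotopy on $\Sigma$ so as to trade its collapsed handles for ramification points, the accounting being exactly Riemann--Hurwitz and its solvability being guaranteed by the Hurwitz existence theorem, which is unobstructed here because $\widehat N$ has positive genus and arbitrarily many simple branch points may be introduced. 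The result is a branched covering homotopic to $\widetilde f$, hence to $f$ after composing with $p$; and since $d\neq j$ this branched covering genuinely ramifies, so $f$ is not homotopic to a covering. The main obstacle is step (i): producing the normal form requires genuine surface surgery — handle slides and control of the vanishing-cycle-type data — and it is this deformation, not the ensuing Riemann--Hurwitz bookkeeping, that is the technical heart of the theorem.
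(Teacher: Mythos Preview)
The paper does not prove this statement at all: it is quoted verbatim as a result of Edmonds \cite{edmonds} and used as a black box in the proofs of Proposition~\ref{charzn} and Theorem~\ref{charzn-th}. There is therefore no ``paper's own proof'' to compare your proposal against.

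Judged on its own, your outline is broadly faithful to Edmonds' actual argument. The easy directions --- the covering case via asphericity and lifting, and the ``only if'' direction via the divisibility $j\mid d$ for a branched cover --- are correct as you state them. For the hard direction, Edmonds does proceed through a normal form: every nonzero-degree map of closed orientable surfaces is homotopic to a \emph{pinch map} (collapsing a compact subsurface with connected boundary to a point) followed by a branched covering, and the numerical hypothesis $d>j$ is precisely what allows the pinched handles to be traded for additional simple branch points. Your steps (i) and (ii) are exactly this pinch normal form and this trade.

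That said, you yourself flag step (i) as ``the main obstacle'' and do not carry it out; this is an honest gap rather than an error, but it means what you have written is an outline, not a proof. Edmonds' construction of the normal form uses transversality with respect to a handle decomposition of the target and a sequence of explicit surgeries on preimage arcs and circles --- genuine two-dimensional cut-and-paste --- and none of that is recoverable from the phrase ``handle slides and control of the vanishing-cycle-type data.'' If you intend to include a proof, that is the part requiring real work; otherwise, citing \cite{edmonds} as the paper does is the appropriate move.
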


We collect together a few elementary facts.

\begin{lemma} Let $M$ be a product $S_1 \times \cdots \times S_r$ of closed,
orientable surfaces $S_i$ of positive genus with $r\geq 2$, and let $S$ be another
closed, orientable surface. Fix $(x_1, \cdots , x_r )\,\in\, S_1 \times \cdots \times
S_r$
and let $j_k\,:\, S_k \,\longrightarrow\, S_1 \times \cdots \times S_r$ be
given by $j_k(y_k) = (x_1, \cdots, x_{k-1}, y_k, x_{k+1},  \cdots, x_{r})$.
Let $f: M\longrightarrow S$ be a map such that \\
a) $f_\ast (\pi_1(M))$ $(\,\subset\, \pi_1(S))$ is not cyclic, and\\
b) $f_\ast \circ j_{k\ast} $ is not the trivial homomorphism
for any $k$.\\ Then $S$ must be a torus. \label{torus} \end{lemma}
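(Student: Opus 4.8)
The plan is to rule out, under hypotheses (a) and (b), every genus for $S$ except one. First I would dispose of the sphere: if $S\,=\,S^2$ then $\pi_1(S)$ is trivial, so $f_\ast(\pi_1(M))$ is the trivial group, which is cyclic, contradicting (a). Hence $\pi_1(S)$ is infinite, and it remains only to exclude the case $\mathrm{genus}(S)\,\geq\, 2$.

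So suppose $g\,:=\,\mathrm{genus}(S)\,\geq\, 2$. The one structural fact I would invoke is the classical statement that in the fundamental group of a closed orientable surface of genus $\geq 2$ the centralizer of every nontrivial element is infinite cyclic; equivalently, two commuting elements always lie in a common cyclic subgroup, and every nontrivial element lies in a \emph{unique} maximal cyclic subgroup (which is then its centralizer). One clean way to see this is to realize $\pi_1(S)$ as a discrete, torsion-free group of orientation-preserving isometries of ${\mathbb H}^2$, all of whose nontrivial elements are hyperbolic: two such commute iff they share an axis, whence both are powers of a common primitive translation, torsion-freeness excluding the order-two rotation that would otherwise reverse an axis. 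Granting this, I would write $\pi_1(M)\,=\,\Gamma_1 \times \cdots \times \Gamma_r$ with $\Gamma_k\,=\,j_{k\ast}(\pi_1(S_k))$, so that the $\Gamma_k$ commute pairwise and generate $\pi_1(M)$. Using (b), pick $a\,\in\,\Gamma_1$ and $b\,\in\,\Gamma_2$ with $f_\ast(a)\,\neq\, 1\,\neq\, f_\ast(b)$. Since $[a,b]\,=\,1$, also $[f_\ast(a),f_\ast(b)]\,=\,1$, so $f_\ast(b)\,\in\, C\,:=\,Z_{\pi_1(S)}(f_\ast(a))$, the unique maximal cyclic subgroup containing $f_\ast(a)$. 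The claim I would then verify is that $f_\ast(\Gamma_k)\,\subseteq\, C$ for every $k$: for $k\,\neq\, 1$, each element of $\Gamma_k$ commutes with $a$, hence its image commutes with $f_\ast(a)$ and lies in $C$; for $k\,=\,1$, each element of $\Gamma_1$ commutes with $b$, hence its image commutes with $f_\ast(b)\,\in\, C\setminus\{1\}$, and so lies in the centralizer of $f_\ast(b)$, which equals $C$ by uniqueness of the maximal cyclic subgroup through a nontrivial element. Since the $\Gamma_k$ generate $\pi_1(M)$, this forces $f_\ast(\pi_1(M))\,\subseteq\, C$, which is cyclic, contradicting (a). Thus $g\,\leq\, 1$, and together with $S\,\neq\, S^2$ this yields that $S$ is a torus.

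The step that carries all the weight --- and the one I would write out carefully --- is the centralizer statement for surface groups of genus $\geq 2$; everything else is routine manipulation with the product decomposition of $\pi_1(M)$. If one prefers to avoid the hyperbolic-geometry picture, the same input can be cited from the theory of torsion-free word-hyperbolic groups, for which centralizers of nontrivial elements are known to be infinite cyclic. I anticipate no further obstacle.
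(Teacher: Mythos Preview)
Your proof is correct and follows essentially the same route as the paper's: both hinge on the fact that in $\pi_1(S)$ for $\mathrm{genus}(S)\geq 2$ any two commuting nontrivial elements lie in a common cyclic subgroup (equivalently, centralizers are cyclic, or $\pi_1(S)$ contains no $\mathbb{Z}^2$). The paper phrases the conclusion as ``$\pi_1(S)$ contains a free abelian group of rank greater than one, hence $S$ is a torus,'' while you run the contrapositive, showing that if $\mathrm{genus}(S)\geq 2$ then the entire image $f_\ast(\pi_1(M))$ lands in a single maximal cyclic subgroup $C$; your version is somewhat more explicit about why \emph{every} factor's image (including $\Gamma_1$) ends up in $C$, which the paper leaves implicit.
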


\begin{proof}  By hypothesis (b), for all $k=1, \cdots, r$, there exists $a_k \in \pi_1(S_k)$ such that
$f_\ast \circ j_{k\ast}(a_k) $ is non-trivial. Hence
the images $f_\ast \circ j_{l\ast}( \pi_1(S_l))$ commute with $f_\ast \circ j_{k\ast}(a_k) $ for all $l \neq k$.

By hypothesis (a) $f_\ast (\pi_1(M))\, \neq\, \langle f_\ast \circ j_{k\ast}(a_k) \rangle$, the (non-trivial) cyclic group generated by
$f_\ast \circ j_{k\ast}(a_k) $. Since $\pi_1(S)$ is torsion-free, this implies that $\pi_1(S)$ contains a free abelian group of rank greater than one.
Hence $S$  must be a torus. \end{proof}

\begin{lemma} Let $M$ be a product $S_1 \times \cdots \times S_r$ of
Riemann surfaces of positive genus equipped with the product complex structure. 
Let $S$ be another Riemann surface of positive genus. Fix
$(x_1, \cdots , x_r )\,\in\, S_1 \times \cdots \times S_r$
and let $j_k: S_k \longrightarrow S_1 \times \cdots \times S_r$ be
as in Lemma \ref{torus}. Let $$f\,:\, M\,\longrightarrow\, S$$ be a map such that $f\circ j_k $ is non-constant for all $k$. Then $S$ has genus one and $f\circ j_k $ is a branched cover
 for all $k$. \label{rs}\end{lemma}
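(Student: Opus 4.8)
The plan is to deduce the statement from Lemma~\ref{torus}, so the whole proof amounts to (i) extracting the ``branched cover'' conclusion directly from complex analysis and (ii) verifying the two hypotheses of Lemma~\ref{torus}. Since $M$ carries the product complex structure, each inclusion $j_k\colon S_k\hookrightarrow M$ is holomorphic, hence $g_k:=f\circ j_k\colon S_k\longrightarrow S$ is a non-constant holomorphic map between compact Riemann surfaces. Such a map is open (open mapping theorem), proper, and has discrete — hence finite — fibres, so it is a finite branched covering of $S$; in particular it is surjective. This is already the second assertion of the lemma, and it remains only to prove that $S$ has genus one.

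The key auxiliary input I would isolate is the following standard fact: if $g\colon \Sigma\longrightarrow \Sigma'$ is a degree-$d$ branched covering of closed orientable surfaces, then $g_\ast(\pi_1(\Sigma))$ has index at most $d$, hence \emph{finite} index, in $\pi_1(\Sigma')$. To see this, let $B\subset \Sigma'$ be the finite set of branch values; then $g$ restricts to an honest $d$-sheeted covering $\Sigma\setminus g^{-1}(B)\longrightarrow \Sigma'\setminus B$, inducing an index-$d$ subgroup on $\pi_1$, and the puncture-filling maps $\pi_1(\Sigma\setminus g^{-1}(B))\longrightarrow \pi_1(\Sigma)$ and $\pi_1(\Sigma'\setminus B)\longrightarrow \pi_1(\Sigma')$ are surjective, so the image of a finite-index subgroup under a surjection is again of finite index. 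Applying this to each $g_k$, the subgroup $f_\ast\bigl(j_{k\ast}(\pi_1(S_k))\bigr)$ has finite index in $\pi_1(S)$ for every $k$. The one subtlety to flag here is that the index can drop strictly below $d$ once branching is present, which is why I only claim finiteness of the index; but finiteness is all that is needed below.

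Now I would check the hypotheses of Lemma~\ref{torus}. Hypothesis (b), namely that $f_\ast\circ j_{k\ast}$ is non-trivial for each $k$, is immediate: $\pi_1(S)$ is infinite (since $S$ has positive genus), so a finite-index subgroup of it cannot be trivial. For hypothesis (a), suppose $f_\ast(\pi_1(M))$ were cyclic; then its subgroup $f_\ast\bigl(j_{1\ast}(\pi_1(S_1))\bigr)$ would be cyclic and of finite index in $\pi_1(S)$, forcing $\pi_1(S)$ to be virtually cyclic. But a surface group of positive genus is either $\Z^2$ (genus one) or a non-elementary Gromov-hyperbolic group (genus $\geq 2$), and neither of these is virtually cyclic — a contradiction. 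Hence both hypotheses of Lemma~\ref{torus} hold (and $r\geq 2$), so $S$ must be a torus, i.e.\ has genus one. Combined with the first paragraph, this proves the lemma. The main obstacle is really the finite-index statement for branched covers: it is elementary but must be phrased carefully, since it is false if one asks for index equal to the degree, and everything downstream rests on getting it right.
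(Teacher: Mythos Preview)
Your proof is correct and follows essentially the same route as the paper's: both observe that a non-constant holomorphic map between compact Riemann surfaces is a branched cover, deduce that each $f_\ast\circ j_{k\ast}(\pi_1(S_k))$ has finite index in $\pi_1(S)$, and then invoke Lemma~\ref{torus}. The only difference is that you spell out the finite-index argument and the verification of hypotheses (a) and (b) in detail, whereas the paper records these as immediate consequences of the positive degree of a branched cover.
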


\begin{proof} Since any non-constant complex analytic map between Riemann surfaces is a branched cover, $f\circ j_k $ is a branched cover
 for all $k$.

Further, any branched cover has positive degree, and hence $[\pi_1(S):f_\ast \circ j_{l\ast}( \pi_1(S_k))]$ is finite for all $k$.
In particular, the hypotheses (a) and (b) of
Lemma \ref{torus} are satisfied. Therefore, the genus of $S$ is one. \end{proof}

We shall say that a complex analytic map $f: M\longrightarrow S$ between a product $M=S_1 \times \cdots \times S_r$ of Riemann surfaces and a Riemann surface $S$
is {\it completely non-degenerate} if $f\circ j_k $ is a branched cover for all
$k$. The next proposition characterizes homotopy classes of 
completely non-degenerate maps and shows that the examples constructed in \cite{dps-bb} account for all irrational Lefschetz fibrations
of a product of Riemann surfaces.

\begin{prop}\label{charzn}
Let $M$ be a product $S_1 \times \cdots \times S_r$ of closed, orientable
surfaces $S_i$ of positive genus with $r\geq 2$, and let $S$ be another
closed, orientable surface of positive genus. Fix $(x_1, \cdots , x_r )
\,\in\, S_1 \times \cdots \times S_r$
and let $j_k: S_k \longrightarrow  S_1 \times \cdots \times S_r$
be as in Lemma \ref{torus}.
If a (continuous) map $f: M \longrightarrow S$ is homotopic to a completely
non-degenerate complex analytic map (for some complex structures on $S_k$ and $S$)
then: \\
a) $S$ has genus one, and\\
b) for all $k$, the homomorphism $(f\circ j_k)_\ast\,:\, \pi_1(S_k)\,
\longrightarrow \,\pi_1(S)$ is either injective or
$$\deg ((f\circ j_k))> [\pi_1(N): f_\ast(\pi_1(M))]\, .$$

Conversely, if a continuous map $f: M \longrightarrow S$ satisfies (b), then $S$ has genus one and there exist complex structures on $S_k, S$
such that $f: M \longrightarrow S$ is homotopic to a completely non-degenerate complex analytic map $\phi$ for the product complex structure on $M$.
Further,  there exist complex analytic maps $\phi_k : S_k \longrightarrow S$ such that $\phi (z_1, \cdots , z_k) = m(\phi_1(z_1), \cdots , \phi_r(z_r))$,
where $m(.,.)$ denotes  addition on $S$.
\end{prop}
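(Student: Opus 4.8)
The plan is to prove the two implications of the equivalence separately; the forward (``only if'') direction is short, and the converse carries the content. \textbf{Forward direction.} Suppose $f$ is homotopic to a completely non-degenerate complex analytic map $g\,:\,M\,\longrightarrow\, S$, so that each $g\circ j_k$ is a non-constant holomorphic map of closed Riemann surfaces, hence a branched covering. Lemma \ref{rs} applied to $g$ gives immediately that $S$ has genus one, which is (a). For (b), a branched covering is trivially homotopic to a branched covering, so Edmonds' Theorem \ref{edmonds} (its ``only if'' direction), applied to $g\circ j_k\,:\,S_k\,\longrightarrow\, S$, yields condition (b) for $g\circ j_k$; and since $f\simeq g$ forces $f\circ j_k\simeq g\circ j_k$ (compose the homotopy with $j_k$), the degree and the conjugacy class of the $\pi_1$-image are unchanged, so (b) holds for $f$.

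For the converse, assume $f$ satisfies (b). \textbf{Step 1 ($S$ is a torus).} By Edmonds' Theorem \ref{edmonds} each $f\circ j_k$ is homotopic to a branched covering $\beta_k\,:\,S_k\,\longrightarrow\, S$. A branched covering of closed surfaces has finite degree, and its image on $\pi_1$ has index at most that degree; thus $(f\circ j_k)_\ast(\pi_1(S_k))$ is a finite-index subgroup of $\pi_1(S)$. Since $S$ has positive genus, every finite-index subgroup of $\pi_1(S)$ is again the fundamental group of a closed surface of positive genus, hence non-trivial and non-cyclic. So $f_\ast\circ j_{k\ast}$ is non-trivial for every $k$, and $f_\ast(\pi_1(M))$, which contains each such non-cyclic subgroup, is non-cyclic. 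Lemma \ref{torus} now forces $S$ to be a torus; in particular $\pi_1(S)\cong\Z^2$ and $S$ is aspherical.

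\textbf{Step 2 (construction).} Fix any complex structure on $S$, realizing it as an elliptic curve $E=\C/\Lambda$ with group law $m$. Each $\beta_k\,:\,S_k\,\longrightarrow\, E$ is, locally, modelled on $z\mapsto z^n$; pulling back the complex structure of $E$ off the branch locus and extending across the finitely many branch points equips $S_k$ with a complex structure $J_k$ for which $\phi_k\,:=\,\beta_k\,:\,(S_k,J_k)\,\longrightarrow\, E$ is a non-constant holomorphic map, i.e.\ a holomorphic branched covering; note that the target $E$ is common to all $k$, even though the $J_k$ need not be. Give $M$ the product complex structure and set
$$
\phi\,:\,M\,\longrightarrow\, E\, ,\qquad \phi(z_1,\cdots,z_r)\,=\, m\bigl(\phi_1(z_1),\cdots,\phi_r(z_r)\bigr)\, ,
$$
which is holomorphic since $m$ and the $\phi_k$ are. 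Then $\phi\circ j_k$ is $\phi_k$ followed by translation by $\sum_{l\neq k}\phi_l(x_l)$, hence a holomorphic branched covering, so $\phi$ is completely non-degenerate of the asserted product form. \textbf{Step 3 ($\phi\simeq f$).} As $E$ is a $K(\Z^2,1)$ with abelian $\pi_1$, a map into $E$ is determined up to homotopy by its effect on $H_1$. Under the splitting $H_1(M)=\bigoplus_k H_1(S_k)$ one has $f_\ast=\sum_k(f\circ j_k)_\ast\circ\mathrm{pr}_{k\ast}$ and likewise $\phi_\ast=\sum_k(\phi\circ j_k)_\ast\circ\mathrm{pr}_{k\ast}$; since $\phi\circ j_k$ differs from $\phi_k$ by a translation (homotopic to the identity, hence trivial on $H_1$) and $\phi_k=\beta_k\simeq f\circ j_k$, the two homomorphisms agree. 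Hence $\phi_\ast=f_\ast$ and $\phi\simeq f$, which would complete the converse.

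\textbf{Expected main obstacle.} The one step that is not pure bookkeeping is the simultaneous holomorphic realization in Step 2 --- producing branched covers $\phi_k$ all with the \emph{same} elliptic target. I would resolve this by pulling back one fixed complex structure on $S\cong E$ along each Edmonds branched covering $\beta_k$; this is legitimate precisely because a branched covering is locally $z\mapsto z^n$ and the pulled-back complex structure extends across the isolated branch points, and the freedom to let the source structures $J_k$ vary with $k$ is exactly what makes the construction close up. Everything else is assembled routinely from Lemma \ref{rs}, Lemma \ref{torus}, Edmonds' Theorem \ref{edmonds}, and the asphericity of $E$.
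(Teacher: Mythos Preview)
Your proof is correct and follows essentially the same route as the paper's: Lemma \ref{rs} and Edmonds for the forward direction; Edmonds, Lemma \ref{torus}, pulling back a fixed complex structure on $S$ along the branched covers, and the aspherical homotopy argument for the converse. If anything, you are more careful than the paper in Step 1, where you explicitly verify the non-cyclicity hypotheses of Lemma \ref{torus} via the finite-index image argument, and in Step 3, where you spell out the $H_1$-level comparison rather than simply asserting that $(f\circ j_k)_\ast=(\phi\circ j_k)_\ast$ and invoking that $M$ and $S$ are Eilenberg--MacLane spaces.
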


\begin{proof} Suppose $f$ is homotopic to a completely non-degenerate complex
analytic map. Then by Lemma \ref{rs}, the surface
$S$ has genus one. Also by hypothesis, for all $k$, the map $f\circ j_k$ is homotopic to a
branched cover. Hence by Theorem \ref{edmonds}, the homomorphism
$(f\circ j_k)_\ast\,:\, \pi_1(S_k)\,\longrightarrow\, \pi_1(S)$ is either
injective  or $\deg ((f\circ j_k))\,>\, [\pi_1(N): f_\ast(\pi_1(M))]$.

Conversely, suppose that for all $k$, the homomorphism $(f\circ j_k)_\ast\,:\,
 \pi_1(S_k)\,\longrightarrow \,\pi_1(S)$ is either injective  or $\deg
((f\circ j_k))\,>\, [\pi_1(N): f_\ast(\pi_1(M))]$.
Then again by Theorem \ref{edmonds}, for all $k$, the map $f\circ j_k$ is homotopic to a
branched cover map $\phi_k$. Hence by Lemma \ref{torus}, the surface $S$ has
genus one. Equip $S$ with some complex structure and pull back the complex
structure to $S_k$ by $\phi_k$. Equip $M$ with the resulting product complex structure $J$. Let $m(.,.)$ denote  addition on $S$.
Define $\phi: (M,J) \longrightarrow S$ by
$\phi (z_1, \cdots , z_r) = m(\phi_1(z_1), \cdots , \phi_r(z_r))$. It is easy to check that $\phi$ and $f$ induce the same map at the level of
fundamental groups (as $(f\circ j_k)_\ast\,=\,(\phi\circ j_k)_\ast$ for all $k$). Since $M$ and $S$ are Eilenberg-Maclane spaces, $\phi$ and $f$ are homotopic.
\end{proof}

We finally come to a topological characterization of irrational Lefschetz fibrations
of a product of Riemann surfaces. We continue using $j_k$ as in Proposition \ref{charzn}.

\begin{theorem}\label{charzn-th}
Let $M$ be a product $S_1 \times \cdots \times S_r$ of closed, orientable surfaces $S_i$ of positive genus with $r\geq 2$.
If there exists a product complex structure on $M$ (coming from complex structures on $S_k$) and an irrational Lefschetz fibration
$f\,:\, M \,\longrightarrow\, S$ which is not a Kodaira fibration, then \\
a) $S$ has genus one, and\\
b) ${\rm deg} ((f\circ j_k))\,> \,[\pi_1(N): f_\ast(\pi_1(M))]$ for all $k$.\\

Conversely, suppose that $S$ is a closed orientable surface of positive genus and
there exists a continuous map $f: M \longrightarrow S$ such that
$\deg ((f\circ j_k))\,> \,[\pi_1(N): f_\ast(\pi_1(M))]$ for all $k$.
Then $S$ has genus one and there exist complex structures on $S_k, S$
such that $f: M \longrightarrow S$ is homotopic to a Lefschetz fibration (for $M$ equipped with the product complex structure). 
Further,  there exist  complex analytic maps $\phi_k : S_k \longrightarrow S$ such that $\phi (z_1, \cdots , z_k) = m(\phi_1(z_1), \cdots , \phi_r(z_r))$,
where $m(.,.)$ denotes  addition on $S$. \end{theorem}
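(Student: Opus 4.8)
The plan is to reduce Theorem \ref{charzn-th} to Proposition \ref{charzn} by showing that an irrational (topological) Lefschetz fibration on a product of Riemann surfaces, with product complex structure, is exactly a completely non-degenerate complex analytic map, up to the degeneracies that a Lefschetz fibration cannot have. The forward direction is the easier one: given a Lefschetz fibration $f\colon M\to S$ with the product complex structure and base of positive genus, I first observe that a Lefschetz fibration is in particular a non-constant holomorphic map (when $M$ carries a complex structure making $f$ holomorphic, which is the setting here). Restricting $f$ to the slice $j_k(S_k)$, the composite $f\circ j_k\colon S_k\to S$ is a holomorphic map of Riemann surfaces; I must argue it is non-constant. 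If $f\circ j_k$ were constant for some $k$, then the corresponding $S_k$-direction would lie in the fibers, forcing the generic fiber to contain a copy of $S_k$, a positive-genus surface; but then either $f$ would be a fiber bundle projection (hence a Kodaira fibration, excluded) or the singular-fiber analysis of Lefschetz fibrations (fibers are smooth curves with at most one node) would be violated — in either case we get a contradiction with ``irrational and not Kodaira.'' So each $f\circ j_k$ is non-constant holomorphic, hence a branched cover, so $f$ is completely non-degenerate; Proposition \ref{charzn}(a),(b) (together with the fact that $\pi_1(S)$ being torsion-free makes the ``injective or high degree'' dichotomy collapse to the degree inequality once we know $S$ is a torus, since a $\pi_1$-injection of a positive-genus surface group into $\Z^2$ is impossible) then yields that $S$ has genus one and $\deg(f\circ j_k) > [\pi_1(N):f_\ast(\pi_1(M))]$.

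For the converse, I start from the degree inequalities and invoke the converse half of Proposition \ref{charzn}: $S$ has genus one, and there are complex structures on the $S_k$ and on $S$ together with a completely non-degenerate holomorphic $\phi\colon M\to S$ of the form $\phi(z_1,\dots,z_r)=m(\phi_1(z_1),\dots,\phi_r(z_r))$, homotopic to $f$. So the content that is genuinely new here, beyond Proposition \ref{charzn}, is upgrading ``holomorphic and completely non-degenerate'' to ``homotopic to a Lefschetz fibration.'' The plan is to show that $\phi$ itself (possibly after a small perturbation within its homotopy class, or after perturbing the base-point data / the maps $\phi_k$ to maps with only simple branch points in general position) is a Lefschetz fibration in the sense of the definition given: $df$ is surjective away from a finite critical set, the local normal form is $\sum z_i^2$ at each critical point, and $\phi$ is injective on the critical set. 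Because $\phi=m\circ(\phi_1\times\cdots\times\phi_r)$ with $m\colon S\times\cdots\times S\to S$ a submersion (iterated group addition on the elliptic curve), the critical points of $\phi$ are exactly the points where at least one $\phi_k$ is critical, i.e. the union of the ``branch divisors'' $D_k = \{z : d\phi_k(z_k)=0\}$ pulled back; after a generic choice of the $\phi_k$ (which we are free to make within each homotopy class, since the branched-cover realization in Theorem \ref{edmonds} / Proposition \ref{charzn} is only up to homotopy) these branch loci are disjoint and consist of simple ramification points, so $\phi$ has isolated nondegenerate (quadratic) critical points, giving the $\sum z_i^2$ normal form. One then perturbs once more so that distinct critical points have distinct images, securing injectivity on the critical set.

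The main obstacle I anticipate is precisely this last upgrade: Proposition \ref{charzn} only guarantees a completely non-degenerate holomorphic map, and a priori such a map could have higher-order ramification (a $\phi_k$ with a branch point of multiplicity $\ge 3$, giving a degenerate critical point of $\phi$), or several critical points of $\phi$ lying over the same point of $S$, neither of which is allowed for a Lefschetz fibration. The resolution is to exploit the freedom in the construction: Edmonds' theorem realizes a map of positive degree by a branched cover only up to homotopy, and within each homotopy class of branched covers $S_k\to S$ of the elliptic curve one can choose a representative (indeed a holomorphic one, after varying the complex structure on $S_k$ as in Proposition \ref{charzn}) whose ramification is simple and whose branch points are in arbitrarily general position; one then needs to check that ``simple ramification for each $\phi_k$, disjoint branch data'' really does give the $\sum z_i^2$ local form for $\phi = m\circ(\phi_1\times\cdots\times\phi_r)$ — a local computation using that $m$ is a holomorphic submersion, so near a point where only $\phi_{k_0}$ is ramified (simply), $\phi$ looks like $(w_1,\dots,w_r)\mapsto w_{k_0}^2 + (\text{linear in the others})$, which after a holomorphic change of coordinates is $\sum z_i^2$. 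Verifying that this perturbation can be done while staying in the prescribed homotopy class of $f$, and that it is compatible with a single product complex structure on $M$, is the delicate bookkeeping; everything else is either Proposition \ref{charzn}, Theorem \ref{edmonds}, or the excluded-degeneracy arguments (no constant factor $\Rightarrow$ not a Kodaira fibration issue) sketched above.
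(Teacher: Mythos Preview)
Your overall strategy—reducing both directions to Proposition \ref{charzn} and then handling the Lefschetz-fibration-specific content—is the same as the paper's. But there are two concrete errors in the execution.

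In the forward direction, the claim that ``a $\pi_1$-injection of a positive-genus surface group into $\Z^2$ is impossible'' fails when $S_k$ itself has genus one, since $\Z^2$ certainly embeds in $\Z^2$. In that case Edmonds' theorem yields an \emph{unramified} cover, and then $\deg(f\circ j_k)$ equals (rather than exceeds) the index, so (b) would fail. The paper rules out the injective case by a different route: if $(f\circ j_k)_\ast$ is injective for some $k$, then in the product form $\phi = m(\phi_1,\ldots,\phi_r)$ supplied by Proposition \ref{charzn} the factor $\phi_k$ is an unramified cover, so $d\phi_k$ never vanishes and hence $\phi$ is a submersion everywhere—i.e.\ a Kodaira fibration. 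One then invokes that a non-Kodaira Lefschetz fibration cannot be homotopic to a Kodaira fibration.

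In the converse, your identification of the critical set of $\phi = m(\phi_1,\ldots,\phi_r)$ is backwards. Since $m$ is iterated addition on the elliptic curve, in local coordinates $d\phi = d\phi_1 + \cdots + d\phi_r$, so $d\phi$ vanishes precisely where \emph{every} $d\phi_k$ vanishes, not where at least one does. The critical set is therefore the finite set of tuples $(b_{1,i_1},\ldots,b_{r,i_r})$ of branch points of the $\phi_k$, not a union of divisors; and at such a tuple, with each $\phi_k$ simply ramified (which the paper arranges by first homotoping each $f\circ j_k$ so that every branch point has degree exactly $2$), the local form is already $w_1^2 + \cdots + w_r^2$. Your local model ``$w_{k_0}^2 + (\text{linear in the others})$'' in fact describes a \emph{regular} point of $\phi$, not a critical one. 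The paper's one-line observation—``any tuple of the form $(b_{1i_1},\ldots,b_{ri_r})$ is a non-degenerate critical point of $\phi$''—is exactly this corrected computation.
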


\begin{proof} For the forward direction, observe by Proposition \ref{charzn}, that
 it suffices to rule out the possibility that $(f\circ j_k)_\ast: \pi_1(S_k)
\longrightarrow \pi_1(S)$ is injective for some $k$. If not, then $f\circ j_k$ is
homotopic to a unramified covering map.
It follows that $f$ is a local submersion at all points, and hence $f$ is homotopic to a Kodaira fibration.
But a  Lefschetz fibration
 $f: M \longrightarrow S$ which is not a Kodaira fibration cannot be homotopic to a Kodaira fibration. It follows that 
$(f\circ j_k)_\ast: \pi_1(S_k)\longrightarrow \pi_1(S)$ cannot be injective
for any $k$.

We now prove the converse direction. By Proposition \ref{charzn} the surface $S$ has genus one. Also,  
for all $k$, the map $f\circ j_k$ is homotopic to a
branched cover map but not a covering map. We can further homotope $f\circ j_k$ so that each branch point $b_{kn}$ has degree exactly 2.
Let $\phi_k$ denote the resulting branched cover maps.
Now (as in the proof of the converse direction of Proposition \ref{charzn}), equip $S$ with some complex structure and, for all $k$,
pull back the complex structure
to $S_k$ by $\phi_k$.
Equip $M$ with the resulting product complex structure and
define $\phi\,:\, M \,\longrightarrow\, S$ by
$\phi (z_1, \cdots , z_r) \,=\, m(\phi_1(z_1), \cdots , \phi_r(z_r))$. Note that any tuple of the form $(b_{1i_1}, \cdots , b_{ri_r})$
is a non-degenerate critical point of $\phi$. Also $\phi$ is homotopic to $f$. It
follows that $f\,:\, M\,\longrightarrow\, S$ is homotopic to a Lefschetz fibration.
\end{proof}

\section{Examples, Counterexamples and Questions}\label{zoo}

We shall give examples of groups having arbitrarily large, but finite homotopical height.
Several of these groups $G$ admit a closed  manifold as a $K(G,1)$ space but still
have finite homotopical height.

A Theorem that will be used frequently is 
the following strong version of the Lefschetz hyperplane theorem 
due to Goresky and MacPherson \cite[Section 2]{gormac}. 

\begin{theorem}[{\cite[Theorem 8.8]{abckt}}]\label{gmmorse}
Let $V$ be a smooth
(not necessarily closed) complex algebraic variety with $f\,:\, V\,\longrightarrow\, \C
{\mathbb P}^N$ a holomorphic map with finite fibers. If $L\,\subset\, \C{\mathbb P}^N$
is a generic linear subspace,
then $\pi_i(V\, , f^{-1}(L))\,=\,0$ for $i\,\leq\, \dim_\C (f^{-1}(L))$. Equivalently, the
inclusion of $f^{-1}(L)$ in $V$ induces an isomorphism of $\pi_i$ for $i\,<\,
\dim_\C (f^{-1}(L))$ and a surjection for $i \,=\, \dim_\C (f^{-1}(L))$.
\end{theorem}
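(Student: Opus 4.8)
The plan is to recognise this as the homotopy Lefschetz hyperplane theorem for the (possibly non-proper, in general possibly singular) morphism $f$, and to prove it by the Andreotti--Frankel plurisubharmonic Morse-theory argument (cf. Theorem~\ref{af}) relativised to $f$ and carried out in the stratified framework of \cite{gormac} (see also \cite[Thm.~8.8]{abckt}). First I would reduce to the case of a hyperplane: write a generic linear subspace $L$ of codimension $c$ as a transverse intersection $H_1\cap\cdots\cap H_c$ of generic hyperplanes and set $L_j=H_1\cap\cdots\cap H_j$, so that $V=f^{-1}(L_0)\supset f^{-1}(L_1)\supset\cdots\supset f^{-1}(L_c)=f^{-1}(L)$. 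By Bertini and generic smoothness (we are in characteristic zero) each $f^{-1}(L_j)$ is, for a generic choice, a smooth quasi-projective variety of complex dimension $\dim_\C V-j$, on which $f$ restricts to a holomorphic map with finite fibres into $L_j\cong\C{\mathbb P}^{N-j}$. Granting that each consecutive pair $(f^{-1}(L_{j-1}),\,f^{-1}(L_j))$ is $(\dim_\C V-j)$-connected, and noting that $\dim_\C V-j\ge\dim_\C f^{-1}(L)$ for all $j\le c$, the long exact sequences of the triples in this tower telescope to show that $(V,\,f^{-1}(L))$ is $\dim_\C f^{-1}(L)$-connected, which is exactly the assertion (the ``equivalently'' clause is just the long exact sequence of the pair). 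So it is enough to treat $c=1$: for $W$ smooth quasi-projective of complex dimension $m$, a map $h\colon W\to\C{\mathbb P}^M$ with finite fibres, and a generic hyperplane $H$ with $A:=h^{-1}(H)$, show $\pi_i(W,A)=0$ for $i\le m-1$.

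Next I would build the Morse function. Pick a section $s$ of $\mathcal O(1)$ on $\C{\mathbb P}^M$ with zero divisor $H$, together with the Fubini--Study metric on $\mathcal O(1)$, and on $U:=W\setminus A$ put $\psi:=-\log|h^\ast s|^2$. Then $\psi$ is plurisubharmonic (its complex Hessian is the pullback of the Fubini--Study K\"ahler form), it is strictly plurisubharmonic on the dense open locus where $h$ is an immersion --- which is nonempty because $h$ has finite fibres and $W$ is smooth --- it is bounded below, and $\psi\to+\infty$ along $A$. After a generic smooth small perturbation, and after stratifying $W$ and $h$ and invoking the stratified Morse theory of \cite{gormac} to keep $\psi$ under control along the non-immersive locus of $h$ and near the ideal boundary of $W$ in a projective completion, $\psi$ becomes a (stratified) Morse function all of whose critical points have Morse index $\le m$; this index bound is the elementary complex-analytic fact that a nondegenerate critical point of a strictly plurisubharmonic function has index at most the complex dimension.

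Then I would conclude as in Andreotti--Frankel and Lefschetz. Morse theory applied to $\psi$ exhibits $U=W\setminus A$, up to homotopy, as built by attaching cells of dimension $\le m$; equivalently $W$ is obtained up to homotopy from a regular neighbourhood of $A$ by attaching cells of dimension $\ge m$ only, and the same holds after passing to universal covers. Since $W\setminus A$ is an oriented open $2m$-manifold, Poincar\'e--Lefschetz duality gives $H^k(W,A)\cong H^k_c(W\setminus A)\cong H_{2m-k}(W\setminus A)=0$ for $k<m$, and likewise with $\Z[\pi_1]$-coefficients. A van Kampen argument (using transversality of the generic $H$) then gives surjectivity of $\pi_1(A)\to\pi_1(W)$, and the relative Hurewicz theorem upgrades the homological vanishing to $\pi_i(W,A)=0$ for $i\le m-1$. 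Feeding this into the first step completes the proof.

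The hard part will be the construction in the second step: classical Morse theory on $V$ itself breaks down here, since $f$ need not be an immersion and $V$ need not be compact, so $\psi$ need not even be proper on $V\setminus f^{-1}(L)$; it is precisely the stratified Morse theory of Goresky--MacPherson, together with the genericity of $L$, that makes the contributions of the non-immersive locus of $f$ and of the boundary at infinity harmless while preserving the index estimate. Everything else --- the reduction of the first step, and the duality and Hurewicz arguments of the third --- is formal once the stratified-Morse input is in place.
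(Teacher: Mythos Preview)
The paper does not supply a proof of this theorem at all: it is quoted as a known result, attributed to Goresky--MacPherson \cite[Section~2]{gormac} via \cite[Theorem~8.8]{abckt}, and is used purely as a black box in Section~\ref{zoo}. So there is no ``paper's own proof'' against which to compare your attempt.

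That said, your sketch is a faithful outline of the standard stratified-Morse-theory proof. The reduction to a single hyperplane by telescoping through a flag is correct and routine, and the identification of $\psi=-\log|h^\ast s|^2$ as the relevant plurisubharmonic function, together with the index bound $\le m$ at nondegenerate critical points, is exactly the Andreotti--Frankel mechanism. You are also right that the genuine content lies entirely in the second step: on a non-proper $W$ the function $\psi$ is bounded below but not proper on $W\setminus A$, and $h$ need not be an immersion, so one really must pass to a projective compactification, stratify, and control both the normal Morse data along lower strata and the contributions from the divisor at infinity. Your paragraph gestures at this (``invoking the stratified Morse theory of \cite{gormac}\ldots makes the contributions \ldots harmless''), but as written it is an appeal to authority rather than an argument; in particular you have not said why the genericity of $L$ forces the critical points of $\psi$ on the boundary strata to have tangential index in the correct range, nor why the attaching of cells coming from infinity respects the dimension bound. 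These are precisely the points where Goresky--MacPherson's machinery (normal slices, the Main Theorem of stratified Morse theory, and the local structure of complex analytic Whitney stratifications) does real work, and a complete proof would have to invoke those results explicitly. The duality-plus-Hurewicz wrap-up in your third step is fine once the cell-attachment statement is in hand.
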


\subsection{Non-uniform lattices in $CH^n$, $n > 2$} These examples are due
to Toledo \cite{tol1}, who showed that a 
non-uniform lattice $\Ga$ in $CH^n$ is the fundamental group of a smooth projective variety
if and only if $ n \,>\, 2$.
Roughly, the construction is as follows. Toledo constructs an embedding of the manifold $M
\,=\,CH^n/\Ga$ in $\C{\mathbb P}^N$
such that the compactification has just one additional point $P\,\in\, \C{\mathbb P}^N$. A generic hyperplane avoids $P$ and cuts $M$ in a 
(complex) $(n-1)$-dimensional manifold $N$. The universal cover $\til{N}$ of $N$ is Stein
 since $CH^n$ is Stein. Also,   $\pi_i(N) \,=\, \pi_i(M)$ for $i\,<\,n-1$
by Theorem \ref{gmmorse}. Therefore, $ht_\SSS (\Ga) \,\geq\, n-1$, and we have a sequence of
examples of  projective groups of arbitrarily large $\SSS$-homotopical height.

The cohomological dimension of $\Ga$ is $(2n-1)$ as $M$ is a noncompact $K(\Ga,1)$ space.
So $ht_\SSS (\Ga) \,\leq\, 2n-1$.
In fact $\Ga $ is a (2n-1)--dimensional duality group, and so $H^2(\Ga ,\, \Z \Ga)\,=\, 0$.
Further, since $N$ is projective, Proposition \ref{qprojb2} gives that $b_2(\Ga)\,>\,0$.

The special case of a non-uniform lattice $\Ga$ in $CH^3$ is interesting. The above discussion gives
the crude bounds $2\,\leq\, ht_\SSS (\Ga)\,\leq\, 5$. It would be worth evaluating precisely what value $ht_\SSS (\Ga)$ has in this case.

\subsection{Fundamental groups of circle bundles over products of smooth varieties}

Let $M$ and $N$ be two smooth complex projective varieties,
of (complex) dimension $n$, with $\pi_2(M)\,=\,\pi_2(N)\,=\,0$ and $n\,\geq\, 2$. Then
there exists a smooth complex projective variety of dimension $n$
with fundamental group $G$ that fits into an exact sequence
$$
1 \,\longrightarrow\, \Z\,\longrightarrow\, G\,\longrightarrow\,
\pi_1(M) \times \pi_1(N)\,\longrightarrow\, 1\, .
$$
The construction of these examples is again based on Theorem \ref{gmmorse} 
and is detailed in Chapter 8 of \cite{abckt}. The above group $G$ appears as the 
fundamental group of a non-trivial $\C^\ast$--bundle $\LL$ over $M \times 
N$. The homotopy exact sequence of the fibration gives $\pi_i(\LL) \,=\, 
\pi_i(M)\times\pi_i(N)$ for all $i\,\geq\, 2$. In particular,
we have $\pi_2(\LL)\,=\,0$. Also, if both
$M$ and $N$ are aspherical (e.g. if they are products of Riemann surfaces of 
positive genus) then $\LL$ is aspherical and quasi-projective. In such a 
case, $G \,\in\, \QPK$. There exists a complex projective subvariety $W$ of $\LL$ of
complex dimension $n$ such that $\pi_i(W) \,=\, \pi_i(\LL)$ for $i \,<\, n$;
this is explained in Chapter 8 of \cite{abckt}.

If the universal covers $\til{M}$ and $\til{ N}$ of $M$ and $N$ respectively
are both Stein, then so is the universal cover $\til{\LL}$ of $\LL$. Hence
the universal cover $\til{W}$ of $W$ is also Stein. It follows that $ht_\SSS(G)\,\geq\, n$.
Also, the cohomological dimension $cd(G)\,=\, 4n+1$. So we get the bounds
$$
n \,\leq\, ht_\SSS(G) \,\leq\, 4n+1\, .
$$

A special class of these examples, due to Campana \cite{campana}, are the Heisenberg
groups $H^{4n+1}$ given by $$1\,\longrightarrow\, \Z \,\longrightarrow\,
H^{4n+1}\,\longrightarrow\, \Z^{4n} \,\longrightarrow\, 1$$ obtained by taking both
$M, N$ to be abelian varieties. Macinic
points out that $H^{4n+1}$ is $(2n-1)$--stage formal, but not $2n$--stage formal
\cite{macinic}. Hence by Proposition \ref{formalKdim}, we get the better bound
$$n\,\leq\, ht_\SSS(G)\,\leq\, ht_\KK(G)\,\leq\, 2n$$ in this case.

\subsection{K\"ahler versus Symplectic} We give examples of groups that admit a symplectic $K(G,1)$ but not a K\"ahler $K(G,1)$ and upgrade them to give
examples of K\"ahler groups that admit a symplectic $K(G,1)$ but not a K\"ahler $K(G,1)$.

As usual let $H^{2n+1}$ denote the integral Heisenberg group fitting into the exact sequence $$1\,\longrightarrow\, \Z\,\longrightarrow\, H^{2n+1}\,\longrightarrow\, \Z^{2n}
\,\longrightarrow\, 1\, .$$ 
Let $M$ denote the 3-manifold with Nil-geometry having $H^3$ as its fundamental group.
Then $M^3 \times M^3$ admits a symplectic structure. This can be shown directly as follows.
Let $M_i, i=1,2$ denote two copies of $M^3$. Let $p_i\,:\,M_i\,\longrightarrow\, T_i$ denote the natural projections onto tori $T_i$.
Let $\alpha_i, \beta_i  $ denote closed 1-forms generating $H^1(T_i)$, $a_i = p_i^\ast (\alpha_i), b_i = p_i^\ast (\beta_i)$.
  Also let $f_i $ be a closed 1-form in  $H^1(M_i)$ representing the dual to the homology class represented by 
the circle fiber. Let $C_i= f_i\wedge a_i, D_i = f_i\wedge b_i$. Let $\pi_i: M_1 \times M_2 \rightarrow M_i$ denote the natural projections for
$i=1,2$. Abusing notation slightly, we denote $\pi_i^\ast C_i$ by $C_i$ and $\pi_i^\ast b_i$ by $b_i$.

Now, define a 2-form on $M_1 \times M_2$ by $$ \omega = C_1 + C_2 + b_1\wedge b_2.$$ Then a direct computation shows that $\omega^3 = 3 C_1\wedge C_2 \wedge b_1\wedge b_2, $
which is a positive multiple of the volume form and hence $(M_1 \times M_2, \omega)$ is symplectic.
The universal cover of $M_1 \times M_2$ is $\reals^6$ and hence $M_1 \times M_2$ is a $K(H^3\times H^3,1)$.

However, $H^3 \times H^3$ is not K\"ahler (see \cite{macinic} for instance). In particular, there is no K\"ahler  $K(H^3\times H^3,1)$.
Note that by Theorem \ref{morimoto}, $M_1 \times M_2$ admits a complex structure. It follows that the complex and symplectic structures cannot be compatible.

\medskip

We now give an example of a of K\"ahler group that admits a symplectic $K(G,1)$
manifold but not a K\"ahler $K(G,1)$ manifold.
By a result of Campana \cite{campana}, the group $H^{2n+1}$ is K\"ahler for $n>3$. As in the above example let $N_1$ and $N_2$ denote 2 copies of the
$(2n+1)$-dimensional Nil-manifold  having $H^{2n+1}$ as its fundamental group. Then as before $N_1 \times N_2$ is a symplectic $K(G,1)$ for 
the  K\"ahler group $G= H^{2n+1} \times H^{2n+1}$.
If $G= H^{2n+1} \times H^{2n+1}$ admits a K\"ahler $K(G,1)$, then it must be homotopy equivalent to $N_1 \times N_2$. Hence the cohomology of $N_1 \times N_2$
must satisfy the hard Lefschetz property, i.e. there must be an $\omega \in H^2(N_1 \times N_2)$ such that $$\omega^k \cup \,:\, H^{2n+1-k}(N_1 \times N_2) \,
\longrightarrow\, H^{2n+1+k}(N_1 \times N_2)\, $$ is an isomorphism.  But this is not possible by a Theorem of Benson and Gordon \cite{bg}.
Hence $G$ does not admit a K\"ahler $K(G,1)$. In particular, there is no equivariant version of Eliashberg's celebrated Theorem \ref{eli-stein}.

\subsection{C-Symplectic Manifolds}
Recall that c-symplectic (cohomologically symplectic) manifolds are closed manifolds $M^{2n}$ with $\omega \in H^2(M)$
such that $\omega^n \neq 0$. 
Let $CSP$ denote the collection of closed symplectic manifolds with a c-symplectic structure.

\begin{prop} $ht_{CSP} (\Z^{2n-1}) \geq n-1$ for $n>2$. \label{csp-abelian} \end{prop}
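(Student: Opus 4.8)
The plan is to produce, for each $n>2$, a closed $c$-symplectic manifold $M$ with $\pi_1(M)=\Z^{2n-1}$ and $\pi_i(M)=0$ for all $1<i<n-1$; by Definition \ref{htd} this gives $ht_{CSP}(\Z^{2n-1})\geq n-1$. The first task — getting the homotopy type right — is handled by the Goresky--MacPherson form of the Lefschetz hyperplane theorem. Observe that $\Z^{2n-1}=\pi_1(V)$ for the smooth affine variety $V=(\C^\ast)^{2n-1}$, which is aspherical, and the inclusion $V\hookrightarrow\C{\mathbb P}^{2n-1}$ (through an affine chart) is holomorphic with finite fibres. For a generic linear subspace $L\subset\C{\mathbb P}^{2n-1}$ with $\dim_\C(V\cap L)=n-1$, Theorem \ref{gmmorse} shows that the inclusion $V\cap L\hookrightarrow V$ is an isomorphism on $\pi_i$ for $i<n-1$; hence $W:=V\cap L$ is a smooth affine variety of complex dimension $n-1$ with $\pi_1(W)=\Z^{2n-1}$ and $\pi_i(W)=0$ for $1<i<n-1$. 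Being affine, $W$ is Stein, so by Theorem \ref{af} (as in Proposition \ref{cx=htstein}) its universal cover is homotopy equivalent to a wedge of $(n-1)$--spheres; in particular $W$ has exactly the homotopy properties demanded of $M$, except that it is not closed and carries no $c$-symplectic class.

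The second task is therefore to replace $W$ by a \emph{closed} manifold equipped with a degree-two class of non-zero top power, without disturbing $\pi_1$ or the vanishing of $\pi_2,\dots,\pi_{n-2}$. The approach I would take is to realize $\Z^{2n-1}$ as the fundamental group of a closed \emph{symplectic} manifold $V_0$ of large even dimension $2N$ that is $(n-2)$--aspherical over its fundamental group, and then descend. One starts from the symplectic torus $T^{2N}$, performs a symplectic surgery (in the sense of Gompf and Luttinger) along an isotropic circle to impose one relation and kill a generator, obtaining $\pi_1=\Z^{2n-1}$, and then performs a further symplectic surgery to annihilate the single free $\Z[\pi_1]$--class this introduces in $\pi_2$; since $N$ is large, the surgery spheres and their duals all sit in degrees far above $n-2$, so $\pi_i(V_0)=0$ for $1<i<n-1$ is preserved, as is the symplectic (a fortiori $c$-symplectic) structure. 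One then invokes Donaldson's symplectic Lefschetz hyperplane theorem (Theorem \ref{don-lhpt}) repeatedly, passing to symplectic submanifolds $V_0\supset V_1\supset\cdots\supset V_k=M$ of dimensions $2N,2N-2,\dots$; each inclusion is an isomorphism on $\pi_i$ for $i$ below roughly half the ambient dimension, so $\pi_1=\Z^{2n-1}$ and $\pi_i=0$ for $1<i<n-1$ persist throughout and $M$ stays symplectic. (An alternative to the second step is to cap the Stein domain $\overline W$ along its contact-type boundary by a suitably highly connected concave symplectic filling, if one is available in this dimension, and glue to get a closed symplectic $M$; but controlling $\pi_1$ and the low homotopy through the gluing is essentially the same issue.)

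The main obstacle is precisely this second step. The point is that $\Z^{2n-1}$ is an odd-dimensional duality group, so no closed \emph{aspherical} symplectic manifold has it as fundamental group; hence for $n\geq 4$ any symplectic model must have $\pi_2=0$ while being genuinely non-aspherical (some $\pi_i\neq 0$ with $i\geq n-1$), and one must check that the torus surgeries effecting the change of $\pi_1$ can be carried out symplectically without creating homotopy in the range $2,\dots,n-2$. Once such a high-dimensional symplectic model is in hand, Donaldson's hyperplane theorem merely transports it down to moderate dimension, and the Goresky--MacPherson step of the first paragraph is what certifies that the low homotopy groups genuinely vanish; the arithmetic $ht_{CSP}(\Z^{2n-1})\geq n-1$ then follows directly from Definition \ref{htd}.
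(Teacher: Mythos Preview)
Your proposal has a genuine gap, and you effectively concede it in your third paragraph: the ``second task'' is never actually carried out. Proposing symplectic torus surgeries to pass from $\pi_1=\Z^{2N}$ to $\Z^{2n-1}$ and then further symplectic surgery to kill the resulting $\pi_2$ class is a programme, not a proof; you have not identified the surgery, verified it stays symplectic, or checked that no low homotopy is introduced. The first paragraph, while correct, produces an open Stein manifold and does not feed into the closed construction you need, so as written the argument does not establish the proposition.

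The point you are missing is that the target class is $c$-symplectic, not symplectic: one only needs a degree-two cohomology class with nonzero top power, and there is no need to preserve any genuine symplectic form under surgery. The paper exploits this directly. Start from a smooth hyperplane section $X$ of $E^n$ (product of elliptic curves), so $\pi_1(X)=\Z^{2n}$ and $\pi_i(X)=0$ for $1<i<n-1$ by the Lefschetz hyperplane theorem. A coordinate projection $E^n\to E^{n-1}$ restricts to a finite map $f:X\to E^{n-1}$, and the kernel of $f_\ast$ on $\pi_1$ is $\Z^2$. Now do a single \emph{topological} surgery: attach a $2$-handle $D^2\times S^{2n-4}$ along a circle representing a primitive element of that kernel, obtaining a closed manifold $Y$ with $\pi_1(Y)=\Z^{2n-1}$. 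Because the handle has codimension $\geq n-1$, the low homotopy groups are unchanged. The map $f$ extends over the $2$-cell to $g:Y\to E^{n-1}$ of positive degree, so $\alpha=g^\ast\omega$ satisfies $\alpha^{n-1}\neq 0$ and $Y$ is $c$-symplectic. This avoids entirely the symplectic-surgery and Donaldson-hypersurface machinery you were reaching for.
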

\begin{proof}  Let $E$ be an elliptic curve and $E^n$ denote its n-fold product ($n>2$)
embedded in some projective space. 
Let $X$ be a hyperplane section of $E^n$, and let $i\,:\, X \,\hookrightarrow\, E^{2n}$
be the inclusion.
Then $\pi_1(X) = \Z^{2n}$. The natural projection $E^{2n} \,\longrightarrow\, E^{2(n-1)}$ induces a finite map $f\,:\, X \,\longrightarrow\, E^{2(n-1)}$ of smooth complex projective
varieties such that $f^\ast (\omega) = \eta$ for K\"ahler forms $\omega \in H^2(E^{2(n-1)}, \Z)$ and $\eta \in H^2(X, \Z)$. Then 
$f^\ast (\omega^{(n-1)}) = \eta^{(n-1)} \neq 0$ gives a (cohomologically) non-vanishing top form on $X$.

Next $f_\ast\,: \,\pi_1(X)  \,\longrightarrow\, \pi_1(E^{2(n-1)})$ factors through the inclusion isomorphism $i_\ast\,:\, \pi_1(X)\,\longrightarrow\, \pi_1(E^{2n})$
and hence the kernel $K$ is isomorphic to $\Z\oplus \Z$. Attach a 2-handle $D^2 \times S^{2n-4}$ to an $S^1$ in $X$ representing
a primitive element of $K$. Let $Y$ denote the manifold obtained from $X$ by this surgery.  Then $\pi_1(Y) = \Z^{2n-1}$, and
$\pi_i(Y) = \pi_i(X)=0$ for $1<i<n-1$ (the last assertion follows from the Lefschetz hyperplane Theorem and the fact that the surgery
does not introduce homotopy groups in dimension less than $2n-4$). Extending  $f$ over the attaching 2-cell and smoothing it out, we obtain
 a smooth map $g\,:\, Y \,\longrightarrow\,  E^{2(n-1)}$ of positive degree (by the local degree formula for instance).
Hence $g^\ast (\omega^{(n-1)}) \in H^{2(n-1)}(Y)$ is non-zero. Choosing $\alpha = g^\ast (\omega) \in H^{2}(Y)$, we obtain
a c-symplectic structure on $Y$. (See \cite{ikrt} for a closely related set of examples, where the starting point is the symplectic category, rather than
the class of projective manifolds.)
\end{proof}

The class of examples given by Proposition \ref{csp-abelian} can be generalized.

\begin{eg} \label{fin2abvar} Let $W$ be an $n-$ dimensional
smooth variety whose Albanese variety is a smooth  $n-$ dimensional abelian variety $A$.
Let $\phi\,:\, W\,\longrightarrow\, A$ be the (finite) Albanese map. As before, let $X$ be a smooth hyperplane section in $A$.
 We can arrange such that $V=\phi^{-1} (X)$ is a smooth subvariety of
$W$. Also, $H_1(V)\,=\,H_1(A)\,=\, \Z^{2n}$. 

The natural {\it smooth} projection $A\,\longrightarrow\, T^{2(n-1)}$ induces a finite map $f\, :\, V \,\longrightarrow\, T^{2(n-1)}$, which automatically has
positive degree. Then, as before, $f_\ast\,:\, H_1(V)\,\longrightarrow\, H_1(T^{2(n-1)})$ has kernel $K$ of rank two.
Attach, as before, a 2-handle $D^2 \times S^{2n-4}$ to an $S^1$ in $V$ representing
a non-torsion element of $K$. Let $Y$ denote the manifold obtained from $V$ by this surgery. Then $H_1(Y)$ has rank  ${2n-1}$. 
Extend  $f$ over the attaching 2-cell as before and smooth it out to obtain
 a smooth map $g\,:\, Y\,\longrightarrow\, E^{2(n-1)}$ of positive degree. Choosing $\alpha = g^\ast (\omega) \in H^{2}(Y)$, we obtain
a c-symplectic structure on $Y$.

If in addition $V$ is constructed in such a way that $\til{V}$ is homotopy equivalent to a wedge of $(n-1)$-spheres, then $\pi_i(Y) = \pi_i(V)=0$ for $1<i<n-1$
as before and hence $ht_{CSP}(\pi_1(Y)) \geq n-1$.
\end{eg}

\subsection{Further Questions}
We end with some further questions which we were led to during the course of our study. 
The following is a  special case of the Shafarevich conjecture.
\begin{qn} If a smooth projective variety
$X$ is a $K(G,1)$, is $\til X$ Stein? This is true for $\til X$ a product of bounded domains and affine spaces.\end{qn}

\begin{qn}
Let $ht_\SSS(G) = n$, and $M$ be a  smooth projective variety of (complex) dimension $m$ realizing the $\SSS-$homotopical height of $G$.
If in addition the action of $G (= \pi_1(M)$ on $\pi_n(M)$ is the trivial action, and
$H^n(G,\ZG)\neq 0$ is $G$  virtually  a PD(n) group, with $n=2m$?
\end{qn}

\section*{Acknowledgments}

We would like to thank Domingo Toledo for discussions
that led us to investigate $H^i(G,\, \ZG)$ for $i\,>\,1$. In particular, he
urged us to look at $H^2(G,\,\ZG)$. Question \ref{tol-h2} has emerged from
these discussions. We also thank R. V. Gurjar for Lemma \ref{gurjar}, Samik Basu for the reference \cite{macinic}
and Alex Suciu for the reference \cite{suciu-imrn}.

\end{document}